\newfont{\gothic}{eufm10}
\def\Z{{\mathbb{Z}}}                   \def\R{{\RR}}
\def\RR{{\mathbb{R}}}        \def\N{{\mathbb{N}}}        \def\Q{{\mathbb{Q}}}
        \newtheorem{theorem}{Theorem}[section]
\newtheorem{lemma}[theorem]{Lemma}
\newtheorem{proposition}[theorem]{Proposition}
\newtheorem{corollary}[theorem]{Corollary}
\newtheorem{definition1}[theorem]{Definition}
\newenvironment{definition}{\begin{definition1}\rm}{\end{definition1}}
\newtheorem{remark1}[theorem]{Remark}
\newenvironment{remark}{\begin{remark1}\rm}{\end{remark1}}
\newtheorem{example1}[theorem]{Example}
\newenvironment{example}{\begin{example1}\rm}{\end{example1}}
\def\barray{\begin{eqnarray*}}             \def\earray{\end{eqnarray*}}
\def\beq{\begin{equation}} \def\eeq{\end{equation}}
\makeatletter \title{Ghost circles in lattice Aubry-Mather theory}  
\author{Blaz Mramor\thanks{Department of Mathematics, VU University Amsterdam, The Netherlands, {\tt bmramor@few.vu.nl}.} \ and Bob Rink\thanks{Department of Mathematics, VU University Amsterdam, The Netherlands, {\tt brink@few.vu.nl}.}\ . }
\begin{document}  \hyphenation{boun-da-ry mo-no-dro-my sin-gu-la-ri-ty ma-ni-fold ma-ni-folds re-fe-rence se-cond se-ve-ral dia-go-na-lised con-ti-nuous thres-hold re-sul-ting fi-nite-di-men-sio-nal ap-proxi-ma-tion pro-per-ties ri-go-rous mo-dels mo-no-to-ni-ci-ty}
\newcommand{\X}{\mathbb{X}}

\newcommand{\p}{\partial}
\maketitle
\noindent

\abstract{Monotone lattice recurrence relations such as the Frenkel-Kontorova lattice, arise in Hamiltonian lattice mechanics, as models for ferromagnetism and as discretization of elliptic PDEs. Mathematically, they are a multi-dimensional counterpart of monotone twist maps. \\
\indent Such recurrence relations often admit a variational structure, so that the solutions $x:\Z^d\to \R$ are the stationary points of a formal action function $W(x)$. Given any rotation vector $\omega\in \R^d$, classical Aubry-Mather theory establishes the existence of a large collection of solutions of $\nabla W(x)=0$ of rotation vector $\omega$. For irrational $\omega$, this is the well-known Aubry-Mather set. It consists of global minimizers and it may have gaps. \\
\indent In this paper, we study the parabolic gradient flow $\frac{dx}{dt} = - \nabla W(x)$ and we will prove that every Aubry-Mather set can be interpolated by a continuous gradient-flow invariant family, the so-called `ghost circle'. The existence of these ghost circles is known in dimension $d=1$, for rational rotation vectors and Morse action functions. The main technical result of this paper is therefore a compactness theorem for lattice ghost circles, based on a parabolic Harnack inequality for the gradient flow. This implies the existence of lattice ghost circles of arbitrary rotation vectors and for arbitrary actions. \\ 
\indent As a consequence, we can give a simple proof of the fact that when an Aubry-Mather set has a gap, then this gap must be filled with minimizers, or contain a non-minimizing solution.}


\section{Introduction and outline}\label{intro} 
In this paper we are interested in variational monotone lattice recurrence relations. Before introducing such recurrence relations in full generality, let us discuss as an example the so-called $d$-dimensional Frenkel-Kontorova lattice. Here, the goal is to find a $d$-dimensional ``lattice configuration'' $x:\Z^d\to \R$ that satisfies
\begin{align}\label{RR}  
V'(x_i) - (\Delta x)_i = 0 \  \ \mbox{for all} \ i\in \mathbb{Z}^d.
\end{align} 
In the equation above, the smooth function $V: \mathbb{R} \to \mathbb{R}$ satisfies $V(\xi+1)=V(\xi)$ for all $\xi\in\R$. It has the interpretation of a periodic onsite potential. Setting $||i||:=\sum_{k=1}^{d}|i_k|$, the discrete Laplace operator $\Delta:\mathbb{R}^{\mathbb{Z}^d}\to  \mathbb{R}^{\mathbb{Z}^d}$ is defined as 
\begin{align}\label{Lap}
(\Delta x)_i := \frac{1}{2d} \sum_{||j-i||=1} \!\! (x_j - x_i) \ \mbox{for all} \ i \in \Z^d .
\end{align}
One could think of equation (\ref{RR}) as a naive discretization of the nonlinear elliptic partial differential equation $V'(u) - \Delta u=0$ for a function $u: \mathbb{R}^d\to \mathbb{R}$ and $x_i = u(i)$.
\\
\indent At the same time, equation (\ref{RR}) is relevant for statistical mechanics, because it is related to the Frenkel-Kontorova Hamiltonian lattice differential equation
\begin{align} \label{FKHam}
\frac{d^2 x_i}{dt^2} + V'(x_i) - (\Delta x)_i = 0 \ \mbox{for all} \ i\in\mathbb{Z}^d.
\end{align}
This differential equation describes the motion of particles under the competing influence of an onsite periodic potential field and nearest neighbor attraction. Obviously, equation (\ref{RR}) describes its stationary solutions. 
\\ \indent Finally, in dimension $d=1$, the solutions of equation (\ref{RR}) correspond to orbits of the famous Chirikov standard map $T_V$ 
of the annulus. 
This correspondence is explained in some detail in Appendix A. 
\\ \mbox{}\\
The Frenkel-Kontorova problem (\ref{RR}) is an example from a quite general class of lattice recurrence relations to which the results of this paper apply. These are recurrence relations for which there exists, for every $j\in \Z^d$, a real-valued ``local potential'' function $S_j:\R^{\Z^d}\to \R$ so that the relation can be written in the form 
\begin{align}\label{RRR1}
\sum_{j\in \Z^d} \partial_iS_j(x) = 0\ \mbox{for all} \ i\in \mathbb{Z}^d .
\end{align}
It turns out that for the Frenkel-Kontorova problem (\ref{RR}), such local potentials exist and it is easy to check that they are given by
 \begin{align}\label{FKpotential}
 S_j(x):= V(x_j) + \frac{1}{8d} \sum_{||k-j||=1}(x_k-x_j)^2.
 \end{align}
For the general problem (\ref{RRR1}), the functions $S_j(x)$ will be required to satisfy some rather restrictive hypotheses that will be explained in detail in Section \ref{problemsetup}. Physically, the most important of these hypotheses is the {\it monotonicity} condition. It is a discrete analogue of ellipticity for a PDE. Among the more technical hypotheses is one that guarantees that the sums in expression (\ref{RRR1}) are finite. For the purpose of this introduction, it probably suffices to say that the potentials (\ref{FKpotential}) of Frenkel-Kontorova are prototypical for the $S_j(x)$ that we have in mind. \\ \indent
It is important to observe that the solutions of (\ref{RRR1}) are precisely the stationary points of the formal sum
\begin{align}\label{potentialW}
W(x):=\sum_{j\in\Z^d} S_j(x).
\end{align}
This follows because differentiation of (\ref{potentialW}) with respect to $x_i$ produces exactly equation (\ref{RRR1}) and it explains why solutions to (\ref{RRR1}) are sometimes called {\it stationary} configurations. 
\\ \mbox{} \\ \noindent
In the case that the periodic onsite potential $V(\xi)$ vanishes, the Frenkel-Kontorova equation (\ref{RR}) reduces to the discrete Laplace equation $\Delta x= 0$, for which it is easy to point out solutions. For instance, when $\xi\in \mathbb{R}$ is an arbitrary number and $\omega \in \mathbb{R}^d$ is an arbitrary vector, then the linear functions $x^{\omega, \xi}: \mathbb{Z}^d\to\mathbb{R}$ defined by
$$x_i^{\omega, \xi} :=\xi + \langle \omega, i\rangle $$
obviously satisfy $\Delta x = 0$. It moreover turns out that the $x^{\omega, \xi}$ are {\it action-minimizers}, in the sense that for every finite subset $B\subset \Z^d$ and every $y:\Z^d\to \R$ with support in $B$, it holds that 
$$\sum_{j\in \Z^d} \left( S_j(x^{\omega,\xi}+ y)- S_j(x^{\omega, \xi}) \right) \geq 0 \ .$$
Note that this sum is actually finite and can be interpreted as $W(x^{\omega, \xi}+y)-W(x^{\omega,\xi})$. 
\begin{definition}
Let $x:\mathbb{Z}^d\to\mathbb{R}$ be a $d$-dimensional configuration. We say that $\omega\in \mathbb{R}^d$ is the {\it rotation vector} of $x$ if for all $i\in \mathbb{Z}^d$, the limit 
$$\lim_{n\to \infty} \frac{x_{ni}}{n} \ \mbox{exists and is equal to} \ \langle \omega, i\rangle.$$
\end{definition}
\noindent Clearly, the rotation vector of $x^{\omega, \xi}$ is equal to $\omega$. On the other hand, in dimension $d\neq 1$, a solution to (\ref{RR}) does not necessarily have a rotation vector. An example is the hyperbolic configuration $x^h$ defined by $x^h_i=i_1 i_2\cdots i_{d-1} i_d$ which solves $\Delta x =0$. \\ 
\mbox{} \\
\noindent  In Aubry-Mather theory, one is interested, among others, in answering the following questions: given a collection of local potentials $S_j(x)$ satisfying the assumptions of Section \ref{problemsetup}, a number $\xi\in \mathbb{R}$ and a vector $\omega \in \mathbb{R}^d$, does there always exist a solution $x$ to equation (\ref{RRR1}) with rotation vector $\omega$ and initial condition $x_0=\xi$? And if so, what is the structure of the solution set? \\ \indent
A rather complete answer to these questions is known. It turns out that solutions to (\ref{RRR1}) of all rotation vectors $\omega\in\R^d$ exist. For example, it was shown by Bangert \cite{bangert87}, that when $\omega\in\R^d \backslash \Q^d$ is irrational, then there exists a unique nonempty collection of ``recurrent'' action-minimizers of rotation vector $\omega$. This is the Aubry-Mather set of rotation vector $\omega$.  It is totally ordered, but may contain ``gaps''. That is, given an arbitrary $\xi\in \R$, it may happen that the Aubry-Mather set of rotation vector $\omega$ does not contain any configuration $x$ satisfying the initial condition $x_0=\xi$. It is known that in this case, the Aubry-Mather set is actually a Cantor set. \\ 
\indent The basics of this classical theory will be reviewed in Sections \ref{birkhoffsection} and \ref{classAM} of this paper. In Section \ref{birkhoffsection}, we will study Birkhoff configurations, examples of which are the action-minimizing configurations of the Aubry-Mather sets. In Section \ref{birkhoffpersection}, we will moreover prove some new results for $d$-dimensional periodic Birkhoff configurations. In Section \ref{classAM}, we will examine minimizing configurations and for completeness, we will reprove the classical result that global minimizers of every rotation vector exist and we will examine the properties of the Aubry-Mather set. \\
\indent To investigate the existence of stationary configurations in the gaps of the Aubry-Mather sets, we propose to study the gradient flow of the formal action function, i.e. the flow of the differential equation
$$\frac{dx}{dt} = - \nabla W(x)\ .$$ 
It was shown by Gol\'e \cite{gole91} that this flow is well-defined on a suitable subspace $\X\subset \R^{\Z^d}$ of configurations that contains all Birkhoff configurations. We will prove some regularity results for the gradient flow in Section \ref{flow} and we will discuss some of its qualitative properties in Section \ref{properties_flow}. The most notable of these is a strong monotonicity property or strong parabolic comparison principle, see Theorem \ref{monotonicity psi}. \\
\indent
The principal goal of this paper is then to prove the existence of a continuous one-dimensional gradient-flow invariant family of configurations that contains the Aubry-Mather set of rotation vector $\omega$. Such an interpolating family will be called a {\it ghost circle} and denoted $\Gamma_{\omega} \subset \R^{\Z^d}$. The precise definition of a ghost circle is given in Section \ref{GC}. \\
\indent Ghost circles were already constructed for twist maps by Gol\'e \cite{gole92}. Hence, they are well-known to exist in dimension $d=1$. Gol\'e starts his construction by assuming that $\omega =\frac{q}{p} \in\Q$ is rational and that an appropriate periodic action function $W_{p,q}(x)$ is a Morse function. Under these assumptions, the existence of a periodic ghost circle follows from a combination of topological arguments and the parabolic comparison principle of the gradient flow. In Section \ref{morse_existence}, we will imitate the construction of these periodic Morse ghost circles in dimension $d\neq 1$. Each of these periodic ghost circles contains at least one global minimizer. \\
\indent Our first main result is contained in Section \ref{morse_approximations}. It generalizes results of Gol\'e \cite{gole01} on twist maps and it roughly states that for every rational $\omega  \in \Q^d$ and for every collection of potentials $S_j(x)$, one can find arbitrarily small perturbations of the $S_j(x)$ that turn the periodic action $W_{p,q}(x)$ into a Morse function. This statement is nontrivial in dimension $d\neq 1$ and it holds because of group theoretic reasons that will explained in Section \ref{birkhoffpersection}. 
  \\ 
\indent The most important technical result of this paper is nevertheless a compactness theorem for ghost circles. It is presented in Section \ref{convergence}. It says that when the rotation vectors $\omega_n$ converge to a rotation vector $\omega_{\infty}$ and the local potentials $S_j^n$ converge to potentials $S_j^{\infty}$ and there exist ghost circles $\Gamma_n$ for the potentials $S_j^n$ of rotation vector $\omega_n$, then there is a ghost circle $\Gamma_{\infty}$ for the potentials $S_j^{\infty}$ of rotation vector $\omega_{\infty}$. Moreover, a subsequence of the $\Gamma_n$ actually converges to $\Gamma_{\infty}$ in a sense to be made precise. Together, all of the above shows that there are ghost circles of every rotation vector and for arbitrary potentials. Again, they contain at least one minimizer and hence the entire Aubry-Mather set of rotation vector $\omega$. \\ 
\indent A similar compactness result was proved by Gol\'e, see \cite{gole01}, for twist maps. The proof of this ``monotone convergence theorem for ghost circles'' relies on the fact that over time, two different solutions of the gradient flow must decrease their number of intersections. Hence, this proof is purely one-dimensional. Our proof, on the other hand, only depends on a quantitative version of the parabolic comparison principle, a so-called Harnack inequality. This inequality is stated and proved in Theorem \ref{UHIP}. \\
 \indent 
As a consequence, we show in Section \ref{gap_solutions} that when the Aubry-Mather set is a Cantor set, then its gaps must either be completely foliated by minimizers, or contain at least one non-minimizing solution to (\ref{RRR1}).
\subsection{Acknowledgement} We would like to thank our colleagues of the Department of Mathematics at VU University Amsterdam for their continuous support. A large part of this paper was written during the authors' visit to the Department of Mathematics and Statistics at Boston University. This research was funded by the Dutch Science Foundation NWO.

\section{Problem setup}\label{problemsetup}
Let us at this point introduce the generalized Frenkel-Kontorova lattice recurrence relations that we want to consider in this paper. \\ 
\indent As was discussed before, we will assume that for all $j\in \mathbb{Z}^d$ there is a function $S_j$ that assigns a real value to every $d$-dimensional configuration:
$$S_j:\mathbb{R}^{\mathbb{Z}^d} \to \mathbb{R} \ .$$
These functions are required to have the conditions A-E described below and are called {\it local potentials}. \\ \indent
To formulate the first condition, let us assume that a finite subset $B\subset \Z^d$ and an $m\geq 0$ times  continuously differentiable function $s:\R^B\to\R$ are given. Then we can define a function $S:\R^{\Z^d}\to\R$ by setting $S(x):=s(x|_B)$. This is just a way of saying that $S$ depends only on the finitely many variables $x_i$ for which $i\in B$. \\
\indent Such an $S$ has some convenient properties, most notably that it is continuous in the topology of pointwise convergence:  if $x^n, x^{\infty}\in\R^{\Z^d}$ and $\lim_{n\to\infty} x^n=x^{\infty}$ pointwise, then obviously also $\lim_{n\to\infty} S(x^n)=S(x^{\infty})$. \\
\indent Moreover, it makes sense to speak of the partial derivatives of the function $S$: if $j_1,\ldots, j_k\in\Z^d$, with $0\leq k\leq m$, is a collection of lattice points, then the partial derivative
$\p_{j_1, \ldots, j_k} S: \R^{\Z^d}\to\R$ can simply be defined as 
$$\p_{j_1, \ldots, j_k} S(x):=\left\{ \begin{array}{ll} (\p_{j_1, \ldots, j_k}s)(x|_B)& \mbox{if}\ j_1, \ldots, j_k \in B, \\ 0 & \mbox{otherwise}.\end{array} \right.$$
These partial derivatives are also continuous with respect to pointwise convergence. \\ 
\indent Finally, we recall the definition $||i||:=\sum_{k=1}^d |i_k|$, for $i\in \Z^d$, and define $B_j^r:=\{k\in \Z^d\ | \ ||k-j||\leq r\}$.\\
\indent
With all this in mind, we can formulate our first condition. 
\begin{itemize}
\item[{\bf A.}] The functions $S_j$ are twice continuously differentiable and of finite range. That is, there is an $0<r<\infty$ and for every $j\in\Z^d$ there is a twice continuously differentiable function $s_j: \R^{B_j^r}\to \R$ such that $S_j(x)=s_j(x|_{B_j^r})$. 
\end{itemize}
In other words, the function $S_j$ depends only on the finitely many variables $x_k$ with $||k-j||\leq r$. Hence, $S_j(x)$ has the interpretation of the ``local energy'' of the configuration $x$ at lattice site $j$ and we think of $r$ as the finite range of the interaction. \\ 
\indent
To formulate condition B, it is convenient to introduce an action of $\mathbb{Z}^d\times\mathbb{Z}$ on $\mathbb{R}^{\mathbb{Z}^d}$ by ``shifts'':
\begin{definition}
Let $k\in \mathbb{Z}^d$ and $l\in \mathbb{Z}$. Then the shift operator $\tau_{k,l}:\mathbb{R}^{\mathbb{Z}^d}\to \mathbb{R}^{\mathbb{Z}^d}$ is defined by
$$(\tau_{k,l} x)_i := x_{i+k}+l\ . $$
\end{definition}
\noindent Clearly, the graph of $\tau_{k,l}x$, viewed as a subset of $\mathbb{Z}^d\times \mathbb{R}$, is obtained by shifting the graph of $x$ over the integer vector $(-k,l)$. This explains why the $\tau_{k,l}$ are called {\it shift operators}.
\begin{itemize}
\item[{\bf B.}] The functions $S_j$ are shift-invariant: $S_j(\tau_{k,l} x) = S_{j+k}(x)$ for all $j$,  $k$ and $l$.
\end{itemize}
In fact, invariance of the $S_j$ under $\tau_{0,1}$ just means that $S_j(x)=S_j(x+1_{\Z^d})$ for all $j$, which means that $S_j$ descends to a function on $\mathbb{R}^{\mathbb{Z}^d}/\mathbb{Z}$. Invariance of the $S_j$ under the shifts $\tau_{k,0}$ expresses the maximal spatial homogeneity of the local potentials. In fact, once one of the $S_j$ is given, for instance $S_0$, then all the others are determined.  \\
\indent The next condition ensures the growth of the $S_j$ at infinity:
\begin{itemize}
\item[{\bf C.}] The functions $S_j$ are bounded from below and coercive in the following sense: for all $k$ with $||k-j||=1$, $$\lim_{|x_{k}-x_{j}| \to \infty}  S_j(x) = \infty\ .$$
\end{itemize}
Condition C says that every function $x\mapsto S_j(x)$ is as coercive as it can possibly be under the restriction that it satisfies the periodicity condition $S_j(\tau_{0,1}x)=S_j(x)$. 
\\ \indent The following condition D is the most essential one:
\begin{itemize}
\item[{\bf D.}] The functions $S_j$ satisfy the so-called {\it monotonicity condition}:
$$\partial_{i,k}S_j  \leq 0 \ \mbox{for all} \ j \ \mbox{and all} \ i\neq k, \mbox{while} \ \partial_{i,k}S_i  < 0 \ \mbox{for all} \ ||i-k||=1 \ .$$
\end{itemize}
Condition D is also called a {\it twist condition} or {\it ferromagnetic condition}. It says that all mixed derivatives of the local potentials are non-positive, while some of them are strictly negative.\\ 
\indent 
For technical reasons we will also assume:
\begin{itemize}
\item[{\bf E.}] The $S_j$ have uniformly bounded second derivatives: there is a constant $C$ such that
$$|\partial_{i, k}S_j | \leq C\ \mbox{for all} \ i, j, k. $$
\end{itemize}
As in Section \ref{intro}, we can now look for stationary configurations corresponding to these potentials. 
\begin{definition}\label{globalstat}
A configuration $x:\mathbb{Z}^d\to\mathbb{R}$ is called a {\it stationary} point for the local potentials $S_j$ if for every finite subset $B\subset \mathbb{Z}^d$ and every configuration $y$ with support in its $r$-interior $\mathring{B}^{(r)}:= \{i\in B\ | \ B_i^r\subset B \}$, it holds that
$$\left. \frac{d}{d\varepsilon} \right|_{\varepsilon = 0} W_B(x+\varepsilon y) = 0 , $$
where $W_B:\mathbb{R}^{\mathbb{Z}^d}\to \mathbb{R}$ is defined as
\begin{align}\label{finiteaction}
W_B(x) := \sum_{j\in B} S_j(x)  .
\end{align}
\end{definition}
\noindent In fact, by differentiating $W_B$ with respect to an $x_i$ with $i\in \mathring{B}^{(r)}$, one obtains that $x$ is a stationary point for the $S_j$ if and only if it satisfies the variational monotone recurrence relation
\begin{align}\label{RRR}
\sum_{||j-i||\leq r} \partial_iS_j(x) = 0\ \mbox{for all} \ i\in \mathbb{Z}^d .
\end{align}
The goal of this paper is to find solutions of (\ref{RRR}) and while doing so, we will exploit the variational principle that underlies it. \\ \indent
By the way, (\ref{RRR}) is called {\it monotone} because condition C guarantees that the derivative of the left hand side of (\ref{RRR}) with respect to any of the $x_k$ with $k\neq i$, is non-positive, while it is strictly negative if $||k-i||=1$. \\ 
\indent Definition \ref{globalstat} moreover inspires the definition of a special type of solutions to (\ref{RRR}):
\begin{definition}\label{globalmin}
A configuration $x:\mathbb{Z}^d\to\mathbb{R}$ is called a {\it global minimizer} or {\it ground state} for the potentials $S_j$ if for every finite subset $B\subset \mathbb{Z}^d$ and every $y:\mathbb{Z}^d\to\mathbb{R}$ with support in $\mathring{B}^{(r)}$,
$$W_B(x+y) - W_B(x) = \sum_{j\in\Z^d} \left( S_j(x+y)-S_j(x) \right) \geq 0 \ .$$
\end{definition}
\noindent Clearly, global minimizers are automatically stationary and hence satisfy the recurrence relation (\ref{RRR}).
\begin{example}
It is easy to check that the Frenkel-Kontorova potentials given in (\ref{FKpotential}) satisfy conditions A-E.   In fact, the range of interaction is $r=1$, and $\p_{j,k}S_j=-\frac{1}{4d}$ for $||j-k||=1$. \\ 
\indent 
In the particular case that $V(\xi)\equiv 0$ all solutions of (\ref{RR}) are actually global minimizers. This follows because every $y\mapsto W_B(x+y)$ is strictly convex if $V(\xi)\equiv 0$ and hence only has one stationary point, which is minimizing.
\end{example}

\section{Spaces of configurations}\label{birkhoffsection}
In this section, we introduce certain spaces of configurations that are often encountered in classical Aubry-Mather theory. We will moreover study some of their properties. Most of the definitions and
results in this section are standard, but to the best of our knowledge Lemma \ref{tauaction} and Theorem \ref{maxrelper} in Section \ref{birkhoffpersection} are new. We start by recalling the following definition:
\begin{definition}
Let $x:\mathbb{Z}^d\to\mathbb{R}$ be a $d$-dimensional configuration. We say that $\omega = \omega(x) \in \mathbb{R}^d$ is the {\it rotation vector} of $x$ if for all $i\in \mathbb{Z}^d$, the limit 
$$\lim_{n\to \infty} \frac{x_{ni}}{n} \ \mbox{exists and is equal to} \ \langle \omega, i\rangle.$$
The space of configurations with rotation vector $\omega$ is denoted 
$$\X_{\omega} : = \{x:\Z^d\to \R\ | \ \omega(x)=\omega\}\ .$$
\end{definition}
\subsection{Birkhoff configurations}\label{birkhoffsubsection}
We will now introduce the concept of a well-ordered lattice configuration.\begin{definition} 
On the configuration space $\R^{\Z^d}$ we define the relations $ \leq,  <$ and $\ll$ by
\begin{itemize}
\item $x \leq y$ if $x_i \leq y_i$ for every $i\in \Z^d$. 
\item $x<y$ if $x\leq y$, but $x \neq y$. 
\item $x \ll y$ if $x_i < y_i$ for every $i\in \Z^d$. 
\end{itemize}
Similarly for $\geq$, $>$ and $\gg$. 
\end{definition}
\noindent Recall the definition of the shift operators $\tau_{k,l}:\mathbb{R}^{\mathbb{Z}^d}\to \mathbb{R}^{\mathbb{Z}^d}$. The partial orderings defined above, now allow us to make the following definition, as in for instance \cite{blank} and \cite{llave-lattices}.
\begin{definition}\label{defbirkhoff}
A configuration $x \in \R^{\Z^d}$ is called a {\it Birkhoff configuration} or a {\it well-ordered configuration}, if for all $k\in \Z^d$ and $l\in \Z$,
\begin{equation}  \label{Birkhoff} \mbox{either}\ \tau_{k.l}x \geq x \;  \; \text{or} \;  \; \tau_{k,l}x \leq x . \end{equation}
\end{definition}
\noindent Definition \ref{defbirkhoff} says that the graph of a Birkhoff configuration $x$ does not cross any of its integer translates. The space of Birkhoff configurations will be denoted $\mathcal{B}\subset \R^{\Z^d}$ and it inherits the topology of pointwise convergence.
Birkhoff configurations will play an essential role in the remainder of this paper. Birkhoff configurations of every rotation vector exist: for every $\omega \in \R^d$ the linear configuration $x^{\omega}$ defined by $x_i^{\omega}:= \langle \omega, i\rangle$ is an example.
\begin{remark}
When $h:\R/\Z\to\R/\Z$ is an orientation preserving circle homeomorphism, then it admits a {\it lift} to a strictly increasing map $H:\R\to\R$ that satisfies $H(\xi+1)=H(\xi)+1$ and $H(\xi)\!\!\! \mod \! 1 = h(\xi \!\!\! \mod \! 1)$. \\
\indent Let us now denote by $x(\xi):\Z\to\R$ the $H$-orbit of $\xi\in \R$, defined by $x(\xi)_i:=H^i(\xi)$.
Then it is clear that for $\xi_1, \xi_2\in \R$ with $\xi_1<\xi_2$, one has that $x(\xi_1)\ll x(\xi_2)$. In turn this implies that each $x(\xi)$ is a Birkhoff sequence. Thus, ordering is a very natural concept in the theory of circle homeomorphisms. 
\end{remark}
\noindent The following result is folklore and it goes back to Poincar\'e, who proved it in the case $d=1$ and in the context of circle homeomorphisms, for which it implies that circle homeomorphisms have a unique rotation number. \\ \indent For $d=1$, the proof of Lemma \ref{sequence} can be found for instance in \cite{gole01}. For completeness, we include the proof for $d>1$ here. Lemma \ref{sequence} says that the graph of a Birkhoff configuration $x$ lies uniformly close to the graph of the affine configuration $i\mapsto x_0 + \langle \omega, i\rangle$.
\begin{lemma}\label{sequence}
Let $x \in \R^{\Z^d}$ be a Birkhoff configuration. Then $x$ has a rotation vector $\omega = \omega(x)$ and
$$|x_i - x_0-\langle \omega(x), i \rangle| \leq1\ .$$
Moreover, the map $x \mapsto \omega(x), \ \mathcal{B} \to \mathbb{R}^d$ is continuous with respect to pointwise convergence. We write 
$$\mathcal{B}_{\omega}:=\{x\in \mathcal{B} \ | \ \omega(x)=\omega\}\ . $$
\end{lemma}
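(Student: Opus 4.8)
The plan is to distill from the Birkhoff dichotomy a quasi-additivity estimate for the displacement function $\phi(i) := x_i - x_0$ and then run a Fekete-type argument. First I would observe that for a Birkhoff configuration the order between $x$ and $\tau_{k,l}x$ is decided at the single site $i=0$: if $x_k + l > x_0$, then $\tau_{k,l}x \le x$ is impossible, so the dichotomy (\ref{Birkhoff}) forces $\tau_{k,l}x \ge x$, i.e.\ $x_{i+k} + l \ge x_i$ for \emph{all} $i$; symmetrically $x_k + l < x_0$ forces $x_{i+k} + l \le x_i$ for all $i$. Choosing the unique integer $l \in (x_0 - x_k,\, x_0 - x_k + 1]$ and then the unique integer $l \in [x_0 - x_k - 1,\, x_0 - x_k)$ yields, for all $i,k \in \Z^d$,
$$ |\phi(i+k) - \phi(i) - \phi(k)| \le 1 . $$

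Next I would feed this into the standard subadditivity machinery. Fixing $i$ and setting $a_N := \phi(Ni)$, the estimate gives $|a_{M+N} - a_M - a_N| \le 1$, whence $|a_N - N\phi(i)| \le N-1$ by induction and $|a_N/N - a_M/M| < 1/N + 1/M$ by comparing $a_{NM}$ with $M a_N$ and with $N a_M$; so $\omega_i := \lim_{N\to\infty} \phi(Ni)/N$ exists, and since $x_{Ni}/N = x_0/N + \phi(Ni)/N$, we get $\lim_N x_{Ni}/N = \omega_i$. Passing to the limit in $|a_N - N\phi(i)| \le N-1$ gives $|\phi(i) - \omega_i| \le 1$. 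It remains to see $\omega_i = \langle \omega, i\rangle$: from $|\phi(N(i+j)) - \phi(Ni) - \phi(Nj)| \le 1$ one gets $\omega_{i+j} = \omega_i + \omega_j$, and $\omega_{mi} = m\omega_i$ for $m \in \Z$ follows by evaluating the limit along multiples of $m$, so $i \mapsto \omega_i$ is an additive $\Q$-homogeneous function $\Z^d \to \R$, i.e.\ $\omega_i = \langle \omega, i\rangle$ with $\omega := (\omega_{e_1}, \dots, \omega_{e_d})$. This gives both the existence of the rotation vector and the bound $|x_i - x_0 - \langle \omega(x), i\rangle| \le 1$.

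For continuity I would note that the constants above are uniform over $\mathcal{B}$: applying $|\phi(Ni) - N\phi(i)| \le N-1$ with $i$ replaced by $mi$ and sending $N \to \infty$ gives, for every $x \in \mathcal{B}$, every $i$, and every $m \ge 1$,
$$ \Bigl| \omega(x)_i - \frac{x_{mi} - x_0}{m} \Bigr| \le \frac{1}{m} . $$
If $x^n \to x^\infty$ pointwise in $\mathcal{B}$, then for fixed $m$ and $i$ the quotient $(x^n_{mi} - x^n_0)/m$ converges to $(x^\infty_{mi} - x^\infty_0)/m$, so a three-term estimate bounds $\limsup_n |\omega(x^n)_i - \omega(x^\infty)_i|$ by $2/m$; letting $m \to \infty$ and running over $i = e_1, \dots, e_d$ gives $\omega(x^n) \to \omega(x^\infty)$ in $\R^d$.

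The hard part will be the very first step: recognising that the Birkhoff dichotomy can be tested at one lattice site, and thereby converting the order condition into the quasi-additivity inequality with the sharp constant $1$. Once that is in place, everything else is the classical Fekete/subadditive-sequence argument together with the bookkeeping of the uniform constant needed for the continuity claim.
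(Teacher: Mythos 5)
Your proof is correct, and it takes a genuinely different route from the paper. The paper's proof is a reduction to the one-dimensional statement, which it treats as known folklore: for each $i,j$ the sequence $n\mapsto x_{ni+j}$ is a 1D Birkhoff sequence, so it has a rotation number with the $\pm 1$ bound; a telescoping estimate along the coordinate directions then gives $|x_i-x_0-\langle\omega,i\rangle|\leq d$, and a second appeal to the 1D lemma applied to $n\mapsto x_{ni}$ sharpens this to $1$; continuity is simply inherited from the 1D case. You instead give a self-contained argument: testing the Birkhoff dichotomy at the single site $0$ (which is legitimate, since failure of $\tau_{k,l}x\leq x$ at one site forces $\tau_{k,l}x\geq x$ everywhere) yields the quasi-additivity bound $|\phi(i+k)-\phi(i)-\phi(k)|\leq 1$ for $\phi(i)=x_i-x_0$, after which everything — existence of the limits, additivity (hence linearity) of $i\mapsto\omega_i$, the sharp constant $1$, and continuity — follows from the Fekete-type machinery. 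What your approach buys is independence from the unproved $d=1$ input and, more usefully, the explicit uniform modulus $|\omega(x)_i-(x_{mi}-x_0)/m|\leq 1/m$ valid for all $x\in\mathcal{B}$, which makes the continuity claim quantitative rather than a citation; the paper's approach is shorter on the page precisely because it outsources the core almost-additivity argument to the classical circle-map case. All the individual steps you sketch (the choice of the two integers $l$, the induction $|a_N-Na_1|\leq N-1$, the comparison of $a_{NM}$ with $Ma_N$ and $Na_M$, and the three-term estimate for continuity) check out.
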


\begin{proof}
We will assume that the result is true for $d=1$ and we choose $i, j \in \mathbb{Z}^d$. Then the sequence $n\mapsto x_{ni+j}$ is a one-dimensional Birkhoff sequence and hence its rotation number $\omega_{i,j}$ exists and is equal to $\lim_{n\to\infty} \frac{x_{ni+j}}{n}$. Moreover, $|x_{ni+j}- x_j - \langle \omega_{i,j},n \rangle| \leq 1$. We first of all remark that $\omega_{i,j}$ does not depend on $j$, and hence can be denoted $\omega_i$. This follows because the Birkhoff property of $x$ ensures that the sequences $n\mapsto x_{ni+j}=(\tau_{j,0}x)_{ni}$ and $n\mapsto x_{ni}$ do not cross. Now denote by $e_1=(1,0,\ldots, 0)$, $e_2=(0,1,0,\ldots, 0)$, etc. the standard basis of $\Z^d$ and define $\omega:=(\omega_{e_1}, \ldots, \omega_{e_d})$. Then, 
\begin{align}\nonumber
|x_i - x_0 -  \langle \omega, i \rangle| = |x_{(i_1, i_2, \ldots, i_d)} - x_{(0, i_2, \ldots, i_d)} - & i_1 \omega_{e_1} + \ldots + x_{(0,\ldots, 0, i_d)} - x_0 - i_d \omega_{e_d} | \leq \\
 |x_{(i_1, i_2, \ldots, i_d)} - x_{(0, i_2, \ldots, i_d)} - i_1 \omega_{e_1}| + \ldots & +  |x_{(0,\ldots, 0, i_d)} - x_0 - i_d \omega_{e_d} | \leq d\ . \nonumber
\end{align}
This clearly implies that $\lim_{n\to \infty} \frac{x_{ni}}{n} = \langle \omega,i\rangle$, while the Birkhoff property of the sequence $n\mapsto x_{ni}$ then implies that in fact, $|x_i - x_0 -  \langle \omega, i \rangle| \leq 1$.\\ 
\indent The continuity of $x\mapsto \omega(x)$ follows immediately from the continuity in the one-dimensional case.
\end{proof}
 \noindent The following proposition is equally standard. In particular, it will allow us to take limits of Birkhoff configurations with rational rotation vectors in order to produce Birkhoff configurations with irrational rotation vectors. \\ 
\indent Recall the action $\tau_{0,1}:x\mapsto x+1$ on $\mathbb{R}^{\Z^d}$. It can be used to identify sequences that differ by an integer. The quotient space is denoted $\R^{\Z^d} / \mathbb{Z}$. Note that every element $[x]$ in this quotient space has a unique representative $x$ with $x_0\in [0,1)$.

\begin{proposition}\label{compactness}
Let $K \subset \R^d$ be compact and let $\mathcal{B}_K := \bigcup_{\omega \in K} \mathcal{B}_{\omega}$. Then $\mathcal{B}_K /\Z$ is compact in the topology of pointwise convergence.
\end{proposition}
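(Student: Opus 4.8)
The plan is to deduce compactness of $\mathcal{B}_K/\Z$ from two ingredients: the uniform closeness estimate of Lemma \ref{sequence}, and the fact that pointwise convergence on a product of compact intervals is compact (Tychonoff). First I would normalize representatives: every class $[x]\in\mathcal{B}_K/\Z$ has a unique representative $x$ with $x_0\in[0,1)$. For such a representative, Lemma \ref{sequence} gives $|x_i-x_0-\langle\omega(x),i\rangle|\leq 1$, and since $\omega(x)\in K$ with $K$ compact, say $\|\omega\|\leq M$ for all $\omega\in K$, we obtain the uniform bound $|x_i|\leq 1+1+M\|i\|=: R_i$ for every $i\in\Z^d$. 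Hence the set of normalized representatives sits inside the product $P:=\prod_{i\in\Z^d}[-R_i,R_i]$, which is compact in the product (pointwise) topology by Tychonoff's theorem.

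Next I would show this set of representatives is closed in $P$, equivalently that $\mathcal{B}_K/\Z$ is closed, which together with the embedding into the compact set $P$ finishes the argument. So let $x^n$ be normalized representatives of classes in $\mathcal{B}_K/\Z$ converging pointwise to some $x^\infty\in P$. I must check three things: (i) $x^\infty$ is Birkhoff; (ii) $\omega(x^\infty)\in K$; (iii) $x^\infty$ is the normalized representative, i.e.\ $x^\infty_0\in[0,1)$ — though this last point is only needed if one insists on normalized representatives, and for the quotient statement it is harmless to allow $x^\infty_0\in[0,1]$ since the boundary point still gives an element of $\mathcal{B}_K/\Z$. Point (i): the Birkhoff condition \eqref{Birkhoff} says that for each fixed $(k,l)$, either $\tau_{k,l}x^n\geq x^n$ for all $n$ in a subsequence, or $\tau_{k,l}x^n\leq x^n$ along a subsequence; since $\Z^d\times\Z$ is countable, a diagonal argument extracts a single subsequence along which, for every $(k,l)$, one of the two inequalities holds for all terms, and pointwise limits preserve non-strict inequalities, so $x^\infty$ satisfies \eqref{Birkhoff} for every $(k,l)$ and hence is Birkhoff. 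Point (ii): by Lemma \ref{sequence} the rotation vector map $\omega:\mathcal{B}\to\R^d$ is continuous for pointwise convergence, so $\omega(x^n)\to\omega(x^\infty)$; since each $\omega(x^n)\in K$ and $K$ is closed, $\omega(x^\infty)\in K$, i.e.\ $x^\infty\in\mathcal{B}_K$.

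Finally I would assemble: the normalized representatives form a subset of the compact space $P$; I have shown (modulo the mild boundary caveat on $x_0$) that this subset is closed; a closed subset of a compact space is compact; and the quotient map restricted to normalized representatives is a continuous bijection onto $\mathcal{B}_K/\Z$, so $\mathcal{B}_K/\Z$ is compact. To handle the $x_0$ caveat cleanly, rather than working with the half-open normalization I would instead directly take an arbitrary sequence $[x^n]$ in $\mathcal{B}_K/\Z$, choose representatives with $x^n_0\in[0,1]$ (a closed interval), run the same compactness-plus-closedness argument inside $\prod_i[-1-M\|i\|,\,1+M\|i\|]$, extract a pointwise-convergent subsequence $x^n\to x^\infty$ with $x^\infty$ Birkhoff and $\omega(x^\infty)\in K$, and conclude $[x^n]\to[x^\infty]$ in $\mathcal{B}_K/\Z$; since $\mathcal{B}_K/\Z$ is metrizable (it is a quotient of a subset of a countable product of metrizable spaces, and the normalization makes the quotient topology metrizable), sequential compactness gives compactness.

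The only genuine subtlety — the ``main obstacle,'' such as it is — is the diagonal extraction ensuring that the Birkhoff alternative \eqref{Birkhoff} survives the limit simultaneously for all $(k,l)$; once that is in place, everything else is a packaging of Tychonoff, the uniform a priori bound from Lemma \ref{sequence}, and continuity of the rotation vector. I would write the diagonal argument carefully, since it is the one place where "either/or for each $n$" must be upgraded to "a fixed one of the two, for all $n$ along a subsequence, for every $(k,l)$."
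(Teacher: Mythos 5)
Your proof is correct and follows essentially the same route as the paper: the uniform bound from Lemma \ref{sequence} places (normalized representatives of) $\mathcal{B}_K/\Z$ inside a product of compact sets, Tychonoff gives compactness of that product, and closedness under pointwise limits (Birkhoff property plus continuity of $x\mapsto\omega(x)$ and closedness of $K$) finishes the argument. The only remark is that the diagonal extraction you flag as the main obstacle is not actually needed: for each fixed $(k,l)$ the pigeonhole principle already yields a subsequence along which one of the two non-strict inequalities holds, and since the pointwise limit $x^\infty$ is the same for every subsequence, the corresponding inequality passes to $x^\infty$ for that $(k,l)$ independently of all the others.
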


\begin{proof}
By definition, $\mathcal{B}$ is closed in the topology of pointwise convergence. Moreover, by Proposition \ref{sequence}, $\mathcal{B}_K/\Z$ is a closed subset of 
$$\{[x] \in \R^{\Z^d}/\Z\ | \ x_k= x_0 + k \cdot \omega + y_k \ \mbox{with} \ ([x]_0, \omega, y_k) \in \R/\Z \times K \times [-1,1]^{\Z^d}\}\ , $$ which is compact in the topology of pointwise convergence. This follows from Tychonov's theorem. 
\end{proof}

\noindent The following corollary of the compactness of $\mathcal{B}_K/\Z$ is trivial, but it has important implications.

\begin{corollary}\label{uniform_twist}
Let $K \subset \R^d$ be compact and let $B\subset \Z^d$ be a finite subset. Assume that $s:\R^{B}/\Z\to \mathbb{R}$ is a continuous function. Then $S:\R^{\Z^d}/\Z \to \R$ defined by $S(x)=s(x|_B)$ attains its maximum and minimum values on $\mathcal{B}_{K}/\Z$. 
\end{corollary}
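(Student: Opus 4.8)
The plan is to combine the compactness of $\mathcal{B}_K/\Z$ established in Proposition \ref{compactness} with the continuity of $S$ in the topology of pointwise convergence, and then simply invoke the extreme value theorem.

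First I would check that the formula $S([x]):=s(x|_B)$ really defines a function on the quotient $\R^{\Z^d}/\Z$ with values in $\R$: since $s$ is defined on $\R^B/\Z$, replacing the representative $x$ by $x+l\cdot 1_{\Z^d}$ changes $x|_B$ to $x|_B + l\cdot 1_B$, which represents the same class in $\R^B/\Z$, so $S$ is well defined. For continuity, note that the restriction map $\R^{\Z^d}/\Z\to\R^B/\Z$, $[x]\mapsto [x|_B]$, is continuous with respect to the pointwise (product) topology on the source and the quotient of the product topology on the target — this is immediate because $B$ is a \emph{finite} set — and $s$ is continuous by hypothesis. Hence $S$ is continuous as a composition of continuous maps.

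Next I would recall that, by Proposition \ref{compactness}, $\mathcal{B}_K/\Z$ is a compact subset of $\R^{\Z^d}/\Z$ in the topology of pointwise convergence. If $K=\emptyset$ there is nothing to prove; otherwise $\mathcal{B}_K/\Z$ is nonempty, since for any $\omega\in K$ the linear configuration $x^{\omega}$ with $x^\omega_i=\langle\omega,i\rangle$ lies in $\mathcal{B}_K$. Therefore $S(\mathcal{B}_K/\Z)$ is a nonempty compact subset of $\R$, hence closed and bounded, and so it contains its supremum and its infimum; that is, $S$ attains its maximum and minimum on $\mathcal{B}_K/\Z$.

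I do not expect any genuine obstacle here: the statement is a direct corollary of Proposition \ref{compactness}, and the only point deserving a line of justification is the continuity of $S$, which itself reduces entirely to the finiteness of $B$.
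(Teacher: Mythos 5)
Your argument is correct and follows exactly the paper's route: continuity of $S$ with respect to pointwise convergence (which, as you note, reduces to the finiteness of $B$) together with the compactness of $\mathcal{B}_K/\Z$ from Proposition \ref{compactness}, and then the extreme value theorem. The extra details you supply (well-definedness on the quotient, nonemptiness via the linear configuration $x^{\omega}$) are fine and merely spell out what the paper leaves implicit.
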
 
\begin{proof}
This follows because such a $S$ is continuous with respect to pointwise convergence and $\mathcal{B}_{K}/\Z$ is compact. 
\end{proof}

\noindent Applied to $S=\partial_{i,k}S_i$ with $||i-k||=1$, and recalling the twist condition $\partial_{i,k}S_i<0$, Corollary \ref{uniform_twist} implies that there is a $\lambda>0$ such that $\partial_{i,k}S_i(x) < -\lambda < 0$ for all $x\in \mathcal{B}_{K}$. In other words, the twist condition D is automatically uniform on $\mathcal{B}_K$. Similarly, even if one does not impose condition E, there is a constant $C>0$ such that $|\partial_{i,k}S_j |\leq C$ for all $i, k$ and $j$, uniformly on $\mathcal{B}_K$.
 
We finish this section with a simple and well-known proposition that expresses that the number theoretical properties of the rotation vector $\omega$ of a Birkhoff configuration $x$ decide to a large extent wether $\tau_{k,l}x> x$ or $\tau_{k,l}x < x$.
\begin{proposition}\label{numbertheory}
Let $\omega\in \R^d$ and $x \in \mathcal{B}_\omega$. If $\langle \omega, k\rangle+l>0$, then $\tau_{k.l}x > x$ and if $\langle \omega, k\rangle+l<0$, then $\tau_{k.l}x < x$.
\end{proposition}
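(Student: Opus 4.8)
The plan is to iterate the shift operator $\tau_{k,l}$ and feed the linear growth estimate of Lemma \ref{sequence} into the Birkhoff dichotomy. First I would record two elementary facts about shifts. One: $\tau_{k,l}$ is order-preserving, since $y\leq z$ implies $(\tau_{k,l}y)_i=y_{i+k}+l\leq z_{i+k}+l=(\tau_{k,l}z)_i$ for all $i$; and $\tau_{k,l}$ is a bijection of $\R^{\Z^d}$ with inverse $\tau_{-k,-l}$. Two: $\tau_{k,l}^n=\tau_{nk,nl}$, so that $(\tau_{k,l}^n x)_0=x_{nk}+nl$.

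Now suppose $\langle\omega,k\rangle+l>0$. Since $x\in\mathcal{B}_\omega$, Lemma \ref{sequence} gives $|x_{nk}-x_0-n\langle\omega,k\rangle|\leq 1$, hence $(\tau_{nk,nl}x)_0-x_0=x_{nk}+nl-x_0\geq n(\langle\omega,k\rangle+l)-1\to\infty$ as $n\to\infty$. In particular, for all sufficiently large $n$ one has $(\tau_{nk,nl}x)_0>x_0$, so $\tau_{nk,nl}x$ is neither $\leq x$ nor equal to $x$. Applying the Birkhoff property of $x$ to the shift $(nk,nl)$, the only remaining possibility is $\tau_{nk,nl}x\geq x$, and therefore $\tau_{nk,nl}x>x$, for all large $n$.

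Next I would deduce $\tau_{k,l}x>x$. The Birkhoff property gives $\tau_{k,l}x\geq x$ or $\tau_{k,l}x\leq x$. If $\tau_{k,l}x\leq x$, then applying the order-preserving map $\tau_{k,l}$ repeatedly yields $\tau_{nk,nl}x=\tau_{k,l}^n x\leq\tau_{k,l}^{n-1}x\leq\cdots\leq\tau_{k,l}x\leq x$ for every $n\geq 1$, which contradicts the previous paragraph. Hence $\tau_{k,l}x\geq x$; and $\tau_{k,l}x=x$ is impossible, since it would force $\tau_{nk,nl}x=x$ for all $n$, again a contradiction. Thus $\tau_{k,l}x>x$, proving the first assertion.

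Finally, the case $\langle\omega,k\rangle+l<0$ reduces to the one just treated: then $\langle\omega,-k\rangle+(-l)>0$, so by the first part $\tau_{-k,-l}x>x$; applying the order-preserving bijection $\tau_{k,l}$ to both sides gives $x=\tau_{k,l}\tau_{-k,-l}x>\tau_{k,l}x$, i.e.\ $\tau_{k,l}x<x$. I do not expect any genuine obstacle here; the only point needing a little care is the strictness of the inequalities, which is supplied by the bijectivity of $\tau_{k,l}$ together with the fact that $(\tau_{nk,nl}x)_0\neq x_0$ for large $n$.
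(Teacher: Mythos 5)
Your proof is correct and follows essentially the same route as the paper: assume the wrong ordering, iterate the shift using order-preservation to get $\tau_{k,l}^n x \leq x$ for all $n$, and contradict the linear growth supplied by Lemma \ref{sequence}. The only cosmetic difference is that you read off the contradiction at the single site $0$, while the paper compares with the linear configuration $x^{\omega}$ and uses the uniform bound on $\sup_i|(x^{\omega}-x)_i|$; both rest on the same estimate.
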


\begin{proof}
Denote by $x^{\omega}\in \mathcal{B}_{\omega}$ the linear configuration defined by $x^{\omega}_i := \langle \omega, i\rangle$. Then $\tau_{k,l}x^{\omega}-x^{\omega} = \langle k, \omega\rangle + l$. Suppose for instance that $\langle k, \omega\rangle + l>0$, that is that $\tau_{k,l}x^{\omega} \gg x^{\omega}$, but assume on the other hand that $\tau_{k,l}x \leq x$. This means that $\tau_{k,l}x-x \leq 0$ and hence also that $\tau_{k,l}^2x-\tau_{k,l}x = \tau_{k,0}(\tau_{k,l}x-x) \leq 0$. Thus, $\tau_{k,l}^2x- x = (\tau_{k,l}^2x - \tau_{k,l}x)+(\tau_{k,l}x -x )\leq 0 $, i.e. $\tau^2_{k,l}x\leq x$. By induction we then find that
$\tau_{k,l}^n x \leq x$, for every $n\geq 1$.  On the other hand, $ \tau_{k,l}^n x^{\omega} = x^{\omega} + n(\langle k, \omega\rangle + l)$. This contradicts the fact that $\sup_i |\tau_{k,l}^n(x^{\omega}-x)_i| = \sup_i |(x^{\omega}-x)_i| \leq |x_0|+2$ is uniformly bounded in $n$.
\end{proof}

\subsection{Periodicity}\label{birkhoffpersection}
It turns out convenient to consider periodic configurations.
To define these, let $p_1, \ldots, p_c\in \mathbb{Z}^d$ be $0\leq c \leq d$ linearly independent integer vectors and let $q_1, \ldots, q_c \in \mathbb{Z}$ be $c$ integers. Then we set
$$\X_{p,q} := \{ x:  \Z^d   \to  \R \ | \ \tau_{p_j,q_j}x = x \ \mbox{for all} \ j=1,\ldots, c\ \}\ .$$
We say that a configuration $x\in \mathbb{X}_{p,q}$ is {\it periodic} with periods $(p_1, q_1),\ldots, (p_c, q_c)$. The collection of periods of $\X_{p,q}$ is a lattice of rank $c$, that we denote by
$$J_{p,q}: = \left\{\sum_{j=1}^{c}m_j(p_j, q_j)\ | \ m_j\in \Z\right\} \subset \Z^d\times \Z \ .$$
An element of $\X_{p,q}$ can have a rotation vector, but this rotation vector can not be arbitrary: when $x:\Z^d\to\R$ is a configuration of rotation vector $\omega$ and $\tau_{p_j,q_j}x=x$, then $x_{np_j} = x_0- nq_j$, so that $\lim_{n\to\infty} \frac{x_{np_j}}{n} =- q_j$, that is
$$\langle \omega, p_j\rangle+ q_j=0 \ \mbox{when} \ \X_{p,q} \cap \X_{\omega} \neq \emptyset \ . $$
\noindent Another way to express this is that when $\X_{p,q}\cap \X_{\omega}\neq \emptyset$, then $J_{p,q}\subset I_{\omega}$, where the lattice $I_{\omega}$ is defined as
$$I_{\omega}: = \{(k,l)\in \Z^d\times \Z\ | \ \langle k, \omega \rangle + l = 0 \} \subset \Z^d\times \Z \ . $$
On the other hand, when $x$ has rotation vector $\omega$ and $\langle \omega, k \rangle + l = 0$, then this does not imply that $\tau_{k,l}x=x$. We therefore define 
$$\overline{\X}_{\omega} := \{x\in \X_{\omega}\ | \ \tau_{k,l}x=x\ \mbox{when} \ \langle \omega, k \rangle + l = 0\ \}\ .$$
\noindent The elements of $\overline{\X}_{\omega}$ are called {\it maximally periodic} as they have all the periods that an element of $\X_{\omega}$ can possibly have. $\overline{\X}_{\omega}$ is nonempty because it contains the linear configuration $x^{\omega}$ defined by $x^{\omega}_i=\langle \omega, i\rangle$. This is true because $\tau_{k,l}x^{\omega} = x^{\omega} + \langle \omega, k \rangle + l$.
\begin{definition}
A $\Z$-basis $(p_1,q_1), \ldots, (p_c,q_c)$ of $I_{\omega}$ is called a collection of {\it principal periods} for $\omega$. That is, 
$(p_1,q_1), \ldots, (p_c,q_c)$ are principal periods for $\omega$ if and only if $\X_{p,q}\cap\X_{\omega}=\overline{\X}_{\omega}$.
\end{definition}
\noindent Of course, a set of principal periods for $\omega\in \R^d$ always exists, but it is not unique. \\
\indent At this point, let us make some group theoretic remarks. First of all, we remind the reader that we can think of the shift operators $\tau_{k,l}$ as defining a group action of $\Z^d\times \Z$ on the space of configurations:
$$\tau: (\Z^d\times \Z)\times \R^{\Z^d}\to \R^{\Z^d}, \ ((k,l),x)\mapsto \tau_{k,l}x\ . $$
Clearly, because $\Z^d\times \Z$ is Abelian, when $\tau_{p_j, q_j}x=x$, then also $\tau_{p_j,q_j}(\tau_{k,l}x)=\tau_{k, l}x$, and thus $\tau$ leaves $\X_{p,q}$ invariant. Moreover, because the elements of $J_{p,q}$ fix all elements of $\X_{p,q}$, we have that when $x\in \X_{p,q}$ and $(k,l)=(K,L)+ \sum_jm_j(p_j,q_j)$ for certain integers $m_j$, then  $\tau_{k,l}x=\tau_{K,L}x$. This shows that $\tau$ induces an action of $(\Z^d\times \Z)/J_{p,q}$ on $\X_{p,q}$. We recall that this action is called {\it free} if for every $(k,l)\notin J_{p,q}$ and every $x\in \X_{p,q}$ it holds that $\tau_{k,l}x\neq x$. We now have the following quite obvious characterization of $\overline{\X}_{\omega}$:
 \begin{lemma}\label{tauaction}
Assume that $\X_{p,q} \cap \X_{\omega} \neq \emptyset$. Then the $\tau$-action of $(\mathbb{Z}^d\times \Z)/ J_{p,q}$ on $\X_{p,q}\cap \X_{\omega}$ is free if and only if the $(p_j,q_j)$ are principal periods for $\omega$, i.e. if and only if $\X_{p,q}\cap\X_{\omega}=\overline{\X}_{\omega}$.
 \end{lemma}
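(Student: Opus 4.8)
The plan is to reduce the whole statement to a single clean equivalence: the $\tau$-action of $(\Z^d\times\Z)/J_{p,q}$ on $\X_{p,q}\cap\X_\omega$ is free if and only if $J_{p,q}=I_\omega$. This suffices because the Definition preceding the lemma declares the $(p_j,q_j)$ to be principal periods for $\omega$ exactly when they form a $\Z$-basis of $I_\omega$; since the $p_j$ are linearly independent the family $(p_j,q_j)$ is automatically $\Z$-independent, so ``being a $\Z$-basis of $I_\omega$'' is the same as ``$J_{p,q}=I_\omega$'', and the same Definition records that this in turn is equivalent to $\X_{p,q}\cap\X_\omega=\overline{\X}_\omega$. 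I will also use the observation made just before the lemma that the hypothesis $\X_{p,q}\cap\X_\omega\neq\emptyset$ already forces each $(p_j,q_j)\in I_\omega$, hence $J_{p,q}\subseteq I_\omega$.

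For the implication ``$J_{p,q}=I_\omega\Rightarrow$ free'', I would start from an arbitrary $(k,l)\in\Z^d\times\Z$ and $x\in\X_{p,q}\cap\X_\omega$ with $\tau_{k,l}x=x$ and show $(k,l)\in J_{p,q}$. Unwinding the fixed-point equation gives $x_{i+k}=x_i-l$ for all $i$, hence $x_{nk}=x_0-nl$ by induction; dividing by $n$ and using that $x$ has rotation vector $\omega$ yields $\langle\omega,k\rangle=-l$, i.e.\ $(k,l)\in I_\omega=J_{p,q}$. This is exactly the computation already carried out in the paragraph that introduces $I_\omega$, so no new work is needed. For the converse, if $J_{p,q}\neq I_\omega$ then, since $J_{p,q}\subseteq I_\omega$, there is some $(k,l)\in I_\omega\setminus J_{p,q}$; the linear configuration $x^\omega_i=\langle\omega,i\rangle$ satisfies $\tau_{m,n}x^\omega=x^\omega+\langle\omega,m\rangle+n$, so it is fixed by every $\tau_{m,n}$ with $(m,n)\in I_\omega$. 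In particular $\tau_{p_j,q_j}x^\omega=x^\omega$, so $x^\omega\in\X_{p,q}\cap\X_\omega$, while $\tau_{k,l}x^\omega=x^\omega$ with $(k,l)$ representing a nontrivial class in $(\Z^d\times\Z)/J_{p,q}$; hence the action is not free.

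I do not expect a genuine obstacle: both directions are a couple of lines and are essentially contained in remarks already made in the text. The only points requiring a moment's care are purely bookkeeping: identifying the three conditions ``free'', ``$(p_j,q_j)$ principal periods'', ``$\X_{p,q}\cap\X_\omega=\overline{\X}_\omega$'' through the Definition (for the lemma one only needs the easy implication $J_{p,q}=I_\omega\Rightarrow\X_{p,q}\cap\X_\omega=\overline{\X}_\omega$, which follows since $\tau_{p_j,q_j}x=x$ for all $j$ forces $\tau_{k,l}x=x$ for every $(k,l)$ in the lattice they generate, together with the always-valid inclusion $\overline{\X}_\omega\subseteq\X_{p,q}\cap\X_\omega$), and observing that the map on $\X_{p,q}\cap\X_\omega$ is just the restriction of the $(\Z^d\times\Z)/J_{p,q}$-action on $\X_{p,q}$ constructed just above the lemma.
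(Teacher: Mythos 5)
Your proposal is correct, and it is worth noting how it relates to the printed proof. The forward direction is identical in substance: you take a fixed point of $\tau_{k,l}$ with rotation vector $\omega$ and derive $\langle\omega,k\rangle+l=0$, hence $(k,l)\in I_{\omega}=J_{p,q}$; the paper phrases the same computation contrapositively ($(k,l)\notin I_{\omega}$ cannot fix an element of $\X_{\omega}$). Where you genuinely add something is the converse, free $\Rightarrow$ principal. The paper's ``other direction'' paragraph assumes the action is not free and concludes $J_{p,q}\neq I_{\omega}$; read literally, that is again the contrapositive of the first implication, and it never exhibits the witness needed for the actual converse. Your argument supplies exactly that witness: when $J_{p,q}\subsetneq I_{\omega}$ (using $J_{p,q}\subseteq I_{\omega}$ from the standing hypothesis $\X_{p,q}\cap\X_{\omega}\neq\emptyset$), the linear configuration $x^{\omega}$ lies in $\X_{p,q}\cap\X_{\omega}$ and is fixed by $\tau_{k,l}$ for any $(k,l)\in I_{\omega}\setminus J_{p,q}$, so the induced action is not free. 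This is the right construction and makes your version of the converse more complete than the paper's as written. Two small bookkeeping remarks: your identification of ``principal periods'' with $J_{p,q}=I_{\omega}$ is fine because the $(p_j,q_j)$ are $\Z$-independent, and the inclusion $\overline{\X}_{\omega}\subseteq\X_{p,q}\cap\X_{\omega}$ that you call ``always valid'' does require $(p_j,q_j)\in I_{\omega}$ — which holds both under the nonemptiness hypothesis and under $J_{p,q}=I_{\omega}$, i.e.\ in every context where you invoke it, so no actual gap results.
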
 
 \begin{proof} Let us start by assuming that the $(p_j, q_j)$ are principal periods for $\omega$, that is that
 $J_{p,q} = I_{\omega}$.   
 We want to show that then the action of $(\Z^d\times \Z)/I_{\omega}$ on $\X_{p,q}\cap \X_{\omega}$ is free. But a nontrivial equivalence class in $(\Z^d\times \Z)/I_{\omega}$ is represented by an element $(k,l)$ with $\langle \omega, k\rangle + l\neq 0$ and it is clear that this inequality implies that $\tau_{k,l}x\neq x$ if $x$ has rotation vector $\omega$.\\ 
 \indent In the other direction, suppose the action is not free. Then there is a $(k,l)\notin J_{p,q}$ and an $x\in \X_{p,q}\cap \X_{\omega}$ with $\tau_{k,l}x=x$. Clearly, such $(k,l)$ must satisfy $\langle \omega,k\rangle + l = 0$, that is $(k,l)\in I_{\omega}$. Thus, $J_{p,q}\neq I_{\omega}$. 
 \end{proof}
 
\noindent The case that $\omega\in \Q^d$ is especially nice. We have the following:
\begin{proposition}
$\omega\in \Q^d$ if and only if $I_{\omega}$ has rank $d$. When $(p_1, q_1), \ldots, (p_d, q_d) \in I_{\omega}$ are linearly independent, then $\X_{p,q}$ is finite-dimensional and $\X_{p,q}\subset \X_{\omega}$. In particular, when $(p_1, q_1), \ldots, (p_d, q_d)$ are principal periods, then $\X_{p,q} = \overline{\X}_{\omega}$. 
\end{proposition}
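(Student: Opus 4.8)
\quad The plan is to dispose of the three assertions in order, using repeatedly that the projection $(k,l)\mapsto k$ is injective on $I_{\omega}$, since the relation $\langle k,\omega\rangle+l=0$ determines $l$ from $k$. Thus $I_{\omega}$ is isomorphic as an abelian group to the sublattice $L_{\omega}:=\{k\in\Z^d\ |\ \langle k,\omega\rangle\in\Z\}$ of $\Z^d$; in particular $\mathrm{rank}\,I_{\omega}=\mathrm{rank}\,L_{\omega}\le d$ always.

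For the equivalence, suppose first $\omega\in\Q^d$ and let $b$ be a common denominator of its entries; then $be_1,\dots,be_d\in L_{\omega}$ are linearly independent, so $\mathrm{rank}\,L_{\omega}=d$, whence $\mathrm{rank}\,I_{\omega}=d$. Conversely, if $L_{\omega}$ contains linearly independent $k_1,\dots,k_d$, form the matrix $K$ with rows $k_j$; it is invertible over $\Q$ and $K\omega\in\Z^d$, so $\omega=K^{-1}(K\omega)\in\Q^d$.

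For the second assertion I would first note that linear independence of $(p_1,q_1),\dots,(p_d,q_d)$ together with $q_j=-\langle p_j,\omega\rangle$ forces $p_1,\dots,p_d$ to be linearly independent (any rational relation among the $p_j$ propagates to the same relation among the $q_j$, hence among the $(p_j,q_j)$). Hence $\Lambda:=\Z p_1\oplus\cdots\oplus\Z p_d$ is a finite-index sublattice of $\Z^d$. Iterating $\tau_{p_j,q_j}x=x$, i.e.\ $x_{i+p_j}=x_i-q_j$, gives $x_{i+\sum_j m_jp_j}=x_i-\sum_j m_jq_j$ for all $m_j\in\Z$; because the $p_j$ freely generate $\Lambda$ this prescription is consistent, and one checks that assigning $x$ arbitrarily on a fixed set of coset representatives of $\Z^d/\Lambda$ and extending by this rule produces an element of $\X_{p,q}$. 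Thus $\X_{p,q}\cong\R^{\Z^d/\Lambda}$ is finite-dimensional. For the inclusion $\X_{p,q}\subset\X_{\omega}$, fix $i\in\Z^d$ and pick $N\ge1$ with $Ni\in\Lambda$, say $Ni=\sum_j m_jp_j$; then $x_{(n+N)i}=x_{ni}-\sum_j m_jq_j$ for every $n$, so along each residue class modulo $N$ the ratios $x_{ni}/n$ converge to $-\tfrac1N\sum_j m_jq_j=\tfrac1N\langle Ni,\omega\rangle=\langle i,\omega\rangle$; since there are only finitely many residue classes, $\lim_{n\to\infty}x_{ni}/n=\langle i,\omega\rangle$, i.e.\ $\omega(x)=\omega$.

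Finally, when the $(p_j,q_j)$ are principal periods we have $J_{p,q}=I_{\omega}$, so membership in $\X_{p,q}$ is equivalent to $\tau_{k,l}x=x$ for all $(k,l)\in I_{\omega}$; intersecting with the inclusion $\X_{p,q}\subset\X_{\omega}$ just established and comparing with the definition of $\overline{\X}_{\omega}$ yields $\X_{p,q}=\overline{\X}_{\omega}$. I expect the only step requiring genuine care to be the rotation-vector computation: one must make sure the limit is taken over all $n$ and not merely along one arithmetic progression, which is exactly why the finiteness of $\Z^d/\Lambda$ is invoked; everything else is bookkeeping about lattices.
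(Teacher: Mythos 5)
Your proof is correct and follows essentially the same route as the paper: the same lattice/matrix argument for the equivalence ($\omega\in\Q^d \Leftrightarrow \mathrm{rank}\, I_{\omega}=d$, including the observation that independence of the $(p_j,q_j)$ forces independence of the $p_j$), the same fundamental-domain (coset-representative) identification giving finite-dimensionality of $\X_{p,q}$, and the same final reduction $\overline{\X}_{\omega}=\X_{\omega}\cap\X_{p,q}=\X_{p,q}$. The only cosmetic difference is in proving $\X_{p,q}\subset\X_{\omega}$: you compute $\lim_{n\to\infty}x_{ni}/n$ explicitly along arithmetic progressions, whereas the paper notes that $i\mapsto x_i-\langle\omega,i\rangle$ is $p(\Z^d)$-periodic and hence bounded; both arguments are equally valid.
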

\begin{proof}
Let us suppose that $\omega \in \Q^d$, for instance $\omega = (\frac{a_1}{b_1}, \ldots, \frac{a_d}{b_d})$ for integers $a_j$ and $b_j$. Then $\langle \omega, P_j\rangle + Q_j =0$ for $P_j:=(0, \ldots, 0, b_j, 0, \ldots, 0)$ and $Q_j=-a_j$. This shows that $I_{\omega}$ has rank $d$.  \\
\indent On the other hand, when $I_{\omega}$ has rank $d$, then we can choose linearly independent $(p_1, q_1), \ldots, (p_d, q_d) \in I_{\omega}$. If we now denote by $p$ the $d\times d$-matrix with integer coefficients $(p_1, \ldots, p_d)$ and by $q=(q_1, \ldots, q_d)\in\Z^d$ the integer vector of length $d$, then we can write the equations $\langle p_j, \omega\rangle + q_j=0$ as the matrix equality $p^T\omega+q=0$, where $p^T$ denotes the transpose of the matrix $p$. This implies that the final column in the rank-$d$ matrix $(p^T, q)$ is degenerate, i.e. that $p^T$ is invertible. In particular, $\omega=-p^{-T}q \in \Q^d$, with $p^{-T}$ the inverse transpose of the matrix $p$. \\
\indent Moreover, the fact that $p$ is invertible implies that $\X_{p,q}$ is finite-dimensional. More precisely, let us define 
$$B_p:= p([0,1)^d)\cap \Z^d\ . $$ 
Then $B_p$ is a fundamental domain for $p$, that is for every $i\in \Z^d$ there is a unique $k \in B_p$ with $k = i \!\!  \mod  p(\Z^d)$. It is not hard to show that this implies that the map $x\mapsto x|_{B_p}$ from $\X_{p,q}$ to $\mathbb{R}^{B_p}$ is an isomorphism. \\ 
\indent Thus, we have that $\dim \R^{B_p} = |B_p|=\mbox{vol}_d(p[0,1)^d)=|\det p\ \! |$ and hence $\X_{p,q}$ is finite-dimensional. \\
\indent In turn this implies that $\X_{p,q}\subset \X_{\omega}$, because any $x\in \X_{p,q}$ satisfies $\sup_{i\in \Z^d}\{x_i-\langle \omega, i\rangle\} = \sup_{i\in B_p} \{x_i-\langle \omega, i\rangle\}<\infty$ and the configuration $i\mapsto \langle \omega, i\rangle$ has rotation vector $\omega$.  \\ 
\indent If $(p_1,q_1), \ldots, (p_d, q_d)$ are principal periods, then the above implies that $\overline{\X}_{\omega} = \X_{\omega}\cap \X_{p,q}=\X_{p,q}$.
\end{proof}

 \noindent After these general considerations, let us now return to the Birkhoff configurations defined in Section \ref{birkhoffsubsection}. Let us denote the set of maximally periodic Birkhoff configurations of rotation vector $\omega$ by
 $$\overline{\mathcal{B}}_{\omega}:=\{x\in \mathcal{B}_{\omega}\ | \ \tau_{k,l}x=x\ \mbox{when} \ \langle k, \omega \rangle + l=0\ \}\ .$$ 
\noindent The following theorem expresses that periodic Birkhoff configurations are automatically maximally periodic.  
\begin{theorem}\label{maxrelper} Let $\omega\in \R^d$, denote $c:={\rm rank}_{\Z}\ \! (I_{\omega})$ and let $(p_1,q_1),\ldots, (p_c, q_c)\in I_{\omega}$ be linearly independent. Then $$\X_{p,q}\cap \mathcal{B}_{\omega} = \overline{\mathcal{B}}_{\omega}\ . $$
 \end{theorem}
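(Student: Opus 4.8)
The plan is to establish the two inclusions separately. The inclusion $\overline{\mathcal{B}}_\omega \subseteq \X_{p,q}\cap\mathcal{B}_\omega$ I expect to be immediate: if $x\in\overline{\mathcal{B}}_\omega$ then, since each $(p_j,q_j)$ lies in $I_\omega$ by hypothesis, the defining property of $\overline{\mathcal{B}}_\omega$ forces $\tau_{p_j,q_j}x = x$, so $x\in\X_{p,q}$, while $x\in\mathcal{B}_\omega$ by definition. Hence all the content is in the reverse inclusion, i.e.\ in showing that a periodic Birkhoff configuration of rotation vector $\omega$ automatically carries \emph{every} period in $I_\omega$.

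For that direction I would fix $x\in\X_{p,q}\cap\mathcal{B}_\omega$ and an arbitrary $(k,l)\in I_\omega$, that is with $\langle k,\omega\rangle + l = 0$, and aim to prove $\tau_{k,l}x = x$. The first step is a purely lattice-theoretic remark: $J_{p,q}$ is generated by the $c$ linearly independent vectors $(p_j,q_j)\in I_\omega$, hence is a sublattice of full rank $c = {\rm rank}_{\Z}(I_\omega)$ in $I_\omega$, hence of finite index. Therefore the class of $(k,l)$ in the finite group $I_\omega/J_{p,q}$ has some finite order $N\geq 1$, i.e.\ $(Nk,Nl)\in J_{p,q}$; and since the periods in $J_{p,q}$ fix every element of $\X_{p,q}$, this gives $\tau_{k,l}^N x = \tau_{Nk,Nl}x = x$.

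The second step combines the Birkhoff property with the monotonicity of the shift operators. Since $x$ is well-ordered, either $\tau_{k,l}x\geq x$ or $\tau_{k,l}x\leq x$; I treat the first case, the other being symmetric. Each $\tau_{k,l}$ is order-preserving (from $(\tau_{k,l}y)_i = y_{i+k}+l$ one sees $y\leq z$ implies $\tau_{k,l}y\leq\tau_{k,l}z$), so iterating $\tau_{k,l}$ on the inequality $\tau_{k,l}x\geq x$ yields the monotone chain $x\leq\tau_{k,l}x\leq\tau_{k,l}^2x\leq\cdots\leq\tau_{k,l}^N x = x$, the last equality being the first step. Hence every inequality is an equality, in particular $\tau_{k,l}x = x$. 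As $(k,l)\in I_\omega$ was arbitrary, this shows $x\in\overline{\mathcal{B}}_\omega$.

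I do not anticipate a serious obstacle. The one point that needs a moment's care is that the given vectors $(p_1,q_1),\ldots,(p_c,q_c)$ need not form a $\Z$-basis of $I_\omega$ — they only span a finite-index sublattice — which is exactly why one cannot invoke Lemma \ref{tauaction} directly and must instead pass to the finite power $\tau_{k,l}^N$ and then use the ordering to collapse the chain. Everything else (monotonicity of the shifts, and the fact that a full-rank sublattice of a lattice has finite index) is routine.
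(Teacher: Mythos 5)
Your proof is correct and follows essentially the same route as the paper: both arguments hinge on the observation that the $(p_j,q_j)$ span a finite-index (maximal-rank) sublattice of $I_{\omega}$, so some multiple $N(k,l)$ lies in $J_{p,q}$ and hence $\tau_{k,l}^N x = x$, after which the Birkhoff ordering forces $\tau_{k,l}x = x$. The only differences are cosmetic — you collapse the monotone chain directly while the paper phrases it as a contradiction with $\tau_{k,l}^n x > x$, and you spell out the trivial inclusion $\overline{\mathcal{B}}_{\omega}\subseteq \X_{p,q}\cap\mathcal{B}_{\omega}$ which the paper leaves implicit.
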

 \begin{proof}
 Let $x\in \X_{p,q}\cap \mathcal{B}_{\omega}$, that is $x$ is Birkhoff, has rotation vector $\omega$ and $\tau_{p_j,q_j}x=x$ for all $j=1,\ldots, c$. We need to show that whenever $\langle \omega, k \rangle + l = 0$, then $\tau_{k,l}x=x$. So let us assume that $\tau_{k,l}x\neq x$. Because $x$ is Birkhoff, we may assume that $\tau_{k,l}x > x$: the case $\tau_{k,l}x < x$ is similar. This assumption implies that $\tau_{nk, nl}x = \tau_{k,l}^nx > x$ as well, for every $n\geq 1$. We claim that this is not possible. \\ \indent To prove this claim, we remark that there must be an $n\in \N$ and $m_1, \ldots, m_c\in \Z$ so that $n(k,l)=\sum_j m_j(p_j, q_j)$.  This is because by assumption the $(p_j,q_j)$ span a sublattice of $I_{\omega}$ of maximal rank. 
We therefore have that $\tau_{nk,nl}x = (\tau_{p_1, q_1}^{m_1}\circ \ldots \circ \tau_{p_c,q_c}^{m_c})x = x$. This is a contradiction and hence, $\tau_{k,l}x=x$. 
 \end{proof}

\noindent In dimension $d=1$, Theorem \ref{maxrelper} simply says that a Birkhoff configuration of period $(np, nq)$ automatically has period $(p, q)$. That is, the period $(p,q)$ of a one-dimensional Birkhoff configuration can be chosen relatively prime. Theorem \ref{maxrelper} is the $d$-dimensional variant of this statement. \\ 
\indent In spite of Theorem \ref{maxrelper}, it should be remarked that in general, $\overline{\mathcal{B}}_{\omega}\neq \mathcal{B}_{\omega}$, that is not all Birkhoff configurations of rotation vector $\omega$ are periodic. Counterexamples are easy to find.

\section{Classical Aubry-Mather theory}\label{classAM}
We are now ready to discuss the most well-known results of classical Aubry-Mather theory in the context of lattice equations. The concepts and results of this section are widely known, but we chose to present them in a perhaps slightly unconventional manner.

\subsection{Fully periodic minimizers}\label{periodicminimizerssection}
Throughout Section \ref{periodicminimizerssection}, we will assume that $\omega\in \Q^d$ and $(p_1,q_1), \ldots, (p_d, q_d) \in \Z^d\times \Z$ are linearly independent. \\
\indent We are interested in solutions to (\ref{RRR}) that lie in $\X_{p,q}$. We start by noting the presence of a variational structure. Recalling the definition $B_{p}=\mathbb{Z}^d \cap p([0,1)^d)$, one has
\begin{proposition}
A configuration $x  \in \mathbb{X}_{p,q}$ solves (\ref{RRR}) if and only if it is a stationary point of the {\it periodic action function}
\begin{align}\label{WsumS}
W_{p,q}: \mathbb{X}_{p,q}\to \mathbb{R}\  \mbox{defined by}\ W_{p, q}(x) := W_{B_p}(x)= \!\! \sum_{j\in B_{p}} S_j(x) \ , 
\end{align}
with respect to variations in $\X_{p,q}$.
\end{proposition}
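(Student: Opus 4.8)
The plan is to turn the statement into a direct computation of the differential of $W_{p,q}$ on the finite-dimensional space $\X_{p,q}$ and to match it, coordinate by coordinate, with the left hand side of (\ref{RRR}). Only conditions A and B are needed: condition A guarantees that all the sums that occur are finite and that the $S_j$ are differentiable, while the shift-invariance condition B is what makes the finite sum $W_{p,q}$ ``see'' the whole infinite recurrence. Monotonicity, coercivity and the second-derivative bound play no role.

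First I would pin down the admissible variations. A line $\varepsilon\mapsto x+\varepsilon y$ stays inside $\X_{p,q}$ precisely when $\tau_{p_j,0}y=y$ for every $j$ --- the value shift $q_j$ drops out because $\tau_{p_j,q_j}x=x$ --- i.e. precisely when $y$ is $p$-periodic. Under the isomorphism $\X_{p,q}\cong\R^{B_p}$ from Section \ref{birkhoffpersection}, a linear basis of such $y$ is given, one for each $i\in B_p$, by the configuration $y^{(i)}$ equal to $1$ on the orbit $i+p(\Z^d)$ and $0$ elsewhere; it represents the coordinate $x_i$. Hence $x\in\X_{p,q}$ is a stationary point of $W_{p,q}$ if and only if $\frac{d}{d\varepsilon}\big|_{0}W_{p,q}(x+\varepsilon y^{(i)})=0$ for all $i\in B_p$.

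Next I would carry out the computation. Since each $S_j$ depends on finitely many coordinates, $\frac{d}{d\varepsilon}\big|_{0}W_{p,q}(x+\varepsilon y^{(i)})=\sum_{j\in B_p}\sum_{k\in p(\Z^d)}\partial_{i+k}S_j(x)$, a finite sum. Differentiating condition B, $S_j(\tau_{k,l}x)=S_{j+k}(x)$, with respect to $x_{i+k}$ gives $\partial_{i+k}S_j(x)=(\partial_i S_{j-k})(\tau_{k,l}x)$; choosing for each $k\in p(\Z^d)$ the unique $l$ with $(k,l)\in J_{p,q}$ and using $\tau_{k,l}x=x$ (valid since $x\in\X_{p,q}$), this becomes $\partial_{i+k}S_j(x)=\partial_i S_{j-k}(x)$. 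Summing over $j\in B_p$ and $k\in p(\Z^d)$ and using that $B_p$ is a fundamental domain for $p(\Z^d)$, so that the pairs $(j,k)$ parametrize the single index $n=j-k$ ranging over all of $\Z^d$ exactly once, I obtain $\frac{d}{d\varepsilon}\big|_{0}W_{p,q}(x+\varepsilon y^{(i)})=\sum_{n\in\Z^d}\partial_i S_n(x)=\sum_{||n-i||\leq r}\partial_i S_n(x)$, which is exactly the left hand side of (\ref{RRR}) at site $i$.

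To finish, I would note that the same differentiated form of condition B shows that, for $x\in\X_{p,q}$, the left hand side of (\ref{RRR}) is itself $p$-periodic in $i$, so that (\ref{RRR}) holding at every $i\in\Z^d$ is equivalent to it holding at the representatives $i\in B_p$. Chaining the three steps yields the claimed equivalence. The only delicate point --- the ``main obstacle'', modest as it is --- lies in the bookkeeping of the second step: keeping straight which shift acts on which index under condition B, verifying that $(j,k)\mapsto j-k$ is a bijection from $B_p\times p(\Z^d)$ onto $\Z^d$, and checking that interchanging $\frac{d}{d\varepsilon}$ with the sum over coordinates is legitimate (it is, because the range condition makes that sum finite) even though $y^{(i)}$ has infinite support.
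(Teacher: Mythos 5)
Your proposal is correct and follows essentially the same route as the paper's proof: varying along the $p$-periodic basis configurations supported on the orbits $i+p(\Z^d)$, differentiating the shift-invariance identity of condition B, and reindexing the double sum over $B_p\times p(\Z^d)$ into the single sum $\sum_{j\in\Z^d}\partial_i S_j(x)$. The only difference is that you spell out explicitly the characterization of admissible variations and the $p$-periodicity of the left hand side of (\ref{RRR}), steps the paper leaves implicit.
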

\begin{proof}
We start by recalling the shift-invariance of the local potentials, condition B, which says that $S_{j+k}(x)=S_j(\tau_{k,l}x)$ for all $k$ and $l$ and all $x\in\R^{\Z^d}$. Differentiation of this identity with respect to $x_i$ then gives that $\p_{i}S_{j+k}(x)=\p_{i-k}S_{j}(\tau_{k,l}x)$.  These equalities respectively imply that for $x\in\X_{p,q}$ it holds that $S_{j+pk}(x)=S_{j}(x)$ and $\p_iS_{j+pk}(x)=\p_{i-pk}S_j(x)$ for all $k\in \Z^d$. \\ 
\indent Now let $x\in\X_{p,q}$, choose an $i\in\Z^d$ and define $e_i\in\X_{p,0}$ by letting $(e_i)_j=1$ if $j=i \! \mod p(\Z^d)$ and $(e_i)_j=0$ otherwise. Then $x+e_i\in\X_{p,q}$ and
$$\left. \frac{d}{d\varepsilon}\right|_{\varepsilon=0} \! W_{p,q}(x+\varepsilon e_i) = \sum_{k\in \Z^d} \sum_{j\in B_p} \p_{i+pk} S_j(x) = \sum_{k\in \Z^d} \sum_{j\in B_p} \p_{i} S_{j-pk}(x) = \sum_{j\in\Z^d}\p_iS_j(x) . $$ 
Of course all these sums are finite. 
\end{proof}
 
\noindent Note that $W_{p,q}=W_{B_p}$ is actually well defined for any $x\in \R^{\Z^d}$, but in this section, we restrict it to a function on $\X_{p,q}$. As such, it is shift-invariant:
\begin{lemma}[Shift-invariance]\label{shift invariance}
The function $W_{p,q}$ is $\tau$-invariant: for $x\in\X_{p,q}$ and $(k, l)\in\Z^d\times \Z$ arbitrary, it holds that $W_{p.q}(\tau_{k, l}x)=W_{p.q}(x)$. 
\end{lemma}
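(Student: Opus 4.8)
The plan is to reduce the statement to two facts that are already available: the shift-invariance of the local potentials (condition B), and the observation, recorded in the proof of the preceding proposition, that for $x\in\X_{p,q}$ the function $j\mapsto S_j(x)$ is invariant under the lattice $p(\Z^d)$, i.e. $S_{j+pm}(x)=S_j(x)$ for all $m\in\Z^d$. Given these, the $\tau$-invariance of $W_{p,q}$ is a short reindexing argument.

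First I would apply condition B termwise. Since $W_{p,q}(\tau_{k,l}x)=\sum_{j\in B_p}S_j(\tau_{k,l}x)$ and $S_j(\tau_{k,l}x)=S_{j+k}(x)$, we get $W_{p,q}(\tau_{k,l}x)=\sum_{j\in B_p}S_{j+k}(x)$. Note that the vertical shift amount $l$ disappears entirely — as it must, because each $S_j$ is already $\tau_{0,1}$-invariant — so everything now comes down to showing that translating the index set $B_p$ by a fixed $k\in\Z^d$ does not change the sum.

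For this I would use that $B_p=\Z^d\cap p([0,1)^d)$ is a fundamental domain for the translation action of $p(\Z^d)$ on $\Z^d$, so that its translate $B_p+k=\{\,j+k\mid j\in B_p\,\}$ is again a complete and irredundant set of representatives for $\Z^d/p(\Z^d)$ (translation by $k$ being a bijection of the quotient group). Now for $x\in\X_{p,q}$ the value $S_{j'}(x)$ depends only on the class of $j'$ in $\Z^d/p(\Z^d)$, because $S_{j'+pm}(x)=S_{j'}(x)$; hence the sum of $S_{j'}(x)$ over any fundamental domain equals its sum over $B_p$. Applying this with the fundamental domain $B_p+k$ yields $\sum_{j\in B_p}S_{j+k}(x)=\sum_{j\in B_p}S_j(x)=W_{p,q}(x)$, which is the desired equality. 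All sums here are finite since $|B_p|=|\det p\,|<\infty$.

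I do not expect any genuine obstacle: the only point requiring care is the bookkeeping with fundamental domains — that translating $B_p$ by an arbitrary integer vector again gives a complete set of coset representatives, and that $S_j(x)$ is constant on $p(\Z^d)$-cosets for periodic $x$. Everything else is a direct consequence of conditions A and B, already unpacked above.
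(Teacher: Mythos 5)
Your proof is correct and follows essentially the same route as the paper: apply condition B termwise to get $\sum_{j\in B_p}S_{j+k}(x)$, then use that $S_j(x)$ is constant on $p(\Z^d)$-cosets for $x\in\X_{p,q}$ together with the fact that $k+B_p$ is again a fundamental domain of $\Z^d/p(\Z^d)$ to reindex the finite sum. No gaps.
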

\begin{proof}
In general, $S_{j+k}(x)=S_{j}(\tau_{k,l}x)$, so that if $x\in\X_{p,q}$, then $S_{j+pk}(x)=S_j(x)$ for all $k\in\Z^d$. Thus, $W_{p,q}(\tau_{k,l}x)=\sum_{j\in B_p} S_j(\tau_{k,l}x) = \sum_{j\in B_p} S_{j+k}(x) = \sum_{j\in k+B_p} S_{j}(x)= \sum_{j\in B_p} S_{j}(x) = W_{p,q}(x)$. The fourth equality follows as both $B_p$ and $k+B_p$ are fundamental domains of $\Z^d/p(\Z^d)$, so that for every $j\in k+B_p$ there is a unique $i\in  B_p$ for which $i=j\! \mod p(\Z^d)$. \end{proof}
 \begin{theorem}[Existence]\label{existence}
The action $W_{p,q}$ attains its minimum on $\mathbb{X}_{p,q}$.  
\end{theorem}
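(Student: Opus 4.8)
The plan is to combine the finite-dimensionality of $\X_{p,q}$ with the coercivity condition C. Since $(p_1,q_1),\dots,(p_d,q_d)$ are linearly independent, the discussion in Section~\ref{birkhoffpersection} shows that $p$ is invertible and that $x\mapsto x|_{B_p}$ is a linear isomorphism $\X_{p,q}\to\R^{B_p}$; in particular $\X_{p,q}$ is a finite-dimensional vector space and, each $S_j$ being continuous for pointwise convergence, $W_{p,q}=\sum_{j\in B_p}S_j$ is continuous on it. It is moreover bounded below, being a finite sum of functions bounded below by condition C, so $W_\ast:=\inf_{\X_{p,q}}W_{p,q}$ is a finite real number. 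The only obstruction to attaining it is the noncompactness of $\X_{p,q}$, so the main work is a coercivity estimate, suitably normalized.

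First I would pass to the affine slice $\Sigma:=\{x\in\X_{p,q}\mid x_0\in[0,1]\}$ (note that $0\in B_p$). By Lemma~\ref{shift invariance}, $W_{p,q}$ is invariant under $\tau_{0,1}\colon x\mapsto x+1$, and every $x\in\X_{p,q}$ can be moved into $\Sigma$ by an integer translation without changing the value of $W_{p,q}$; hence $\inf_\Sigma W_{p,q}=W_\ast$, and it suffices to attain the minimum on $\Sigma$. The key step is then the following coercivity claim: if $x^n\in\Sigma$ with $\max_{i\in B_p}|(x^n)_i|\to\infty$, then $W_{p,q}(x^n)\to\infty$. Granting this, the sublevel set $\{x\in\Sigma\mid W_{p,q}(x)\le W_\ast+1\}$ is nonempty, closed and bounded, hence compact (we are in a finite-dimensional space), so the continuous function $W_{p,q}$ attains on it a minimum, which is necessarily equal to $W_\ast$.

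To prove the claim, suppose $x^n\in\Sigma$ with $\max_{i\in B_p}|(x^n)_i|$ unbounded. Since $B_p$ is finite and $(x^n)_0\in[0,1]$ is bounded, after passing to a subsequence there is a fixed $i^\ast\in B_p$ with $|(x^n)_{i^\ast}|\to\infty$. Pick a nearest-neighbour path $0=j_0,j_1,\dots,j_m=i^\ast$ in $\Z^d$ with $m=\|i^\ast\|$ and telescope $(x^n)_{i^\ast}-(x^n)_0=\sum_{l=0}^{m-1}\big((x^n)_{j_{l+1}}-(x^n)_{j_l}\big)$; since the left side is unbounded while $m$ is fixed, along a further subsequence there is a fixed edge $(a,b)$, $\|a-b\|=1$, with $|(x^n)_a-(x^n)_b|\to\infty$. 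Using that $x^n$ is periodic, so that translating by an element of $p(\Z^d)$ shifts all coordinates by an integer and hence preserves all differences $x_a-x_b$, I translate $(a,b)$ by a suitable element of $p(\Z^d)$ to an edge $(a',b')$ with $a'\in B_p$, $\|a'-b'\|=1$ and $|(x^n)_{a'}-(x^n)_{b'}|=|(x^n)_a-(x^n)_b|\to\infty$. Condition C then gives $S_{a'}(x^n)\to\infty$, and since the remaining terms $\sum_{j\in B_p,\,j\ne a'}S_j(x^n)$ are bounded below, $W_{p,q}(x^n)\to\infty$, as claimed.

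The only delicate point — and the main obstacle — is precisely this coercivity claim: after the normalization $x_0\in[0,1]$ (which removes the $\tau_{0,1}$-invariance that makes $W_{p,q}$ non-coercive on all of $\X_{p,q}$), one must check that a configuration which is large on $B_p$ forces a large gradient across some nearest-neighbour edge, and that this edge can be pulled into the fundamental domain $B_p$ by periodicity so that condition C bears on a term actually appearing in $W_{p,q}=\sum_{j\in B_p}S_j$. Once that is in place, the rest is the standard argument that a continuous, bounded-below function with bounded sublevel sets on a closed subset of a finite-dimensional space attains its infimum.
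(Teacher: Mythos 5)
Your proposal is correct and follows the same overall strategy as the paper's proof: use the $\tau_{0,1}$-invariance of $W_{p,q}$ to normalize $x_0\in[0,1]$, derive from condition C that $W_{p,q}$ is coercive in the coordinates $x_i$, $i\in B_p$, and conclude by compactness in the finite-dimensional space $\X_{p,q}\cong\R^{B_p}$. Where you genuinely diverge is the mechanism that converts a large coordinate into a large summand of $\sum_{j\in B_p}S_j$: the paper enlarges the index set, using the identity $W_{k+nB_p}=n^dW_{p,q}$ on $\X_{p,q}$ for a translate $k+nB_p$ containing a cube $C_N\supset B_p$, and then propagates coercivity by induction along nearest neighbours starting from the bounded coordinate $x_0$; you instead telescope along a nearest-neighbour path from $0$ to the large coordinate to isolate one bad edge, and then use the periodicity of configurations in $\X_{p,q}$ (translation of indices by $p(\Z^d)$ shifts all values by a common integer, hence preserves differences) to move that edge so that its base point lies in $B_p$, where condition C bears directly on a term actually present in $W_{p,q}$. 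Both devices rest on the same ingredients (shift-invariance B, periodicity, and the lower bound plus coercivity in C); your edge-translation argument is a bit more direct, while the paper's rescaling identity is the same tool it reuses later in Theorems \ref{global} and \ref{converse}. One cosmetic point: your coercivity claim is stated with $\max_{i\in B_p}|(x^n)_i|\to\infty$ but proved only along subsequences; since what you actually need is boundedness of the sublevel set $\{x\in\Sigma\mid W_{p,q}(x)\le W_*+1\}$, the subsequence version suffices and nothing is lost, though the claim could be phrased that way from the start.
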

\begin{proof}
Since $W_{p,q}(\tau_{0,1}x)=W_{p,q}(x)$ for $x\in\X_{p,q}$, clearly $W_{p,q}$ descends to a function on $\mathbb{X}_{p,q}/\mathbb{Z}$. Every element in this quotient space has a representative $x$ with $x_0\in [0,1]$. \\
\indent Choose a cube $C_N=\{i\in\Z^d\ | \ |i_k|\leq N \ \mbox{for all} \ k=1,\ldots d\}$ that contains $B_p$ and choose a $k\in \Z^d$ and an $n\in\N$ such that $k+nB_p$ in turn contains $C_N$. Moreover, remember that $W_{k+nB_{p}}=n^dW_{p,q}$ on $\X_{p,q}$. \\
\indent The coercivity of the $S_i$, condition C, implies that for all $j$ with $||j||=1$, it holds that for $x\in\X_{p,q}$ with $x_0\in [0,1]$, we have that $\lim_{|x_j|\to \infty} W_{k+nB_{p}}(x) = \infty$. And hence by induction that for all $j\in C_N$ and $x\in\X_{p,q}$ with $x_0\in [0,1]$, it holds that $\lim_{|x_j|\to\infty} W_{k+nB_{p}}(x)=\infty$. Because $B_p\subset C_N$ and $W_{p,q}=n^{-d}W_{k+nB_{p}}$, this means in particular that for all $j\in B_p$ and $x\in \X_{p,q}$ with $x_0\in [0,1]$ it holds that $\lim_{|x_j|\to\infty} W_{p,q}(x)=\infty$. Hence, $W_{p,q}$ attains its minimum on $\mathbb{X}_{p,q}$. 
\end{proof}

\noindent The configurations that minimize $W_{p,q}$ on $\X_{p,q}$ will be called $p,q$-minimizers. Note that other extremal points of $W_{p,q}$ in $\mathbb{X}_{p,q}$, such as saddle points, may also exist. Under certain mild conditions their existence will be proved later in this paper. \\
\indent The following lemma is well-known. We took the proof from \cite{llave-valdinoci}.

\begin{lemma}[Minimum - maximum property]\label{maxminprop}
Assume the periodic configurations $x,y  \in \mathbb{X}_{p,q}$ are $p,q$-minimizers. Then also $m:=\min\{x,y\}$ and $M:=\max\{x,y\}$ are $p,q$-minimizers.
\end{lemma}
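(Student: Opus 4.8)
The plan is to reduce the claim to a pointwise (in the lattice index $j$) \emph{submodularity} estimate for the local potentials, and then to invoke minimality. First I would verify that $m$ and $M$ actually belong to $\X_{p,q}$: since $(\tau_{k,l}z)_i=z_{i+k}+l$ is a relabelling of indices followed by the addition of a constant, it commutes with the coordinatewise operations $\min$ and $\max$, so $\tau_{p_j,q_j}m=\min\{\tau_{p_j,q_j}x,\tau_{p_j,q_j}y\}=\min\{x,y\}=m$ and likewise $\tau_{p_j,q_j}M=M$ for every $j$. In particular $W_{p,q}(m)$ and $W_{p,q}(M)$ are well defined; writing $c:=\min_{\X_{p,q}}W_{p,q}$ (a minimum, by Theorem \ref{existence}), we have $W_{p,q}(x)=W_{p,q}(y)=c$ by hypothesis and $W_{p,q}(m),W_{p,q}(M)\geq c$ since $m,M\in\X_{p,q}$.

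The heart of the proof is the inequality
\begin{align}\label{submodW}
W_{p,q}(m)+W_{p,q}(M)\leq W_{p,q}(x)+W_{p,q}(y),
\end{align}
which I would prove term by term, i.e. by showing $S_j(m)+S_j(M)\leq S_j(x)+S_j(y)$ for each $j\in B_p$ and summing over the finite set $B_p$. To this end, set $A:=\{i\in\Z^d : x_i\leq y_i\}$ and, for $(s,t)\in[0,1]^2$, define the configuration $z(s,t)$ by $z(s,t)_i:=x_i+s(y_i-x_i)$ for $i\in A$ and $z(s,t)_i:=x_i+t(y_i-x_i)$ for $i\notin A$; then $z(0,0)=x$, $z(1,1)=y$, $z(1,0)=M$ and $z(0,1)=m$. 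Fixing $j$, the map $(s,t)\mapsto S_j(z(s,t))$ is $C^2$ (it depends on finitely many coordinates), one has $S_j(m)+S_j(M)-S_j(x)-S_j(y)=-\int_0^1\!\!\int_0^1\partial_s\partial_t\,S_j(z(s,t))\,ds\,dt$, and the chain rule gives
\[
\partial_s\partial_t\,S_j(z(s,t))=\sum_{i\in A}\ \sum_{k\notin A}(y_i-x_i)(y_k-x_k)\,\partial_{i,k}S_j(z(s,t)),
\]
a finite sum (only pairs in $B_j^r$ contribute). In each summand $y_i-x_i\geq0$ since $i\in A$, $y_k-x_k<0$ since $k\notin A$, and $\partial_{i,k}S_j\leq0$ because $i\in A$, $k\notin A$ forces $i\neq k$ so that condition D applies; hence every summand is nonnegative, the double integral is nonnegative, and the term-by-term inequality follows. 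Summing over $j\in B_p$ yields \eqref{submodW}.

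It then only remains to combine the pieces: \eqref{submodW} gives $W_{p,q}(m)+W_{p,q}(M)\leq2c$, while $W_{p,q}(m)\geq c$ and $W_{p,q}(M)\geq c$, so $W_{p,q}(m)=W_{p,q}(M)=c$; that is, $m$ and $M$ minimize $W_{p,q}$ on $\X_{p,q}$. I expect the one genuinely delicate point to be the submodularity step: the bilinear interpolation $z(s,t)$ must be arranged so that its mixed $s,t$-derivative involves only the index pairs $(i,k)$ with $i\in A$ and $k\notin A$ — precisely the pairs for which the sign of $(y_i-x_i)(y_k-x_k)$ is pinned down and for which $i\neq k$ is automatic, so that the monotonicity condition D is available. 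Everything else is routine bookkeeping with finite sums.
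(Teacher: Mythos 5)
Your proof is correct and follows essentially the same route as the paper: the bilinear interpolation you build on the sets $A$ and $\Z^d\setminus A$ is exactly the paper's interpolation $x+t\alpha+s\beta$ with $\alpha=M-x$, $\beta=m-x$, and the key inequality is obtained in both cases from the mixed second derivative together with the sign information from condition D and the disjointness of the ``up'' and ``down'' parts. The only cosmetic differences are that you prove the submodularity inequality termwise in $j$ before summing and spell out the final minimality argument, which the paper leaves implicit.
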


\begin{proof}
It is obvious that $m,M  \in \mathbb{X}_{p,q}$. Write $\alpha:=M-x$ and $\beta:=m-x$ and observe that $\alpha >0$, $\beta<0$, while $\mbox{supp}(\alpha) \cap \mbox{supp}(\beta) = \emptyset$ and $y=M+m-x=\alpha + m=\alpha + \beta + x$. The proof is done, if we show that $$W_{p,q}(x)+W_{p,q}(y)\geq W_{p,q}(M)+W_{p,q}(m).$$
This is the same as showing $$W_{p,q}(x+\alpha+\beta)-W_{p,q}(x+\alpha)-W_{p,q}(x+\beta)+W_{p,q}(x) \geq 0.$$
The left hand-side of this inequality can be put in integral form as 
 $$\int_0^1\int_0^1\frac{\p^2}{\p t\p s} W_{p,q}(x+\alpha t+\beta s)ds dt =  \sum_{i,k\in \Z^d}\sum_{j \in B_{p}}\left( \int_0^1\int_0^1 \partial_{i,k} S_j(x + t\alpha+ s\beta)dsdt \right) \alpha_i\beta_k \ .$$
Since $\mbox{supp}(\alpha) \cap \mbox{supp}(\beta) = \emptyset$, we have that $\alpha_i\beta_i=0$ for all $i$. Moreover, the twist condition $\partial_{i,k}S_{j}\leq 0$ for all $i\neq k$ and the inequalities $\alpha_i\beta_k\leq 0$, guarantee that the remaining terms in the sum are nonnegative.
\end{proof}
\noindent This is now used to prove the following famous lemma:
\begin{lemma}[Aubry's lemma]\label{Aubry's lemma}
Assume the configurations $x \neq y  \in \mathbb{X}_{p,q}$ are $p,q$-minimizers. Then either $x \ll y$ or $y \ll x$.
\end{lemma}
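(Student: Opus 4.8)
The plan is to combine the minimum--maximum property (Lemma \ref{maxminprop}) with a strong comparison argument coming from the twist condition D. First I would observe that since $x$ and $y$ are both $p,q$-minimizers, Lemma \ref{maxminprop} tells us that $m:=\min\{x,y\}$ and $M:=\max\{x,y\}$ are again $p,q$-minimizers, hence solutions of the recurrence relation (\ref{RRR}). The strategy is then a dichotomy: I want to show that $x$ and $y$ cannot ``touch'' at any lattice site, i.e. there is no $i$ with $x_i=y_i$; once that is established, connectedness of $\Z^d$ (in the graph where $i\sim k$ iff $\|i-k\|=1$) forces either $x_i<y_i$ everywhere or $y_i<x_i$ everywhere, which is exactly $x\ll y$ or $y\ll x$.

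The heart of the matter is therefore the no-touching claim. Suppose $x_i = y_i$ for some $i$ but $x\neq y$. Since $x$ and $m$ are both $p,q$-minimizers agreeing at $i$ (because $m_i=\min\{x_i,y_i\}=x_i$) and both solve (\ref{RRR}), I would look at the recurrence relation at a site $i'$ with $\|i'-i\|=1$ at which, say, $m$ and $x$ differ --- such a site exists if $m\neq x$, and if $m=x$ then $x\le y$ everywhere and we are already close to done. Subtracting the two copies of (\ref{RRR}) at a suitable site and writing the difference as an integral of second derivatives $\partial_{i',k}S_j$ along the segment from $m$ to $x$, the strict negativity $\partial_{i',k}S_{i'}<0$ for $\|i'-k\|=1$ (which by Corollary \ref{uniform_twist} is even uniform on Birkhoff classes, though plain strictness suffices here) together with $m\le x$ and $m_i=x_i$ yields a contradiction: the equation at $i$ reads $\sum_{\|j-i\|\le r}(\partial_i S_j(x)-\partial_i S_j(m))=0$, and each term is a nonpositive combination $\le 0$ of the differences $x_k-m_k\ge 0$ with at least one strictly negative coefficient multiplying a strictly positive difference. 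Hence all nearest-neighbor differences $x_k-m_k$ must vanish, and propagating this along edges (using that the twist is strict across every edge $\|i-k\|=1$) gives $x=m$ on all of $\Z^d$, contradicting $m\neq x$.

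Carrying this propagation out cleanly is the step I expect to be the main obstacle: one has to be a little careful that the recurrence at a site $i$ only sees neighbors within range $r$, so the argument is really ``$x_i=m_i$ and $x$ solves (\ref{RRR}) at $i$ $\Rightarrow$ $x_k=m_k$ for all nearest neighbors $k$ of $i$'', and then this is iterated along a path in $\Z^d$ to reach every lattice point. An equivalent and perhaps cleaner route, which I might prefer to write up, is to argue entirely on the level of $m$ and $M$: since $M-m\ge 0$ and both are minimizers, the same integrated-second-derivative computation shows that $(M-m)_i=0$ at one site forces $(M-m)_k=0$ at all neighbors, hence $M\equiv m$, hence $x\le y$ or $y\le x$; then a second application of the same touching argument to the pair $\{x,y\}$ itself (now known to be ordered, say $x\le y$, $x\neq y$) upgrades $\le$ to $\ll$. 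Either way, the only real content is the strong maximum principle packaged in the twist condition, and the rest is the combinatorial observation that $\Z^d$ is connected under nearest-neighbor adjacency.
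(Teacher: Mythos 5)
Your central mechanism is sound and is in fact the engine of the paper's own proof: by Lemma \ref{maxminprop} both $x$ and $m:=\min\{x,y\}$ are $p,q$-minimizers, hence stationary, and at a site where two ordered stationary configurations agree, the integrated second-derivative identity together with the strict twist $\partial_{i,k}S_i<0$ for $\|i-k\|=1$ forces equality at all nearest neighbours, hence (iterating along paths) everywhere. The gap is in your reduction. You claim that once no site with $x_i=y_i$ exists, connectedness of $\Z^d$ forces $x\ll y$ or $y\ll x$. There is no intermediate value theorem on a discrete lattice: two configurations can satisfy $x_i\neq y_i$ at every site and still cross (take $y_i-x_i=(-1)^{i_1}$, say), so ``no touching'' does not imply ordering. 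Your ``cleaner route'' via $M$ and $m$ has the same defect, since $M_i=m_i$ exactly when $x_i=y_i$: comparing $M$ with $m$ again only rules out touching of $x$ and $y$, not crossing, and the subsequent claim ``hence $x\le y$ or $y\le x$'' does not follow.

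The crossing-without-touching case is nevertheless within reach of your own machinery, and this is precisely how the paper organizes the proof: one compares $m$ with $x$ (or with $y$), never $x$ with $y$ directly. If $x$ and $y$ cross, then $m<x$ while $m_i=x_i$ at every site where $x_i\le y_i$ (such sites exist), so your propagation argument applied to the pair $(m,x)$ yields $m=x$, a contradiction; if instead $x\le y$, $x\neq y$ with a touching site, one applies the same argument to $m=x$ and $y$. Note also that your intermediate claim that $m\neq x$ alone produces a site adjacent to $i$ where $m$ and $x$ differ is not correct as stated; one needs the connectedness argument (a path from a touching site to a site of strict inequality yields adjacent sites $i,k$ with $m_i=x_i$ and $m_k<x_k$), after which the paper needs only one step of the comparison, contradicting stationarity at $i$, while your iterated version is equivalent. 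With the dichotomy replaced by this case analysis on $(m,x)$ and $(m,y)$, your proof closes and coincides with the paper's.
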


\begin{proof}
We pursue a proof by contradiction. Denote again $m:=\min\{x,y\}$. Suppose that for instance that $m<x$ but that is not true that $m \ll x$. The case that $m<y$ and not $m\ll y$ is similar. The assumption implies that there are indices $i,k \in \mathbb{Z}^d$ with $||i-k||=1$ such that $m_i = x_i$ and $m_k<x_k$. Now we compute
\begin{align}\nonumber
&\sum_{j\in\Z^d}\left(\p_iS_j(x) - \p_iS_j(m)\right) =  \sum_{j\in\Z^d}\int_0^1 \frac{d}{dt} \partial_iS_j(tx+(1-t)m) dt = \\ & \sum_{j,l \in\Z^d} \left( \int_0^1\partial_{i,l} S_j(tx + (1-t)m)dt \right)  (x_l - m_l)\ . \nonumber
\end{align}
Recall that $x_i=m_i$, while, by the twist condition, for every $l\neq i$, it holds that $\p_{i,l}S_j\leq 0$ and $(x_l-m_l)\geq 0$. Thus, every term in the above sum is nonpositive. But for the $k$ chosen above, $\partial_{i,k}S_i<0$, while $x_k-m_k>0$. This proves that $\sum_j\p_iS_j(x)\neq\sum_j\p_iS_j(m)$. This contradicts the fact that, by the lemma above, both $m$ and $x$ are $p,q$-minimizers and must therefore both be stationary.
\end{proof}

\begin{corollary}\label{Birkhoff minimizers}
Periodic minimizers are Birkhoff configurations.
\end{corollary}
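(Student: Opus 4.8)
The plan is to deduce the statement immediately from Aubry's lemma (Lemma \ref{Aubry's lemma}) together with the shift-invariance of the periodic action (Lemma \ref{shift invariance}). The underlying observation is that a shift of a $p,q$-minimizer is again a $p,q$-minimizer, so any such configuration must be comparable to all its integer translates.

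Concretely, let $x\in\X_{p,q}$ be a $p,q$-minimizer and fix an arbitrary $(k,l)\in\Z^d\times\Z$. First I would check that $\tau_{k,l}x$ again lies in $\X_{p,q}$: since $\Z^d\times\Z$ is Abelian the shift operators commute, so $\tau_{p_j,q_j}(\tau_{k,l}x)=\tau_{k,l}(\tau_{p_j,q_j}x)=\tau_{k,l}x$ for every $j=1,\ldots,d$. By Lemma \ref{shift invariance} one has $W_{p,q}(\tau_{k,l}x)=W_{p,q}(x)$, and since $x$ attains the minimum of $W_{p,q}$ on $\X_{p,q}$ (which exists by Theorem \ref{existence}), so does $\tau_{k,l}x$. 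Hence $\tau_{k,l}x$ is also a $p,q$-minimizer.

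Now I would apply Aubry's lemma to the pair $x$ and $\tau_{k,l}x$: either $\tau_{k,l}x=x$, or $x\ll\tau_{k,l}x$, or $\tau_{k,l}x\ll x$. In every one of these cases $\tau_{k,l}x\geq x$ or $\tau_{k,l}x\leq x$, which is precisely the Birkhoff condition (\ref{Birkhoff}) for this $(k,l)$. As $(k,l)$ was arbitrary, $x$ is a Birkhoff configuration.

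There is no real obstacle here: the only points needing (minimal) care are that shifting preserves membership in $\X_{p,q}$ and preserves the value of the action, and both follow at once from commutativity of the shifts and from Lemma \ref{shift invariance}. The substance of the argument is already contained in the minimum--maximum property (Lemma \ref{maxminprop}) and Aubry's lemma.
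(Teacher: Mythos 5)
Your proof is correct and follows the same route as the paper: shift-invariance of $W_{p,q}$ shows that $\tau_{k,l}x$ is again a $p,q$-minimizer, and Aubry's lemma then gives the comparability with all translates, which is exactly the Birkhoff condition. The only difference is that you spell out explicitly that $\tau_{k,l}x\in\X_{p,q}$, which the paper leaves implicit.
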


\begin{proof}
Let $x  \in \mathbb{X}_{p,q}$ be a minimizer. Then for any $k \in \Z^d$ and $l \in \Z$, we have that
$W_{p,q}(x)=W_{p,q}(\tau_{k,l}x)$ by the invariance property of $W_{p,q}$. This shows that also
 $\tau_{k,l}x$ is a minimizer, whence, by the previous corollary, either $\tau_{k,l}x \ll x$, $\tau_{k,l}x=x$ or $\tau_{k,l}x \gg x$. In particular, $x$ is a Birkhoff configuration.
\end{proof}

\begin{lemma}\label{birkhoffperiodic}
Let $n\in\N$. Every $p,q$-minimizer is an $np,nq$-minimizer and vice versa.
\end{lemma}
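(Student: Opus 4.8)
The plan is to establish the two implications together, once we know two facts: (a) the scaling identity $W_{np,nq}(x)=n^{d}W_{p,q}(x)$ for every $x\in\X_{p,q}$, and (b) that every $np,nq$-minimizer automatically lies in $\X_{p,q}$. Note first that $\X_{p,q}\subseteq\X_{np,nq}$, since $\tau_{np_j,nq_j}=\tau_{p_j,q_j}^{\,n}$ fixes whatever $\tau_{p_j,q_j}$ fixes, and that $(np_1,nq_1),\dots,(np_d,nq_d)$ are again linearly independent, so Theorem~\ref{existence} and Lemmas~\ref{shift invariance} and \ref{Aubry's lemma} apply verbatim with $p,q$ replaced by $np,nq$.

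For (a), I would look at the fundamental domain $B_{np}=np([0,1)^{d})\cap\Z^{d}$ of the matrix $np$. Since $np([0,1)^{d})=p([0,n)^{d})$ and $[0,n)^{d}=\bigsqcup_{m\in\{0,\dots,n-1\}^{d}}(m+[0,1)^{d})$, injectivity of $p$ gives $B_{np}=\bigsqcup_{m}\bigl(p(m)+B_{p}\bigr)$, a disjoint union of $n^{d}$ translates of $B_{p}$ by vectors of $p(\Z^{d})$. Because $S_{i+pk}(x)=S_{i}(x)$ for every $x\in\X_{p,q}$ (as noted in the proof of the proposition preceding Lemma~\ref{shift invariance}), summing over $B_{np}$ yields $W_{np,nq}(x)=\sum_{m}\sum_{i\in B_{p}}S_{i+p(m)}(x)=n^{d}W_{p,q}(x)$.

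For (b), which is the crux: let $y$ be an $np,nq$-minimizer and fix $j$. Since the shifts commute, $\tau_{p_j,q_j}y\in\X_{np,nq}$, and by shift-invariance of $W_{np,nq}$ (Lemma~\ref{shift invariance} with periods $np,nq$) it has the same, hence minimal, action, so $\tau_{p_j,q_j}y$ is again an $np,nq$-minimizer. If $\tau_{p_j,q_j}y\neq y$, Aubry's lemma (Lemma~\ref{Aubry's lemma} with periods $np,nq$) forces $y\ll\tau_{p_j,q_j}y$ or $\tau_{p_j,q_j}y\ll y$. In the first case, applying the order-preserving map $\tau_{p_j,q_j}$ repeatedly gives $y\ll\tau_{p_j,q_j}y\ll\cdots\ll\tau_{p_j,q_j}^{\,n}y=\tau_{np_j,nq_j}y=y$, a contradiction; the second case is symmetric. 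Hence $\tau_{p_j,q_j}y=y$ for all $j$, i.e. $y\in\X_{p,q}$.

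Finally I would combine these. Let $x$ be a $p,q$-minimizer and let $y$ be an $np,nq$-minimizer, which exists by Theorem~\ref{existence}. By (b), $y\in\X_{p,q}$, and trivially $x\in\X_{p,q}\subseteq\X_{np,nq}$. Minimality of $y$ on the larger space gives $W_{np,nq}(y)\le W_{np,nq}(x)$, while (a) together with minimality of $x$ on $\X_{p,q}$ gives $W_{np,nq}(y)=n^{d}W_{p,q}(y)\ge n^{d}W_{p,q}(x)=W_{np,nq}(x)$. Thus equality holds throughout, so $x$ attains the minimum of $W_{np,nq}$ on $\X_{np,nq}$ and $y$ attains the minimum of $W_{p,q}$ on $\X_{p,q}$, which is exactly the claim. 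The only real obstacle is step (b); everything else is bookkeeping with fundamental domains and the rescaling of the action. One could replace the use of Aubry's lemma in (b) by the minimum--maximum property (Lemma~\ref{maxminprop}) applied to $\min\{y,\tau_{p_j,q_j}y\}$ followed by the stationarity argument from the proof of Aubry's lemma, but the direct route is shorter.
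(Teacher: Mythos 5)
Your proof is correct, and it reaches the key step by a somewhat different route than the paper. The paper shows that an $np,nq$-minimizer $y$ lies in $\X_{p,q}$ by first quoting Corollary \ref{Birkhoff minimizers} (periodic minimizers are Birkhoff) and then Theorem \ref{maxrelper} (periodic Birkhoff configurations are maximally periodic, so $\X_{np,nq}\cap\mathcal{B}_{\omega}=\X_{p,q}\cap\mathcal{B}_\omega$); you instead observe that $\tau_{p_j,q_j}y$ is again an $np,nq$-minimizer by shift-invariance, invoke Aubry's lemma to compare $y$ with $\tau_{p_j,q_j}y$, and derive a contradiction from the strict chain $y\ll\tau_{p_j,q_j}y\ll\cdots\ll\tau_{p_j,q_j}^{\,n}y=y$. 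This is in effect an inline, self-contained re-proof of the special case of Theorem \ref{maxrelper} that is needed here, with the Birkhoff property replaced by direct comparability of two minimizers; it buys you independence from Section \ref{birkhoffpersection} and from the rotation-vector bookkeeping (that $\X_{np,nq}\subset\X_\omega$ for the same $\omega$ and that $(p_j,q_j)\in I_\omega$), at the cost of repeating an argument the paper has already packaged abstractly. You also spell out, via the decomposition $B_{np}=\bigsqcup_{m\in\{0,\dots,n-1\}^d}\bigl(p(m)+B_p\bigr)$ and the identity $S_{i+pk}(x)=S_i(x)$ on $\X_{p,q}$, the scaling relation $W_{np,nq}=n^dW_{p,q}$ that the paper only asserts; and your final double-inequality argument, using one minimizer of each problem (both existing by Theorem \ref{existence}), establishes both directions simultaneously, exactly as the paper's concluding comparison does.
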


\begin{proof}
Assume that $x$ is an $np,nq$-minimizer, that is a minimizer of $W_{np,nq}$ on $\X_{np,nq}$. Then, by Aubry's lemma, $x\in \mathcal{B}_{np,nq}$. Theorem \ref{maxrelper} now implies that $\mathcal{B}_{np,nq}=\mathcal{B}_{p,q}$, so actually $x\in \mathcal{B}_{p,q}\subset \X_{p,q}$. Note now that on $\X_{p,q}$ it holds that $W_{np,nq}=n^dW_{p,q}$ and let $y\in \X_{p,q}\subset \X_{np,nq}$. Then $W_{p,q}(x)=n^{-d}W_{np,nq}(x)\leq n^{-d}W_{np,nq}(y)=W_{p,q}(y)$. Thus, $x$ is a  $p,q$-minimizer. 
\\ \indent In the other direction, if $x$ is a $p,q$-minimizer and $y$ is an $np,nq$-minimizer, then $y\in \mathcal{B}_{p,q}$ and $W_{np,nq}(x)=n^dW_{p,q}(x)\leq n^dW_{p,q}(y)=W_{np,nq}(y)$, that is $x$ is an $np,nq$-minimizer.
\end{proof}

\noindent The following result shows that $p,q$-minimizers are global minimizers. Recall that $x$ is called a global minimizer if for every finite set $B\subset \Z^d$ and every $y$ with support in its $r$-interior $\mathring{B}^{(r)}$, one has that $W_B(x+y)\geq W_B(x)$, with $W_B(x):=\sum_{j\in B}S_j(x)$.

\begin{theorem}\label{global}
Periodic minimizers are global minimizers.
\end{theorem}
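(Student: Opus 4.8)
\emph{Proof proposal.} The plan is to use Lemma~\ref{birkhoffperiodic}, which tells us that a $p,q$-minimizer $x$ is simultaneously an $np,nq$-minimizer for \emph{every} $n\in\N$, and to promote a given compactly supported variation to an honest $np,nq$-periodic competitor once $n$ is large. So fix a finite set $B\subset\Z^d$ and a configuration $y$ with support in $\mathring{B}^{(r)}$. Since $\mathrm{supp}(y)\subset\mathring{B}^{(r)}$, every lattice site within range $r$ of $\mathrm{supp}(y)$ lies in $B$, so only finitely many $S_j$ are affected by the variation $y$ and
$$W_B(x+y)-W_B(x)=\sum_{j\in\Z^d}\bigl(S_j(x+y)-S_j(x)\bigr),$$
a finite sum, which we must show is $\geq 0$.

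Next I would choose $n$ so large that the translates $g+\mathrm{supp}(y)$, $g\in (np)(\Z^d)$, are pairwise so far apart (in $\|\cdot\|$) that even the $(np)(\Z^d)$-translates of the $2r$-neighbourhood of $\mathrm{supp}(y)$ are disjoint; this is possible because the lattice $(np)(\Z^d)=n\,p(\Z^d)$ becomes arbitrarily sparse. Define the periodic extension $\tilde y_i:=\sum_{g\in (np)(\Z^d)}y_{i-g}$. For $n$ this large the sum has at most one nonzero term, $\tilde y$ is $(np)(\Z^d)$-periodic in the plain sense, and $\tilde y$ coincides with $y$ on the $2r$-neighbourhood of $\mathrm{supp}(y)$. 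Since $x\in\X_{p,q}\subset\X_{np,nq}$ satisfies $x_{i+np_j}+nq_j=x_i$ and $\tilde y_{i+np_j}=\tilde y_i$, one checks directly that $x+\tilde y\in\X_{np,nq}$.

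The heart of the argument is then to compute $W_{np,nq}(x+\tilde y)-W_{np,nq}(x)=\sum_{j\in B_{np}}\bigl(S_j(x+\tilde y)-S_j(x)\bigr)$. Because both $x$ and $x+\tilde y$ lie in $\X_{np,nq}$, the shift-invariance condition~B forces $j\mapsto S_j(x+\tilde y)-S_j(x)$ to be $(np)(\Z^d)$-periodic, so its sum over the fundamental domain $B_{np}$ equals its sum over any complete set of coset representatives of $\Z^d/(np)(\Z^d)$. For $n$ large the $r$-neighbourhood of $\mathrm{supp}(y)$ injects into $\Z^d/(np)(\Z^d)$; on that neighbourhood $\tilde y\equiv y$ (since there each $S_j$ only sees sites within the $2r$-neighbourhood), so $S_j(x+\tilde y)=S_j(x+y)$ there, while off $\bigcup_g(g+N_r(\mathrm{supp}\,y))$ the summand vanishes. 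Hence
$$W_{np,nq}(x+\tilde y)-W_{np,nq}(x)=\sum_{j\in\Z^d}\bigl(S_j(x+y)-S_j(x)\bigr)=W_B(x+y)-W_B(x).$$
Since $x$ is an $np,nq$-minimizer and $x+\tilde y\in\X_{np,nq}$, the left-hand side is $\geq 0$, so $W_B(x+y)\geq W_B(x)$ and $x$ is a global minimizer.

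I expect the only delicate point to be the bookkeeping in the third step: making precise that taking $n$ large genuinely decouples the periodic copies of the perturbation, so that a fundamental domain sees exactly one copy together with its full $r$-neighbourhood and $\tilde y$ restricted there is literally $y$, and keeping the neighbourhoods of $\mathrm{supp}(y)$ (radius $r$ versus radius $2r$) consistent with the finite range $r$ of the potentials in condition~A. Everything else is a direct application of Lemma~\ref{birkhoffperiodic} and condition~B.
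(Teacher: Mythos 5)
Your proposal is correct and follows essentially the same route as the paper: invoke Lemma~\ref{birkhoffperiodic}, extend the compactly supported variation $y$ to an $np$-periodic competitor $\tilde y\in\X_{np,nq}$ for $n$ large, and identify $W_{np,nq}(x+\tilde y)-W_{np,nq}(x)$ with $W_B(x+y)-W_B(x)$. The paper merely phrases it as a contradiction and handles the decoupling by choosing $k,n$ with $B\subset k+B_{np}$ (one translated fundamental domain containing the variation) instead of your disjoint-$2r$-neighbourhood/coset-representative bookkeeping, which amounts to the same thing.
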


\begin{proof} Let $x \in \X_{p,q}$ be a $p,q$-minimizer. If $x$ is not a global minimizer, then there exists a finite set $B \subset \Z^d$ and a configuration $y$ with $\mbox{supp}(y) \subset \mathring{B}^{(r)}$, such that $W_B(x+y) < W_B(x)$. Since $B$ is finite, there exist a $k\in\Z^d$ and an $n \in \N$ such that $\mbox{supp}(y)\subset B \subset k+B_{np}$. Now define $\tilde y\in \X_{np,nq}$ by setting $\tilde y_i=y_j$ when $j$ is the unique point in $k+B_{np}$ for which $j=i\! \mod np(\Z^d)$. In other words, $\tilde y$ is the $np$-periodic extension of $y|_{k+B_{np}}$. Then we conclude that 
$$W_{np,nq}(x+\tilde y) - W_{np,nq}(x) = W_{k+nB_p}(x+y)-W_{k+nB_p}(x)= W_{B}(x+y)-W_B(x)<0, $$ 
so $x$ is not $np,nq$-minimizer. This contradicts Lemma \ref{birkhoffperiodic}.
\end{proof}

\noindent Perhaps surprisingly, to prove the converse one needs to be slightly more ingenious. We have not found this statement anywhere in the literature:

\begin{theorem}\label{converse}
If $x\in \X_{p,q}$ is a global minimizer, then it is a $p,q$-minimizer.
\end{theorem}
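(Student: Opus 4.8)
The plan is to argue by contradiction using a ``dilate and truncate'' trick. Suppose $x \in \X_{p,q}$ is a global minimizer but is \emph{not} a $p,q$-minimizer. Then there is a $z \in \X_{p,q}$ with $W_{p,q}(z) < W_{p,q}(x)$, and setting $y := z-x$ we obtain $y \in \X_{p,0}$ and $W_{p,q}(x+y) - W_{p,q}(x) = -\delta$ for some $\delta>0$. Since $y$ is $p$-periodic and $p$ is invertible, $y$ takes only finitely many values, hence is bounded; likewise $x$, being $p$-periodic, has uniformly bounded oscillation (the pattern $\{x_k-x_i\mid k\in B_i^r\}$ takes only $|B_p|$ distinct values as $i$ varies). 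The obstruction to an immediate contradiction is that Definition \ref{globalmin} only tests perturbations supported in the $r$-interior $\mathring{B}^{(r)}$ of a finite box $B$, whereas a periodic perturbation is never of this form.

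To get around this, fix $N\in\N$ and pass to the period lattice $Np,Nq$. As in the proof of Lemma \ref{birkhoffperiodic}, $W_{Np,Nq}=N^dW_{p,q}$ on $\X_{p,q}$, and since $x,x+y\in\X_{p,q}\subset\X_{Np,Nq}$ we get $W_{B_{Np}}(x+y)-W_{B_{Np}}(x)=-N^d\delta$ (using $W_{Np,Nq}=W_{B_{Np}}$). Now truncate: set $\tilde y_i := y_i$ for $i\in\mathring{B}_{Np}^{(r)}$ and $\tilde y_i:=0$ otherwise, so $\tilde y$ is supported in $\mathring{B}_{Np}^{(r)}$. For $j$ in the ``clean interior'' $\mathring{B}_{Np}^{(2r)}=\{j\in B_{Np}\mid B_j^r\subset\mathring{B}_{Np}^{(r)}\}$ one has $S_j(x+\tilde y)=S_j(x+y)$, because $S_j$ depends only on coordinates in $B_j^r$. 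The complementary set $B_{Np}\setminus\mathring{B}_{Np}^{(2r)}$ is a boundary layer containing only $O(N^{d-1})$ lattice points, and on it each of $|S_j(x+y)-S_j(x)|$ and $|S_j(x+\tilde y)-S_j(x)|$ is bounded by a constant $M$ independent of $j$ and $N$: indeed $x$, $y$, and $x+\tilde y$ all have uniformly bounded oscillation, so that --- modulo the integer diagonal shift, under which the local potentials are invariant by condition B, and using that the $S_j$ are all translates of a single continuous function of finitely many variables (conditions A, B) --- the restrictions $x|_{B_j^r}$, $(x+y)|_{B_j^r}$, $(x+\tilde y)|_{B_j^r}$ range over a fixed compact set on which that function is bounded. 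Therefore
$$ W_{B_{Np}}(x+\tilde y)-W_{B_{Np}}(x)=\!\!\sum_{j\in\mathring{B}_{Np}^{(2r)}}\!\!\big(S_j(x+y)-S_j(x)\big)+O(N^{d-1})=-N^d\delta+O(N^{d-1}).$$

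Choosing $N$ large enough that the right-hand side is negative produces a finite box $B=B_{Np}$ and a configuration $\tilde y$ supported in $\mathring{B}^{(r)}$ with $W_B(x+\tilde y)<W_B(x)$, contradicting the assumption that $x$ is a global minimizer. Hence $x$ is a $p,q$-minimizer. The crux --- and the place where some ingenuity is needed --- is the scale balance in the last display: truncating a periodic perturbation to the $r$-interior of a box unavoidably costs a boundary error of order $N^{d-1}$ (the ``surface area''), and this can be dominated by the volume-order gain $N^d\delta$ coming from the periodic defect only after one first enlarges the fundamental domain. The one genuinely technical ingredient is the uniform bound $M$, which is exactly where periodicity of $x$ and $y$ together with the structural conditions A--B enter.
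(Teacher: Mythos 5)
Your proof is correct and follows essentially the same route as the paper's: assume $x$ is not a $p,q$-minimizer, dilate the fundamental domain to $B_{Np}$, truncate the better periodic competitor to the $r$-interior, and beat the $O(N^{d-1})$ boundary error with the $N^d\delta$ volume gain. The only (immaterial) difference is how the uniform bound on the boundary terms is justified: the paper first arranges $x\ll y$ by adding an integer and uses compactness of the order interval $[x,y]$, whereas you use periodicity, bounded oscillation and the shift-invariance of condition B directly.
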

\begin{proof}
Suppose that $x\in \X_{p,q}$ is not a $p,q$-minimizer. We will prove that this implies that $x$ is not a global minimizer. Our assumption means that there is a $y\in \X_{p,q}$ for which $0 < \varepsilon:= W_{p,q}(x) -W_{p,q}(y)$. This in turn implies that $W_{np,nq}(x)-W_{np,nq}(y) = n^d\varepsilon$. \\
\indent By periodicity, we may assume that $x\ll y$. Let us now define, for $n\in \N$, the configurations $x\leq y^n \leq y$ by
$$y^n_i=\left\{ \begin{array}{cc} y_i & \mbox{if} \ i\in \mathring{B}_{np}^{(r)} \\ x_i & \mbox{otherwise} \end{array} \right.$$
Here, $\mathring{B}_{np}^{(r)}$ is the $r$-interior of $B_{np}$. By definition, $y^n$ is a variation of $x$ with support in this $r$-interor. It now holds that 
\begin{align}\nonumber
W_{B_{np}}(x)-W_{B_{np}}(y^n) \!= & \!\! \sum_{j\in B_{np}} \!\! \left(S_j(x)-S_j(y^n)\right)\!\! =\!\! \sum_{j\in B_{np}}\!\! \left(S_j(x)-S_j(y)\right) + \left( S_j(y)-S_j(y^n)\right) \\
\nonumber =& \  n^d\varepsilon +\!\! \sum_{j\in B_{np}} \!\! \left( S_j(y)-S_j(y^n)\right)\ .
\end{align}
Because the support of $y-y^n$ is contained in $\Z^d\backslash \mathring{B}_{np}^{(r)}$ and the range of interaction of the $S_j$ is equal to $r$, the number of nonzero terms in the above sum is at most $(2r)^{2d+1}|\p B_{np}| \leq E n^{d-1}$, where $E$ is a constant depending only on $r, d$ and $p$.\\
\indent
Moreover, by compactness of $[x,y]:=\{z\ | \ x\leq z\leq y\}$, there is a constant $e>0$ so that $|S_j(y)|, |S_j(y^n)|<e$. This then implies that $$W_{B_{np}}(x)-W_{B_{np}}(y^n)\geq \varepsilon n^d - 2Ee \cdot n^{d-1}\ .$$ 
Choosing $n$ large enough, we see that $x$ is not a global minimizer.
\end{proof}

\subsection{Nonperiodic minimizers}\label{nonpermin}
In this section, we show that global minimizers of all rotation vectors exist. They are constructed as limits of periodic minimizers. Moreover, we show that they satisfy a certain pairwise regularity. The results in this section are standard.

\begin{lemma}\label{closed}
The set of global minimizers is closed in the topology of pointwise convergence.
\end{lemma}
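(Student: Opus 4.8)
The plan is to argue directly from Definition~\ref{globalmin}, using only the continuity of the finite-range action functionals $W_B$ with respect to the topology of pointwise convergence. Let $(x^n)_{n\in\N}$ be a sequence of global minimizers converging pointwise to a configuration $x^\infty$. To show that $x^\infty$ is a global minimizer, I fix an arbitrary finite subset $B\subset\Z^d$ and an arbitrary configuration $y$ with support in the $r$-interior $\mathring{B}^{(r)}$, and I must establish that $W_B(x^\infty+y)-W_B(x^\infty)\geq 0$.

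First I would record that $W_B$ is continuous in the topology of pointwise convergence. Indeed, by condition~A each $S_j$ with $j\in B$ is of the form $S_j(x)=s_j(x|_{B_j^r})$ with $s_j$ continuous, so $S_j$ depends only on the finitely many coordinates $x_k$ with $k\in B_j^r$ and is therefore continuous with respect to pointwise convergence; since $W_B=\sum_{j\in B}S_j$ is a finite sum, it is continuous as well. Because the competitor $y$ is held fixed, the sequence $x^n+y$ also converges pointwise, namely to $x^\infty+y$. Hence $W_B(x^n)\to W_B(x^\infty)$ and $W_B(x^n+y)\to W_B(x^\infty+y)$.

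Finally, since each $x^n$ is a global minimizer and $y$ is supported in $\mathring{B}^{(r)}$, we have $W_B(x^n+y)-W_B(x^n)\geq 0$ for every $n$. Passing to the limit $n\to\infty$ in this inequality, using the two convergences above, yields $W_B(x^\infty+y)-W_B(x^\infty)\geq 0$. As $B$ and $y$ were arbitrary, $x^\infty$ is a global minimizer, so the set of global minimizers is closed in the topology of pointwise convergence. There is essentially no obstacle here: the only points deserving (minimal) attention are the continuity of $W_B$ under pointwise convergence, which is immediate from the finite-range hypothesis~A, and the observation that the perturbation $y$ must be kept fixed as $n\to\infty$ so that $x^n+y$ converges pointwise.
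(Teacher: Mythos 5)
Your argument is correct and coincides with the paper's own proof: both fix $B$ and $y$, note the continuity of the finite sum $W_B$ with respect to pointwise convergence, and pass to the limit in the inequality $W_B(x^n+y)-W_B(x^n)\geq 0$. Nothing further is needed.
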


\begin{proof} 
Assume that $x^{n}$ is a sequence of global minimizers converging pointwise to $x^{\infty}$. Let $B \subset \Z^d$ be a finite set and $y$ a configuration with support in $\mathring{B}^{(r)}$. Then
\begin{align}\label{minblabla}
W_B(x^{n}+y)-W_B(x^{n})\geq 0\ .
\end{align}
But $W_B$ is continuous with respect to pointwise convergence, so that taking the limit for $n\to \infty$ of equation (\ref{minblabla}), we find that $W_B(x^{\infty}+y)-W_B(x^{\infty})\geq 0$. So $x^{\infty}$ is a global minimizer.
\end{proof}

\begin{theorem}\label{quasi-periodic birkhoff minimizers}
For all rotation vectors $\omega \in \R^d$ and all local potentials $S_j$, there exists a global minimizer $x \in \overline{\mathcal{B}}_\omega$. 
\end{theorem}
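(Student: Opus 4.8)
The plan is to construct a global minimizer of rotation vector $\omega$ as a pointwise limit of periodic minimizers of nearby rational rotation vectors. First I would dispose of the rational case. If $\omega\in\Q^d$, choose principal periods $(p_1,q_1),\ldots,(p_d,q_d)$ for $\omega$, so that $\X_{p,q}=\overline{\X}_{\omega}$. By Theorem \ref{existence}, $W_{p,q}$ attains its minimum at some $x\in\X_{p,q}$; by Corollary \ref{Birkhoff minimizers} this $x$ is Birkhoff, and by Theorem \ref{maxrelper} it lies in $\overline{\mathcal{B}}_{\omega}$; finally, Theorem \ref{global} shows $x$ is a global minimizer. Since $x\in\X_{p,q}=\overline{\X}_{\omega}$, it has rotation vector $\omega$ and is maximally periodic, so $x\in\overline{\mathcal{B}}_{\omega}$ as required.

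For irrational $\omega$, I would pick a sequence $\omega_n\in\Q^d$ with $\omega_n\to\omega$. By the rational case there are global minimizers $x^n\in\overline{\mathcal{B}}_{\omega_n}\subset\mathcal{B}_{\omega_n}$. Normalizing by the shift $\tau_{0,1}$ we may assume $x^n_0\in[0,1)$. Since $\{\omega_n\}\cup\{\omega\}$ is contained in a compact set $K\subset\R^d$, all the $[x^n]$ lie in $\mathcal{B}_K/\Z$, which by Proposition \ref{compactness} is compact in the topology of pointwise convergence. Hence a subsequence of the (representatives) $x^n$ converges pointwise to some $x^\infty$, and by closedness of $\mathcal{B}$ and continuity of $x\mapsto\omega(x)$ on $\mathcal{B}$ (Lemma \ref{sequence}) we get $x^\infty\in\mathcal{B}_{\omega}$. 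By Lemma \ref{closed}, $x^\infty$ is again a global minimizer.

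The remaining point — and the one I expect to be the only genuine obstacle — is to upgrade $x^\infty\in\mathcal{B}_{\omega}$ to $x^\infty\in\overline{\mathcal{B}}_{\omega}$, i.e. to check maximal periodicity: $\tau_{k,l}x^\infty=x^\infty$ whenever $\langle\omega,k\rangle+l=0$. Fix such a pair $(k,l)$. Since $x^\infty$ is Birkhoff, $\tau_{k,l}x^\infty$ is comparable to $x^\infty$, and $W_B$-minimality is preserved by the shift $\tau_{k,l}$ (as in Corollary \ref{Birkhoff minimizers}), so $\tau_{k,l}x^\infty$ is also a global minimizer, hence comparable to $x^\infty$ by Aubry-type ordering. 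If, say, $\tau_{k,l}x^\infty> x^\infty$, then $\tau_{k,l}^n x^\infty$ is strictly increasing in $n$; but $\tau_{k,l}^n x^\infty$ has rotation vector $\omega$ with the same ``offset'' $\langle\omega, nk\rangle + nl = 0$, so all these configurations stay within a bounded distance of the fixed linear configuration $i\mapsto\langle\omega,i\rangle$ (Lemma \ref{sequence} gives a uniform bound), contradicting strict monotonicity of an unbounded family. This is exactly the argument of Proposition \ref{numbertheory} / Theorem \ref{maxrelper}, adapted to the non-periodic setting. The only subtlety is arranging the strict comparability $\tau_{k,l}x^\infty\neq x^\infty \Rightarrow \tau_{k,l}x^\infty\gg x^\infty$ (not merely $>$): this follows from an Aubry-type lemma for global minimizers, analogous to Lemma \ref{Aubry's lemma}, which one should either cite from the standard literature or prove by the same mixed-derivative computation using the twist condition D. Granting that, the contradiction closes the proof.
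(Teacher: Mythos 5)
Your rational case and the compactness-plus-closedness part of the irrational case are fine and agree with the paper, but the last step — upgrading $x^\infty\in\mathcal{B}_\omega$ to $x^\infty\in\overline{\mathcal{B}}_\omega$ — has a genuine gap. Your contradiction argument is modelled on Proposition \ref{numbertheory}, but that argument works only when $\langle\omega,k\rangle+l\neq 0$: there the iterates $\tau_{k,l}^n x$ drift by $n(\langle\omega,k\rangle+l)$ relative to the linear configuration and boundedness is violated. In the resonant case $\langle\omega,k\rangle+l=0$, which is exactly the case you must handle, Lemma \ref{sequence} shows the family $\tau_{k,l}^n x^\infty$ stays within distance $1$ of $i\mapsto x^\infty_0+\langle\omega,i\rangle$ for \emph{all} $n$, and a strictly increasing family that is bounded above is perfectly consistent (it simply converges pointwise); "strictly monotone" does not imply "unbounded", so no contradiction arises. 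Worse, the statement you are trying to prove — that every Birkhoff global minimizer of rotation vector $\omega$ is maximally periodic — is false, so no Aubry-type lemma can rescue it: already for $d=1$ and rational $\omega=q/p$ there are non-recurrent (heteroclinic) global minimizers interpolating between neighbouring periodic minimizers in a gap; they are Birkhoff, minimizing, have rotation number $\omega$, satisfy $\langle\omega,p\rangle-q=0$, yet $\tau_{p,-q}x\neq x$. The same phenomenon occurs for resonant irrational $\omega$ in $d\geq 2$, and the paper itself warns that $\overline{\mathcal{B}}_\omega\neq\mathcal{B}_\omega$ in general. So with an arbitrary approximating sequence $\omega_n\to\omega$ your limit $x^\infty$ may genuinely fail to lie in $\overline{\mathcal{B}}_\omega$.

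The paper closes this gap at the level of the approximation, not of the limit: it chooses $\omega_n\in\Q^d$ with $\omega_n\to\omega$ \emph{and} $\langle\omega_n,k\rangle+l=0$ for every $(k,l)$ with $\langle\omega,k\rangle+l=0$, i.e. the resonances of $\omega$ are preserved by the approximants. Then each periodic minimizer $x^n\in\overline{\mathcal{B}}_{\omega_n}$ already satisfies $\tau_{k,l}x^n=x^n$ for all such $(k,l)$, and since $\tau_{k,l}$ is continuous for pointwise convergence this periodicity passes to the limit: $\tau_{k,l}x^\infty=\lim_n\tau_{k,l}x^n=\lim_n x^n=x^\infty$. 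If you adjust your choice of $\omega_n$ in this way (such a sequence always exists: approximate $\omega$ within the rational affine subspace cut out by its resonances $I_\omega$), the rest of your argument goes through verbatim and you no longer need the problematic final paragraph.
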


\begin{proof}
For any $\omega \in \R^d$, we can take a sequence $\omega_{n} \in \mathbb{Q}^d$, such that $\lim_{n \to \infty} \omega_{n} = \omega$, while $\langle \omega_n, k\rangle+l=0$ for all the $k$ and $l$ for which $\langle \omega, k\rangle+l=0$. We take a corresponding sequence $(p_n, q_n)$ of principal periods for which $\omega_n:=-p_n^{-T}q_n$. By Theorems \ref{existence} and \ref{global}, there exists a global minimizer $x^{n} \in \mathcal{B}_{p_n, q_n} = \overline{\mathcal{B}}_{\omega_{n}}$. In particular, $x_n$ has rotation vector $\omega_n$ and satisfies $\tau_{k,l}x^{n}=x^{n}$ for all $k$ and $l$ for which $\langle \omega, k\rangle + l =0$. Because the $\omega_{n}$ and $\omega$ lie in some compact subset $K$ of $\R^d$, Proposition \ref{compactness}, guarantees that there is a subsequence  of the $x^{n}$ that converges pointwise to a Birkhoff configuration $x^{\infty}\subset \mathcal{B}_K$. By continuity of the rotation vector $x \mapsto \omega(x)$, see Proposition \ref{sequence}, $x^{\infty}$ actually has rotation vector $\omega$. Moreover, the limit $x^{\infty}$ will have the same periodicities: denoting the converging subsequence also by $x^n$, the continuity of $\tau_{k,l}$ implies that $\tau_{k,l}x^{\infty}=\tau_{k,l}(\lim_{n\to\infty} x^n) = \lim_{n\to\infty} \tau_{k,l} x^n = \lim_{n\to\infty} x^n = x^{\infty}$ for all $k$ and $l$ with $\langle \omega, k\rangle + l =0$. Finally, $x^{\infty}$ is a global minimizer by Theorem \ref{global} and Lemma \ref{closed}.
\end{proof}

\noindent The following result expresses the regularity of pairwise comparable stationary solutions. It is the analogue of a Harnack inequality for elliptic PDEs.
 \begin{theorem}[Elliptic Harnack inequality]\label{ellharn}
Let $x<y$ be two Birkhoff configurations with rotation vector in the compact set $K\subset \R^d$.  Suppose that $x$ and $y$ are stationary for the local potentials $S_j$. Then there is a constant $\delta$, depending only on $K$ and $||i-k||$, such that for all $i$ and $k$,
$$(y_k - x_k) \leq \delta (y_i-x_i) \ .$$
In particular, if $x<y$, then $x\ll y$.
 \end{theorem}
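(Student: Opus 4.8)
The plan is to linearise the two stationarity relations~(\ref{RRR}) --- the one for $x$ and the one for $y$ --- along the straight segment joining them, and then to run a discrete elliptic maximum principle on the resulting finite-difference operator. Write $z:=y-x\geq 0$ (the inequality holds because $x<y$). Subtracting the relation~(\ref{RRR}) for $x$ from the one for $y$ and using the fundamental theorem of calculus along $t\mapsto x+tz$, one obtains for every $i\in\Z^d$ a linear identity
\[
\sum_{l\in\Z^d}a_{i,l}\,z_l=0,\qquad\text{where}\qquad a_{i,l}:=\sum_{\|j-i\|\leq r}\int_0^1\p_{i,l}S_j(x+tz)\,dt ,
\]
in which only the finitely many $l$ with $\|l-i\|\leq 2r$ contribute. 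The monotonicity condition~D gives $a_{i,l}\leq 0$ for all $l\neq i$, and condition~E gives $|a_{i,l}|\leq C'$ for a constant $C'$ depending only on $r$, $d$ and $C$.

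The heart of the proof is a uniform lower bound for the twist coefficients along the segment: I claim there is a $\lambda>0$, depending only on $K$, such that $a_{i,l}\leq-\lambda$ whenever $\|i-l\|=1$. This is the one genuinely delicate point, since the configurations $x+tz$ need not be Birkhoff, so the uniform twist estimate of Corollary~\ref{uniform_twist} cannot be quoted directly. The way around it is to observe that the whole segment lies in a set of configurations with uniformly bounded nearest-neighbour increments: by Lemma~\ref{sequence}, $|x_m-x_j|$ and $|y_m-y_j|$ are each at most $M_K:=2+\sup_{\omega\in K}\|\omega\|_\infty$ when $\|m-j\|=1$, hence so is $|(x+tz)_m-(x+tz)_j|$, and summing along lattice paths of length at most $r$ bounds every relative coordinate $(x+tz)_m-(x+tz)_j$ with $\|m-j\|\leq r$ by $rM_K$. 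Since each $\p_{i,l}S_j$ is continuous, of range $r$, and invariant under $\tau_{0,1}$, it factors through the map $w\mapsto\bigl(w_j\bmod 1,\,(w_m-w_j)_{\|m-j\|\leq r}\bigr)$, and the bounds just obtained force the segment $x+tz$ into a fixed compact region $Q$ in the target of this map. By the shift-invariance condition~B there are, up to translation, only $2d$ of the relevant coefficient functions $\p_{i,l}S_i$ with $\|i-l\|=1$; each is continuous and strictly negative on $Q$, so all of them stay below some $-\lambda<0$ with $\lambda$ depending only on $K$. Establishing this uniform twist along the non-Birkhoff segment is the main obstacle of the argument; everything else is routine.

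Granting the two bounds, the Harnack inequality follows by the usual chaining. Rearranging the identity at a site $i$ gives $a_{i,i}z_i=\sum_{l\neq i}|a_{i,l}|z_l\geq\lambda\,z_l$ for every neighbour $l$ of $i$, while $a_{i,i}z_i\leq C'z_i$ because $a_{i,i}\leq C'$ and $z_i\geq 0$; hence $z_l\leq(C'/\lambda)\,z_i$ whenever $\|l-i\|=1$. Chaining this estimate along a shortest lattice path from $i$ to $k$ yields $z_k\leq(C'/\lambda)^{\|i-k\|}z_i$, so $\delta:=(C'/\lambda)^{\|i-k\|}$ has exactly the claimed dependence on $K$ and $\|i-k\|$. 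Finally, applying the same inequality with $i$ and $k$ interchanged shows that $z_i$ and $z_k$ are always comparable; since $z=y-x\not\equiv 0$, some $z_i>0$, and therefore $z_k>0$ for every $k$, i.e.\ $x\ll y$.
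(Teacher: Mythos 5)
Your proof is correct, and its skeleton is the same as the paper's: subtract the two stationarity relations, interpolate along the segment $t\mapsto x+t(y-x)$ to get a linear relation $\sum_l a_{i,l}z_l=0$ with non-positive off-diagonal coefficients, bound the nearest-neighbour coefficients away from zero and the diagonal ones from above, and chain the resulting one-step estimate along a lattice path; the final positivity argument ($x<y$ implies $x\ll y$) is also the same. Where you genuinely diverge is the justification of the uniform twist bound $a_{i,l}\le-\lambda$ along the segment. The paper simply asserts that every convex combination $\tau y+(1-\tau)x$ is again Birkhoff and then quotes Corollary \ref{uniform_twist}, i.e.\ compactness of $\mathcal{B}_K/\Z$. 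You correctly observe that the intermediate configurations need not be Birkhoff (ordering with respect to a translate $\tau_{k,l}$ can be lost when $\langle\omega,k\rangle+l=0$ and the translates of $x$ and $y$ order oppositely), and instead you prove the needed uniformity directly: by Lemma \ref{sequence} the segment has nearest-neighbour increments bounded in terms of $K$, each $\p_{i,l}S_j$ is continuous, of finite range and $\tau_{0,1}$-invariant, hence factors through a compact set of relative coordinates, and shift-invariance reduces everything to finitely many coefficient functions. This buys a more robust (and arguably more watertight) version of the key step, at the cost of redoing a compactness argument that the paper gets for free from Corollary \ref{uniform_twist}; the constants you obtain have the same dependence on $K$, $\|i-k\|$ and the fixed data $r,d,C$ of the potentials as in the paper.
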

 \begin{proof}
By interpolation: let $x$ and $y$ be stationary and Birkhoff and let $i, k\in \Z^d$ and assume first that $||i-k||=1$. Choose a $B$ with $i \in \mathring{B}^{(r)}$ and recall the definition $W_B(x)=\sum_{j\in B}S_j(x)$. Then, by stationarity,
\begin{align}\nonumber
0 =& \partial_iW_B(y)-\partial_i W_B(x) =  \int_0^1\frac{d}{d\tau} \left( \sum_{||i-j||\leq r} \partial_iS_j(\tau y+(1-\tau)x)\right) d\tau = \\ \nonumber
 & \sum_{||i-j||\leq r, ||j-l||\leq r} \!\! \left( \int_0^1\partial_{i,l}S_j(\tau y+(1-\tau)x)d\tau \right) (y_l-x_l) \ .
\end{align}
Since, by the twist condition C, the only possibly positive terms on the right hand side are the $\left(\partial_{i,i}S_j(\tau y+(1-\tau)x)\right) (y_i-x_i)$, the right hand side is less than or equal to
$$\left(\int_0^1\!\! \sum_{||i-j||\leq r}\!\! \partial_{i,i}S_j(\tau y \! + \! (1\! -\! \tau)x)d\tau \! \right) \!  (y_i-x_i) + \left(\int_0^1 \!\! \sum_{||k-i||= 1} \!\! \partial_{i,k}S_i(\tau y \! + \! (1\! - \! \tau)x)d\tau \! \right) \!  (y_k-x_k) \ .
$$
Now, because $x$ and $y$ are Birkhoff, so is every $\tau y + (1-\tau)x$ and hence by Corollary \ref{uniform_twist}, there are constants $\lambda, C>0$, depending only on the compact set $K$, such that for all $j$ and all $||i-k||=1$, it holds that $\partial_{i,k}S_i < -\lambda$, while  $\partial_{i,i}S_j<C$ for all $i$ and $j$. Thus,
$$0 \leq (2r)^dC(y_i-x_i) - 2d \lambda (y_k-x_k)\ .$$
This proves the theorem for $||i-k||=1$ with $\delta = \delta_1 :=(2r)^dC/2d\lambda$. For $||i-k||>1$, the result then follows by induction and it holds for $\delta = \delta_1^{||i-k||}$.
\end{proof}

\subsection{Aubry-Mather sets}
We make the following definition:
\begin{definition}\label{defmatherset}
An {\it Aubry-Mather set} $\mathcal{M}\subset \R^{\Z^d}$ is a collection of configurations with the following properties
\begin{itemize}
\item $\mathcal{M}$ is nonempty and closed under pointwise convergence
\item $\mathcal{M}$ is strictly ordered, i.e. for every $x,y \in \mathcal{M}$, $x \ll y$, $x=y$ or $x \gg y$
\item $\mathcal{M}$ is shift-invariant: if $x\in \mathcal{M}$, then for every $(k,l) \in \Z^d \times \Z$, also $\tau_{k,l}x \in \mathcal{M}$
\item Every $x\in \mathcal{M}$ is a global minimizer of the variational recurrence relation (\ref{RRR})
\item $\mathcal{M}$ does not contain any strictly smaller set with the properties listed above
\end{itemize} 
\end{definition}
\noindent The strict ordering and the shift-invariance of an Aubry-Mather set $\mathcal{M}$ imply that any configuration $x\in \mathcal{M}$ is Birkhoff and hence has a rotation vector $\omega=\omega(x)$. The ordering of $\mathcal{M}$ moreover implies that this rotation vector is independent of the choice of $x\in \mathcal{M}$, that is $\omega = \omega(\mathcal{M})$ and thus, $\mathcal{M} \subset \mathcal{B}_\omega$. 
\\ \indent Recall that Theorem \ref{quasi-periodic birkhoff minimizers} states that for every rotation vector $\omega$ there exists a minimizer $x\in \mathcal{B}_{\omega}$ for which $\tau_{k,l}x=x$ as soon as $\langle \omega, k\rangle + l =0$. This in fact implies that a certain Aubry-Mather set $\mathcal{M}(x)\subset \mathcal{B}_\omega$ exists. This $\mathcal{M}(x)$ is constructed as follows. 
\noindent One starts by defining the collection $\widetilde{\mathcal{M}}(x)\subset \mathcal{B}_{\omega}$ as the closure with respect to pointwise convergence of the set of translates of $x$:
$$\widetilde{\mathcal{M}}(x):=\overline{ \{ \tau_{k,l}x \ | \ (k,l)\in \Z^d\times \Z\ \} }. $$
This is almost an Aubry-Mather set:
\begin{lemma}\label{almostMatherset}
Let $x\in \mathcal{B}_{\omega}$ be an action-minimizer with the property that $\tau_{k,l}x=x$ when $\langle \omega, k\rangle + l =0$. Then $\widetilde{\mathcal{M}}(x)$ is nonempty, closed, strictly ordered, shift-invariant and consists of minimizers. Moreover, for every $y\in \widetilde{\mathcal{M}}(x)$ it holds that $\tau_{k,l}y= y$ as soon as $\langle \omega, k\rangle + l = 0$. When $\omega\in \Q^d$, then $\widetilde{\mathcal{M}}(x)$ is an Aubry-Mather set.
\end{lemma}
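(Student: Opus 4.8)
The plan is to verify the six listed properties in roughly increasing order of difficulty, and then treat the rational case separately. Nonemptiness is trivial since $x\in\widetilde{\mathcal M}(x)$, and closedness is immediate because $\widetilde{\mathcal M}(x)$ is defined as a pointwise closure. Shift-invariance follows because the generating set $\{\tau_{k,l}x\}$ is already shift-invariant (as $\tau_{k',l'}\tau_{k,l}x=\tau_{k+k',l+l'}x$) and each $\tau_{k',l'}$ is continuous with respect to pointwise convergence, so the property survives taking the closure. That $\widetilde{\mathcal M}(x)$ consists of global minimizers follows because the notion of global minimizer is itself shift-invariant — this is an immediate consequence of condition B, which gives $W_B(\tau_{k,l}z)=W_{B+k}(z)$ and carries $r$-interior supported variations to $r$-interior supported variations — so every $\tau_{k,l}x$ is a minimizer, and by Lemma \ref{closed} the set of global minimizers is pointwise-closed. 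The periodicity statement is proved the same way: if $\langle\omega,k\rangle+l=0$ then $\tau_{k,l}\tau_{k',l'}x=\tau_{k',l'}\tau_{k,l}x=\tau_{k',l'}x$ by the hypothesis on $x$ and commutativity of the shifts, so every generator is fixed by $\tau_{k,l}$, and this persists under pointwise limits by continuity of $\tau_{k,l}$.

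The strict ordering is the core of the argument. First I would record that every element of $\widetilde{\mathcal M}(x)$ is Birkhoff (the space $\mathcal B$ is pointwise-closed, and every $\tau_{k,l}x$ is Birkhoff because the shifts commute and are order-preserving), has rotation vector $\omega$ (the rotation vector is shift-invariant and continuous by Lemma \ref{sequence}), and is stationary (being a global minimizer); hence $\widetilde{\mathcal M}(x)\subset\mathcal B_{\{\omega\}}$ with $\{\omega\}$ compact. Next, the generating set is totally ordered by $\leq$: to compare $\tau_{k,l}x$ and $\tau_{k',l'}x$, apply the order-preserving map $\tau_{-k',-l'}$ to reduce to comparing $\tau_{k-k',l-l'}x$ with $x$, which are comparable since $x$ is Birkhoff. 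This total order passes to the closure by a subsequence argument: given $y=\lim_n\tau_{a_n}x$ and $z=\lim_n\tau_{b_n}x$, for each $n$ one of $\tau_{a_n}x\leq\tau_{b_n}x$ or $\tau_{b_n}x\leq\tau_{a_n}x$ holds; choosing a subsequence on which the inequality is consistent and passing to the limit yields $y\leq z$ or $z\leq y$. Finally, the elliptic Harnack inequality of Theorem \ref{ellharn}, applied to these Birkhoff stationary configurations of rotation vector $\omega$, upgrades $y<z$ to $y\ll z$, which gives the strict ordering.

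For the last assertion, suppose $\omega\in\Q^d$ and fix principal periods $(p_1,q_1),\dots,(p_d,q_d)$ for $\omega$, so that $x\in\overline{\mathcal B}_\omega=\mathcal B_{p,q}\subset\X_{p,q}$, where $\X_{p,q}$ is finite-dimensional and pointwise-closed in $\R^{\Z^d}$. Since $\X_{p,q}$ is $\tau$-invariant, the whole orbit $\{\tau_{k,l}x\}$ lies in $\X_{p,q}$, and by Proposition \ref{numbertheory} together with the hypothesis on $x$, the configuration $\tau_{k,l}x$ depends only on the value $t=\langle\omega,k\rangle+l$; for $\omega\in\Q^d$ these values form a discrete subgroup of $\R$, and since $\tau_{0,1}$ shifts $t$ by $1$ while acting on configurations by $z\mapsto z+1$, the orbit consists of finitely many $\Z$-translates of finitely many configurations and is therefore pointwise-closed. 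Hence $\widetilde{\mathcal M}(x)=\{\tau_{k,l}x\}$ is a single $(\Z^d\times\Z)$-orbit, so any nonempty shift-invariant subset equals all of it; thus $\widetilde{\mathcal M}(x)$ is minimal and therefore an Aubry-Mather set.

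I expect the main obstacle to be the strict ordering — specifically, the passage from $\leq$-comparability of distinct elements of the closure to genuine $\ll$-comparability, which really does require the regularity of stationary Birkhoff configurations provided by the elliptic Harnack inequality and fails for arbitrary Birkhoff configurations of a fixed rotation vector. A secondary technical point is the verification, in the rational case, that the orbit $\{\tau_{k,l}x\}$ is already pointwise-closed, which is exactly what reduces minimality to the triviality that a group orbit contains no proper invariant subset.
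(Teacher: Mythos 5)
Your proof is correct and follows essentially the same route as the paper: continuity of the shifts plus Lemma \ref{closed} for shift-invariance, the minimizing property and maximal periodicity; $\leq$-comparability of limits of translates of the Birkhoff configuration $x$ upgraded to $\ll$ via the elliptic Harnack inequality (Theorem \ref{ellharn}); and, for $\omega\in\Q^d$, the observation that the $\tau$-orbit of $x$ is discrete (finitely many configurations modulo integer vertical shifts), hence closed, so $\widetilde{\mathcal{M}}(x)$ is a single orbit and therefore minimal. The only cosmetic differences are your subsequence argument for comparability in place of the paper's two-site crossing contradiction, and the superfluous citation of Proposition \ref{numbertheory} where the maximal-periodicity hypothesis on $x$ alone gives that $\tau_{k,l}x$ depends only on $\langle\omega,k\rangle+l$.
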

\begin{proof} By definition, $\widetilde{\mathcal{M}}(x)$ is nonempty and closed. \\ \indent We note that when $x$ is a minimizer, then so is $\tau_{k,l}x$ and because any pointwise limit of minimizers is a minimizer itself, by Lemma \ref{closed}, we see that $\widetilde{\mathcal{M}}(x)$ consists of minimizers only. \\ \indent When $y\in \widetilde{\mathcal{M}}(x)$, say $y = \lim_{n\to \infty} \tau_{k_n, l_n}x$, then the continuity of $\tau_{k,l}$ implies that $\tau_{k,l}y = \tau_{k,l}(\lim_{n\to\infty} \tau_{k_n, l_n}x) = \lim_{n\to \infty} \tau_{k,l} (\tau_{k_n, l_n}x) = \lim_{n\to \infty} \tau_{k+k_n, l+l_n}x$ and thus, $\widetilde{\mathcal{M}}(x)$ is shift-invariant. \\ \indent The fact that $x$ is a Birkhoff configuration means that the collection $\{\tau_{k,l}x\ | \ (k,l)\in\Z^d\times \Z\ \}$ is ordered. Now let $y$ and $z$ be elements of $\widetilde{\mathcal{M}}(x)$, say $y = \lim_{n\to\infty}\tau_{k_n,l_n}x$ and $z=\lim_{n\to\infty}\tau_{K_n,L_n}x$. We claim that $y\leq z$ or $z\leq y$. If not, then there are $i,k$ with $y_i<z_i$ and $y_k>z_k$. The pointwise convergence then implies that there are $n$ and $N$ so that $(\tau_{k_n,l_n}x)_i<(\tau_{K_N,L_N}x)_i$ and $(\tau_{k_n,l_n}x)_k>(\tau_{K_N,L_N}x)_k$. This is a contradiction. The second conclusion of Theorem \ref{ellharn} now implies that $y\ll z$, $y=z$ or $y\gg z$, that is $\widetilde{\mathcal{M}}(x)$ is strictly ordered. \\ \indent The penultimate conclusion of the lemma follows from the continuity of $\tau_{k,l}$ and the fact that $\tau_{k,l}x=x$ when $\langle \omega, k\rangle + l = 0$. Namely, for such $k$ and $l$ and for $y\in \widetilde{\mathcal{M}}(x)$, say $y=\lim_{n\to\infty}\tau_{k_n,l_n}x$, we have that $\tau_{k,l}y=\tau_{k,l}(\lim_{n\to\infty} \tau_{k_n,l_n}x) =  \lim_{n\to\infty} \tau_{k,l}(\tau_{k_n,l_n}x) = \lim_{n\to\infty} \tau_{k_n,l_n}(\tau_{k,l}x) = \lim_{n\to\infty} \tau_{k_n,l_n}x = y$.\\
\indent Finally, when $\omega\in \Q^d$, then our assumptions imply that $x$ is periodic, say $x\in\X_{p,q}$, with $(p,q)$ a collection of principal periods for $\omega$. This implies that the $\tau$-orbit of $x$ is finite. Thus, $\widetilde{\mathcal{M}}(x)$ is equal to this single $\tau$-orbit and cannot contain any proper nonempty $\tau$-invariant subset.
\end{proof}

\noindent It is clear from the proof of Lemma \ref{almostMatherset}, that when $\omega\in \Q^d$, then every Aubry-Mather set is finite and consists of the translates of one periodic minimizer. Thus, the Aubry-Mather sets of rational rotation vector do not need to be unique. \\ \indent On the other hand, when $\omega\in \R^d\backslash \Q^d$ is irrational, then $\widetilde{\mathcal{M}}(x)$ may fail to be an Aubry-Mather set. Then one replaces $\widetilde{\mathcal{M}}(x)$ by its {\it recurrent} subset
$$\mathcal{M}(x) := \{ y \in \widetilde{\mathcal{M}}(x) \ | y = \lim_{n\to\infty} \tau_{k_n,l_n}y \ \mbox{for a sequence} \ (k_n, l_n)\ \mbox{with} \ \langle \omega, k_n\rangle + l_n \neq 0\ \}\ .$$
Before proving that this $\mathcal{M}(x)$ is indeed an Aubry-Mather set, let us define for a configuration $y\in \widetilde{\mathcal{M}}(x)$, the configurations
$$y^-:=\sup \{\tau_{k,l}y\ll y\}\ \mbox{and}\  y^+:=\inf \{ \tau_{k,l}y \gg y\} \ .$$
We remark that, by definition, $y\in \mathcal{M}(x)$ if and only if $y=y^-$ or $y=y^+$, or both. We now have the following technical result:
\begin{proposition}\label{yminus}
For $y, z\in \widetilde{\mathcal{M}}(x)$ it holds that $y^-= \sup \{\tau_{k,l}z\ll y^-\}$ and $y^+= \inf \{\tau_{k,l}z \gg y^+\}$. 
\end{proposition}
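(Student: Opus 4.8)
The plan is to reduce the statement to a question about the values of configurations at a single site. First note that both $y^-$ and $W:=\sup\{\tau_{k,l}z\ll y^-\}$ are pointwise suprema of totally ordered families of translates of a Birkhoff configuration, hence pointwise limits of monotone sequences of such translates, and therefore lie in the pointwise-closed set $\widetilde{\mathcal M}(x)$. By Theorem~\ref{ellharn} the set $\widetilde{\mathcal M}(x)$ is strictly ordered, so that for $u,v\in\widetilde{\mathcal M}(x)$ one has $u\ll v$ precisely when $u_0<v_0$; in particular $W=y^-$ as soon as $W_0=(y^-)_0$. Since $W\le y^-$ is immediate, the whole task is to show that translates of $z$ accumulate at $y^-$ from strictly below: that there are $(k_n,l_n)$ with $\tau_{k_n,l_n}z\ll y^-$ and $(\tau_{k_n,l_n}z)_0\to(y^-)_0$.

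I would treat the case $z=y$ first. By definition $(y^-)_0=\sup\{(\tau_{k,l}y)_0:\tau_{k,l}y\ll y\}$, and since $\omega$ is irrational the orbit of $y$ has no immediate predecessor among its translates, so this supremum is not attained by a single translate. Choosing $(k_n,l_n)$ with $\tau_{k_n,l_n}y\ll y$ and $(\tau_{k_n,l_n}y)_0$ strictly increasing to $(y^-)_0$, total ordering of the translates of $y$ makes $\tau_{k_n,l_n}y$ pointwise increasing; being bounded above by $y$ it converges pointwise to some $v\in\widetilde{\mathcal M}(x)$ with $v\le y^-$ and $v_0=(y^-)_0$, hence $v=y^-$. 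Thus $\tau_{k_n,l_n}y\uparrow y^-$ with $\tau_{k_n,l_n}y\ll y^-$, which settles the proposition for $z=y$. Running the same argument with $y^-$ in place of $y$, and combining it with the shift-equivariance $\tau_{k,l}(u^-)=(\tau_{k,l}u)^-$ and the squeeze $\tau_{k_n,l_n}y\le(\tau_{k_{n+1},l_{n+1}}y)^-=\tau_{k_{n+1},l_{n+1}}(y^-)\le\tau_{k_{n+1},l_{n+1}}y\le y^-$, one obtains $\tau_{k_n,l_n}(y^-)\to y^-$ with $\tau_{k_n,l_n}(y^-)\ll y^-$. Hence $(y^-)^-=y^-$, so $y^-\in\mathcal M(x)$; symmetrically $y^+\in\mathcal M(x)$, and in particular $\mathcal M(x)$ is nonempty.

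For an arbitrary $z\in\widetilde{\mathcal M}(x)$ I would pass through $\mathcal M(x)$. Since $\omega$ is irrational, $\mathcal M(x)$ carries an infinite $\tau$-orbit, and a strict-order argument shows it has no isolated points; together with $(y^-)^-=y^-$ and shift-invariance of $\mathcal M(x)$ this yields a sequence in $\mathcal M(x)$ that is $\ll y^-$ and converges to $y^-$. On the other hand $\overline{\{\tau_{k,l}z\}}$ is a nonempty closed shift-invariant subset of $\widetilde{\mathcal M}(x)$, and the monotone semiconjugacy of $\widetilde{\mathcal M}(x)$ onto the rigid $\omega$-rotation, together with density of $\{\langle\omega,k\rangle+l\}$, forces $\overline{\{\tau_{k,l}z\}}$ to contain $\mathcal M(x)$, hence to meet the open set $\{u\in\widetilde{\mathcal M}(x):w_0<u_0<(y^-)_0\}$ for every $w\in\mathcal M(x)$ with $w\ll y^-$. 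By density of $\{\tau_{k,l}z\}$ in $\overline{\{\tau_{k,l}z\}}$ this open set also contains a translate of $z$; letting $w$ run up to $y^-$ along $\mathcal M(x)$ produces translates of $z$ that are $\ll y^-$ and converge to $y^-$. This gives $W=y^-$, and the statement about $y^+$ follows by reversing all inequalities.

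The step I expect to be the real obstacle is the structural input invoked in the last paragraph: that $\mathcal M(x)$ is the unique minimal subset of $\widetilde{\mathcal M}(x)$, is contained in every orbit closure, and has no isolated points. These are the lattice analogues of classical one-dimensional Aubry-Mather facts; the tools at hand are the strict order from Theorem~\ref{ellharn} and the compactness of $\mathcal B_K/\Z$ from Proposition~\ref{compactness}, but carrying the ``no two adjacent gaps'' and ``unique minimal set'' arguments over to dimension $d\ne1$ requires some care. Everything else is routine bookkeeping with the partial order.
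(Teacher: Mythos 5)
Your treatment of the special cases $z=y$ and $z=y^-$ is correct: using the density of $\{\langle\omega,k\rangle+l\}$ and Proposition~\ref{numbertheory} to see that the supremum defining $y^-$ is not attained, extracting an increasing sequence of translates of $y$ converging to $y^-$, and then squeezing to get $(y^-)^-=y^-$, is a sound (and somewhat different) argument. The problem is the general case, which for $z\neq y$ is the whole content of the proposition, and there you have a genuine gap that you yourself flag. The step ``$\overline{\{\tau_{k,l}z\}}$ contains $\mathcal{M}(x)$ for every $z\in\widetilde{\mathcal{M}}(x)$'' (equivalently, that $\mathcal{M}(x)$ is the unique minimal set, obtained via a monotone semiconjugacy onto the rigid $\omega$-rotation) is not available at this point: in the paper it is deduced \emph{from} Proposition~\ref{yminus} — the proof of Theorem~\ref{mathersettheorem} opens precisely with the observation that Proposition~\ref{yminus} makes every point of $\mathcal{M}(x)$ a limit of the $\tau$-orbit of any $z\in\widetilde{\mathcal{M}}(x)$ — and no semiconjugacy is constructed anywhere in the paper. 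So as written the argument is circular relative to the development, and proving the invoked structural facts independently in dimension $d\neq 1$ would be an undertaking of at least the same difficulty as the proposition itself.

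For comparison, the paper handles arbitrary $z$ directly by a short contradiction argument that needs none of this machinery: set $z^-(y^-):=\sup\{\tau_{k,l}z\ll y^-\}$ and suppose $z^-(y^-)\ll y^-$. Since $y^-$ is by definition approximated from below by translates of $y$, there are $(k,l)$ with $z^-(y^-)\ll\tau_{k,l}y\ll y^-$; then $\tau_{k,l}y\ll y$ forces $\langle\omega,k\rangle+l<0$ by Proposition~\ref{numbertheory}, so applying $\tau_{-k,-l}$ gives $z^-(y^-)\ll\tau_{-k,-l}z^-(y^-)\ll y$, while $\tau_{-k,-l}z^-(y^-)$ is still a pointwise limit of translates of $z$ lying below $y^-$ (continuity and equivariance of the shifts), contradicting the supremum property of $z^-(y^-)$. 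If you want to salvage your approach, you should replace the appeal to minimality by an argument of this kind — the only inputs you are entitled to here are Proposition~\ref{numbertheory}, the strict ordering of $\widetilde{\mathcal{M}}(x)$ coming from Theorem~\ref{ellharn}, and the continuity of the shift maps.
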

\begin{proof}
Let us prove the first equality: the proof of the second one is similar. We denote $z^-(y^-):= \sup \{\tau_{k,l}z\ll y^-\}$ and we argue by contradiction. That is, we suppose that $z^-(y^-) \neq y^-$, and hence,  that $z^-(y)\ll y^-$. Then, because $y^-$ can be approximated from below by translates of $y$ by definition, there are $k$ and $l$ so that $z^-(y^-) \ll \tau_{k,l}y \ll y^-$. This implies that $\tau_{k,l}y\ll y$ and in view of Proposition \ref{numbertheory}, we must therefore have that $\langle \omega, k\rangle + l <0$. Applying $\tau_{-k,-l}$ to the inequality $z^-(y^-) \ll \tau_{k,l} y$, we obtain that $\tau_{-k,-l} z^-(y^-) \ll y$. But because $\langle \omega, -k\rangle -l >0$, we must also have that $z^-(y^-) \ll \tau_{-k,-l} z^-(y^-)$. Hence, $z^-(y^-) \ll \tau_{-k,-l} z^-(y^-) \ll y$. But this contradicts the definition of $z^-(y^-)$, because by continuity of $\tau_{-k,-l}$, if $z^-(y^-) = \lim_{n\to\infty} \tau_{k_n,l_n} z$, then also $\tau_{-k,-l} z^-(y^-) = \lim_{n\to\infty} \tau_{-k+k_n,-l+l_n} z$ is a limit of translates of $z$ that lie below $y^-$. 
\end{proof}
\noindent We are now ready to prove:
\begin{theorem}\label{mathersettheorem}
When $\omega\in\R^d/\Q^d$, then $\mathcal{M}(x)$ is the unique Aubry-Mather set contained in $\widetilde{\mathcal{M}}(x)$. 
\end{theorem}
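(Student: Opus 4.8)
The plan is to reduce everything to two facts: \textbf{(I)} the recurrent set $\mathcal{M}(x)$ itself enjoys all the defining properties of an Aubry--Mather set, and \textbf{(II)} \emph{every} nonempty, closed, $\tau$-invariant subset $\mathcal{N}\subseteq\widetilde{\mathcal{M}}(x)$ already contains $\mathcal{M}(x)$. Granting these, the theorem follows at once: by (I), $\mathcal{M}(x)$ is an Aubry--Mather set contained in $\widetilde{\mathcal{M}}(x)$; and if $\mathcal{N}\subseteq\widetilde{\mathcal{M}}(x)$ is \emph{any} Aubry--Mather set, then $\mathcal{N}$ is in particular nonempty, closed and $\tau$-invariant, so $\mathcal{M}(x)\subseteq\mathcal{N}$ by (II), and since $\mathcal{M}(x)$ is itself an Aubry--Mather set by (I), the minimality of $\mathcal{N}$ forces $\mathcal{N}=\mathcal{M}(x)$.

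The heart of (II) is Proposition~\ref{yminus}. Pick $z\in\mathcal{N}$, so that the orbit closure $\overline{\{\tau_{k,l}z\ |\ (k,l)\in\Z^d\times\Z\}}$ is contained in $\mathcal{N}$. Let $w\in\mathcal{M}(x)$; by the definition of $\mathcal{M}(x)$ either $w=w^-$ or $w=w^+$, say the former. Then Proposition~\ref{yminus} gives $w=w^-=\sup\{\tau_{k,l}z\ll w\}$, and since $z$ is Birkhoff this set is totally ordered, while by Lemma~\ref{sequence} it is nonempty (it contains $\tau_{0,n}z=z+n$ for $n$ sufficiently negative, because $z$ and $w$ have the same rotation vector and hence uniformly bounded deviation from the same affine configuration). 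Thus $w$ is the pointwise limit of an increasing sequence of translates of $z$, so $w\in\overline{\{\tau_{k,l}z\}}\subseteq\mathcal{N}$; the case $w=w^+$ is symmetric. This proves (II), and most parts of (I) come in the same way. For nonemptiness, take any $y\in\widetilde{\mathcal{M}}(x)$; then $y^-\in\widetilde{\mathcal{M}}(x)$, being an increasing pointwise limit of translates of $y$, and Proposition~\ref{yminus} (with $z:=y^-$) gives $y^-=\sup\{\tau_{k,l}y^-\ll y^-\}$, which exhibits $y^-$ as an increasing limit of its own translates $\tau_{k,l}y^-\ll y^-$, for which $\langle\omega,k\rangle+l<0\neq 0$ by Proposition~\ref{numbertheory}; hence $y^-\in\mathcal{M}(x)$ and $\mathcal{M}(x)\neq\emptyset$. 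Shift-invariance of $\mathcal{M}(x)$ is immediate from the definition and the commutativity of the shifts: if $w=\lim_n\tau_{k_n,l_n}w$ with $\langle\omega,k_n\rangle+l_n\neq 0$, then $\tau_{K,L}w=\lim_n\tau_{k_n,l_n}(\tau_{K,L}w)$ for every $(K,L)$. Strict ordering and the minimizer property are inherited from $\widetilde{\mathcal{M}}(x)$ by Lemma~\ref{almostMatherset}. Finally, minimality of $\mathcal{M}(x)$ is itself a consequence of (II): a nonempty, closed, $\tau$-invariant subset of $\mathcal{M}(x)$ must, by (II), contain $\mathcal{M}(x)$ and hence equal it.

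The one genuinely delicate point --- and the main obstacle --- is that $\mathcal{M}(x)$ is \emph{closed} under pointwise convergence. I would prove this by showing that the complement $\widetilde{\mathcal{M}}(x)\setminus\mathcal{M}(x)=\{v\in\widetilde{\mathcal{M}}(x)\ |\ v^-\ll v\ll v^+\}$ is relatively open. The map $\pi:\widetilde{\mathcal{M}}(x)\to\R$, $\pi(v):=v_0$, is a continuous and strictly order-preserving injection, since $\widetilde{\mathcal{M}}(x)$ is strictly ordered; with the help of Lemma~\ref{sequence} (orbits stay at bounded distance from the affine configuration) and the compactness of $\widetilde{\mathcal{M}}(x)/\Z$ (Proposition~\ref{compactness}) one checks that $\pi$ is in fact a homeomorphism onto a closed subset of $\R$, so it suffices to argue with order-intervals. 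The crux is then to show that the ``gap'' $(v_0^-,v_0^+)$ of a configuration $v_0$ with $v_0^-\ll v_0\ll v_0^+$ contains no element of $\mathcal{M}(x)$ in its interior; granting this, every $v$ with $\pi(v_0^-)<\pi(v)<\pi(v_0^+)$ again satisfies $v^-\ll v\ll v^+$, hence is non-recurrent, and this open set lies in the complement. Establishing the emptiness of gaps is a uniformity statement --- one has to rule out a pointwise limit $v^\infty=\lim_n\tau_{k_n,l_n}v^n$ of recurrent configurations $v^n\to v^\infty$ in which $v^\infty$ drops into the interior of a gap --- and I expect the right tools to be the definitions of $v^\pm$ together with Proposition~\ref{numbertheory} (which fixes the sign of $\langle\omega,k\rangle+l$ for a translate lying above or below a Birkhoff configuration) and the uniform comparison of the elliptic Harnack inequality, Theorem~\ref{ellharn}, which bounds $(\tau_{k,l}v-v)_j$ in terms of $(\tau_{k,l}v-v)_0=\pi(\tau_{k,l}v)-\pi(v)$. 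An equivalent, perhaps more convenient formulation is that $\mathcal{M}(x)=\overline{\{\tau_{k,l}w\}}$ for any single $w\in\mathcal{M}(x)$ --- manifestly closed --- the substance being the inclusion $\overline{\{\tau_{k,l}w\}}\subseteq\mathcal{M}(x)$, i.e.\ that no pointwise limit of translates of a recurrent configuration can land inside a gap. Once this closedness is established, (I) is complete and the theorem follows by the reduction of the first paragraph.
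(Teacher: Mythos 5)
Most of your plan coincides with the paper's argument: uniqueness and minimality via Proposition \ref{yminus} (every recurrent configuration is a limit of translates of any $z\in\widetilde{\mathcal{M}}(x)$, so every nonempty, closed, shift-invariant subset contains $\mathcal{M}(x)$), nonemptiness via $(y^-)^-=y^-$, shift-invariance via continuity of $\tau_{k,l}$, and the remaining Aubry--Mather properties inherited from Lemma \ref{almostMatherset}. Those parts are correct.

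The genuine gap is exactly the point you flag yourself: closedness of $\mathcal{M}(x)$ is never proved. Your text reduces it to the statement that no limit of recurrent configurations can land strictly inside a gap $(v^-,v^+)$, and then stops at ``I expect the right tools to be \dots'', proposing the projection $\pi_0$, Proposition \ref{numbertheory} and the elliptic Harnack inequality. As it stands this is a plan for a proof, not a proof, and the route you sketch is also more elaborate than needed: no uniformity or Harnack-type estimate is required. The missing step closes in three lines using the very proposition you already rely on. Suppose $y_n\in\mathcal{M}(x)$, $y_n\to y$ pointwise, and $y$ is not recurrent, so $y^-\ll y\ll y^+$. Since $\widetilde{\mathcal{M}}(x)$ is strictly ordered, pointwise convergence at a single site forces $y^-\ll y_n\ll y^+$ for some $n$, and $y_n\neq y$ because $y_n$ is recurrent. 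By Proposition \ref{yminus} (applied with $z=y$), the recurrent configuration $y_n$ is approximated by translates of $y$; hence there exist $k,l$ with $y^-\ll\tau_{k,l}y\ll y^+$ and $\tau_{k,l}y\neq y$. Depending on whether $\tau_{k,l}y\ll y$ or $\tau_{k,l}y\gg y$, this contradicts the definition of $y^-$ or of $y^+$. With that supplied, your reduction (I)+(II) gives the theorem, and your argument is then essentially the paper's.
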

\begin{proof}
Proposition \ref{yminus} says that any $y\in \mathcal{M}(x)$ is a limit point of the $\tau$-orbit of any $z\in\widetilde{\mathcal{M}}(x)$. Thus, any nonempty, shift-invariant closed subset of $\widetilde{\mathcal{M}}(x)$ should contain $\mathcal{M}(x)$. It remains to show that $\mathcal{M}(x)$ is nonempty, shift-invariant and closed.\\ 
\indent First of all, Proposition \ref{yminus} applied to $z=y^-$ and $z=y^+$ respectively, says that $(y^-)^-=y^-$ and $(y^+)^+=y^+$, i.e. that $y^-$ and $y^+$ are recurrent. This shows that $\mathcal{M}(x)$ is nonempty. \\
\indent 
Shift-invariance of $\mathcal{M}(x)$ follows from the continuity of $\tau_{k,l}$: when $y=\lim_{n\to \infty}\tau_{k_n,l_n}y$, then $\tau_{k,l}y=\lim_{n\to\infty}\tau_{k_n,l_n}(\tau_{k,l}y)$.\\
\indent To prove that $\mathcal{M}(x)$ is closed, assume that $\lim_{n\to \infty} y_n =y$ pointwise for a sequence $y_n$ of recurrent configurations. When the limit $y$ is not recurrent, then $y^- \ll y \ll y^+$, so that there is an $n$ for which $y^- \ll y_n \ll y^+$. But $y_n$ is recurrent, hence $y_n\neq y$, while by Proposition \ref{yminus}, $y_n$ can be approximated by translates of $y$. Hence, there are $k$ and $l$ such that $y^-\ll \tau_{k,l}y\ll y^+$ and $\tau_{k,l}y\neq y$. This contradicts the definition of $y^-$ or $y^+$.
\end{proof}
\begin{remark} A theorem of Bangert \cite{bangert87} in the case of elliptic PDEs, states that when $\omega\in\R^d\backslash \Q^d$, then the recurrent subset actually does not depend on the choice of the Birkhoff minimizer $x\in \mathcal{B}_{\omega}$. In other words, that when $x, y\in \mathcal{B}_{\omega}$ are such that $\tau_{k,l}x=x$ and $\tau_{k,l}y=y$ whenever $\langle \omega, k\rangle + l=0$, then $\mathcal{M}(x)=\mathcal{M}(y)$. 
\\
\indent The proof of this theorem is nontrivial. The essence of it lies in proving an Aubry lemma for recurrent minimizers, that is to show that if $\tilde x \in \mathcal{M}(x)$ and $\tilde y\in \mathcal{M}(y)$ are recurrent, then $\tilde x\ll \tilde y$, $\tilde x=\tilde y$ or $\tilde x\gg \tilde y$. \\ 
\indent We claim that a similar theorem holds for lattices instead of PDEs, but we will not prove this, as it is not essential for the remainder of this paper.  As a result, the Aubry-Mather set of an irrational rotation vector is unique.
\end{remark}
\noindent The following well-known result shows that the set of recurrent minimizers can have a complicated topology. We recall that a topological space $\mathcal{C}$ is called a {\it Cantor set} if it is closed, perfect and totally disconnected. ``Perfect'' means that every element $c\in \mathcal{C}$ is a limit of points in $\mathcal{C}\backslash \{c\}$. ``Totally disconnected'' means that for any two elements $c_1, c_2\in \mathcal{C}$ one can decompose $\mathcal{C}$ as the disjoint union of closed sets $\mathcal{C}_1$  and $\mathcal{C}_2$ with $c_1\in \mathcal{C}_1$ and $c_2\in \mathcal{C}_2$. 
\begin{theorem}\label{Cantortheorem}
If $\omega\in \R^d\backslash \Q^d$, then $\mathcal{M}(x)$ is either connected or a Cantor set.
\end{theorem}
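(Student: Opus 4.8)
The plan is to transfer the topology of $\mathcal{M}(x)$ to a closed subset of the real line. Define $\pi\colon\mathcal{M}(x)\to\R$ by $\pi(y):=y_0$. Because $\mathcal{M}(x)$ is strictly ordered, $y\ll z\Leftrightarrow y_0<z_0$, so $\pi$ is an order-preserving bijection onto its image $\Pi:=\pi(\mathcal{M}(x))$, and it is continuous for pointwise convergence. I would first check that $\pi$ is a homeomorphism and that $\Pi$ is closed in $\R$: if $\xi_n\to\xi$ with $\xi_n\in\Pi$, the configurations $y^n:=\pi^{-1}(\xi_n)$ lie in $\mathcal{B}_\omega$, so $|y^n_i-\xi_n-\langle\omega,i\rangle|\le 1$ by Lemma \ref{sequence}; hence $(y^n)$ is pointwise bounded, and any pointwise limit point $z$ lies in the closed set $\mathcal{M}(x)$ with $z_0=\xi$, so $z=\pi^{-1}(\xi)$ by injectivity. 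This shows simultaneously that $\xi\in\Pi$ and that $y^n\to\pi^{-1}(\xi)$. Since $\tau_{0,1}$ maps $\mathcal{M}(x)$ onto itself as $y\mapsto y+1$, the set $\Pi$ is invariant under $\xi\mapsto\xi+1$, and in particular unbounded above and below. Thus $\mathcal{M}(x)$ is homeomorphic to a closed, $\Z$-periodic set $\Pi\subset\R$, and it suffices to show that $\Pi$ is equal to $\R$ or is a Cantor set.

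Two facts about $\Pi$ enter. First, $\Pi$ is perfect: any $y\in\mathcal{M}(x)$ satisfies $y=y^-$ or $y=y^+$, and $y^\pm$ is a pointwise limit of translates $\tau_{k_n,l_n}y$ with $\langle\omega,k_n\rangle+l_n\neq 0$, which by Proposition \ref{numbertheory} satisfy $\tau_{k_n,l_n}y\gg y$ or $\tau_{k_n,l_n}y\ll y$ and hence are distinct from $y$ while, by shift-invariance, still lying in $\mathcal{M}(x)$; so $y$ is an accumulation point of $\mathcal{M}(x)\setminus\{y\}$. Second, the $\tau$-action of $\Z^d\times\Z$ on $\mathcal{M}(x)$ is minimal: Proposition \ref{yminus}, applied with $z\in\mathcal{M}(x)$, says that every point of $\mathcal{M}(x)$ is a limit point of the $\tau$-orbit of every point of $\mathcal{M}(x)$, i.e.\ every $\tau$-orbit is dense. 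Through $\pi$ these translate into a group of order-preserving homeomorphisms $\phi_{k,l}:=\pi\circ\tau_{k,l}\circ\pi^{-1}$ of $\Pi$, every orbit of which is dense in $\Pi$.

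Now I would split into cases according to whether $\Pi$ has nonempty interior. If $\Pi$ contains no open interval, then $\Pi$ is closed and totally disconnected in $\R$ (a connected subset of $\R$ with two points contains the segment joining them), so together with perfectness $\Pi$, and therefore $\mathcal{M}(x)$, is a Cantor set. If $\Pi$ contains an open interval $(a,b)$, then $\Pi=\R$: otherwise $\R\setminus\Pi$ is a nonempty open set, whose connected components are bounded intervals since $\Pi$ is unbounded in both directions; pick such a component $(p,q)$, so that $p,q\in\Pi$ (as $\Pi$ is closed) and $q$ is the immediate successor of $p$ in the ordered set $\Pi$. By minimality the $\phi$-orbit of $p$ meets the relatively open set $(a,b)=(a,b)\cap\Pi$, say $\phi_{k,l}(p)\in(a,b)$. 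But $\phi_{k,l}$ is an order automorphism of $\Pi$, hence carries immediate successors to immediate successors, so $(\phi_{k,l}(p),\phi_{k,l}(q))\cap\Pi=\emptyset$; this contradicts $(a,b)\subset\Pi$ containing points immediately to the right of $\phi_{k,l}(p)$. Hence $\mathcal{M}(x)\cong\R$ is connected. Since every subset of $\R$ falls into one of these two cases, the theorem follows. The step where the special structure of the recurrent Aubry-Mather set is genuinely needed — and hence the crux of the argument — is the minimality of the $\tau$-action (Proposition \ref{yminus} together with the reasoning behind Theorem \ref{mathersettheorem}) and its use in the gap argument above; the reduction through $\pi$ and the two topological cases are routine.
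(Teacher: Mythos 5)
Your proof is correct, and its engine is the same as the paper's: perfectness from recurrence plus Proposition \ref{numbertheory}, the density statement of Proposition \ref{yminus}, and the fact that the shifts $\tau_{k,l}$ are order isomorphisms of $\mathcal{M}(x)$ and therefore carry gaps (immediate-successor pairs) to gaps. The difference is in the packaging. You first check that $\pi_0$ restricts to a homeomorphism of $\mathcal{M}(x)$ onto a closed, $1$-periodic set $\Pi\subset\R$ and then split on whether $\Pi$ has empty interior; the paper argues intrinsically, splitting on connectedness: from a separation $U\cup V$ it manufactures a configuration $y$ with $y^-\neq y^+$, and then uses Proposition \ref{yminus} and order-preservation of the shifts to place a gap between any two recurrent configurations, yielding total disconnectedness. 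Your reduction buys something concrete: once in $\R$, the assumption $\Pi\neq\R$ immediately produces a complementary interval $(p,q)$ with endpoints in $\Pi$, so the sup-over-a-separation construction is unnecessary, and both ``totally disconnected'' and ``connected'' become elementary facts about closed subsets of the line; the price is the (routine, and correctly handled) verification that $\pi_0|_{\mathcal{M}(x)}$ is a homeomorphism onto a closed set, an argument the paper only carries out later, for ghost circles, in Proposition \ref{projection}. Your version also gives the slightly sharper dichotomy that in the connected case $\mathcal{M}(x)$ projects onto all of $\R$, i.e.\ forms a foliation, which the paper only states informally after the theorem. One step to make explicit in a write-up: passing from $y^-=\sup\{\tau_{k,l}z\ll y^-\}$ to ``$y^-$ is a pointwise limit of translates of $z$'' uses that the translates of $z$ are totally ordered, so a monotone sequence realizing the supremum coordinatewise can be extracted; the paper makes the same leap in the proof of Theorem \ref{mathersettheorem}, so you are on equal footing there.
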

\begin{proof}
The recurrent subset is perfect by definition: for every $y\in \mathcal{M}(x)$, it holds that $y=\lim_{n\to\infty}\tau_{k_n,l_n}y$, where by Proposition \ref{numbertheory} the condition that $\langle \omega, k_n\rangle + l_n\neq 0$ guarantees that $\tau_{k_n,l_n}y\neq y$ for all $n$.\\
\indent We will now show that when $\mathcal{M}(x)$ is not connected, then there is a $y\in\mathcal{M}(x)$ so that $y^-\neq y^+$. So let's assume that $\mathcal{M}(x)$ is not connected and write $\mathcal{M}(x)=U\cup V$ for two nonempty closed subsets $U$ and $V$ with $U\cap V=\emptyset$. We may assume that there exist $u\in U$ and $v\in V$ so that $u\ll v$, whence we can define $y:=\sup \{u\in U\ |  u \ll v\}$. Clearly, $y\in U$, because $U$ is closed. Hence, $y\ll v$.
We claim that $y^+\neq y$. This is easily proved: if $y^+=y$, then $y=\lim_{n\to\infty}\tau_{k_n,l_n}y$ for a sequence with $y \ll \tau_{k_n,l_n}y\ll v$. By definition of $y$, it must hold that $\tau_{k_n, l_n}y \in V$. Hence, because $V$ is closed, also $y\in V$, which is a contradiction. \\
\indent The next step is to observe that an order interval $[y^-, y^+]:=\{z\in\R^{\Z^d} \ | \ y^- \leq z \leq y^+\}$ can never contain any recurrent elements other than $y^-$ and $y^+$. Namely, if $y^-\ll v \ll y^+$ were such a recurrent element, then by Proposition \ref{yminus}, it can be approximated by translates of $y$, so that there are $k$ and $l$ with $y^-\ll \tau_{k,l}y\ll y^+$. This contradicts the definition of $y^-$ or $y^+$. This is why we call the order interval $[y^-, y^+]$ a {\it gap} in the Aubry-Mather set.
\\
\indent Now we show that when $\mathcal{M}(x)$ is not connected, and hence contains at least one gap $[y^-, y^+]$, then between any two elements $w, z\in \mathcal{M}(x)$ there exists a gap.  Namely, for any given pair $w\ll z$, either $[w, z]$ is a gap, or there is a recurrent element $w \ll u  \ll z$. By Proposition \ref{yminus}, this $u$ can then be approximated by the $\tau$-orbit of $y^-$, which implies that there are $k$ and $l$ so that $w \ll \tau_{k,l}y^-\ll z$. But when $[y^-,y^+]$ is a gap, then so is $[\tau_{k,l}y^-, \tau_{k,l}y^+]$, since $\tau_{k,l}$ is order-preserving. We must therefore have that $w \ll \tau_{k,l}y^- \ll \tau_{k,l}y^+ \leq z$, i.e. that there is a gap between $w$ and $z$.\\
\indent This implies that $\mathcal{M}(x)$ is totally disconnected: if $w,z\in \mathcal{M}(x)$ with $w\ll z$, then there is a gap $[y^-,y^+]$ with $w\leq y^-\ll y^+\leq z$ and hence $\mathcal{M}(x)$ splits as the disjoint union of the closed sets $ \{u\in \mathcal{M}(x) \ | \ u \leq y^-\}$ and $\{v\in \mathcal{M}(x) \ | \ v\geq y^+\}$ that contain $w$ and $z$ respectively.
\end{proof}
\noindent The proof of Theorem \ref{Cantortheorem} shows that for any $y\in \widetilde{\mathcal{M}}(x)$, in the order interval 
$$[y^-, y^+]: = \{ z\in \R^{\Z^d} \ | \  y^- \leq z \leq y^+\}$$ 
only the elements $y^-$ and $y^+$ are recurrent. Hence, when $y^-\neq y^+$, then $[y^-, y^+]$ is called a {\it gap} in the Aubry-Mather set. Moreover, in the case that $\mathcal{M}(x)$ is not connected, then between any two recurrent configurations there exists such a gap.
\\
\indent 
When $\mathcal{M}(x)$ is connected, then we say that it forms a {\it foliation}: for every $i\in\Z^d$ and every $\xi\in \R$ there is a unique $y\in \mathcal{M}(x)$ so that $y_i=\xi$. In the case that $\mathcal{M}(x)$ is a Cantor set, one says that it forms a {\it lamination}: for every $i$ and every $\xi$ there is at most one $y$ so that $y_i=\xi$. 
\\
\indent Both foliations and laminations by minimizers occur in examples, for instance that of the Frenkel-Kontorova lattice (\ref{RR}). In fact, when $V(\xi)\equiv 0$, then the Aubry-Mather sets are all of the form $\mathcal{M}(x^{\omega,0}):=\{x^{\omega, \xi}\ | \ \xi \in\R\}$, where we recall that the linear configuration $x^{\omega,\xi}$ is defined by $x^{\omega,\xi}_i=\langle \omega, i\rangle +\xi$. These Aubry-Mather sets are clearly connected. \\
\indent On the other hand, the following theorem says that when the onsite potentials $V(\xi)$ are sufficiently oscillatory, then the Aubry-Mather sets must be Cantor sets:
\begin{theorem}\label{gaps}
Let $S_j$ be local potentials satisfying conditions A-E and let $K\subset \R^d$ be a compact set. Then there exists a number $M>0$, depending on the $S_j$ and on $K$, such that for every $1$-periodic twice continuously differentiable function $V=V(\xi)$ with ${\rm osc}\ V:=\max_{\xi,\nu\in\R}(V(\xi) - V(\nu))>M$, the collection of local potentials $\tilde S_j$ defined by $\tilde S_j(x)=S_j(x)+V(x_j)$ does not possess any connected, strictly ordered shift-invariant family of global minimizers of rotation vector $\omega\in K$.
\end{theorem}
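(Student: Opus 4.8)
The plan is to argue by contradiction, using a single local variation of a would‑be minimizer at the site where the onsite potential $V$ is most unfavourable: if ${\rm osc}\,V$ is large, then moving that one coordinate from a maximum of $V$ to a minimum of $V$ strictly lowers the (finite) action, contradicting minimality.

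Suppose then that $\mathcal{M}$ is a nonempty, connected, strictly ordered, shift‑invariant family of global minimizers for the potentials $\tilde S_j$, all of rotation vector $\omega\in K$. First I would record the structure of $\mathcal{M}$. Since $\mathcal{M}$ is strictly ordered and shift‑invariant, for any $x\in\mathcal{M}$ and any $(k,l)$ the translate $\tau_{k,l}x\in\mathcal{M}$ is comparable to $x$, so $x$ is Birkhoff; hence $\mathcal{M}\subset\mathcal{B}_\omega\subset\mathcal{B}_K$. Next I would use connectedness: the map $\mathcal{M}\to\R,\ x\mapsto x_0$, is continuous for pointwise convergence, so its image is a connected subset of $\R$; being nonempty and invariant under $\xi\mapsto\xi\pm1$ (apply $\tau_{0,\pm1}$), this image is all of $\R$. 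So, fixing representatives $\xi^+,\xi^-\in[0,1)$ with $V(\xi^+)=\max V$ and $V(\xi^-)=\min V$, there is some $y\in\mathcal{M}$ with $y_0=\xi^+$.

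Next I would compare $y$ with the competitor $y'$ that agrees with $y$ off the origin and has $y'_0:=\xi^-$. This is a finite variation of $y$ supported on $\{0\}$, so applying the definition of global minimizer on the box $B:=B_0^{2r}$ gives $\sum_{j\in B_0^r}\bigl(\tilde S_j(y')-\tilde S_j(y)\bigr)\ge 0$, where only the sites $j$ with $||j||\le r$ occur, by the finite‑range hypothesis A. Writing $\tilde S_j(z)=S_j(z)+V(z_j)$ and noting that $y$ and $y'$ differ only at the origin, the onsite contribution to this sum is exactly $V(y'_0)-V(y_0)=V(\xi^-)-V(\xi^+)=-\,{\rm osc}\,V$, so that $\sum_{j\in B_0^r}\bigl(S_j(y')-S_j(y)\bigr)\ \ge\ {\rm osc}\,V$.

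The last step — and the only one requiring real care — is to bound $\sum_{j\in B_0^r}\bigl(S_j(y')-S_j(y)\bigr)$ by a constant $M$ that depends only on $K$ and the $S_j$, and in particular not on $V$, on $\omega$, or on $y$. For this I would invoke Lemma \ref{sequence}: as $y\in\mathcal{B}_K$ and $y_0=\xi^+\in[0,1)$, every coordinate $y_i$ with $||i||\le 2r$ lies in a bounded interval $I_K$ determined only by $K$ and $r$, and the same is true of $y'$ since $y'_0=\xi^-\in[0,1)$. Because each $S_j$ has range $r$ and, by shift‑invariance B, is an index‑translate of $S_0$, this yields $|S_j(y')-S_j(y)|\le 2\max_{z\in I_K^{B_0^r}}|s_0(z)|$ for every $j\in B_0^r$, a bound independent of $j$; summing over the $|B_0^r|$ relevant indices gives the required $M$. (Alternatively one may quote Corollary \ref{uniform_twist} applied to $S_0$ on $\mathcal{B}_K/\Z$, together with the fact that $y'$ lies within sup‑distance $1$ of the Birkhoff configuration $y$.) Keeping this constant uniform in the perturbation $V$ is the main bookkeeping point; everything else is soft. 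With $M$ chosen this way, the inequality above forces ${\rm osc}\,V\le M$, so no family $\mathcal{M}$ as above can exist when ${\rm osc}\,V>M$, which is the claim.
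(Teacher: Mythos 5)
Your proposal is correct and follows essentially the same route as the paper: take a member of the hypothesized connected, ordered, shift-invariant family whose $0$-th coordinate sits at the unfavourable value of $V$, perform a single-site variation moving it to the favourable value, and observe that the onsite gain ${\rm osc}\,V$ beats the uniformly bounded change of the at most $(2r+1)^d$ finite-range terms $S_j$, contradicting global minimality. The only differences are cosmetic: you spell out the connectedness/surjectivity-of-$\pi_0$ step that the paper leaves implicit, and you obtain the uniform constant via bounded coordinate intervals from Lemma \ref{sequence} (arguably a bit more carefully, since the perturbed configuration is not itself Birkhoff) where the paper uses the oscillation of $S_j$ over $\mathcal{B}_K$.
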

\begin{proof}
Because $\mathcal{B}_{K}/\Z$ is compact and the functions $S_j$ are $\tau$-invariant and continuous, their oscillation over $\mathcal{B}_K$ is bounded and, say, equal to $N:=\mbox{osc}_{\ \mathcal{B}_K} S_{j}=\max_{x,y\in\mathcal{B}_K}(S_j(x)-S_j(y))$. Let $M>(2r+1)^dN$, where $r\geq 1$ is the finite interaction range of the local potentials $S_j$, and choose a smooth $1$-periodic onsite potential $V$ with oscillation larger than $M$. Assume for instance that $V(\xi)-V(\nu)>M$ for certain $\xi, \nu\in \R$.  \\ \indent 
We will now prove that if a configuration $x\in \mathcal{B}_K$ has $x_0=\xi$, then it can not be a global minimizer. In other words, that $x$ is a ``gap configuration''. This is easily shown by defining $y:\Z^d\to\R$ by setting $y_i=0$ for $i\neq 0$ and $y_0=\nu-\xi$. Now choose a finite subset $B\subset \Z^d$ such that $B^{r}_0\subset B$. Then $\mbox{supp}\ y = \{0\} \subset \mathring{B}^{(r)}$ and we compute that 
 \begin{align}\nonumber & \tilde W_B(x) -  \tilde W_B(x+y)=\sum_{j\in B}\tilde S_j(x) -\tilde S_j(x+y) =  \sum_{j\in B^r_0} \tilde S_j(x) -\tilde S_j(x+y) = \\
& V(\xi)-V(\nu) +  \sum_{j\in B^r_0} S_j(x+y)-S_j(x) > M - (2r+1)^dN>0. \nonumber
 \end{align}
 This shows that $x$ is not a global minimizer.
\end{proof}
\begin{example} 
For the Frenkel-Kontorova lattice, Theorem \ref{gaps} can be improved upon considerably. In fact, by Lemma \ref{sequence}, $\mbox{osc}_{\ \mathcal B_{\omega}} (x_j-x_k) \leq 2$, which is independent of $\omega$. Therefore, the oscillation over $\mathcal{B}$ of the interaction potential $\frac{1}{8d}\sum_{||j-k||=1}(x_j-x_k)^2$ is bounded above by $1$. Thus, for any onsite potential $V(\xi)$ with oscillation larger than $2d$, the Frenkel-Kontorova lattice with local potentials $S_j(x)  =\frac{1}{8d}\sum_{||k-j||=1}(x_j-x_k)^2 + V(x_j)$  does not have a connected family of global minimizers of any rotation vector at all. 
\\ \indent
The latter result for the Frenkel-Kontorova lattice is well-known in dimension $d=1$. It turns out that the one-dimensional Frenkel-Kontorova lattice is equivalent to the Chirikov standard map $T_V$, see Appendix A. As such, Theorem \ref{gaps} and the discussion above say that for any onsite potential $V$ with oscillation larger than $2$, the standard map $T_V$ has no rotational invariant curves. In the case that $V$ has the ``standard'' form $V(\xi)=\frac{k}{8\pi^2} \cos (2\pi \xi)$, so that $\mbox{osc}\ V = \frac{k}{4\pi^2}$, we obtain that there are no rotational invariant curves for $k>8\pi^2$. In fact, in this case the much stronger computer-proved bound $k> \frac{63}{64}$ is actually known, see \cite{percival}.
\end{example}

\section{A formal gradient flow}\label{flow}
The idea of studying globally stationary solutions by means of a formal gradient flow goes back to Gol\'e, see \cite{gole91}. We will review his ideas in this section. The new result is a parabolic Harnack inequality, see Theorem \ref{UHIP}. \\
\indent The study of the formal gradient flow starts with the observation that one can assign a meaning to the partial derivatives of the formal, and generally nonconvergent sum $W(x)=\sum_{j \in \Z^d} S_j(x)$, namely as follows. Since the potentials $S_j$ are of finite range, every variable $x_i$ appears only in finitely many terms of the formal series. Hence, we may write, with a slight abuse of notation, $$(\nabla W(x))_i:=\p_i W(x) = \sum_{||j-i||\leq r}\p_i S_j(x) \ .$$
Note that $\nabla W: \R^{\Z^d}\to \R^{\Z^d}$ is well-defined as soon as the $S_j$ are continuously differentiable and that $\nabla W$ is the formal gradient of $W$ with respect to the $l_2$-inner product $\langle x, y\rangle_{2} = \sum_{j\in\Z^d} x_jy_j$.\\ 
\indent
We remark that $x$ is globally stationary if and only if $\nabla W(x)=0$. In this section, we shall nevertheless view such $x$ as stationary points of the auxiliary differential equation
$$\frac{dx}{dt} = - \nabla W(x)\ .$$
This differential equation shall be defined on an appropriate Banach subspace $\X\subset \R^{\Z^d}$ of configurations, for which its initial value problem has existence and uniqueness of solutions. The corresponding flow is called the negative gradient flow of $W$. The motivation to study the negative gradient flow is simply that it will help us find globally stationary solutions. \\ 
\indent The Banach subspace we choose to work with is the exponentially weighted configuration space
$$\X:=\{x \in \R^{\Z^d} | \ ||x||_{\X}:= \sum_{i \in \Z^d} \frac{|x_i|}{2^{||i||}}<\infty\} \subset \R^{\Z^d}\ , $$ 
where we recall that $||i||:=\sum_{k=1}^d|i_k|$. First of all, the space of Birkhoff configurations is contained in $\X$:
\begin{lemma}\label{subspace}
$\mathcal{B}\subset \X$.
\end{lemma}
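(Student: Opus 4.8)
The plan is to show that the coordinates $|x_i|$ of a Birkhoff configuration grow at most linearly in $\|i\|$, and then to observe that the geometric weights $2^{-\|i\|}$ in the definition of $\|\cdot\|_{\X}$ defeat any polynomial growth. First I would invoke Lemma \ref{sequence}: since $x\in\mathcal{B}$, it possesses a rotation vector $\omega=\omega(x)$ and satisfies $|x_i-x_0-\langle\omega,i\rangle|\le 1$ for all $i\in\Z^d$. Therefore
\[
|x_i|\le |x_0|+1+|\langle\omega,i\rangle|\le (|x_0|+1)+\Bigl(\max_{k}|\omega_k|\Bigr)\sum_{k=1}^d|i_k|=(|x_0|+1)+\Bigl(\max_k|\omega_k|\Bigr)\|i\|.
\]
Writing $a:=|x_0|+1$ and $b:=\max_k|\omega_k|$, both finite and depending only on $x$, we get $|x_i|\le a+b\|i\|$ for every $i$.

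Next I would estimate the $\X$-norm by grouping lattice sites according to the value of $\|i\|$. For $n\ge 0$ set $N_d(n):=\#\{i\in\Z^d\ |\ \|i\|=n\}$; a crude bound suffices, namely $N_d(n)\le(2n+1)^d$ since every such $i$ lies in the cube $\{|i_k|\le n\}$. Then
\[
\|x\|_{\X}=\sum_{i\in\Z^d}\frac{|x_i|}{2^{\|i\|}}\le\sum_{n=0}^{\infty}\frac{(a+bn)\,N_d(n)}{2^n}\le\sum_{n=0}^{\infty}\frac{(a+bn)(2n+1)^d}{2^n}<\infty,
\]
because the numerator grows polynomially in $n$ while $2^{-n}$ decays geometrically. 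Hence $\|x\|_{\X}<\infty$, i.e. $x\in\X$, and since $x\in\mathcal{B}$ was arbitrary, $\mathcal{B}\subset\X$.

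I do not expect any genuine obstacle: the only ingredient beyond Lemma \ref{sequence} is the elementary remark that the number of lattice points on the $\ell^1$-sphere of radius $n$ grows polynomially in $n$, which the cube bound $(2n+1)^d$ handles without effort. The argument is entirely quantitative and self-contained once the linear estimate $|x_i|\le a+b\|i\|$ is in hand.
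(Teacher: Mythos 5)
Your proposal is correct and follows essentially the same route as the paper: invoke Lemma \ref{sequence} to get the linear bound $|x_i|\le |x_0|+1+\|\omega\|\,\|i\|$ and then observe that the exponentially weighted sum defining $\|x\|_{\X}$ converges. Your explicit grouping by $\|i\|=n$ with the cube bound $(2n+1)^d$ merely spells out the convergence that the paper leaves as an immediate observation.
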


\begin{proof}
This follows because every $x\in \mathcal{B}$ has a rotation vector, say $\omega$, and $|x_i- x_0- \langle \omega,i\rangle| \leq 1$. This implies that $|x_i|\leq ||\omega|| \cdot ||i|| + |x_0| + 1$ and hence
 $$||x||_\X=\sum_{i \in \Z^d} \frac{|x_i|}{2^{\|i\|}}\leq \sum_{i \in \Z^d} \frac{\|i\|\|\omega\|+|x_0| + 1}{2^{\|i\|}} < \infty\ .$$ 
\end{proof}

\noindent We moreover note that the topology $\mathcal{B}$ inherits from $\X$ is exactly that of pointwise convergence:

\begin{proposition}\label{pointwiseinX}
Let $x\in \X$ and for all $n\in \N$, let $x_n\in \X$. Then $\lim_{n\to\infty} ||x_n-x||_{\X}=0$ if and only if $\lim_{n\to\infty} x_n = x$ pointwise. In particular, a sequence in $\mathcal{B}$ converges in $\X$ if and only if it converges pointwise.
\end{proposition}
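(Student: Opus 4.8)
The plan is to establish the two implications separately; only the reverse one requires any real work.

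For ``$\X$-convergence implies pointwise convergence'' (valid for arbitrary $x,x_n\in\X$), I would simply note that for each fixed $i\in\Z^d$ the quantity $|x_{n,i}-x_i|/2^{\|i\|}$ is one of the nonnegative summands of the series defining $\|x_n-x\|_\X$, so $|x_{n,i}-x_i|\le 2^{\|i\|}\,\|x_n-x\|_\X\to 0$, whence $x_{n,i}\to x_i$ for every $i$.

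For the converse, ``pointwise convergence implies $\X$-convergence'', the plan is a uniform-tail (dominated-convergence) argument, and the crucial ingredient is a summable envelope $g=(g_i)$ with $|x_{n,i}|\le g_i$ for all $n,i$, $|x_i|\le g_i$, and $\sum_i g_i/2^{\|i\|}<\infty$. For sequences in $\mathcal{B}$ — which is the case the paper needs — such an envelope is furnished by Lemma~\ref{sequence}: if $x_n\in\mathcal{B}$ and $x_n\to x$ pointwise, then $x\in\mathcal{B}$ (as $\mathcal{B}$ is closed under pointwise limits), the rotation vectors satisfy $\omega(x_n)\to\omega(x)$ by the continuity assertion of Lemma~\ref{sequence}, so $\|\omega(x_n)\|\le R$ for some $R$; also $x_{n,0}\to x_0$ gives $|x_{n,0}|\le C$ for some $C$; and then the bound $|x_{n,i}-x_{n,0}-\langle\omega(x_n),i\rangle|\le 1$ yields $|x_{n,i}|\le C+R\|i\|+1=:g_i$, with the same bound for the limit $x$. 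Since $\sum_i g_i/2^{\|i\|}<\infty$ — exactly the computation already carried out in the proof of Lemma~\ref{subspace} — given $\varepsilon>0$ I would fix a finite set $F\subset\Z^d$ with $\sum_{i\notin F}2g_i/2^{\|i\|}<\varepsilon/2$; then $\sum_{i\notin F}|x_{n,i}-x_i|/2^{\|i\|}<\varepsilon/2$ uniformly in $n$, while $\sum_{i\in F}|x_{n,i}-x_i|/2^{\|i\|}\to 0$ because $F$ is finite and convergence is pointwise, so $\|x_n-x\|_\X<\varepsilon$ for all large $n$. The final sentence of the proposition is then immediate, as both implications have been established for sequences in $\mathcal{B}$, which lies in $\X$ by Lemma~\ref{subspace}.

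The main obstacle is precisely the tail of the weighted series: pointwise convergence controls only finitely many coordinates at a time, so without a uniform grip on the tail one cannot interchange the limit with the sum (the configuration equal to $2^{\|i_n\|}$ at a single far-away site $i_n$ with $\|i_n\|\to\infty$, and zero elsewhere, converges pointwise to $0$ while keeping $\|\cdot\|_\X=1$). Producing the summable envelope — via the affine a priori estimate of Lemma~\ref{sequence} — is therefore the heart of the matter; with it in hand the remaining estimate is routine.
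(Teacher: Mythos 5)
Your proposal is correct, but it takes a genuinely different route from the paper's proof, and the difference is substantive rather than stylistic. The paper disposes of the proposition in two sentences: the equivalence on all of $\X$ is declared ``obvious'', and the statement for $\mathcal{B}$ is said to follow because $\mathcal{B}$ is a closed subset of $\X$. Your counterexample (the configuration equal to $2^{\|i_n\|}$ at a single site $i_n$ with $\|i_n\|\to\infty$ and zero elsewhere, which tends to $0$ pointwise while keeping $\X$-norm $1$) shows that the implication ``pointwise convergence $\Rightarrow$ $\X$-convergence'' actually fails for arbitrary sequences in $\X$, so the first claim cannot be true as literally stated and the paper's appeal to it, together with the closedness remark (which guarantees only that a pointwise limit of Birkhoff configurations lies in $\mathcal{B}\subset\X$, not that the weighted tail is controlled), does not constitute a proof. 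What you prove instead is precisely the part the paper uses later: for sequences in $\mathcal{B}$, the a priori estimate $|x_i-x_0-\langle\omega(x),i\rangle|\le 1$ of Lemma \ref{sequence}, combined with continuity of $x\mapsto\omega(x)$ and convergence of the zeroth coordinates, produces a summable envelope $g_i=C+R\|i\|+1$ (summable against the weights $2^{-\|i\|}$ exactly as in Lemma \ref{subspace}), and the finite-head/uniform-tail split then yields $\|x_n-x\|_\X\to 0$; the converse direction is the trivial one, since each weighted coordinate difference is a single summand of the norm. In short, the paper's route buys brevity at the price of resting on a false general claim, while your dominated-convergence argument buys a complete proof of the statement that is actually needed (equivalence of the two notions of convergence on $\mathcal{B}$, or more generally on any uniformly dominated class) and makes explicit why some such restriction is indispensable.
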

\begin{proof}
The first claim is obvious. The second claim follows because $\mathcal{B}$ is a closed subset of $\X$.
\end{proof}

\noindent Before showing the existence of the negative gradient flow on $\X$, we need the following simple lemma, which shows that the shift maps $\tau_{k,l}$ are Lipschitz on $\X$:

\begin{lemma}\label{lipschitz translations}
Let $x, y \in \X$ and $(k,l) \in \Z^d \times \Z$. Then $\tau_{k,l}x \in \X$ and $\tau_{k,l}y \in \X$, while $\|\tau_{k,l}x - \tau_{k,l}y\|_\X\leq 2^{\|k\|}\|x-y\|_\X$.
\end{lemma}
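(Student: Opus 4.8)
The plan is to reduce the whole statement to a single change of summation index. First observe that by definition of the shift operator,
\[
(\tau_{k,l}x - \tau_{k,l}y)_i = (x_{i+k}+l) - (y_{i+k}+l) = x_{i+k}-y_{i+k},
\]
so the integer shift $l$ cancels and plays no role in the difference. Hence
\[
\|\tau_{k,l}x - \tau_{k,l}y\|_\X = \sum_{i\in\Z^d}\frac{|x_{i+k}-y_{i+k}|}{2^{\|i\|}} = \sum_{m\in\Z^d}\frac{|x_m-y_m|}{2^{\|m-k\|}},
\]
after substituting $m=i+k$. The triangle inequality for the $\ell^1$-type norm $\|\cdot\|$ gives $\|m-k\|\geq \|m\|-\|k\|$, whence $2^{-\|m-k\|}\leq 2^{\|k\|}\,2^{-\|m\|}$, and summing this termwise yields the claimed estimate $\|\tau_{k,l}x - \tau_{k,l}y\|_\X\leq 2^{\|k\|}\|x-y\|_\X$.

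To see that $\tau_{k,l}x\in\X$ (and likewise $\tau_{k,l}y\in\X$), I would argue directly:
\[
\|\tau_{k,l}x\|_\X = \sum_{i\in\Z^d}\frac{|x_{i+k}+l|}{2^{\|i\|}} \leq \sum_{i\in\Z^d}\frac{|x_{i+k}|}{2^{\|i\|}} + |l|\sum_{i\in\Z^d}\frac{1}{2^{\|i\|}}.
\]
By the same index substitution and the same bound $2^{-\|m-k\|}\leq 2^{\|k\|}2^{-\|m\|}$, the first sum is at most $2^{\|k\|}\|x\|_\X<\infty$. The second sum is finite because the number of lattice points $i\in\Z^d$ with $\|i\|=n$ grows only polynomially in $n$, while $2^{-n}$ decays exponentially; thus $\sum_{i\in\Z^d}2^{-\|i\|}<\infty$. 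This shows $\tau_{k,l}x\in\X$. (Equivalently, one can write $\tau_{k,l}x = (\tau_{k,l}x-\tau_{k,l}0)+\tau_{k,l}0$ with $0$ the zero configuration, apply the Lipschitz estimate with $y=0$ to the first summand, and note that the constant configuration $\tau_{k,l}0$ lies in $\X$.)

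There is essentially no serious obstacle here: the only points requiring a little care are the bookkeeping in the shift $m=i+k$ and the elementary fact that constant configurations belong to $\X$, i.e. that $\sum_{i\in\Z^d}2^{-\|i\|}<\infty$. Both are immediate, so the proof is short.
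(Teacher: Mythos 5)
Your proof is correct and follows essentially the same route as the paper: both arguments rest on the triangle inequality for $\|\cdot\|$ on $\Z^d$ (yielding $2^{-\|i\|}\leq 2^{\|k\|}2^{-\|i+k\|}$, or equivalently your $2^{-\|m-k\|}\leq 2^{\|k\|}2^{-\|m\|}$ after reindexing), the observation that the vertical shift $l$ only contributes the constant configuration, whose $\X$-norm is finite since $\sum_{i\in\Z^d}2^{-\|i\|}<\infty$, and the cancellation of $l$ in the difference $\tau_{k,l}x-\tau_{k,l}y=\tau_{k,0}(x-y)$. No gaps; the bookkeeping is the same as in the paper's proof.
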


\begin{proof}
First of all,
$$\|\tau_{k,0}x\|_\X = \sum_{i\in \Z^d} \frac{|x_{i+k}|}{2^{\|i\|}} = 2^{\|k\|} \sum_{i\in \Z^d} \frac{|x_{i+k}|}{2^{\|i\|+\|k\|}}  \leq 2^{\|k\|} \sum_{i\in \Z^d} \frac{|x_{i+k}|}{2^{\|i + k\|}} = 2^{||k||} ||x||_{\X}\ .$$
Therefore, $\|\tau_{k,l}x||_{\X}= ||\tau_{k,0}x + l||_{\X}\leq  ||\tau_{k,0}x||_{\X}+||l||_{\X}\leq 2^{||k||}||x||_{\X}+ ||l||_{\X} < \infty$ and similarly for $\tau_{k,l}y$. In particular, $\|\tau_{k,l}x - \tau_{k,l}y\|_\X = \|\tau_{k,0}(x -y)\|_\X \leq 2^{\|k\|}\|x-y\|_\X$.
\end{proof}

\noindent In particular, this means that $\tau_{k,l}:\mathbb{X}\to\mathbb{X}$ is continuous in the topology of pointwise convergence: if $x^n\to x^{\infty}$ pointwise, then $\tau_{k,l}x^n\to \tau_{k,l}x^{\infty}$ pointwise. Of course, this is also clear without Lemma \ref{lipschitz translations}.\\
\indent The main result of this section is the following theorem, which says that under the condition that the local potentials $S_j$ are twice continuously differentiable with uniformly bounded second derivatives, then $-\nabla W$ indeed defines a flow on $\X$. Moreover, this flow has the regularity properties one expects it to have.

\begin{theorem}\label{psi}
Assume the local potentials $S_j$ satisfy conditions A, B and E, that is they are twice continuously differentiable with uniformly bounded second derivatives, they depend on finitely many variables and are shift-invariant. Then the vector field $-\nabla W:\X\to\X$ is globally Lipschitz continuous, i.e. there is a constant $L>0$, depending only on the constant $C$ of condition E and the interaction range $r$ of condition A, such that for all $x, y\in\X$,
$$||\nabla W(x)-\nabla W(y)||_{\X} \leq L ||x-y||_{\X}.$$
Hence, the initial value problem $\frac{dx}{dt}=-\nabla W(x),\ x(0)=x_0$ on $\mathbb{X}$ has global-in-time existence and uniqueness of solutions and defines a complete flow $t\mapsto \Psi_t$ on $\X$. This flow is Lipschitz continuous, i.e. there are constants $L_t >0$, depending only on $L$, such that for all $t\in \R$ and $x,y\in \X$,  
$$||\Psi_tx-\Psi_ty||_{\X} \leq L_t ||x-y||_{\X}.$$ 
Moreover, this flow depends Lipschitz continuously on $-\nabla W$. This means that there are constants $\overline{L}_t>0$, depending only on $L$, such that for all $t\in \R$ and $x,y\in \X$ and for all $-\nabla W$ and $-\nabla \tilde W$ with Lipschitz constants $\leq L$ and respective complete flows $\Psi_t$ and $\tilde \Psi_t$,
$$\sup_{x\in \X} ||\Psi_t x - \tilde \Psi_t x||_{\X} \leq \overline{L}_t \sup_{x\in \X} ||\nabla W(x) - \nabla \tilde W(x)||_{\X}.$$
\end{theorem}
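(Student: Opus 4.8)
The plan is to prove the global Lipschitz estimate for $-\nabla W$ on the weighted Banach space $\X$ first, and then to read off the flow and all of its continuity properties from the standard Banach-space versions of the Picard--Lindel\"of and Gr\"onwall theorems.

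\emph{Step 1: $-\nabla W:\X\to\X$ is well-defined and globally Lipschitz.} Starting from $(\nabla W(x))_i=\sum_{\|j-i\|\leq r}\p_iS_j(x)$, I note that condition A forces $\p_iS_j$ to depend only on the variables $x_k$ with $\|k-j\|\leq r$, hence only on those $x_k$ with $\|k-i\|\leq 2r$. For $x,y\in\X$ I would write
$$\p_iS_j(x)-\p_iS_j(y)=\sum_k\left(\int_0^1\p_{i,k}S_j(ty+(1-t)x)\,dt\right)(x_k-y_k),$$
a finite sum, bound $|\p_{i,k}S_j|\leq C$ by condition E, and sum over the finitely many $j$ with $\|j-i\|\leq r$ to obtain $|(\nabla W(x))_i-(\nabla W(y))_i|\leq CN\sum_{\|k-i\|\leq 2r}|x_k-y_k|$, where $N$ is a purely combinatorial constant (depending on $r$ and the ambient dimension). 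Multiplying by $2^{-\|i\|}$, summing over $i$, interchanging the absolutely convergent double sum, and using the elementary bound $2^{-\|i\|}\leq 2^{\|i-k\|}2^{-\|k\|}\leq 2^{2r}2^{-\|k\|}$ whenever $\|i-k\|\leq 2r$, one arrives at $\|\nabla W(x)-\nabla W(y)\|_\X\leq L\|x-y\|_\X$ with $L$ an explicit multiple of $C$ otherwise depending only on $r$. That $\nabla W$ actually lands in $\X$ then follows by applying the estimate with $y=0$ together with the observation that $\nabla W(0)$ is a bounded sequence — by shift-invariance (condition B) its entries are among the finitely many numbers $(\p_mS_0)(0)$ with $\|m\|\leq r$ — and bounded sequences belong to $\X$ since $\sum_i2^{-\|i\|}<\infty$.

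\emph{Steps 2 and 3: existence of the flow and Lipschitz dependence on the initial datum.} Since $-\nabla W$ is globally Lipschitz on the Banach space $\X$, Picard--Lindel\"of yields local existence and uniqueness for $\frac{dx}{dt}=-\nabla W(x)$, $x(0)=x_0$, and the global Lipschitz bound forces at most exponential growth of $\|x(t)\|_\X$, ruling out finite-time blow-up and giving solutions for all $t\in\R$; these assemble into a complete flow $\Psi_t$. Subtracting the integral equations for $\Psi_tx$ and $\Psi_ty$, taking $\X$-norms, applying the Lipschitz bound under the integral sign, and invoking Gr\"onwall gives $\|\Psi_tx-\Psi_ty\|_\X\leq e^{L|t|}\|x-y\|_\X$, so one may take $L_t=e^{L|t|}$.

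\emph{Step 4: Lipschitz dependence on the vector field.} Given two fields $-\nabla W$, $-\nabla\tilde W$ with Lipschitz constants $\leq L$ and respective complete flows $\Psi_t$, $\tilde\Psi_t$, compare the integral equations for $\Psi_tx$ and $\tilde\Psi_tx$ and split
$$\nabla W(\Psi_sx)-\nabla\tilde W(\tilde\Psi_sx)=\big(\nabla W(\Psi_sx)-\nabla W(\tilde\Psi_sx)\big)+\big(\nabla W(\tilde\Psi_sx)-\nabla\tilde W(\tilde\Psi_sx)\big);$$
the first bracket is bounded by $L\|\Psi_sx-\tilde\Psi_sx\|_\X$ and the second by $\varepsilon:=\sup_{z\in\X}\|\nabla W(z)-\nabla\tilde W(z)\|_\X$, so Gr\"onwall's inequality with an inhomogeneous term gives $\|\Psi_tx-\tilde\Psi_tx\|_\X\leq \varepsilon\,|t|\,e^{L|t|}$ uniformly in $x\in\X$, and $\overline L_t=|t|e^{L|t|}$ works. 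The only genuinely new work is Step 1 — the bookkeeping that turns the finite-range and uniform-second-derivative hypotheses into a clean estimate against the exponentially weighted norm; everything afterwards is the standard Banach-space ODE toolkit, so I expect no real obstacle there.
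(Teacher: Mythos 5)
Your proposal is correct and follows essentially the same route as the paper: the interpolation estimate with conditions A and E to get the global Lipschitz bound on $\X$ (your weight inequality $2^{-\|i\|}\leq 2^{\|i-k\|}2^{-\|k\|}$ plays the role of the paper's lemma that $\tau_{k,0}$ is $2^{\|k\|}$-Lipschitz on $\X$), followed by Picard--Lindel\"of and the two Gr\"onwall arguments yielding $L_t=e^{L|t|}$ and $\overline{L}_t=|t|e^{L|t|}$. Your explicit check that $\nabla W(0)$ is a bounded sequence via shift-invariance, hence lies in $\X$, is a small detail the paper leaves implicit, but otherwise the arguments coincide.
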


\begin{proof}
Using the uniform bound that $|\p_{i,k}S_j|\leq C$, see condition $E$, we will prove that $-\nabla W$ maps $\X$ to $\X$ and is globally Lipschitz continuous. The usual ODE theory then provides the existence of a complete flow $t\mapsto \Psi_t$ on $\X$.\\
\indent Thus, let $x,y \in \X$. Then first of all
\begin{align}\nonumber
|\! -\nabla W(x)_i \!+\! \nabla W(y)_i| \leq \!\!\! \sum_{||j-i||\leq r}\!\! |\p_i S_j(y) - \p_i S_j(x)| =\!\!\! \sum_{||j-i||\leq r} & \! \left| \int_0^1 \frac{d}{d\tau}\left( \p_{i} S_j(\tau y+(1-\tau) x)\right)  d\tau \right| \\
\nonumber \leq \sum_{||j-i|| \leq r} \left| \int_0^1 \sum_{||k-j||\leq r}\p_{i, k} S_j(\tau y+(1-\tau) x) d\tau\right| \cdot |y_k-x_k| & \leq \ C \!\!\sum_{||k-j||\leq r} \sum_{||j-i||\leq r}|x_k-y_k| \ .
\end{align}
But this implies that 
\begin{align}\nonumber
\|-\nabla W(x) + \nabla W(y)\|_\X  \leq C& \sum_{i\in\Z^d} 2^{-||i||}\!\! \sum_{||k-j||\leq r}\! \sum_{||j-i||\leq r}|x_k-y_k| = \\ C\! \sum_{||m||\leq r} \! \sum_{||n||\leq r} \sum_{i\in \Z^d} 2^{-||i||}  |x_{i+m+n}-y_{i+m+n}&| =  C\!\! \sum_{||m||\leq r} \sum_{||n||\leq r}||\tau_{m+n,0}x-\tau_{m+n,0}y||_{\X}
\nonumber
\end{align}
By Lemma \ref{lipschitz translations} and the fact that in the sum above $||m+n||\leq 2r$, we know that $||\tau_{m+n,0}x-\tau_{m+n,0}y||_{\X} \leq 2^{2r}||x-y||_{\X}$. Hence, noting that $|\{i\in\Z^d\ | \ ||i||\leq r\}|\leq (2r+1)^d$, we obtain that 
$$\|-\nabla W(x) + \nabla W(y)\|_\X  \leq L ||x-y||_{\X}\ ,$$
where $L:= 2^{2r}C(2r+1)^{2d}$. On the one hand, this shows that $-\nabla W$ is globally Lipschitz continuous. On the other hand, choosing $y= 0$, we see that $||-\nabla W(x) +\nabla W(0)||_{\X}\leq L ||x||_{\X}$, or $||-\nabla W(x)||_{\X} \leq L ||x||_{\X} + ||\nabla W(0)||_{\X}$, that is $-\nabla W$ maps $\X$ into $\X$. \\ 
\indent This implies the existence and uniqueness of solutions of the initial value problem $\frac{dx}{dt}=-\nabla W(x)$, $x(0)=x_0$ in $\X$, that is the existence of flow maps $\Psi_t:\X\to\X$ for all $t\in \R$. The Lipschitz continuity of $\Psi_t$ follows, as usual, from an application of Gronwall's inequality: first one shows that $||\Psi_tx-\Psi_ty||_{\X}\leq ||x-y||_{\X} + L\int_0^{|t|}||\Psi_{\tau}x-\Psi_{\tau}y||_{\X}d\tau$. This then implies that $||\Psi_tx - \Psi_t y||_{\X} \leq L_t ||x-y||_{\X}$, with $L_t=e^{L|t|}$.\\
\indent For the last part of the theorem, let $\nabla W$ and $\nabla \tilde W$ be two vector fields with Lipschitz constants $\leq L$ and complete flows $\Psi_t$ and $\tilde \Psi_t$ respectively. Call $x(t)=\Psi_t x$ and $\tilde x(t)=\tilde \Psi_t x$. We then have 
\begin{align}\nonumber
&||x(t)-\tilde x(t)||_{\X} \leq \int_0^{|t|}|| \nabla\tilde W(\tilde x(\tau)) - \nabla W(x(\tau))||_{\X}d\tau \leq   
 \int_0^{|t|} ||\nabla \tilde W(\tilde x(\tau)) \!-\!  \nabla W(\tilde x(\tau))||_{\X}d\tau + \\  &\int_0^{|t|} \! ||\nabla W(\tilde x(\tau)) - \nabla W(x(\tau))||_{\X}d\tau \! \nonumber
\leq \! |t| \sup_{x\in \X} ||\nabla W(x)\! -\! \nabla \tilde W(x)||_{\X} \! + \! L \int_0^{|t|} ||\tilde x(\tau) - x(\tau)||_{\X}  d\tau.
\end{align}
Thus, by Gronwall's inequality, $||\Psi_tx-\tilde \Psi_tx||_{\X}\leq \overline{L}_t \sup_{x\in \X} ||\nabla W(x) - \nabla \tilde W(x)||_{\X}$ with $\overline{L}_t=|t|e^{L|t|}$.
\end{proof}
\begin{remark}
It is not true in general that $-\nabla W:\X\to\X$ is a $C^1$ map. Hence, contrary to a claim made in \cite{gole01}, the $\Psi_t$ in general can not be assumed $C^1$ either. 
\end{remark}
\noindent By Proposition \ref{pointwiseinX}, the first part of Theorem \ref{psi} implies that $\nabla W: \X \to \X$ is continuous with respect to pointwise convergence: if $\lim_{n\to\infty} x^n=x^{\infty}$ pointwise, then $\lim_{n\to\infty}\nabla W(x^n)=\nabla W(x^{\infty})$ pointwise. \\ \indent Similarly, the second part of Theorem \ref{psi} implies that for every $t\in \R$ the flow map $\Psi_t: \X \to \X$ is continuous with respect to pointwise convergence. \\ \indent Part three of Theorem \ref{psi} implies that if $\nabla W^n, \nabla W^{\infty}:\X\to\X$ is a sequence of formal gradient vector fields with a uniform Lipschitz constant and corresponding flow maps $\Psi_t^n, \Psi_t^{\infty}:\X\to\X$ and such that $\nabla W^n\to \nabla W^{\infty}$ uniformly on $\X$, then for all $t$ also $\Psi_t^n\to\Psi_t^{\infty}$ uniformly on $\X$. \\
\indent In the remainder of this section, we will formulate a concept of convergence for a sequence of finite range potentials $S_j^n$ that guarantees that their corresponding gradient vector fields and flow maps converge uniformly. It turns out that it is enough to require the convergence of the gradients of the $S_j^n$. We will first of all need to define what it means for collections of gradients of finite range potentials to be ``close''. Remembering the definition in Section \ref{problemsetup} of the partial derivatives $\p_{j_1,\ldots, j_k}S$ of a $k$ times continuously differentiable function $S:\R^{\Z^d}\to\R$ of finitely many variables, we now define:
\begin{definition} Let $S:\R^{\Z^d}\to\R$ be an $m+1 \geq 0$ times continuously differentiable function of finitely many variables, that is $S(x)=s(x|_{B})$ for a certain finite subset $B\subset \Z^d$ and an $m+1$ times continuously differentiable function $s:\R^{B}\to\R$. Then we define the uniform $C^m(\R^{\Z^d})$ norm $||\nabla S||_{C^m(\R^{\Z^d})}\in [0,\infty)$ of the gradient of $S$ as the finite sum of suprema
$$||\nabla S||_{C^m(\R^{\Z^d})} :=\sum_{1\leq k\leq m+1}\ \sum_{j_1, \ldots, j_k\in \Z^d}\ \sup_{x\in\R^{\Z^d}} |\p_{j_1, \ldots, j_k}S(x)|. $$
\end{definition}
\noindent We note that if $S_j:\R^{Z^d}\to\R$ is a collection of $m+1$ times continuously differentiable, shift invariant finite range potentials, that is if $S_j(x)=s_j(x|_{B_j^r})$ for some $m+1$ times continuously differentiable function $s_j:\R^{B_j^r}\to\R$ and $S_j(\tau_{k,l}x)=S_{j+k}(x)$ for all $j, k$ and $l$, then $||\nabla S_i||_{C^m(\R^{\Z^d})}= ||\nabla S_j||_{C^m(\R^{\Z^d})}$ for all $i,j\in\Z^d$. With this in mind, we first of all prove:
\begin{proposition}\label{estimate}
Let $S_j, \tilde S_j:\R^{\Z^d}\to\R$ be two $m+1\geq 1$ times continuously differentiable and shift-invariant collections of finite range local potentials, say $S_j(x)=s_j(x|_{B_j^r})$ and $\tilde S_j(x)=\tilde s_j(x|_{B_j^r})$ and denote their corresponding gradient vector fields by $\nabla W$ and $\nabla \tilde W$. Then, there is a constant $\overline{L}>0$, depending only on the dimension $d$, such that
$$\sup_{x\in \X} ||\nabla W(x)-\nabla \tilde W(x)||_{\X}\leq \overline{L}||\nabla S_0-\nabla \tilde S_0||_{C^0(\R^{\Z^d})}.$$
\end{proposition}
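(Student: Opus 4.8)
The plan is to use shift-invariance to rewrite the difference $\nabla W-\nabla\tilde W$ entirely in terms of the single potential $S_0-\tilde S_0$, then obtain a bound on each coordinate that is independent of the lattice site, and finally sum the exponential weights.

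\emph{Step 1: reduction to $S_0$.} Condition B gives $S_j(x)=S_0(\tau_{j,0}x)$, and since $(\tau_{j,0}x)_k=x_{k+j}$ the chain rule yields $\p_iS_j(x)=(\p_{i-j}S_0)(\tau_{j,0}x)$, and likewise $\p_i\tilde S_j(x)=(\p_{i-j}\tilde S_0)(\tau_{j,0}x)$. Recalling that $(\nabla W(x))_i=\sum_{\|j-i\|\le r}\p_iS_j(x)$, this gives
$$\bigl(\nabla W(x)-\nabla\tilde W(x)\bigr)_i=\sum_{\|j-i\|\le r}\bigl(\p_{i-j}(S_0-\tilde S_0)\bigr)(\tau_{j,0}x).$$

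\emph{Step 2: a site-independent pointwise bound.} Taking absolute values, bounding each summand by its supremum over $\R^{\Z^d}$, and substituting $m=i-j$, one obtains for every $i\in\Z^d$ and every $x\in\X$
$$\bigl|\bigl(\nabla W(x)-\nabla\tilde W(x)\bigr)_i\bigr|\le\sum_{\|m\|\le r}\ \sup_{y\in\R^{\Z^d}}\bigl|\p_m(S_0-\tilde S_0)(y)\bigr|=\|\nabla S_0-\nabla\tilde S_0\|_{C^0(\R^{\Z^d})},$$
where the last equality holds because $S_0-\tilde S_0$ depends only on the variables $x_k$ with $\|k\|\le r$, so $\p_m(S_0-\tilde S_0)\equiv 0$ whenever $\|m\|>r$. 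Note that this bound does not depend on $i$.

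\emph{Step 3: summing the weights.} Consequently
$$\|\nabla W(x)-\nabla\tilde W(x)\|_{\X}=\sum_{i\in\Z^d}\frac{\bigl|\bigl(\nabla W(x)-\nabla\tilde W(x)\bigr)_i\bigr|}{2^{\|i\|}}\le\|\nabla S_0-\nabla\tilde S_0\|_{C^0(\R^{\Z^d})}\sum_{i\in\Z^d}\frac{1}{2^{\|i\|}},$$
and since $2^{-\|i\|}=\prod_{k=1}^d2^{-|i_k|}$ one computes $\sum_{i\in\Z^d}2^{-\|i\|}=\bigl(\sum_{n\in\Z}2^{-|n|}\bigr)^d=3^d$. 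Taking the supremum over $x\in\X$ proves the proposition with $\overline L:=3^d$, which depends only on $d$; if the right-hand side of the asserted inequality is infinite there is nothing to prove. I do not anticipate a real obstacle here: the only care needed is the bookkeeping in Step 1 (combining the chain rule with condition B) and the observation in Step 2 that the finiteness of the interaction range makes the per-coordinate estimate independent of the site, so that only the convergent weight sum $\sum_i 2^{-\|i\|}$ survives.
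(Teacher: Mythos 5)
Your proof is correct and follows essentially the same route as the paper: use shift-invariance to reduce every term $\p_iS_j-\p_i\tilde S_j$ to a partial derivative of $S_0-\tilde S_0$, bound each coordinate of $\nabla W-\nabla\tilde W$ by $\|\nabla S_0-\nabla\tilde S_0\|_{C^0(\R^{\Z^d})}$ uniformly in the site, and then sum the weights $2^{-\|i\|}$. Your explicit constant $\overline L=3^d$ is just the evaluated form of the paper's $\overline L=\sum_{i\in\Z^d}2^{-\|i\|}$.
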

\begin{proof}
We have that 
\begin{align}\nonumber
|| \nabla W(x)& - \nabla \tilde W(x)||_{\X} \leq \sum_{i\in\Z^d} 2^{-||i||} \!\! \sum_{||j-i||\leq r} |\p_i S_j(x) - \p_i \tilde S_j(x)|.  
\nonumber
\end{align}
By shift invariance, $\p_iS_j(x)=\p_{i-j}S_0(\tau_{j,0}x)$ and similarly for $\tilde S_j$, so that $\sup_x |\p_iS_j(x) - \p_i \tilde  S_j(x)|= \sup_x |\p_{i-j}S_0(x) - \p_{i-j}\tilde S_0(x)|$, and consequently
\begin{align}\nonumber
\sup_{x\in \X} &||\nabla W(x) - \nabla \tilde W(x)||_{\X} \leq \sum_{i\in\Z^d}\! 2^{-||i||}  \!\! \sum_{||j-i||\leq r} \sup_{x\in\X} |\p_{i-j}S_0(x)-\p_{i-j}\tilde S_0(x)| = \\ \nonumber
& \sum_{i\in\Z^d}\! 2^{-||i||} \!\! \sum_{||j||\leq r} \sup_{x\in\X} |\p_{j}S_0(x)-\p_{j}\tilde S_0(x)| \leq\overline{L}||\nabla S_0-\nabla \tilde S_0||_{C^0(\R^{\Z^d})}, 
\end{align}  
with $\overline{L}=\sum_{i\in\Z^d}2^{-||i||}$.
\end{proof}
\noindent We are now ready to define what it means for a sequence of local potentials to converge: 
\begin{definition}
Let $S_j^n, S_j^{\infty}: \R^{\Z^d}\to \R$ be a sequence of collections of $m+1\geq 1$ times continuously differentiable, shift-invariant functions of finite range $r$.   
Then we say that the $\nabla S_j^n$ converge to the $\nabla S_j^{\infty}$ uniformly in $C^m(\R^{\Z^d})$ as $n\to \infty$ if
$$\lim_{n\to\infty} ||\nabla S_0^n-\nabla S_0^{\infty}||_{C^m(\R^{\Z^d})} =0  .$$
\end{definition}
\noindent With this definition, we can then prove the following corollary of Theorem \ref{psi}. It trivially follows from our definitions, Theorem \ref{psi} and Proposition \ref{estimate}.
\begin{corollary}\label{continuousflow}
Let $S_j^n, S_j^{\infty}: \R^{\Z^d}\to \R$ be a sequence of continuously differentiable local potentials of finite range $r$, with corresponding gradient vector fields $\nabla W^n$ and $\nabla W^{\infty}$, and assume that $\nabla S_j^n\to \nabla S_j^{\infty}$ uniformly in $C^0(\R^{\Z^d})$. Then $\nabla W^n\to\nabla W^{\infty}$ uniformly, i.e. 
$$\lim_{n\to\infty} \sup_{x\in \X} ||\nabla W^n(x) - \nabla W^{\infty}(x)||_{\X} = 0.$$
Moreover, in the case that the $S_j^n$ and $S_j^{\infty}$ are twice continuously differentiable with uniformly bounded second derivatives, so that $-\nabla W^n$ and $-\nabla W^{\infty}$ have well defined flow maps $\Psi_t^n$ and $\Psi_t^{\infty}$, then it also holds for every $t\in\R$ that $\Psi^n_t\to \Psi_t^{\infty}$ uniformly, i.e.
$$\lim_{n\to\infty} \sup_{x\in \X} ||\Psi^n_t x - \Psi^{\infty}_t x||_{\X} =0.$$
\end{corollary}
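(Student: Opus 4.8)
Both statements are to be read off directly from Proposition \ref{estimate} and Theorem \ref{psi}; no new estimate is required. The whole point is bookkeeping: one must only check that every constant that appears is independent of $n$.

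For the first limit, I would apply Proposition \ref{estimate} to the pair $(S_j^n,S_j^{\infty})$, both of which have range $r$ and are continuously differentiable. This yields a constant $\overline{L}=\sum_{i\in\Z^d}2^{-\|i\|}$, depending only on $d$ and in particular not on $n$, such that
$$\sup_{x\in\X}\|\nabla W^n(x)-\nabla W^{\infty}(x)\|_{\X}\le \overline{L}\,\|\nabla S_0^n-\nabla S_0^{\infty}\|_{C^0(\R^{\Z^d})}.$$
Since by hypothesis $\nabla S_j^n\to\nabla S_j^{\infty}$ uniformly in $C^0(\R^{\Z^d})$, the right-hand side tends to $0$ as $n\to\infty$, which is the first claim.

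For the second limit, the key observation is that "uniformly bounded second derivatives" should be understood to provide a single constant $C$ with $|\partial_{i,k}S_j^n|\le C$ for all $i,j,k$ and all $n$, including $n=\infty$. The first part of Theorem \ref{psi} then says that each $-\nabla W^n$ and $-\nabla W^{\infty}$ is globally Lipschitz on $\X$ with one and the same constant $L=2^{2r}C(2r+1)^{2d}$, and hence each generates a complete flow $\Psi_t^n$, $\Psi_t^{\infty}$ on $\X$. Next I would invoke the third part of Theorem \ref{psi}, the Lipschitz dependence of the flow on its generating vector field, with this common Lipschitz bound $L$; it produces constants $\overline{L}_t=|t|e^{L|t|}$, again independent of $n$, such that
$$\sup_{x\in\X}\|\Psi_t^n x-\Psi_t^{\infty}x\|_{\X}\le \overline{L}_t\,\sup_{x\in\X}\|\nabla W^n(x)-\nabla W^{\infty}(x)\|_{\X}.$$
Combining this with the first part of the corollary, the right-hand side converges to $0$ as $n\to\infty$ for each fixed $t\in\R$, which is exactly the asserted uniform convergence of the flow maps.

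There is essentially no obstacle here: the entire analytic content lives in Proposition \ref{estimate} and Theorem \ref{psi}, which have already been proved. The only point that deserves a sentence of care — and the reason the hypotheses are phrased with a single constant $C$ rather than an $n$-dependent bound — is verifying that $\overline{L}$, $L$ and $\overline{L}_t$ can all be chosen uniformly in $n$, so that the chain of inequalities above produces a genuine limit rather than a sequence of vacuous bounds.
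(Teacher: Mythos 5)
Your proposal is correct and is exactly the argument the paper intends: the statement is noted in the text to follow trivially from Proposition \ref{estimate} (giving the uniform-in-$n$ bound $\overline{L}$) and the third part of Theorem \ref{psi} (Lipschitz dependence of the flow on the vector field with a common Lipschitz constant $L$), which is precisely your chain of inequalities. Your extra remark about checking that $\overline{L}$, $L$ and $\overline{L}_t$ are independent of $n$ is the right point of care and matches the paper's reading of the hypotheses.
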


\section{Properties of the gradient flow}\label{properties_flow}
In this section, we collect some qualitative properties of the formal negative gradient flow that was introduced in the previous section.\\
\indent First of all, not surprisingly, it is equivariant with respect to shifts:
\begin{proposition}\label{invariance}
Let $(k,l)\in \Z^d\times \Z$ and $t\in \R$. Then $\Psi_t \circ \tau_{k,l}=\tau_{k,l} \circ \Psi_t$.
\end{proposition}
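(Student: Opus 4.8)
The plan is to derive $\Psi_t\circ\tau_{k,l}=\tau_{k,l}\circ\Psi_t$ from the $\tau$-equivariance of the formal gradient vector field $-\nabla W$ together with the uniqueness of solutions of the initial value problem guaranteed by Theorem \ref{psi}.

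First I would record the equivariance of the gradient. Differentiating the identity $S_j(\tau_{k,l}x)=S_{j+k}(x)$ of condition B with respect to $x_i$ — the chain rule contributes only the term $m=i-k$, because $(\tau_{k,l}x)_m=x_{m+k}+l$ — and relabelling $i\mapsto i+k$ gives
\[
(\p_iS_j)(\tau_{k,l}x)=\p_{i+k}S_{j+k}(x).
\]
Summing over the finitely many $j$ with $\|j-i\|\le r$ and substituting $j'=j+k$, which ranges exactly over $\|j'-(i+k)\|\le r$, this yields $\nabla W(\tau_{k,l}x)_i=\nabla W(x)_{i+k}=(\tau_{k,0}\nabla W(x))_i$. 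Hence $\nabla W\circ\tau_{k,l}=\tau_{k,0}\circ\nabla W$ as maps $\X\to\X$; all terms make sense because $\tau_{k,l}x\in\X$ by Lemma \ref{lipschitz translations}.

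Next I would compare two curves in $\X$. Fix $x\in\X$ and put $y(t):=\tau_{k,l}(\Psi_tx)$, so $y(t)_i=(\Psi_tx)_{i+k}+l$. Since the constant $l$ does not depend on $t$,
\[
\frac{d}{dt}y(t)_i=\frac{d}{dt}(\Psi_tx)_{i+k}=-\nabla W(\Psi_tx)_{i+k}=-(\tau_{k,0}\nabla W(\Psi_tx))_i,
\]
and by the equivariance established above (applied to $\Psi_tx$) this equals $-\nabla W(\tau_{k,l}\Psi_tx)_i=-\nabla W(y(t))_i$. Thus $y$ solves $\frac{dy}{dt}=-\nabla W(y)$ with $y(0)=\tau_{k,l}x$, so by the uniqueness part of Theorem \ref{psi} we get $y(t)=\Psi_t(\tau_{k,l}x)$ for all $t\in\R$, i.e. $\tau_{k,l}\circ\Psi_t=\Psi_t\circ\tau_{k,l}$.

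There is no real obstacle. The equivariance of $\nabla W$ is an immediate consequence of condition B, and the one point to keep in mind is that $-\nabla W$ is genuinely defined and Lipschitz at the shifted configuration $\tau_{k,l}x$, so that the uniqueness statement applies — this is precisely Lemma \ref{lipschitz translations} together with Theorem \ref{psi}.
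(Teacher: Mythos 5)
Your proof is correct and follows essentially the same route as the paper: differentiate the shift-invariance identity of condition B to obtain the equivariance $\nabla W\circ\tau_{k,l}=\tau_{k,0}\circ\nabla W$, and conclude via uniqueness of solutions from Theorem \ref{psi}. The only difference is that you make the uniqueness step explicit, which the paper leaves implicit after showing that shifts of solutions are again solutions.
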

\begin{proof}
By the shift-invariance of the local potentials $S_j$ of condition B above, we have that $S_{j-k}(\tau_{k,l}x)=S_{j}(x)$ for all $k,l$ and $j$. Differentiating this identity with respect to $x_{i+k}$, we find that $\p_{i}S_{j-k}(\tau_{k,l}x)=\p_{i+k}S_{j}(x)$. Assume now that $\frac{dx_i}{dt} = -\left(\nabla W(x)\right)_i$ for all $i$. Then,
\begin{align}\nonumber
\frac{d}{dt}\left(\tau_{k,l}x\right)_i = \frac{d x_{i+k}}{dt} = -\left(\nabla W(x)\right)_{i+k} & = -\!\!\!\!\!\! \sum_{||j-(i+k)||\leq r}\!\!\!\! \p_{i+k}S_j(x)  = \\ 
- \!\!\!\!\!\! \sum_{||(j-k)-i||\leq r} \!\!\!\! \p_{i}S_{j-k}(\tau_{k,l}x) =& -\left( \nabla W (\tau_{k,l}x)\right)_{i}  \ .\nonumber
\end{align}
In other words, when $t\mapsto x(t)$ is a solution of the negative gradient flow, then so is $t\mapsto \tau_{k,l}x(t)$.
\end{proof}
\noindent Proposition \ref{invariance} implies in particular that the spaces $\X_{p,q}$ of periodic configurations are invariant under the gradient flow.\\
\indent The following well-known property of the negative gradient flow is the analogue of the comparison principle for parabolic PDEs, cf. \cite{gole91} or \cite{llave-lattices}. It is a direct consequence of the monotonicity condition D.

\begin{theorem}[Strict monotonicity of the parabolic flow]\label{monotonicity psi}
Let $x,y \in \mathbb{X}$ such that $x<y$. Denote by $\Psi_t$ the time-$t$ flow of $\dot x = -\nabla W(x)$. Then for every $t>0$, $\Psi_tx \ll \Psi_ty$.
\end{theorem}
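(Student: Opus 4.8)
The plan is to use the fact that condition D makes $-\nabla W$ a \emph{cooperative} (quasimonotone) vector field on $\X$. Concretely, for each $\xi\in\X$ the coordinate derivatives $\partial_{x_k}(-\nabla W)_i(\xi) = -\partial_{i,k}W(\xi) = -\sum_{\|j-i\|\le r}\partial_{i,k}S_j(\xi)$ exist (each $(-\nabla W)_i$ depends on finitely many variables and the $S_j$ are $C^2$), and by condition D they are $\ge 0$ whenever $i\ne k$ and \emph{strictly} positive whenever $\|i-k\|=1$; by conditions A and E they are bounded in absolute value by a constant $C'$ and vanish unless $\|i-k\|\le r$, and, exactly as in the proof of Theorem~\ref{psi}, the ``matrix'' $\bigl(\partial_{x_k}(-\nabla W)_i(\xi)\bigr)_{i,k}$ is a bounded operator on $\X$ of operator norm $\le L$. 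Writing $z(t) := \Psi_t y - \Psi_t x$ and applying the fundamental theorem of calculus coordinatewise, $z$ solves a nonautonomous linear equation $\dot z = A(t)z$, where $A(t) := \int_0^1 D(-\nabla W)(\Psi_t x + s\,z(t))\,ds$ is for each $t$ a bounded operator on $\X$ with the same sign pattern and bounds, and $t\mapsto A(t)$ is continuous.

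First I would prove \emph{weak} monotonicity: $x\le y$ implies $\Psi_t x \le \Psi_t y$ for $t\ge 0$. Put $w(t) := e^{C't}z(t)$, so that $\dot w = B(t)w$ with $B(t) := A(t) + C'\,\mathrm{Id}$ having all entries $\ge 0$ and finite operator norm. Because $B(t)$ maps the positive cone $\{v\in\X : v\ge 0\}$ into itself (a finite nonnegative sum at each site) and this cone is closed in $\X$ by Proposition~\ref{pointwiseinX}, the Picard iterates $w_0(t) := w(0)$, $w_{n+1}(t) := \int_0^t B(s)w_n(s)\,ds$ all lie in the cone when $w(0)\ge 0$, and they converge in $\X$ to $w(t)$; hence $w(t)\ge 0$, i.e.\ $z(t)\ge 0$.

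For the \emph{strict} statement I would use $z(t)\ge 0$ together with two scalar differential inequalities. First, $\dot z_i = a_{ii}(t)z_i + \sum_{m\ne i}a_{im}(t)z_m \ge -C'z_i$, so $e^{C't}z_i(t)$ is nondecreasing and positivity of a single coordinate, once attained, persists for all later times. Second, for nearest neighbours $\|i-k\|=1$ one has $\dot z_k \ge a_{ki}(t)z_i - C'z_k$ with $a_{ki}(t) > 0$ \emph{strictly} — this is precisely where the strict inequalities $\partial_{i,k}S_i < 0$ of condition D enter — so integrating $\tfrac{d}{ds}\bigl(e^{C's}z_k(s)\bigr) \ge e^{C's}a_{ki}(s)z_i(s)$, whose right-hand side is continuous and strictly positive on $(t_0,t]$ once $z_i(t_0)>0$, shows that $z_i(t_0)>0$ forces $z_k(t)>0$ for every $t>t_0$. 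Since $x<y$ there is an $i_0$ with $z_{i_0}(0)>0$; given an arbitrary $i\in\Z^d$, choose a nearest-neighbour path $i_0=j_0,\dots,j_n=i$ in $\Z^d$ and apply the spreading inequality successively on intervals beginning at $0,\,t/n,\,2t/n,\dots$, obtaining $z_i(t)=z_{j_n}(t)>0$. As $i$ was arbitrary, $\Psi_t x \ll \Psi_t y$.

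The main obstacle is making the comparison argument rigorous on the infinite-dimensional Banach space $\X$: in finite dimensions one could argue by looking at the first time the ordering fails at some coordinate, but here one must instead lean on the boundedness of $A(t)$ as an operator on $\X$ (inherited, via conditions A and E, from the Lipschitz estimate of Theorem~\ref{psi}) and on the closedness of the positive cone, and keep careful track that the nearest-neighbour coupling coefficient $a_{ki}(t)$ stays strictly positive along the continuous trajectory so that positivity genuinely propagates across all of $\Z^d$ in any positive time.
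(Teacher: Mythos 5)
Your proposal is correct and follows essentially the same route as the paper's proof: write $z(t)=\Psi_t y-\Psi_t x$ as a solution of the interpolated linear equation $\dot z=A(t)z$, add a multiple of the identity so that the resulting operator preserves the positive cone, obtain $z(t)\ge 0$ from the positive (Picard/Neumann) series, and then propagate strict positivity to every lattice site along nearest-neighbour paths with subdivided time intervals, using the strict inequality $\partial_{i,k}S_i<0$ of condition D. Your splitting into a weak-monotonicity step followed by scalar differential inequalities is only a mild repackaging of the paper's single series estimate, so no further comparison is needed.
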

\begin{proof}
Denote $x(t)=\Psi_tx$ and $y(t)=\Psi_ty$ and define $u(t):=y(t)-x(t)$. Note that $u(0)> 0$ and that $u$ satisfies the following linear ODE: 
 \begin{align}\nonumber
\dot{u}_i(t) & =-\p_i W(y(t)) + \p_i W(x(t))=  \int_0^1\frac{d}{d\tau} \left( \sum_{||i-j||\leq r} \partial_iS_j(\tau x(t)+(1-\tau)y(t))\right) d\tau = \\ \nonumber
 & \sum_{||i-j||\leq r, ||j-k||\leq r} \!\! \left( \int_0^1 - \partial_{i,k}S_j(\tau x(t)+(1-\tau)y(t))d\tau \right) u_k(t) =: (H(t)u(t))_i\ .
\end{align}
Here, for every $t$, the operator $H(t)$ is Lipschitz from $\X$ to $\X$, by a proof similar to that of Theorem \ref{psi}. Recall that $\p_{i,k}S_j\leq 0$ when $i\neq k$, whereas $\p_{i,i}S_j<C$. This implies that there is a constant $M>0$ such that the operators $\tilde H(t):=H(t) + M\mbox{Id}: \mathbb{R}^{\Z^d}\to\R^{\Z^d}$ are positive: $u\geq 0$ implies $\tilde H(t)u\geq 0$.\\ 
\indent
Note moreover that both the $H(t)$ and the $\tilde H(t)$ are uniformly bounded operators, whence the ODEs $\dot u = H(t)u$ and $\dot v = \tilde H(t) v$ define well-posed initial value problems. More importantly, $u(t)$ solves $\dot{u}= H(t) u$ if and only if $v(t):=e^{Mt}u(t)$ solves $\dot{v}=\tilde H(t)v$.  We will now prove that for every $t>0$ and every $i$, $v_i(t) >0$. Then, obviously, $u_i(t)>0$ as well, which then proves the theorem. \\
\indent
To prove the claim on $v(t)$, we solve the initial value problem for $\dot v = \tilde H(t)v$ by Picard iteration, that is we write 
\begin{equation}\label{picard}v(t)= \left( \sum_{n=0}^{\infty} \tilde H^{(n)}(t) \right) v(0),\end{equation} 
where the $\tilde H^{(n)}(t)$ are defined inductively by
$$\tilde H^{(0)}(t)=\mbox{Id}\ \mbox{and} \ \tilde H^{(n)}(t):=\int_0^t \tilde H(\tilde t) \circ \tilde H^{(n-1)}(\tilde t) \ d\tilde t \ \mbox{for} \ n \geq 1\ .$$
Observe that the positivity of $\tilde H(t)$ implies that the $\tilde H^{(n)}(t)$ are positive as well. Because $v(0)=u(0)>0$, we can therefore estimate, for any $i, k\in\Z^d$ with $||i-k||=1$,
\begin{align}\nonumber
v_i(t) = \left(\sum_{n=0}^{\infty} \tilde H^{(n)}(t) v(0)\right)_{i} \geq (\tilde H(t) v(0))_i \geq \\ \left( \int_0^t\int_0^1- \partial_{i,k}S_i(\tau x(\tilde t) +(1-\tau)y(\tilde t))d\tau d\tilde t \right) v_k(0) \ .\label{qualitativeestimate}
\end{align}
Now choose a $k \in \Z^d$ such that $v_k(0)>0$ and recall that $\p_{i,k}S_i < 0$. Then from (\ref{qualitativeestimate}) it follows that if $||i-k||=1$, then for all $t>0$, $v_i(t) > 0$. \\ 
\indent To generalize to the case that $||i-k|| \neq 1$, let us choose a sequence of lattice points $k=i_0, \ldots, i_{N}=i$ such that $||i_n-i_{n-1}||=1$ and $N=||i-k||$. Then, by induction, $v\left(\frac{nt}{N}\right)_{i_n}>  0$ for all $n$. Thus, if $v_k(0)>0$ and $t>0$, then $v_i(t)> 0$.
\end{proof}
\noindent Theorem \ref{monotonicity psi} immediately gives us the following important corollary.
\begin{corollary} \label{Birkhoff invariance}
Let $\omega \in \R^d$. Then $\mathcal{B}_\omega$ is positively invariant under the negative gradient flow: $\Psi_t(\mathcal{B}_\omega)\subset \mathcal{B}_\omega$, for every $t>0$.
\end{corollary}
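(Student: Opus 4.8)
The plan is to derive both requirements --- that $\Psi_t x$ is well-ordered and that it still has rotation vector $\omega$ --- from only two facts established earlier: the shift-equivariance $\Psi_t\circ\tau_{k,l}=\tau_{k,l}\circ\Psi_t$ of Proposition \ref{invariance}, and the strict comparison principle of Theorem \ref{monotonicity psi}. Fix $x\in\mathcal{B}_\omega$ and $t>0$ and set $y:=\Psi_t x$. For the first point, let $(k,l)\in\Z^d\times\Z$ be arbitrary. Since $x$ is Birkhoff, $\tau_{k,l}x\geq x$ or $\tau_{k,l}x\leq x$; say the former. If $\tau_{k,l}x=x$, then applying $\Psi_t$ and using Proposition \ref{invariance} gives $\tau_{k,l}y=\Psi_t(\tau_{k,l}x)=y$; if instead $\tau_{k,l}x>x$, then, noting $x,\tau_{k,l}x\in\X$ by Lemmas \ref{subspace} and \ref{lipschitz translations}, Theorem \ref{monotonicity psi} yields $\tau_{k,l}y=\Psi_t(\tau_{k,l}x)\gg y$. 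In both cases $\tau_{k,l}y\geq y$, and the case $\tau_{k,l}x\leq x$ is symmetric, so $y$ is a Birkhoff configuration. By Lemma \ref{sequence} it therefore has a rotation vector $\omega':=\omega(y)$.

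It remains to show $\omega'=\omega$, and here the tool is Proposition \ref{numbertheory}, which determines the order relation between a Birkhoff configuration and its translate $\tau_{k,l}$ from the sign of $\langle\omega,k\rangle+l$ whenever that number is nonzero. Suppose $\omega'\neq\omega$. Since the integer vectors span $\R^d$, there is $k_0\in\Z^d$ with $\langle\omega,k_0\rangle\neq\langle\omega',k_0\rangle$; after possibly replacing $k_0$ by $-k_0$, assume $\delta:=\langle\omega-\omega',k_0\rangle>0$, pick $n\in\N$ with $n\delta>1$, and set $k:=nk_0$. The open interval $(-\langle\omega,k\rangle,-\langle\omega',k\rangle)$ then has length $n\delta>1$, so it contains an integer $l$, and this pair $(k,l)$ satisfies $\langle\omega,k\rangle+l>0$ and $\langle\omega',k\rangle+l<0$. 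Applying Proposition \ref{numbertheory} to $x\in\mathcal{B}_\omega$ gives $\tau_{k,l}x>x$, whence $\tau_{k,l}y\gg y$ by the mechanism of the previous paragraph; but applying Proposition \ref{numbertheory} to $y\in\mathcal{B}_{\omega'}$ gives $\tau_{k,l}y<y$ --- a contradiction. Hence $\omega(\Psi_t x)=\omega$, i.e.\ $\Psi_t x\in\mathcal{B}_\omega$.

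I do not expect a real obstacle here; the argument is essentially a short combination of Propositions \ref{invariance} and \ref{numbertheory} with Theorem \ref{monotonicity psi}. The only place that needs a moment's care is the second step: one cannot simply choose $l$ as a barrier lying strictly between the two hyperplanes, so the vector $k_0$ must first be dilated by $n$ to make the gap between $-\langle\omega,k\rangle$ and $-\langle\omega',k\rangle$ exceed $1$, which guarantees an \emph{integer} $l$ with the required strict inequalities. (One might hope instead to argue that $t\mapsto\omega(\Psi_t x)$ is continuous and hence constant, but making ``the rotation vector cannot jump'' precise seems to need the comparison argument above in any case, so I would keep the direct version.)
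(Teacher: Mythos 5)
Your argument is correct and takes essentially the same route as the paper, which disposes of the corollary in one line by observing that the strict monotonicity of Theorem \ref{monotonicity psi}, combined with the shift-equivariance of Proposition \ref{invariance}, preserves the order inequalities defining $\mathcal{B}_\omega$. The only difference is that you make explicit, via Proposition \ref{numbertheory} and the dilation of $k_0$ needed to place an integer $l$ strictly between $-\langle\omega,k\rangle$ and $-\langle\omega',k\rangle$, why the rotation vector cannot change under the flow --- a step the paper leaves implicit in the phrase ``the inequalities that define $\mathcal{B}_\omega$''.
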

\noindent This just follows because the strict monotonicity of the parabolic flow implies that $\Psi_t$ preserves the inequalities that define $\mathcal{B}_{\omega}$. \\
\indent The following is a quantitative version of Theorem \ref{monotonicity psi}. It will be crucial in the remainder of this paper and we have not found it elsewhere in the literature.
\begin{theorem}[Parabolic Harnack inequality] \label{UHIP}
Let $t>0$, $K\subset \mathbb{R}^d$ a compact set and $x,y\in \mathcal{B}_K:= \cup_{\omega\in K} \mathcal{B}_{\omega}$ such that $x<y$. Then there exists a constant $L>0$, depending only on $K$, $||i-k||$ and $t$, such that for all $i,k \in \mathbb{Z}^d$,
$$(\Psi_t y)_i - (\Psi_t x)_i \geq L ( y_k-x_k )\ .$$
\end{theorem}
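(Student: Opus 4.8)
The plan is to run the Picard-iteration argument from the proof of Theorem~\ref{monotonicity psi}, but now keeping track of quantitative lower bounds rather than of mere positivity. Write $x(s):=\Psi_s x$, $y(s):=\Psi_s y$ and $u(s):=y(s)-x(s)$; by Corollary~\ref{Birkhoff invariance} both $x(s)$ and $y(s)$ stay in $\mathcal{B}_K$ for $s\in[0,t]$, and $u(0)=y-x>0$. Exactly as in Theorem~\ref{monotonicity psi}, $u$ solves the linear nonautonomous equation $\dot u=H(s)u$ with
$$H(s)_{ik}=\sum_{\|i-j\|\le r,\ \|j-k\|\le r}\int_0^1 -\partial_{i,k}S_j\big(\tau x(s)+(1-\tau)y(s)\big)\,d\tau\ ,$$
and, choosing a fixed $M>0$ (depending only on the constant $C$ of condition~E and on $r$) so that $\tilde H(s):=H(s)+M\,\mathrm{Id}$ has nonnegative entries, the substitution $v(s):=e^{Ms}u(s)$ turns this into $\dot v=\tilde H(s)v$, solved by the norm-convergent Picard series $v(t)=\sum_{n\ge0}\tilde H^{(n)}(t)v(0)$ in which every $\tilde H^{(n)}(t)$ has nonnegative entries.

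The new ingredient is a \emph{uniform} strictly positive lower bound on the near-diagonal entries of $\tilde H(s)$. For $\|i-k\|=1$ the sum defining $H(s)_{ik}$ contains the term $\int_0^1-\partial_{i,k}S_i(\tau x(s)+(1-\tau)y(s))\,d\tau$, every other term being $\ge0$ by condition~D, while $M\,\mathrm{Id}$ contributes nothing off the diagonal. Since each interpolated configuration $\tau x(s)+(1-\tau)y(s)$ is squeezed between $x(s),y(s)\in\mathcal{B}_K$ (which, being comparable, share a rotation vector $\omega\in K$), Lemma~\ref{sequence} confines its coordinates on any ball $B_i^r$, after subtracting a common integer, to a fixed compact subset of $\R^{B_i^r}$ depending only on $K$ and $r$; as $\partial_{i,k}S_i$ is continuous and strictly negative everywhere by condition~D, its maximum over that set equals $-\lambda$ for some $\lambda=\lambda(K,r)>0$ that, by shift-invariance, is independent of $i$, of the pair $x<y$ and of $s,\tau$ (this is just the uniform twist estimate of Corollary~\ref{uniform_twist}, read off along the orbit). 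Hence $\tilde H(s)_{ik}\ge\lambda$ for every $s\in[0,t]$ whenever $\|i-k\|=1$.

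With this bound I would fix $i,k$, pick a nearest-neighbour path $k=i_0,i_1,\dots,i_N=i$ in $\Z^d$ with $N=\|i-k\|$, and estimate $v(t)_i$ from below by the single monomial term of $\tilde H^{(N)}(t)$ attached to this path: since the series, the entries of each $\tilde H(s)$, and $v(0)=u(0)$ are all nonnegative,
$$v(t)_i\ \ge\ \Big(\int_{0<s_1<\dots<s_N<t}\;\prod_{\ell=1}^{N}\tilde H(s_\ell)_{i_\ell,i_{\ell-1}}\,ds_1\cdots ds_N\Big)\,v(0)_k\ \ge\ \lambda^{N}\,\frac{t^{N}}{N!}\,v(0)_k\ .$$
Undoing the substitution ($u(t)=e^{-Mt}v(t)$, $v(0)=y-x$) gives
$$(\Psi_t y)_i-(\Psi_t x)_i\ =\ u(t)_i\ \ge\ e^{-Mt}\,\lambda^{\|i-k\|}\,\frac{t^{\|i-k\|}}{\|i-k\|!}\,(y_k-x_k)\ ,$$
so the theorem holds with $L:=e^{-Mt}\lambda^{\|i-k\|}t^{\|i-k\|}/\|i-k\|!$, which indeed depends only on $K$ (through $\lambda$), on $\|i-k\|$ and on $t$; the case $i=k$ is the $N=0$ term $v(t)_i\ge v(0)_i$.

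The step I expect to be the main obstacle is precisely the uniformity claimed in the second paragraph: one must make sure that $\lambda$ (and $M$) can be chosen independently of the particular Birkhoff pair $x<y$ and of $s,\tau$, so that the resulting $L$ depends on nothing but $K$, $\|i-k\|$ and $t$. This is exactly where the compactness of $\mathcal{B}_K/\Z$ (Proposition~\ref{compactness}) and the ensuing uniform version of the twist condition~D (Corollary~\ref{uniform_twist} and the remark following it) are indispensable; the surrounding Picard bookkeeping merely makes the qualitative argument of Theorem~\ref{monotonicity psi} quantitative.
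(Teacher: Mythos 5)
Your proposal is correct and follows essentially the same route as the paper: the paper's proof is likewise a quantitative rerun of the Picard argument of Theorem~\ref{monotonicity psi}, using the uniform twist constant $\lambda>0$ from Corollary~\ref{uniform_twist} to bound the nearest-neighbour entries of $\tilde H(s)$ from below. The only (harmless) difference is in the final bookkeeping: the paper chains the one-step estimate over $N=\|i-k\|$ time-subintervals of length $t/N$, obtaining $L=e^{-Mt}(\lambda t/\|i-k\|)^{\|i-k\|}$, whereas you extract the $N$-th term of the time-ordered Picard series along a nearest-neighbour path, obtaining the marginally sharper $L=e^{-Mt}\lambda^{\|i-k\|}t^{\|i-k\|}/\|i-k\|!$; your compactness justification of the uniformity of $\lambda$ on the interpolated configurations is also sound and, if anything, slightly more careful than the paper's direct appeal to Corollary~\ref{uniform_twist}.
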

\begin{proof}
The proof is a quantitative variant of the proof of Theorem \ref{monotonicity psi}. We start by recalling that by Corollary \ref{uniform_twist}, there is a constant $\lambda>0$, depending only on $K$, such that $\p_{i,k}S_i(z)\leq -\lambda < 0$ for all $||i-k||=1$ and $z\in  \cup_{\omega\in K} \mathcal{B}_{\omega}$. Then (\ref{qualitativeestimate}) shows that if $||i-k||=1$, then $v_i(t) \geq \tilde L_1 v_k(0)$, with $\tilde L_1=t\lambda$. \\ 
\indent To generalize to the case that $||k-i|| \neq 1$, we again choose a sequence of lattice points $k=i_0, \ldots, i_{N}=i$ such that $||i_n-i_{n-1}||=1$ and $N=||i-k||$. Then there is a constant $\tilde L_N'=\frac{t\lambda}{N}$ depending only on $K$, $t$ and $||i-k||$ such that $v\left(\frac{nt}{N}\right)_{i_n} \geq L_N' v\left(\frac{(n-1)t}{N}\right)_{i_{n-1}}$ for all $n$. Thus, $v_i(t)\geq \tilde L_N v_k(0)$ with $\tilde L_N= (\tilde L_N')^N$. \\ \indent This proves that $u_i(t)\geq L u_k(0)$ with $L=e^{-Mt}\tilde L_N = e^{-Mt}(\lambda t/ ||i-k||)^{||i-k||}$. 
\end{proof}
\noindent Note that for Birkhoff configurations, both the strict monotonicity, Theorem \ref{monotonicity psi}, and the elliptic Harnack inequality, Theorem \ref{ellharn}, follow directly from this parabolic Harnack inequality. \\
\indent We moreover remark that under the uniform twist condition that $\p_{i,k}S_i(z)\leq-\lambda<0$ for all $z\in \mathbb{X}$ and $||i-k||=1$, the above parabolic Harnack inequality holds for all $x,y$ in $\mathbb{X}$ with $x<y$, i.e. it then holds irrespective of the Birkhoff property of $x$ and $y$. This uniform twist condition for instance holds for the Frenkel-Kontorova problem, see formula  (\ref{FKpotential}).\\
\indent To finish this section, let us for completeness include the following alternative existence proof for globally stationary Birkhoff solutions of arbitrary rotation vector. It was provided by Gol\'e in \cite{gole91} in dimension $d=1$. The below is a more or less trivial generalization to higher dimensions, see also \cite{llave-lattices}. As opposed to the results presented in Section \ref{nonpermin}, it also holds without Hypothesis C that requires that the $S_j$ are coercive. The proof presented here is slightly shorter and more direct than the proof in \cite{llave-lattices}.

\begin{theorem}\label{existenceGole}
Also without  the coercivity condition C, it holds that for every $\omega \in \R^d$, there exists an $x\in \mathcal{B}_{\omega}$ with $\nabla W(x)=0$. 
\end{theorem}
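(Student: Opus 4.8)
The plan is to imitate Gol\'e's argument: first prove the statement for rational rotation vectors $\omega\in\Q^d$ using the finite-dimensionality of the periodic configuration spaces together with the positive invariance of $\mathcal{B}_\omega$ under the gradient flow, and then obtain the general case by approximating an irrational $\omega$ by rationals, exactly as in the proof of Theorem~\ref{quasi-periodic birkhoff minimizers}. So let $\omega\in\Q^d$ and fix principal periods $(p_1,q_1),\ldots,(p_d,q_d)$ for $\omega$, so that $\X_{p,q}$ is finite-dimensional and, by Theorem~\ref{maxrelper}, $\X_{p,q}\cap\mathcal{B}_\omega=\overline{\mathcal{B}}_\omega$. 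By Proposition~\ref{invariance} the flow $\Psi_t$ leaves $\X_{p,q}$ invariant, and by Corollary~\ref{Birkhoff invariance} it maps $\mathcal{B}_\omega$ into itself for $t\ge 0$; hence it maps $\overline{\mathcal{B}}_\omega=\X_{p,q}\cap\mathcal{B}_\omega$ into itself, and since it also commutes with $\tau_{0,1}$ it descends to a semiflow on the quotient $\overline{\mathcal{B}}_\omega/\Z$. By Proposition~\ref{compactness} (applied with $K=\{\omega\}$) this quotient is a nonempty compact metric space. The finite sum $W_{p,q}=\sum_{j\in B_p}S_j$ is continuous for pointwise convergence and, by Lemma~\ref{shift invariance}, descends to a continuous function on $\overline{\mathcal{B}}_\omega/\Z$; hence it attains a minimum there, say at a class $[x^\ast]$.

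Next I would run the flow line through $x^\ast$. Writing $x(t):=\Psi_t x^\ast$, the curve $t\mapsto[x(t)]$ stays inside $\overline{\mathcal{B}}_\omega/\Z$, so $W_{p,q}(x(t))\ge W_{p,q}(x^\ast)$ for every $t\ge 0$. On the other hand, identifying $\X_{p,q}$ with $\R^{B_p}$, the variational characterization at the start of Section~\ref{periodicminimizerssection} gives $\partial_i W_{p,q}(x)=(\nabla W(x))_i$ for $i\in B_p$, so that along the gradient flow
$$\frac{d}{dt}W_{p,q}(x(t))=\sum_{i\in B_p}\partial_i W_{p,q}(x(t))\,\dot{x}_i(t)=-\sum_{i\in B_p}\big((\nabla W(x(t)))_i\big)^2\le 0 .$$
Thus $W_{p,q}(x(t))$ is nonincreasing and bounded below by its value at $t=0$, so it is constant; therefore $(\nabla W(x^\ast))_i=0$ for all $i\in B_p$, and by the $p(\Z^d)$-periodicity of $\nabla W(x^\ast)$ (again Proposition~\ref{invariance}) we conclude $\nabla W(x^\ast)=0$. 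Since $x^\ast\in\overline{\mathcal{B}}_\omega\subset\mathcal{B}_\omega$, this settles the case $\omega\in\Q^d$; note that only conditions A, B, D, E were used --- condition~D enters solely through Corollary~\ref{Birkhoff invariance} / Theorem~\ref{monotonicity psi} --- and coercivity~C was not needed.

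For $\omega\in\R^d\setminus\Q^d$ I would proceed exactly as in the proof of Theorem~\ref{quasi-periodic birkhoff minimizers}: choose $\omega_n\in\Q^d$ with $\omega_n\to\omega$, let $x^n\in\overline{\mathcal{B}}_{\omega_n}$ be the stationary configurations just produced, and pick a compact $K\subset\R^d$ containing $\omega$ and all $\omega_n$. Then $\mathcal{B}_K/\Z$ is compact by Proposition~\ref{compactness}, so after passing to a subsequence the canonical representatives converge pointwise to some $x^\infty\in\mathcal{B}_K$. Since $\nabla W$ is continuous for pointwise convergence (Theorem~\ref{psi} together with Proposition~\ref{pointwiseinX}) we get $\nabla W(x^\infty)=\lim_n\nabla W(x^n)=0$, and since $x\mapsto\omega(x)$ is continuous on $\mathcal{B}$ (Lemma~\ref{sequence}) we get $\omega(x^\infty)=\omega$. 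Hence $x^\infty\in\mathcal{B}_\omega$ with $\nabla W(x^\infty)=0$, which completes the proof.

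The main obstacle is the rational case, and specifically the choice of functional to minimize. Lacking condition~C, $W_{p,q}$ need not attain a minimum on the noncompact space $\X_{p,q}$, so one must instead minimize it over the compact, flow-invariant set $\overline{\mathcal{B}}_\omega/\Z$; the crucial point is then that invariance of this set forces the orbit through the minimizer to remain where $W_{p,q}$ cannot decrease, and the Lyapunov identity above upgrades this to $\nabla W=0$. Everything else --- the finite-dimensionality, the compactness of the relevant quotients, and the continuity of $\nabla W$ and of the rotation vector --- is routine and has already been developed in the earlier sections.
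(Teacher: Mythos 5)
Your proof is correct, but it follows a genuinely different route from the paper's. The paper argues directly on $\mathcal{B}_{\omega}$ for an arbitrary (not necessarily rational) $\omega$: assuming no stationary point exists, compactness of $\mathcal{B}_{\omega}$ yields a uniform positive lower bound $\varepsilon$ on the truncated gradient norms $A_B(x)=\sum_{i\in\mathring{B}^{(r)}}(\p_iW(x))^2$, and a bulk-versus-boundary scaling estimate on large boxes $B(mN)$ shows that $\frac{d}{dt}W_{B(mN)}\leq -m^d\varepsilon + D m^{d-1}\leq -1$ uniformly on the forward-invariant compact set $\mathcal{B}_{\omega}$, contradicting boundedness of $W_{B(mN)}$ there; no periodic reduction and no rational approximation are needed. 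You instead first treat $\omega\in\Q^d$ by the finite-dimensional reduction to $\X_{p,q}\cong\R^{B_p}$, minimizing $W_{p,q}$ over the compact, forward-invariant set $\overline{\mathcal{B}}_{\omega}/\Z$ (rather than over all of $\X_{p,q}$, which is exactly how you dodge coercivity), and then use the Lyapunov identity $\frac{d}{dt}W_{p,q}(\Psi_tx)=-\sum_{i\in B_p}((\nabla W(\Psi_tx))_i)^2$ together with minimality to force $\nabla W(x^\ast)=0$; the irrational case then follows by the same approximation scheme as Theorem \ref{quasi-periodic birkhoff minimizers}, using compactness of $\mathcal{B}_K/\Z$ and continuity of $\nabla W$ and of $x\mapsto\omega(x)$ under pointwise convergence. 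Both arguments are sound and use only conditions A, B, D, E. The paper's argument buys a single uniform treatment of all rotation vectors and is independent of the periodicity theory (principal periods, Theorem \ref{maxrelper}), which your reduction relies on; your argument buys a more elementary finite-dimensional core in the rational case and reuses machinery already established, at the cost of an extra limiting step for irrational $\omega$ and the (correct, but worth stating explicitly) verification that the restriction of $\Psi_t$ to $\X_{p,q}$ coincides with the standard gradient flow of $W_{p,q}$ on $\R^{B_p}$, via $\partial_iW_{p,q}(x)=(\nabla W(x))_i$ for $i\in B_p$.
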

\begin{proof}
Recall that the conditions A, D and E alone guarantee that the compact set $\mathcal{B}_{\omega}$  is forward invariant under the negative gradient flow. Condition B will be used below. \\
\indent Now, for $B\subset \Z^d$ a finite subset, recall the definition of the finite action $W_B(x):=\sum_{j\in B}S_j(x)$. Then, for $i\in \mathring{B}^{(r)}$, it holds that $\p_iW_B(x)= \p_iW(x)$, whereas if $||i-B||:=\min_{j\in B}||j-i||> r$, it is true that $\p_iW_B(x)=0$. Thus, the time-derivative of $W_B$ along solutions of $\frac{dx}{dt}=-\nabla W(x)$ equals
\begin{align}\nonumber
\frac{d}{dt} W_B(x)  = - \sum_{i\in \Z^d} \p_iW_B(x)\p_iW(x) =   - \sum_{i\in\mathring{B}^{(r)}} \left(\p_iW(x)\right)^2 - \!\!\!\! \sum_{i\notin \mathring{B}^{(r)}, ||i-B||\leq r } \!\!\! \p_iW_B(x) \p_iW(x) \ .
\end{align}
We call $A_B(x):=\sum_{i\in\mathring{B}^{(r)}} (\p_iW(x))^2$. It is the square length of the gradient of the map $y\mapsto W_B(x+y)$ from $\R^{\mathring{B}^{(r)}}$ to $\R$ evaluated at $y=0$. With this definition, one checks that if $B_1\subset B_2$, then $A_{B_1}(x)\leq A_{B_2}(x)$. Moreover, if $\mathring{B_1}^{(r)}$ and $\mathring{B_2}^{(r)}$ are disjoint, then $A_{B_1\cup B_2}(x) = A_{B_1}(x) + A_{B_2}(x)$. \\
\indent
The second sum in the expression for $\frac{d}{dt}W_B$ consists of ``boundary terms''. We will call it $a_B(x):= \sum_{i\notin \mathring{B}^{(r)}} \p_iW_B(x) \p_iW(x)$. Because $\mathcal{B}_{\omega}$ is compact and $\p_iS_j = \p_{i-j}S_0\circ \tau_{j,0}$ for all $j\in \Z^d$, there is a constant $c>0$ with the property that $|\p_iS_j|\leq c$ for all $i,j \in \Z^d$ and uniformly on $\mathcal{B}_{\omega}$. This in turn implies the estimate $|a_B(x)| \leq c^2(2r+1)^d |\p B |$.  \\ 
\indent Assume now that there is no globally stationary point in $\mathcal{B}_{\omega}$. Then for every $x\in \mathcal{B}_{\omega}$ there is a finite subset $B_x\subset \Z^d$ such that $A_{B_x}(x) =2\varepsilon_x>0$. Moreover, because $\nabla W_{B_x}$ is continuous on $\R^{\Z^d}$, it holds that $x$ has an open neighborhood $U_x\subset \R^{\Z^d}$ on which $A_{B_x}>\varepsilon_x$. By compactness we can find a finite collection $x_1,\ldots, x_m\in \mathcal{B}_{\omega}$ such that $\mathcal{B}_{\omega}\subset \cup_{l=1}^m U_{x_l}$. Define $B:=\cup_{l=1}^m B_{x_l}$ and $\varepsilon:=\min_l \{\varepsilon_{x_l}\}>0$. Then every $x\in\mathcal{B}_{\omega}$ is in some $U_{x_l}$ and thus, $A_B(x) \geq A_{B_{x_l}}(x)>\varepsilon_{x_l} \geq \varepsilon >0$, that is $A_B>\varepsilon>0$ uniformly on $\mathcal{B}_{\omega}$. Moreover, translation invariance implies that for any $k\in\Z^d$, also $A_{B+k} > \varepsilon$ uniformly on $\mathcal{B}_{\omega}$.\\
\indent For $n\in \N$, define the ball $B(n):=\{j\in\Z^d \ | \ ||i||\leq n\} \subset \Z^d$ and let $N\in \N$ be such that the $B$ above is contained in $B(N)$. Then $A_{B(N)}(x)\geq A_B(x)>\varepsilon>0$. \\ \indent Let $m\geq 2$ be an integer. By translation invariance and the fact that $B(mN)$ contains at least $m^d$ translates of $B(N)$ with disjoint $r$-interiors, it holds that $A_{B(mN)} \geq m^dA_{B(N)}> m^d \varepsilon$. On the other hand, $|a_{mN}(x)|\leq c^2(2r+1)^d|\p B(mN)| = D_N^rm^{d-1}$ for some $D_N^r>0$. Thus, $\frac{d}{dt}W_{B(mN)}(x) \leq - m^d\varepsilon + D_N^rm^{d-1}$ and hence by choosing $m$ large enough, we can arrange that $\frac{d}{dt}W_{mN}(x)\leq -1$ uniformly on $\mathcal{B}_{\omega}$. \\ \indent Since $\mathcal{B}_{\omega}$ is forward invariant under the negative gradient flow, this implies that $W_{mN}$ is not bounded from below on $\mathcal{B}_{\omega}$. This contradicts the fact that $\mathcal{B}_{\omega}$ is compact and $W_{mN}$ is continuous. This proves that there must be a globally stationary point in $\mathcal{B}_{\omega}$.
\end{proof}

\section{Ghost circles}\label{GC}
In dimension $d=1$, the concept of a ghost circle was introduced by Gol\'e. We generalize this definition here to general dimensions. Note the similarity with Definition \ref{defmatherset} of an Aubry-Mather set.
\begin{definition}[Ghost Circle] \label{definition ghost circle}
A {\it ghost circle} $\Gamma \subset \R^{\Z^d}$ is a collection of configurations with the following properties
\begin{itemize}
	\item $\Gamma$ is nonempty, closed and connected
	\item $\Gamma$ is strictly ordered, i.e. for every $x,y \in \Gamma$, $x \ll y$, $x=y$ or $x \gg y$
	\item $\Gamma$ is invariant under shifts: if $x\in \Gamma$, then for every $(k,l) \in \Z^d \times \Z$, also $\tau_{k,l}x \in \Gamma$
	\item $\Gamma$ is invariant under the positive and negative gradient flow: for all $t \in \R$, $\Psi_t(\Gamma) = \Gamma$
\end{itemize}
\end{definition}
\noindent An example of a ghost circle are the connected Aubry-Mather sets of Theorem \ref{Cantortheorem}. \\ 
\indent The strict ordering and the shift-invariance of a ghost circle $\Gamma$ imply that any configuration $x\in \Gamma$ is Birkhoff and hence has a rotation vector $\omega=\omega(x)$. The ordering of $\Gamma$ moreover implies that this rotation vector is independent of the choice of $x\in \Gamma$, that is $\omega = \omega(\Gamma)$ and thus, $\Gamma \subset \mathcal{B}_\omega$. \\
\indent
Let $j \in \Z^d$. Recall the definition of the projection to the $j$-th factor $$\pi_j:\R^{\Z^d} \to \R, \ \pi_j(x)=x_j\  .$$ 
Each $\pi_j$ is continuous with respect to pointwise convergence. In fact, we can show that $\pi_j|_{\Gamma}: \Gamma \to \R$ is a homeomorphism: 

\begin{proposition}\label{projection}
Let $\Gamma$ be a ghost circle. Then, for every $j\in\Z^d$, the projection $\pi_j:\R^{\Z^d} \to \R$ induces a homeomorphism $\pi_j|_{\Gamma}:\Gamma \to \R$. 
\end{proposition}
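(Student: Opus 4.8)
The plan is to check that $\pi_j|_\Gamma$ is a continuous bijection onto $\R$, and then to upgrade this to a homeomorphism by a local compactness argument; I note in advance that only the properties \emph{nonempty}, \emph{closed}, \emph{connected}, \emph{strictly ordered} and \emph{shift-invariant} of $\Gamma$, together with $\Gamma\subset\mathcal{B}_\omega$, will be used.

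First I would dispose of injectivity and continuity. If $x,y\in\Gamma$ satisfy $x_j=y_j$, then strict orderedness gives $x\ll y$, $x=y$ or $x\gg y$; the first and the last contradict $x_j=y_j$, so $x=y$. Continuity is immediate: since $\Gamma\subset\mathcal{B}_\omega$, Proposition \ref{pointwiseinX} says the topology on $\Gamma$ is that of pointwise convergence, in which each $\pi_j$ is continuous. For surjectivity, $\pi_j(\Gamma)$ is a nonempty connected subset of $\R$, hence an interval; picking any $x\in\Gamma$ and using shift-invariance, $x+n\cdot 1_{\Z^d}=\tau_{0,1}^n x\in\Gamma$ for all $n\in\Z$, so $x_j+n\in\pi_j(\Gamma)$ for every $n\in\Z$. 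An interval containing $\{x_j+n : n\in\Z\}$ is all of $\R$, so $\pi_j(\Gamma)=\R$.

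The remaining point, and the one I expect to be the main obstacle, is continuity of the inverse. The key observation is that the bounded slices $\Gamma_{[a,b]}:=\{x\in\Gamma : a\le x_j\le b\}$ are compact. Indeed, by Lemma \ref{sequence} every $x\in\mathcal{B}_\omega$ satisfies $|x_i-x_0-\langle\omega,i\rangle|\le 1$, hence $|x_i-x_j-\langle\omega,i-j\rangle|\le 2$; thus $x\in\Gamma_{[a,b]}$ forces $x_i$ into a fixed compact interval for each $i\in\Z^d$, so by Tychonov's theorem $\Gamma_{[a,b]}$ lies inside a compact set, and it is closed there because $\mathcal{B}_\omega$ is closed (the rotation vector being continuous on $\mathcal{B}$, Lemma \ref{sequence}), $\Gamma$ is closed, and the condition $a\le x_j\le b$ is closed. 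Then $\pi_j$ restricts to a continuous bijection $\Gamma_{[-n,n]}\to[-n,n]$ — surjectivity here using $\pi_j(\Gamma)=\R$ — and a continuous bijection from a compact space onto a Hausdorff space is a homeomorphism, yielding a continuous inverse $\sigma_n:[-n,n]\to\Gamma$. By injectivity of $\pi_j|_\Gamma$ the $\sigma_n$ agree on overlaps and patch to an inverse $\sigma:\R\to\Gamma$ of $\pi_j|_\Gamma$; since each $\xi\in\R$ has a neighborhood contained in some $[-n,n]$ on which $\sigma=\sigma_n$, the map $\sigma$ is continuous, and $\pi_j|_\Gamma$ is a homeomorphism. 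The delicate step is precisely the compactness of $\Gamma_{[a,b]}$: this is where the Birkhoff property of ghost-circle configurations enters through Lemma \ref{sequence}, and one must keep track of all three closedness inputs; the rest is routine point-set topology.
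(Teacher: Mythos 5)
Your proof is correct, and most of it (injectivity from the strict ordering, continuity of $\pi_j$ in the topology of pointwise convergence, and surjectivity from connectedness together with invariance under $\tau_{0,1}$) coincides with the paper's argument; the genuine difference is in how you obtain continuity of the inverse. The paper argues that the coordinate projection $\pi_j:\R^{\Z^d}\to\R$ is an open map for the product topology and concludes directly that $(\pi_j|_{\Gamma})^{-1}$ is continuous; no compactness and no Birkhoff estimate are used there. You instead prove that the slices $\Gamma_{[a,b]}=\{x\in\Gamma:\ a\le x_j\le b\}$ are compact, using the a priori bound $|x_i-x_0-\langle\omega,i\rangle|\le 1$ of Lemma \ref{sequence} together with Tychonov's theorem and the closedness of $\Gamma$, and then invoke the standard fact that a continuous bijection from a compact space onto a Hausdorff space is a homeomorphism, patching the local inverses $\sigma_n$ over the intervals $[-n,n]$. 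Your route is a bit longer and genuinely uses the Birkhoff property of ghost-circle configurations, which the paper's openness argument does not invoke; in exchange it is self-contained point-set topology and avoids the step, left implicit in the paper, of passing from openness of $\pi_j$ on all of $\R^{\Z^d}$ to openness of its restriction to the subspace $\Gamma$ (a passage that is not automatic for arbitrary subsets and is ultimately justified by the same structure of $\Gamma$ that you make explicit via compactness of the slices). Both arguments are valid proofs of Proposition \ref{projection}.
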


\begin{proof}
Let $\Gamma \subset \R^{\Z^d}$ be a ghost circle. Clearly, $\pi_j|_{\Gamma}$ is continuous. \\
\indent
The strict ordering of $\Gamma$ implies that $\pi_j|_{\Gamma}$ is injective. Moreover, shift-invariance of $\Gamma$ implies that if $x \in \Gamma$, then so is $\tau_{0,l}x = x+l$ for every $l\in \Z$, whence the range of $\pi_j|_{\Gamma}$ is unbounded. Since $\Gamma$ is connected and $\pi_j|_{\Gamma}$ is continuous, its range is both unbounded and connected, that is $\pi_j|_{\Gamma}:\Gamma\to\R$ is surjective. \\
\indent To prove that $(\pi_j|_{\Gamma})^{-1}:\R\to\Gamma$ is continuous, it suffices to realize that $\pi_j:\R^{\Z^d}\to\R$ is an open map, i.e. that it sends open sets to open sets.  This holds because the topology of pointwise convergence is generated by open sets $U\subset \R^{\Z^d}$ for which $\pi_k(U)=V$ with $V\subset \R$ an open subset, while $\pi_l(U)=\mathbb{R}$ for all $l\neq k$. For such $U$, it is clear that $\pi_j(U)$ is open.
\end{proof}
\noindent Lemma \ref{projection} thus says that a ghost circle $\Gamma$ is homeomorphic to $\R$. It should be remarked though that, because $\Gamma$ is invariant under the vertical shift $\tau_{0,1}$, and the gradient flow $\Psi_t$ is equivariant with respect to $\tau_{0,1}$, it makes sense to identify every element $x\in \Gamma$ with $\tau_{0,1}x=x+1\in \Gamma$. The quotient $\Gamma/\Z\cong \R/\Z$ is a genuine topological circle. This identification is sometimes understood in this paper. 
\\
\indent The name {\it ghost circle} refers to the fact that $\Gamma/\Z$ may not consist of ``physically relevant'' configurations, i.e. globally stationary solutions. But, being a compact one-dimensional object consisting of orbits of a formal gradient flow, it has a good chance of containing such solutions. In fact, the following proposition serves as a first motivation to study ghost circles.
\begin{proposition}\label{solution}
Every ghost circle $\Gamma\subset \R^{\Z^d}$ contains a globally stationary solution.
\end{proposition}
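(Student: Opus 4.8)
The plan is to exploit the fact that a ghost circle is a compact (modulo the vertical shift) one-dimensional invariant set of the negative gradient flow and to use a Lyapunov-type argument. First I would pass to the quotient $\Gamma/\Z$, which by Proposition \ref{projection} and the remark following it is a genuine topological circle, and which is invariant under the flow $\Psi_t$ since $\Psi_t$ commutes with $\tau_{0,1}$ by Proposition \ref{invariance}. The key point is that $\Gamma/\Z$ is compact: indeed $\Gamma\subset\mathcal{B}_\omega$ for a single rotation vector $\omega=\omega(\Gamma)$, so $\Gamma/\Z\subset\mathcal{B}_{\{\omega\}}/\Z$, which is compact by Proposition \ref{compactness}, and $\Gamma/\Z$ is closed there.

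Next I would construct a continuous, strictly decreasing Lyapunov function along non-stationary orbits. The natural candidate is a finite, or suitably renormalized, version of the action $W$. For a finite box $B(N)$ one has, as computed in the proof of Theorem \ref{existenceGole},
$$\frac{d}{dt}W_{B(N)}(x(t)) = -\sum_{i\in\mathring{B}(N)^{(r)}}\bigl(\p_iW(x(t))\bigr)^2 - a_{B(N)}(x(t)),$$
where the boundary term $a_{B(N)}$ is uniformly bounded on $\mathcal{B}_\omega$ by $c^2(2r+1)^d|\p B(N)|$. If the flow on $\Gamma$ had \emph{no} stationary point, then by the compactness of $\Gamma/\Z$ and the argument in Theorem \ref{existenceGole} there would be $\varepsilon>0$ and $N$ (in fact a large multiple $mN$) with $\frac{d}{dt}W_{B(mN)}(x(t)) \le -1$ uniformly for $x\in\Gamma$; since $\Gamma$ is forward invariant, $W_{B(mN)}$ would then be unbounded below on $\Gamma$, contradicting compactness of $\Gamma/\Z$ together with the $\tau_{0,1}$-invariance of $W_{B(mN)}$ (which descends it to a continuous function on $\Gamma/\Z$, hence bounded). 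Actually this is already a proof, so the cleanest route is simply to invoke Theorem \ref{existenceGole}'s mechanism verbatim, replacing the role of $\mathcal{B}_\omega$ by $\Gamma$: every ingredient — forward invariance, compactness modulo $\tau_{0,1}$, shift-invariance of $W_B$, the uniform boundary estimate — is available.

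An alternative, perhaps more in the spirit of a short proof, would use the circle topology directly: $\Psi_t$ induces an orientation-preserving flow on the circle $\Gamma/\Z\cong\R/\Z$ (orientation preserving because $\Psi_t$ preserves the order on $\Gamma$ by Theorem \ref{monotonicity psi}), and every flow on a circle either has a fixed point or all orbits are periodic. The latter is impossible for a gradient flow, since $W_{B(mN)}$ is non-increasing along orbits and strictly decreasing unless $\nabla W=0$ on the whole orbit; a nonconstant periodic orbit would force $W_{B(mN)}$ to return to its initial value while strictly decreasing somewhere, a contradiction. Hence there is a fixed point $x$ of $\Psi_t$ for all $t$, i.e.\ $\nabla W(x)=0$, so $x\in\Gamma$ is globally stationary.

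The main obstacle is making the Lyapunov argument rigorous despite $W$ itself being only a formal sum: one must commit to a fixed finite truncation $W_{B(mN)}$ and control the boundary flux term uniformly, which is exactly what the compactness of $\mathcal{B}_\omega/\Z$ (hence of $\Gamma/\Z$) and shift-invariance provide; this is precisely the bookkeeping already carried out in the proof of Theorem \ref{existenceGole}, so no genuinely new estimate is needed.
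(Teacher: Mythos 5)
Your main argument is precisely the paper's proof: the paper simply notes that $\Gamma$ is a closed, shift- and flow-invariant subset of some $\mathcal{B}_{\omega}$ (compact modulo $\tau_{0,1}$) and repeats the argument of Theorem \ref{existenceGole} verbatim, so your proposal is correct and takes essentially the same approach. One caution about your alternative circle-flow sketch only: $W_{B(mN)}$ is not in fact non-increasing along orbits, since $\frac{d}{dt}W_{B(mN)} = -A_{B(mN)}(x) - a_{B(mN)}(x)$ and the boundary term $a_{B(mN)}$ has no definite sign, so that shortcut still requires the large-box estimate of the main argument to rule out periodic orbits.
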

\noindent Since $\Gamma$ is a closed, flow-invariant subset of some $\mathcal{B}_{\omega}$, the proof of this proposition is identical to that of Theorem \ref{existenceGole}. Moreover, we remark that when $\Gamma$ contains at least one global minimizer, say $x$, then it automatically contains the entire Aubry-Mather set $\mathcal{M}(x)$. \\
\indent In the following two sections we first of all show that under generic conditions, ghost circles of rational rotation vectors exist and then we will prove a compactness result for ghost circles which will allow us to take limits and obtain ghost circles of irrational rotation vectors.

\section{Morse approximations and periodic ghost circles} \label{PMGC}
In this section, we will prove two technical results. The first is that the local potentials $S_j$ can be perturbed, in a way that will be made precise, so that the periodic action $W_{p,q}:\X_{p,q}\to\R$ becomes a Morse function. \\ \indent
The second result of this section says that whenever $W_{p,q}:\X_{p,q}\to \R$ is a Morse function, then there exists a ghost circle $\Gamma\subset \X_{p,q}$. \\
\indent Together with the results of Section \ref{convergence}, this will imply that any collection of local potentials admits a ghost circle of arbitrary rotation vector.

\subsection{Existence of Morse approximations} \label{morse_approximations}
Let $\omega\in\mathbb{Q}^d$ be a rational rotation vector and let $(p_1,q_1), \ldots, (p_d,q_d)$ be a set of principal periods for $\omega$. Recall that in Section \ref{classAM} we defined the periodic action function $W_{p,q}:\X_{p,q}\to\R$ by $W_{p,q}(x)=\sum_{j\in B_p}S_j(x)$. \\
\indent One says that $W_{p,q}: \X_{p,q}\to \R$ is a {\it Morse function} if at its critical points its Hessian is nondegenate. In other words, if $\nabla W_{p,q}(x)=0$ implies that $D^2W_{p,q}(x)$ is invertible, where $D^2W_{p,q}(x)$ is the symmetric matrix of second derivatives of $W_{p,q}$ evaluated at $x$. By the implicit function theorem, every critical point $x$ of a Morse function is isolated. Moreover, each of these critical points can be assigned an index $i(x)$ which equals the dimension of the unstable manifold of $x$, considered as an equilibrium point for the negative gradient flow $\frac{dx}{dt}=-\nabla W_{p,q}(x)$. \\ 
\indent We remark here that for arbitrary local potentials $S_j$, the periodic action $W_{p,q}$  is not automatically a Morse function. A simple example of a non-Morse action function arises in the Frenkel-Kontorova model without local potential, for which 
$$W_{p,q}(x) = \sum_{j\in B_{p}}\frac{1}{8d}\sum_{||i-j||=1}(x_i-x_j)^2\ .$$ 
This action function satisfies $W_{p,q}(x + t)=W_{p,q}(x)$ for all $t\in \R$, so that its second derivative is everywhere degenerate. In fact, it has a one-parameter family of stationary points, and thus none of those is isolated. Nevertheless, in this subsection we will prove the following theorem:
\begin{theorem}
\label{morsification s}
Let $S_j:\R^{\Z^d}\to \R$ be local potentials that satisfy conditions A-E. Let $\omega \in \Q^d$ and let $(p_1, q_1), \ldots, (p_d, q_d)$ be principal periods $\omega$, that is $\overline{\X}_{\omega}=\X_{p,q}$. Then there exists a sequence of local potentials $S_j^n$ with the following properties:
\begin{itemize}
\item[{\bf 1.}] The $S_j^n$ satisfy conditions A-E.
\item[{\bf 2.}] The range of interaction of the $S_j^n$ is uniformly bounded in $n$.
\item[{\bf 3.}] For every $n$, the periodic action $W_{p,q}^n=\sum_{j\in B_p} S_j^n$ is a Morse function on $\X_{p,q}$.
\item[{\bf 4.}] The gradients converge uniformly: $\lim_{n\to \infty} \nabla S_j^n = \nabla S_j$ uniformly in $C^1(\R^{\Z^d})$
\item[{\bf 5.}] The potentials converge uniformly on compacts: $\lim_{n\to \infty} S_j^n = S_j$ uniformly on $\mathcal{B}_{p,q}$.
\end{itemize}
\end{theorem}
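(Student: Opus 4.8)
The plan is to obtain the $S_j^n$ as \emph{small} perturbations of the $S_j$ with the interaction range held fixed at some $\rho\ge r$, so that properties 1, 2, 4 and 5 come essentially for free and the only real work lies in property 3. By the proposition of Section \ref{periodicminimizerssection}, the critical points of the periodic action $W_{p,q}=W_{p,q}^{0}$ on the finite-dimensional space $\X_{p,q}\cong\R^{B_p}$ are exactly the configurations in $\X_{p,q}$ that solve (\ref{RRR}); condition C confines these, once $x_0$ is restricted to a bounded interval, to a fixed compact set $Q\subset\X_{p,q}$, exactly as in the proof of Theorem \ref{existence}. It therefore suffices to arrange that $W_{p,q}^n$ is nondegenerate at its critical points inside $Q$. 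I would consider only perturbations of the form $S_j^n:=S_j+R_j^n$ with $R_j^n:=R_0^n\circ\tau_{j,0}$ for a $1$-periodic, range-$\rho$ function $R_0^n:\R^{B_0^\rho}\to\R$, so that $W_{p,q}^n=W_{p,q}+\sum_{j\in B_p}R_j^n$ is again shift-invariant by the argument of Lemma \ref{shift invariance}. If $\|\nabla R_0^n\|_{C^1(\R^{\Z^d})}\to 0$, then properties 2, 4 and 5 are immediate. Conditions A and B are structural; C survives because the perturbation is bounded; the strict inequalities in D and the bound E survive for small $C^2$-norm; the only delicate point is the non-strict part of D, which I would preserve by choosing $R_0^n$ in the tangent cone to the convex set of admissible range-$\rho$ potentials at $S_0$ — for instance with negative semidefinite Hessian, a class still wide enough to be useful (it contains all onsite perturbations $v(x_0)$ as well as concave functions of $x|_{B_0^\rho}$ with bounded derivatives).

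With the resulting convex family $\mathcal R$ of admissible perturbations, I would apply the parametric transversality theorem to the smooth map $G:Q\times\mathcal R\to\R^{B_p}$, $G(x,R):=\nabla_xW_{p,q}^{R}(x)$. Once $G$ is shown to be transverse to $0\in\R^{B_p}$, the set of $R\in\mathcal R$ for which $0$ is a regular value of $G(\cdot,R)$ — equivalently, for which $W_{p,q}^{R}$ is a Morse function on $Q$ — is residual, hence dense, in $\mathcal R$. Picking $R=R^n$ in this set with $\|\nabla R^n\|_{C^1}\to 0$ yields the $S_j^n$; and since by C there are no critical points of $W_{p,q}^n$ outside $Q$, $W_{p,q}^n$ is in fact Morse on all of $\X_{p,q}$.

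The crux of the proof, and the only place where it matters that $(p_1,q_1),\dots,(p_d,q_d)$ are \emph{principal} periods, is the transversality of $G$: at a zero $(x_\ast,R_\ast)$ one must realise every $(c_i)_{i\in B_p}\in\R^{B_p}$ as a value $D_RG(x_\ast,R_\ast)\cdot\delta R=\big(\sum_{\|j-i\|\le\rho}\p_i(\delta R)_j(x_\ast)\big)_{i\in B_p}$ with $\delta R$ ranging over the tangent cone. Here I would invoke Lemma \ref{tauaction}: since the $(p_j,q_j)$ are principal periods, the residual shift group $(\Z^d\times\Z)/J_{p,q}$ acts \emph{freely} on $\X_{p,q}$, and from this — taking $\rho$ large, and using that $x_\ast$ solves (\ref{RRR}) together with $\tau_{p_j,q_j}x_\ast=x_\ast$ — one can show that the local patterns $(\tau_{i,0}x_\ast)|_{B_0^\rho}$, $i\in B_p$, are pairwise distinct as points of $\R^{B_0^\rho}/\Z$: were two of them equal, one would produce a period of $x_\ast$ lying in $J_{p,q}$ and relating two distinct sites of the fundamental domain $B_p$, which is impossible. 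One can then select, for each $i\in B_p$, an admissible perturbation $\delta R^{(i)}$ supported near the $i$-th pattern and negligible near all the others, with gradient at $x_\ast$ having a nonzero $x_i$-component, so that the vectors $D_RG(x_\ast,R_\ast)\cdot\delta R^{(i)}$ form, after a triangular rearrangement, a basis of $\R^{B_p}$. Making this separation construction precise — a perturbation that is simultaneously suitably localized, $1$-periodic, and compatible with the sign constraints of D — is the step I expect to be the main obstacle; the finite-dimensional reduction, the openness of the strict part of D via Corollary \ref{uniform_twist}, and the transversality and Morse-theory machinery are then routine.
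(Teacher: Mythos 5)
Your overall strategy (perturb only the $S_j$, reduce to the finite-dimensional $W_{p,q}$ on $\X_{p,q}\cong\R^{B_p}$, and get the Morse property from a Sard/parametric-transversality argument whose surjectivity step uses the freeness of the residual shift action to separate the local patterns $(\tau_{i,0}x_*)|_{B_0^\rho}$ modulo $\Z$) is in the same spirit as the paper, and the pattern-separation argument itself is sound for $\rho$ large. But there are two genuine gaps. First, the compactness claim: condition C (coercivity) bounds sublevel sets and hence minimizers, but it does \emph{not} confine all critical points of $W_{p,q}^n$ with $x_0\in[0,1]$ to a fixed compact set $Q$, and your closing assertion that ``by C there are no critical points of $W_{p,q}^n$ outside $Q$'' is unjustified; saddle points may escape to infinity for a coercive function. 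The paper avoids this issue entirely by performing the Morsification globally on $\X_{p,q}$ (Theorem \ref{morsificationtheorem}): it covers $\X_{p,q}$ by a shift-invariant family of balls centered on a discrete grid, using the proper discontinuity of the $(\Z^d\times\Z)/I_\omega$-action (Lemma \ref{properly}, which is where the principal-period hypothesis enters), and adds inductively chosen small linear-times-bump perturbations, with the size on the sets $E(N)$ forced to decay like $\varepsilon/(1+N^2)^2$.

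Second, and more fundamentally, your mechanism for preserving condition D does not work as stated. A negative semidefinite Hessian does not give nonpositive mixed partials, and pointwise strict negativity of $\partial_{i,k}S_i$ on the non-compact domain is not stable under uniformly $C^2$-small perturbations (there is no uniform margin off compact sets). Worse, the two requirements you place on $\delta R^{(i)}$ are incompatible: a perturbation that is localized near a single pattern in $\R^{B_0^\rho}/\Z$ and has all mixed second partials $\leq 0$ must have them $\equiv 0$ (integrate $\partial_{u,v}\delta R$ over a box containing the support), hence is additively separable, hence cannot be localized; and separable/onsite perturbations only produce vectors of the form $\bigl(g(x_{*,i})\bigr)_{i\in B_p}$, which fail to span $\R^{B_p}$ whenever two sites of $B_p$ carry equal values mod $1$ --- a coincidence that freeness of the action does not exclude, since it only separates whole patterns. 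This is exactly the obstruction the paper's first perturbation step is designed to remove: adding $\frac1n\sum_{i,k\in j+B_p}(x_k-x_i)\arctan(x_k-x_i)$ makes $\partial_{i,k}\widetilde S^n_j\leq-\frac1n\frac{2}{(1+(x_i-x_k)^2)^2}<0$ for \emph{all} pairs in $j+B_p$, creating a quantitative buffer; the Morse perturbation $F^n$ is then only required to satisfy $|\partial_{i,k}F^n|\leq\frac{1}{2n}\frac{1}{(1+(x_i-x_k)^2)^2}$ (which is why Theorem \ref{morsificationtheorem} is stated with the $E(N)$-decay), after which it can be an arbitrary localized bump-type perturbation and is finally redistributed among the local potentials as $S_j^n=\widetilde S_j^n+\frac{1}{|\det p\,|}F^n\bigl(x|_{j+B_p}^{\rm per}\bigr)$. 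Without some analogue of this preliminary strict-monotonization, the ``main obstacle'' you flag is not merely technical: the sign constraints of D and the localization needed for surjectivity cannot be met simultaneously.
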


\noindent In dimension $d=1$, this theorem was proved by Gol\'e \cite{gole92}, \cite{gole01} in the context of twist maps. His proof does not generalize to dimensions $d>1$ or to general monotone variational problems in dimension $d=1$, because it explicitly exploits the interpretation of $S_j(x)=S(x_j, x_{j+1})$ as the generating function of a twist map of the annulus, see Appendix A. \\
\indent Our proof in higher dimensions is different, and it is based on Lemma \ref{tauaction} and ideas from equivariant Morse theory. We start by making Lemma \ref{tauaction} a bit more quantitative:
 \begin{lemma}\label{properly}
Let $\omega\in \Q^d$. Then the $\tau$-action of $(\Z^d\times \Z)/I_{\omega}$ on $\overline{\X}_{\omega}$ is properly discontinuous. More precisely, when $(k,l)$ represents a nontrivial element of $(\Z^d\times \Z)/I_{\omega}$ and $x\in\overline{\X}_{\omega}$, then
 $$|\tau_{k,l}x-x|_1: = \sum_{i \in B_p}|(\tau_{k,l}x-x)_i|\geq 1\ .$$
 \end{lemma}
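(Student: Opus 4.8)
The plan is to deduce the stated inequality from a much cruder fact: the \emph{signed} total displacement of $x$ under $\tau_{k,l}$ over the fundamental domain $B_p$,
$$\phi(x) \;:=\; \sum_{i\in B_p}\big((\tau_{k,l}x)_i - x_i\big)\qquad (x\in\overline{\X}_{\omega}=\X_{p,q}),$$
is a nonzero integer whenever $(k,l)$ represents a nontrivial class of $(\Z^d\times\Z)/I_{\omega}$. Here I use that, since the $(p_j,q_j)$ are principal periods, $\overline{\X}_{\omega}=\X_{p,q}$, so every configuration in play is genuinely periodic and summing over the finite set $B_p$ makes sense. Once the claim is established, one is done immediately, because
$$|\tau_{k,l}x-x|_1 \;=\; \sum_{i\in B_p}\big|(\tau_{k,l}x-x)_i\big| \;\ge\; \Big|\sum_{i\in B_p}(\tau_{k,l}x-x)_i\Big| \;=\; |\phi(x)| \;\ge\; 1 .$$

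First I would check that $\phi$ does not depend on $x$ and is integer-valued. Writing $(\tau_{k,l}x)_i = x_{i+k}+l$ gives $\phi(x) = l\,|B_p| + \sum_{i\in B_p}(x_{i+k}-x_i)$. For each $i\in B_p$ there are a unique $\sigma(i)\in B_p$ and $m_i\in\Z^d$ with $i+k = \sigma(i)+p(m_i)$, because $B_p$ — and hence also $k+B_p$ — is a fundamental domain for $\Z^d/p(\Z^d)$; moreover $i\mapsto\sigma(i)$ is a bijection of $B_p$. The periodicity relations $\tau_{p_j,q_j}x=x$ amount to $x_{a+p(m)} = x_a - \langle q,m\rangle$ for all $a,m\in\Z^d$, so $x_{i+k} = x_{\sigma(i)} - \langle q,m_i\rangle$. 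Summing over $i\in B_p$ and using that $\sigma$ permutes $B_p$, the $x$-dependent terms cancel and one is left with $\phi(x) = l\,|B_p| - \langle q,M\rangle$, where $M := \sum_{i\in B_p}m_i\in\Z^d$ depends only on $k$ and $p$ — a quantity independent of $x$, and manifestly an integer.

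Next I would pin down the value of this constant by evaluating on the linear configuration $x^{\omega}\in\overline{\X}_{\omega}$, $x^{\omega}_i=\langle\omega,i\rangle$: there $(\tau_{k,l}x^{\omega}-x^{\omega})_i=\langle\omega,k\rangle+l$ for every $i$, so $\phi=\phi(x^{\omega})=(\langle\omega,k\rangle+l)\,|B_p|$. If $(k,l)$ represents a nontrivial class of $(\Z^d\times\Z)/I_{\omega}$, then by definition $(k,l)\notin I_{\omega}$, i.e. $\langle\omega,k\rangle+l\neq0$, whence $\phi\neq0$; being a nonzero integer, $|\phi|\ge1$, and the displayed chain of inequalities completes the bound. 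Proper discontinuity is then an immediate corollary: for any $x$ the ball $\{y\in\overline{\X}_{\omega}\ :\ |y-x|_1<1/2\}$ is disjoint from all its nontrivial $\tau$-translates, since $\tau_{k,l}$ moves \emph{every} point of $\overline{\X}_{\omega}$ by at least $1$ in $|\cdot|_1$.

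The only genuine idea is the recognition that $\sum_{i\in B_p}(\tau_{k,l}x-x)_i$ is a ``topological'' invariant forced by the period lattice rather than something varying with $x$; after that, the argument is pure fundamental-domain bookkeeping, and I foresee no real obstacle beyond keeping the indexing of $\sigma$ and the $m_i$ consistent.
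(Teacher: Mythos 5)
Your proof is correct, and it takes a genuinely different route from the paper's. The paper iterates the shift: it observes that $(nk,-n\langle\omega,k\rangle)\in I_{\omega}$ for $n=|\det p\,|$, so $\tau_{k,l}^{n}x=\tau_{0,n(l+\langle\omega,k\rangle)}x$ is a pure vertical translation with $|\tau_{k,l}^n x-x|_1=n^2|l+\langle\omega,k\rangle|\ge n$, and then telescopes, using the shift-invariance of $|\cdot|_1$ on the $p$-periodic differences $\tau_{k,l}^{j+1}x-\tau_{k,l}^{j}x$, to get $|\tau_{k,l}^n x-x|_1\le n|\tau_{k,l}x-x|_1$ and hence the bound. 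You instead work with the signed displacement $\phi(x)=\sum_{i\in B_p}\big((\tau_{k,l}x)_i-x_i\big)$: the fundamental-domain bookkeeping (the permutation $\sigma$ of $B_p$ together with $x_{a+p(m)}=x_a-\langle q,m\rangle$) shows $\phi$ is an $x$-independent integer, evaluation at $x^{\omega}\in\overline{\X}_{\omega}$ identifies it as $(\langle\omega,k\rangle+l)\,|\det p\,|$, and the triangle inequality $|\tau_{k,l}x-x|_1\ge|\phi|\ge 1$ finishes. Both arguments ultimately rest on the same arithmetic fact, that $|\det p\,|(\langle\omega,k\rangle+l)$ is a nonzero integer when $(k,l)\notin I_{\omega}$, and both give the same quantitative bound $|\det p\,|\,|\langle\omega,k\rangle+l|$; what yours buys is a one-shot, iteration-free argument in which the integrality appears automatically from the permutation cancellation (no adjugate computation, no induction on iterates), at the cost of slightly more careful indexing over the fundamental domain. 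Your closing remark on proper discontinuity is also fine: if $y\in B_{1/2}(x)\cap\tau_{k,l}B_{1/2}(x)$, writing $y=\tau_{k,l}z$ with $|z-x|_1<1/2$ gives $|\tau_{k,l}z-z|_1<1$, contradicting the displacement bound.
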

 \begin{proof}
Let $(p,q)$ be principal periods for $\omega$, i.e. $\omega=-p^{-T}q$, and write $n:=|\det p \ \!|$. We notice that for an arbitrary $k\in \Z^d$ it holds that $-n\langle k, \omega\rangle = \langle k, |\det p \ \!|p^{-T}q\rangle \in \Z$ and thus that $(n k, - n\langle\omega,k\rangle) \in I_{\omega}$. Hence, writing $nl =$ $-n\langle \omega, k\rangle$ $+ nl + n \langle \omega, k\rangle$, we see that $\tau_{k,l}^{n}x = \tau_{n k, n l}x =\tau_{0,n \left( l +  \langle \omega, k\rangle\right)}x$. Thus, $|\tau^n_{k,l}x-x|_1=n^2 \left| l +  \langle \omega, k\rangle\right|$. 
\\
\indent Now if $(k,l)$ represents a nontrivial element of $(\Z^d\times \Z)/I_{\omega}$, then $n\cdot | l +  \langle \omega, k\rangle| \geq 1$, and hence we have that $|\tau_{k,l}^{n}x - x|_1\geq n$. We claim that this implies that $|\tau_{k,l}x-x|_1\geq 1$. This follows from the fact that $\tau_{k,l}^{j+1}x-\tau^j_{k,l}x=\tau_{k,0}(\tau_{k,l}^jx-\tau_{k.l}^{j-1}x)$ and thus, by induction, that $|\tau_{k,l}^{j+1}x-\tau_{k,l}^jx|_1=|\tau_{k,l}x-x|_1$. Therefore, $|\tau_{k,l}^nx-x|_1\leq |\tau^{n}_{k,l}x-\tau_{k,l}^{n-1}x|_1 + \ldots + |\tau_{k,l}x-x|_1=n|\tau_{k,l}x-x|_1$, which means that $|\tau_{k,l}x-x|_1\geq 1$.
 \end{proof} 
\noindent With Lemma \ref{properly} at hand, one can prove that the quotient $\overline{\X}_{\omega}/(\Z^d\times \Z)$ is a smooth manifold. An arbitrary $\Z^d\times \Z$-invariant function $f:\overline{\X}_{\omega} \to \R$ descends to this quotient and can hence be perturbed into a shift-invariant Morse function $f^{\varepsilon}$. Instead of providing this rather standard construction from equivariant Morse theory, let us prove this latter fact directly here:
\begin{theorem}\label{morsificationtheorem}
Let $\omega\in \Q^d$ and let $p,q$ be principal periods for $\omega$. When $f:\X_{p,q}\to \R$ is an $m\geq 2$ times continuously differentiable shift-invariant function, then for every $\varepsilon>0$ there exists a shift-invariant Morse function $f^{\varepsilon}:\X_{p,q}\to \R$ with 
$$||f-f^{\varepsilon}||_{C^m(E(N))} \leq \frac{\varepsilon}{(1+N^2)^2}\ \mbox{for every} \ N>0\ .$$
Here,
$$E(N):= \{x\in \X_{p,q}\  | \ |x_i-x_k|\geq N \ \mbox{for some} \ i\neq k \ \mbox{with} \ i,k \in B_p \}\ .$$ 
\end{theorem}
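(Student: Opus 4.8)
The plan is to recognise the statement as the equivariant density of Morse functions for a free, properly discontinuous group action, and to carry out the classical local perturbation construction while tracking the size of the perturbation against a natural proper exhaustion of the quotient.

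\textbf{Reduction to a manifold.} First I would pass to the quotient. Since $(p,q)$ are principal periods, $\X_{p,q}=\overline{\X}_{\omega}$ is a finite-dimensional real vector space of dimension $n:=|B_p|=|\det p\,|$, and the $\tau$-action of $G:=(\Z^d\times\Z)/I_{\omega}$ on it is free by Lemma \ref{tauaction} and properly discontinuous by Lemma \ref{properly}. Hence $M:=\X_{p,q}/G$ is a (Hausdorff, second countable) smooth $n$-manifold and $\pi:\X_{p,q}\to M$ is a smooth covering. Because $f$ is shift-invariant it descends to $\bar f\in C^m(M)$, and conversely the pull-back $\pi^{*}F$ of any Morse function $F$ on $M$ is a shift-invariant Morse function on $\X_{p,q}$, since $\pi$ is a local diffeomorphism, so that critical points and the nondegeneracy of the Hessian are read off in charts. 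So it is enough to construct, for each $\varepsilon>0$, a Morse function $f^{\varepsilon}$ on $M$ satisfying the weighted smallness below, and then pull it back.

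\textbf{The exhaustion and a reformulation of the estimate.} The continuous function $x\mapsto N(x):=\max_{i,k\in B_p}|x_i-x_k|$ is $\tau_{0,1}$-invariant, and a short computation using $q_a=-\langle p_a,\omega\rangle$ shows that $N$ varies along any $G$-orbit by at most a constant $c_0$ depending only on $\omega$ and $p$. Hence $h(x):=\min_{g\in G}N(gx)$ is a continuous, proper, $G$-invariant function with $h(x)\le N(x)\le h(x)+c_0$, which descends to a proper exhaustion function $\bar h$ on $M$ with compact sublevel sets $K_s:=\{\bar h\le s\}$. Since $E(N)=\{x:N(x)\ge N\}$ and these sets are nested with $N(x)$ the largest value of $N$ for which $x\in E(N)$, the desired inequality $\|f-f^{\varepsilon}\|_{C^m(E(N))}\le\varepsilon(1+N^2)^{-2}$ for all $N>0$ is equivalent to the pointwise bound $\sum_{|\alpha|\le m}|\p^{\alpha}(f-f^{\varepsilon})(x)|\le\varepsilon(1+N(x)^2)^{-2}$; and since $f^{\varepsilon}-f$ is $G$-invariant, it suffices (after rescaling $\varepsilon$ by an absolute constant) to obtain $\le\varepsilon(1+h(x)^2)^{-2}$, a weight that does descend to $M$.

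\textbf{The construction.} On $M$ I would fix a locally finite cover by charts $\varphi_j:B_j\to\R^n$ subordinate to the exhaustion (each $\overline{B_j}$ compact and contained in one shell $K_{s_j+2}\setminus K_{s_j-1}$), together with nested open sets $B_j''\Subset B_j'\Subset B_j$ whose cores $B_j''$ still cover $M$, and cutoffs $\rho_j\equiv1$ on $B_j'$ with $\operatorname{supp}\rho_j\subset B_j$. Then I would set $f_0:=\bar f$ and $f_j:=f_{j-1}+\rho_j\cdot(\langle a_j,\cdot\rangle\circ\varphi_j)$, choosing $a_j\in\R^n$: (i) outside the measure-zero set of critical values of the $C^1$ map $\nabla(f_{j-1}\circ\varphi_j^{-1})$, so that $f_{j-1}+\langle a_j,\cdot\rangle\circ\varphi_j$ is Morse on $\varphi_j(B_j')$ --- Sard's theorem applies exactly because $m\ge2$ makes $\nabla f$ of class $C^1$; (ii) small enough that $f_j$ remains Morse on a neighbourhood of the compact set $\overline{B_1''\cup\dots\cup B_{j-1}''}$, which is possible because the Morse property is $C^2$-open; and (iii) with $\|\rho_j\cdot(\langle a_j,\cdot\rangle\circ\varphi_j)\|_{C^m(M)}\le 2^{-j}\varepsilon(1+s_j^2)^{-2}$. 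Local finiteness makes $f^{\varepsilon}:=\lim_j f_j$ a well-defined $C^m$ function that is Morse everywhere --- every point lies in some $B_j''$ and condition (ii) is imposed at every later stage whose chart meets it --- while a point with $\bar h$-value $\ge s$ is perturbed only by charts with $s_j\ge s-2$, so $\sum_{s_j\ge s-2}2^{-j}(1+s_j^2)^{-2}\lesssim(1+s^2)^{-2}$ yields the weighted bound; pulling $f^{\varepsilon}$ back by $\pi$ gives the required shift-invariant Morse function. The main obstacle I anticipate is precisely the interaction of (iii) with (i) and (ii): one must order the charts and calibrate the magnitudes so that no stage undoes an earlier Morse gain, so that the cutoff collars $B_j\setminus B_j'$ --- where the linear perturbation is damped and hence gives no Morse control --- are taken care of by the cores $B_k''$ of later charts, and so that the telescoping of $2^{-j}(1+s_j^2)^{-2}$ over each shell still lands within the prescribed weight $(1+N^2)^{-2}$. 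Everything else is the standard density argument with this extra quantitative bookkeeping layered on.
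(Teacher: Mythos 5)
Your proposal is correct, and it follows a genuinely different route from the paper: you pass to the quotient, whereas the paper deliberately avoids this and works directly on $\X_{p,q}$. Concretely, the paper builds a shift-invariant, locally finite covering of $\X_{p,q}$ by $|\cdot|_1$-balls centred at the discrete shift-invariant grid $G_{p,q}$, and makes each perturbation equivariant by hand, adding $\sum_{(k,l)\in(\Z^d\times\Z)/I_{\omega}}\phi(\tau_{k,l}(x-x^n))\langle\alpha_n,\tau_{k,l}(x-x^n)\rangle$, where Lemma \ref{properly} guarantees that at any point only one term of the sum is nonzero; the weighted estimate on $E(N)$ is then imposed term by term with the same $2^{-n}$ geometric factors you use. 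You instead invoke Lemma \ref{tauaction} and Lemma \ref{properly} to form the manifold $M=\X_{p,q}/G$, run the standard weighted (strong Whitney topology) density-of-Morse-functions argument there, and pull back; equivariance then comes for free, and the local steps (Sard applied to the $C^1$ map $\nabla f$ -- which is exactly where $m\geq 2$ enters, $C^2$-openness of the Morse property on compacta, local finiteness so the limit stabilises) are identical in substance to the paper's. What your route buys is a reduction to a textbook statement with the equivariance built in; what it costs is the extra bookkeeping you correctly identify: transferring the $E(N)$-weight to a proper exhaustion of $M$ via your orbit-adjusted function $h$, and comparing chart $C^m$ norms with the ambient ones -- the cleanest fix for the latter is to take the charts $\varphi_j$ to be local sections of the covering $\pi$, which is possible since the deck transformations act by coordinate permutations plus integer constants, so chart derivatives coincide with derivatives on $\X_{p,q}$. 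Two harmless imprecisions: the rescaling constant absorbing $h\leq N\leq h+c_0$ and the shell shift $s_j\geq s-2$ is not absolute but depends on $p,q$ (through $c_0$), and the pointwise reformulation should be stated as the bound $\varepsilon(1+N(x)^2)^{-2}$ at points with $N(x)>0$ (points with $N(x)=0$ carry no constraint); neither affects the argument.
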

\begin{proof}
Let $c:=\frac{1}{4|\det p\ \!|}$ and define the discrete collection of configurations 
$$G_{p,q}:=\{x:\Z^d \to c\cdot \Z \ |\ \tau_{p_j,q_j}x=x \ \mbox{for} \ j=1,\ldots, d\ \} \subset \X_{p,q}\ . $$
We first of all remark that it is clear that $\tau_{k,l}G_{p,q}=G_{p,q}$. For $x\in G_{p,q}$, let us now define the balls 
$$B_r(x):=\{ y \in \X_{p,q}\ | \ |y - x|_1 : = \sum_{i \in B_p}|x_i - y_i|<r\ \}\ . $$ 
Then we have that $\tau_{k,l}B_r(x)=B_r(\tau_{k,l}x)$, because $|\tau_{k,l}x-\tau_{k,l}y|_1=|x-y|_1$, that is the norm $|\cdot|_1$ on $\X_{p,q}$ is shift-invariant. \\
\indent Moreover, if $y\in \X_{p,q}$, then there must be an element $x\in G_{p,q}$ with $|x_i-y_i|\leq \frac{1}{2}c$ for all $i$, that is for which $|x-y|_1\leq \frac{1}{8}$. In other words, $\X_{p,q}=\bigcup_{x\in G_{p,q}} B_r(x)$ when $r>\frac{1}{8}$. On the other hand, Lemma \ref{properly} implies that when $r<\frac{1}{2}$, then $B_{r}(x)\cap B_{r}(\tau_{k,l}x) = \emptyset$ unless $\langle \omega,k \rangle + l =0$. \\
\indent
This proves that for $\frac{1}{8}<r<\frac{1}{2}$, the collection $\{B_{r}(x)\}_{x\in G_{p,q}}$ forms a shift-invariant covering of $\X_{p,q}$ on which $(\Z^d\times \Z)/I_{\omega}$ acts ``properly discontinuously''.\\
\indent Finally, we let $\phi:\X_{p,q}\to [0,1]$ be a $C^{\infty}$ bump function with the properties that $\phi \equiv 0$ outside $B_{\frac{1}{2}}(0)$ and $\phi \equiv 1$ on $B_{\frac{1}{4}}(0)$. Let's say that $||\phi||_{C^m(\X_{p,q})}\leq E$. \\
\indent After these preparations, we are ready to construct the perturbation $f^{\varepsilon}$ of $f$. This is done by enumerating $G_{p,q}= \{x^1, x^2, \ldots \}$ and defining it inductively. \\
\indent So let us assume that $f^{\varepsilon}_{n-1}$ is $\tau$-invariant, satisfies the Morse property on the union 
$\bigcup_{1\leq i \leq n-1}B_{\frac{1}{4}}(x^i)$ and
fulfills the estimates 
$||f-f^{\varepsilon}_{n-1}||_{C^m(E(N))} \leq \frac{\varepsilon(1-2^{-(n-1)})}{(1+N^2)^2}$. 
\\
\indent  We now want $\alpha_n\in \R^{B_p}$ to be a vector so that $x\mapsto f_{n-1}^{\varepsilon}(x)+\langle \alpha_n, x\rangle$ is Morse on $B_{\frac{1}{4}}(x^n)$. Such $\alpha_n$'s are dense in $\R^{B_p}$ by Sard's theorem, see for instance \cite{Hirsch}. 
\\
\indent The function $f^{\varepsilon}_n$ is now defined as the shift-invariant function
$$f^{\varepsilon}_n(x):= f^{\varepsilon}_{n-1}(x)+ \!\!\! \sum_{(k,l)\in (\Z^d\times \Z)/I_{\omega}}\!\!\! \phi(\tau_{k,l}(x-x_n))\langle \alpha_n, \tau_{k,l}(x-x^n)\rangle\ .$$
Because $B_{\frac{1}{2}}(\tau_{k,l}x_n)$ does not intersect $B_{\frac{1}{2}}(\tau_{K,L}x_n)$ unless $(k,l)=(K,L) \!\! \mod I_{\omega}$, we have that at every $x\in \X_{p,q}$, the above sum consists of only one term.  Moreover, $f_n^{\varepsilon}$ is Morse on $B_{\frac{1}{4}}(x^n)$ by construction.  \\
\indent In fact, by choosing $\alpha_n$ small enough, one can make sure that $f^{\varepsilon}_n$ is Morse on the entire union $\bigcup_{1\leq i \leq n}B_{\frac{1}{4}}(x^i)$. This is true because the collection of Morse functions is open in the space of differentiable functions $C^m\left( \bigcup_{1\leq i\leq n-1}B_{\frac{1}{4}}(x^i)\right)$ for $m\geq 2$, see \cite{Hirsch}. \\ 
\indent By choosing $\alpha_n$ even smaller if necessary, we can also arrange that $f_{n-1}^{\varepsilon}-f_n^{\varepsilon}$ has a $C^m(E(N))$-norm less than $\frac{2^{-n}\varepsilon}{(1+N^2)^2}$. This implies that
\begin{align}\nonumber
||f-f^{\varepsilon}_n ||_{C^m(E(n))}  \leq \frac{(1-2^{-(n-1)})\varepsilon}{(1+N^2)^2} + \frac{2^{-n}\varepsilon}{(1+N^2)^2} = \frac{(1-2^{-n})\varepsilon}{(1+N^2)^2} \ .\nonumber
\end{align}
The required $f^{\varepsilon}$ is the limit $f^{\varepsilon}:=\lim_{n\to\infty} f^{\varepsilon}_n$. Not only does this limit satisfy the required estimates, but it also stabilizes pointwise, which shows that it is Morse.
\end{proof}

\noindent We can now complete the proof of Theorem \ref{morsification s}:
\begin{proof}[Proof of Theorem \ref{morsification s}] 
We start by perturbing the $S_j$ so that they satisfy a strict monotonicity criterion. This will then allow us to perturb the potentials once more without risking to destroy monotonicity condition D. Recall that $B_p:=p([0,1]^d)\cap \Z^d$ is a fundamental domain of $p$. It has cardinality $|\det p \ \!|$. Our first perturbation step is now made by defining 
$$\widetilde{S}_j^n(x):=S_j(x)+ \frac{1}{n}\sum_{i,k \in j+ B_p}(x_k-x_i)\arctan(x_k-x_i)\  .$$ 
The strict monotonicity of the $\widetilde{S}_j^n$ follows because $\frac{d^2}{dx^2}\left( x\arctan x \right) =\frac{2}{(1+x^2)^2}$ is strictly positive. Hence,
$$\p_{i,k}\widetilde{S}_j^n(x)\leq - \frac{1}{n}\frac{2}{(1+(x_i-x_k)^2)^2} < 0\ \mbox{for all}  \ i,k\in j+B_p \ \mbox{with} \ i\neq k\ . $$
\noindent By Theorem \ref{morsificationtheorem}, the periodic action $\widetilde{W}_{p,q}^n: \X_{p,q} \to \R$ defined by $\widetilde{W}_{p,q}^n(x):= \sum_{j \in B_p}\widetilde{S}_j^n(x)$ can now be perturbed into a $\tau$-invariant Morse function $W^{n}_{p,q}:\X_{p,q}\to\R$ of the form 
$$W_{p,q}^n(x)=\widetilde{W}_{p,q}^n(x) + F^n(x)\ . $$ 
The perturbation $F^n$ may be chosen so that it satisfies $||F^n||_{C^2(\X_{p,q})} \leq \frac{1}{n}$, $F^n(\tau_{k,l}x)=F^n(x)$ for all $x\in \X_{p,q}$ and all $k,l$ and $|\p_{i,k}F^n(x)|\leq \frac{1}{2n}\frac{1}{(1+(x_i-x_k)^2)^2}$. \\
\indent For $x:\Z^d\to\R$, let us denote by $\left. x \right|_{j+B_p}^{\rm per}$ the $p$-periodic extension of $x|_{j+B_p}$ defined by $\left(x|_{j+B_j}^{\rm per}\right)_i= x_k$, where $k\in j+B_p$ is the unique element of $j+B_p$ equal to $i$ modulo $p(\Z^d)$. Then we can define, for each $j\in \Z^d$ the new local potential 
$$S_j^n(x):= \widetilde{S}_j^n(x) + \frac{1}{|\det p \ |}F^n\left(\left. x \right|_{j+B_p}^{\rm per}\right)\ .$$
We will now prove that these $S_j^n$ satisfy all requirements of Theorem \ref{morsification s}. \\
\indent In fact, condition $A$ and requirement {\bf 2.} hold true because the range of interaction of both the sum $\frac{1}{2n}\sum_{i,k\in j+B_p} (x_k-x_i)\arctan(x_k-x_i)$ and the perturbation $F^n\left(x|_{j+B_p}^{\rm per}\right)$ do not exceed the bounded radius of $B_p$. \\
\indent Condition B holds by definition. Condition $C$ holds because $x\mapsto x\arctan x$ is nonnegative and $|F_n(x)|$ is uniformly bounded. Condition $D$ holds true because $\p_{i,k}S_j^n(x)\leq \p_{i,k}S_j(x)$, as is easy to check. \\
\indent Requirement {\bf 3.} holds because $W_{p,q}^n(x)=\sum_{j\in B_p} S_j^n(x) = \widetilde{W}^n_{p,q}(x)+F^n(x)$ is a Morse function by construction. \\
\indent Requirement {\bf 4.} and condition E are true because both $|\frac{d}{dx}x\arctan x |= |\frac{1}{1+x^2} + \arctan x| \leq 3$ and $|\frac{d^2}{dx^2}x\arctan x|=|\frac{2}{(1+x^2)^2}|\leq 2$ are uniformly bounded and $||F^n||_{C^2(\X_{p,q})} \leq \frac{1}{2n}$, so that $||\nabla S_j^n-\nabla S_j||_{C^1(\R^{\Z^d})}\leq \frac{C}{n}$ for some constant $C$ depending on the dimension $d$, the periodicity $p$ and the range of interaction $r$. \\
\indent Similarly, requirement {\bf 5.} holds true because $\sum_{i,k\in j+B_p} (x_i-x_k)\arctan (x_i-x_k)$ is uniformly bounded on $\mathcal{B}_{p,q}$ and $|F_n(x)|\leq \frac{1}{2n}$ uniformly on $\X_{p,q}$.
\end{proof}

\subsection{Existence of periodic ghost circles for Morse actions}\label{morse_existence}
We will now show that when the local potentials $S_j$ satisfy conditions A-E and are chosen so that $W_{p,q}: \X_{p,q}\to\R$ is a Morse function, then they admit a periodic ghost circle $\Gamma\subset \X_{p,q}$. More precisely, we will prove the following:

\begin{theorem}\label{ghost_circles} 
Let $\omega\in \Q^d$ and let $(p_1, q_1), \ldots, (p_d, q_d)$ be principal periods for $\omega$. Assume moreover that the local potentials $S_j$ are chosen so that $W_{p,q}:\X_{p,q}\to \R$ is a Morse function. Then there exists a $C^1$ ghost circle $\Gamma \subset \X_{p,q}$ for the $S_j$. This ghost circle includes all the global minimizers of $W_{p,q}$. It consists of stationary points of index $0$ and index $1$ and heteroclinic orbits of the negative gradient flow.
\end{theorem}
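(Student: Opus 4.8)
The plan is to imitate Gol\'e's topological construction of periodic ghost circles, now available in dimension $d\neq 1$ because the finite-dimensional space $\X_{p,q}\cong\R^{B_p}$ carries the $\tau$-equivariant Morse function $W_{p,q}$ and the equivariant negative gradient flow of Theorem \ref{psi}. First I would pass to the quotient torus: since $W_{p,q}(\tau_{0,1}x)=W_{p,q}(x)$ (Lemma \ref{shift invariance}), the action $W_{p,q}$ and the flow $\Psi_t$ descend to $\X_{p,q}/\tau_{0,1}\cong\R^{B_p}/\Z$. By Theorem \ref{existence} a global minimizer $x_{\min}$ exists, and by the minimum--maximum property (Lemma \ref{maxminprop}) and Aubry's lemma (Lemma \ref{Aubry's lemma}) the set of minimizers is totally ordered; its image in the quotient is a finite $\tau_{0,1}$-orbit of index-$0$ rest points lying on a single line transverse to the shift direction. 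The ghost circle will be built by connecting consecutive minimizers (in the ordering $\ll$) by heteroclinic orbits of the negative gradient flow that descend through index-$1$ saddles.

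The key steps, in order: (i) Fix two consecutive minimizers $x^-\ll x^+$ (consecutive meaning no further minimizer strictly between them), and consider the order interval $[x^-,x^+]=\{z\mid x^-\le z\le x^+\}$, which by Corollary \ref{uniform\_twist}-type compactness is a compact, $\Psi_t$-forward-invariant (strict monotonicity, Theorem \ref{monotonicity psi}) subset of $\X_{p,q}$ homeomorphic to a cube. (ii) Run a mountain-pass / min-max argument over paths in $[x^-,x^+]$ joining $x^-$ to $x^+$: because $W_{p,q}$ is Morse and $x^\pm$ are strict local minima on this interval, there is a critical value $c>\max\{W_{p,q}(x^-),W_{p,q}(x^+)\}$ realized at a critical point $y$ inside $[x^-,x^+]$ whose index is exactly $1$ (index $\ge 1$ by mountain pass; index $\le 1$ because a deformation argument using the gradient flow inside the cube, together with the absence of intermediate minimizers, rules out higher index — this is where Morseness and the one-parameter nature of the problem are used). (iii) The unstable manifold of this index-$1$ saddle $y$ is one-dimensional; its two branches are heteroclinic orbits flowing by $-\nabla W_{p,q}$ down to rest points which, by the strict monotonicity principle (the flow preserves the ordering and the two branches leave $y$ in the two ``directions'' compatible with $x^-\ll y\ll x^+$), must land precisely at $x^-$ and at $x^+$. (iv) Concatenate: $\Gamma_0 := \{x^-\}\cup W^u(y)\cup\{x^+\}$ is a compact connected strictly ordered arc from $x^-$ to $x^+$; glueing these arcs over all consecutive pairs of the finite minimizer orbit, and then over all vertical $\tau_{0,1}$-translates, produces a set $\Gamma\subset\X_{p,q}$ that is nonempty, closed, connected, strictly ordered, $\tau$-invariant (by construction and equivariance, Proposition \ref{invariance}) and $\Psi_t$-invariant for all $t\in\R$ (forward invariance is clear; backward invariance holds because $\Gamma$ is a union of complete orbits — rest points and full heteroclinics). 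Finally, $\Gamma$ is $C^1$ because near each index-$0$ and index-$1$ rest point the (un)stable manifolds are $C^1$ and the gradient flow is regular, so the arcs glue smoothly; and $\Gamma$ contains all global minimizers of $W_{p,q}$ by construction (every minimizer appears as an endpoint of two adjacent arcs), which by Proposition \ref{solution} and the remark after it means $\Gamma$ contains the whole Aubry--Mather set.

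The main obstacle I anticipate is Step (ii)--(iii): showing that the mountain-pass critical point between two \emph{consecutive} minimizers has index exactly $1$ and that its unstable manifold connects \emph{exactly} $x^-$ to $x^+$. In $d=1$ Gol\'e gets this from the ordering of orbits and the fact that intersection numbers decrease; here I would instead argue as follows. The order interval $[x^-,x^+]$ with $x^\pm$ removed deformation-retracts, under the negative gradient flow, onto the union of stable manifolds of the interior critical points; if some interior critical point had index $\ge 2$, a standard handle-attachment count would force either an extra index-$0$ point (a minimizer, contradicting consecutiveness) or a change in the connectivity of sublevel sets incompatible with $[x^-,x^+]\setminus\{x^\pm\}$ being connected — more carefully, one uses that by strict monotonicity the flow on $[x^-,x^+]$ is order-preserving, so each critical point $z$ in $[x^-,x^+]$ satisfies $x^-\ll z\ll x^+$ or equals an endpoint, and one shows the Morse inequalities on $[x^-,x^+]/\!\sim$ force a single index-$1$ saddle with heteroclinics to the two minima. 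I would write this carefully, since it is the crux; everything else (compactness of order intervals via Proposition \ref{compactness} and Corollary \ref{uniform\_twist}, equivariance, closedness, $C^1$-regularity of invariant manifolds) is routine given the earlier sections.
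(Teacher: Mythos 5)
Your overall strategy (mountain pass between consecutive ordered rest points, ordered heteroclinics from an index-$1$ saddle, then glue equivariantly) is the same as the paper's, but there is a genuine gap in the way you set it up: you build the circle on consecutive \emph{global minimizers} $x^-\ll x^+$. Nothing prevents the order interval $[x^-,x^+]$ from containing further index-$0$ critical points of $W_{p,q}$ (local minima that are not global minimizers). In that situation your steps (ii)--(iii) fail as stated: the mountain-pass level between $x^-$ and $x^+$ need not be realized by a \emph{single} saddle whose unstable manifold connects $x^-$ to $x^+$; what the Perron--Frobenius/ordering argument actually gives is that the two branches of $\mathcal{W}^u$ of an index-$1$ point limit onto \emph{some} index-$0$ points in the interval, and these can be the intermediate non-minimizing local minima rather than $x^\pm$. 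Your proposed arc $\{x^-\}\cup\mathcal{W}^u(y)\cup\{x^+\}$ then simply does not connect $x^-$ to $x^+$, and the glued set is neither connected nor a union of complete orbits joining the minimizers, so it is not a ghost circle. The paper avoids this by first constructing a \emph{maximal index-$0$ skeleton}: starting from the ordered, shift-invariant family of global minimizers, one adjoins $\tau$-orbits of index-$0$ points lying strictly between consecutive members until no index-$0$ point remains in any gap (finitely many steps, by the Morse property and compactness modulo $\Z$). Only for \emph{consecutive index-$0$ points} in this enlarged family do the mountain-pass lemma and the heteroclinic lemma force the saddle's branches to land exactly on the two neighbours; the resulting ghost circle in general contains non-minimizing index-$0$ points, which is consistent with the statement of the theorem but not with your construction.

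Two secondary points. First, your argument that the mountain-pass critical point has index exactly $1$ (``one-parameter nature'', Morse inequalities on $[x^-,x^+]$) is not what is needed and is shaky in dimension $\dim\X_{p,q}>1$; the paper only shows that \emph{at least one} critical point at the min-max level has index $1$, by a deformation argument: paths can be pushed around index-$\geq 2$ points and cannot approach index-$0$ points at that level without exceeding it, so if no index-$1$ point existed at level $c$ the whole path could be pushed strictly below $c$. Second, strict monotonicity of the flow alone does not give the ordering of $\mathcal{W}^u(y)$ nor the identification of its limit points: one needs the Perron--Frobenius step, i.e.\ that $-D^2W_{p,q}$ plus a multiple of the identity is a nonnegative matrix, so the unstable eigenvector at an index-$1$ point is simple and strictly positive (whence $\mathcal{W}^u$ is locally, then globally, strictly ordered), and the corresponding statement at the limiting rest point forces it to have index $0$. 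These linear-algebra inputs are where the twist condition enters and should be made explicit.
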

\noindent The construction of this ghost circle is essentially the same as the construction in dimension $d=1$ provided by Gol\'e \cite{gole92}. We nevertheless decided to provide the proofs. \\
\indent To prove Theorem \ref{ghost_circles}, we need two lemmas and the following definition: 
\begin{definition}
We say that $x\ll y$ are {\it consecutive} index-$0$ stationary configurations if there is no index-$0$ stationary configuration $z$ with $x\ll z \ll y$. 
\end{definition}
\noindent It turns out that when $W_{p,q}$ is a Morse function, then between consecutive index-$0$ stationary configurations we can find another critical point:
\begin{lemma}[Mountain pass theorem]\label{mountain pass}
Assume that $W_{p,q}:\X_{p,q}\to \R$ is Morse and let $x \ll y$ be two consecutive index-$0$ stationary configurations of $-\nabla W_{p,q}$. Then there is an index-$1$ stationary configuration $z$ in between $x$ and $y$. 
\end{lemma}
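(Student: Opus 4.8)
The plan is to run a finite-dimensional mountain pass argument inside the order interval spanned by $x$ and $y$. Recall that, since $\omega\in\Q^d$ and $(p_1,q_1),\dots,(p_d,q_d)$ are principal periods, $\X_{p,q}$ is finite-dimensional and $\Psi_t$ restricts on it to the negative gradient flow of the $C^2$ function $W_{p,q}:\X_{p,q}\to\R$, whose critical points are exactly the stationary configurations lying in $\X_{p,q}$. First I would consider the compact convex box $\Sigma:=\{z\in\X_{p,q}\mid x\le z\le y\}$, which has nonempty interior because $x\ll y$. By Theorem~\ref{monotonicity psi}, applied to the $\Psi_t$-fixed configurations $x$ and $y$, the box $\Sigma$ is positively invariant: $x\le z\le y$ implies $x=\Psi_t x\le\Psi_t z\le\Psi_t y=y$ for $t>0$. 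I would also record the following consequence of the same theorem, which will make the phrase ``in between $x$ and $y$'' automatic: any \emph{stationary} $z$ with $x\le z\le y$ and $z\notin\{x,y\}$ in fact satisfies $x\ll z\ll y$ (apply strict monotonicity to the ordered stationary pairs $(x,z)$ and $(z,y)$). Since $W_{p,q}$ is Morse and $\Sigma$ is compact, $\Sigma$ contains only finitely many critical points, all of which, apart from $x$ and $y$, lie in its interior.

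Next I would set up the min-max. Let $\mathcal{P}$ be the set of continuous paths $\gamma:[0,1]\to\Sigma$ with $\gamma(0)=x$, $\gamma(1)=y$; it is nonempty since $\Sigma$ is convex. Put $c:=\inf_{\gamma\in\mathcal{P}}\ \max_{s\in[0,1]}W_{p,q}(\gamma(s))$. The heart of the argument is the strict inequality $c>M_0:=\max\{W_{p,q}(x),W_{p,q}(y)\}$. Let $w\in\{x,y\}$ be an endpoint with $W_{p,q}(w)=M_0$ and $w'$ the other endpoint. As an index-$0$ critical point, $w$ is a strict local minimum and is isolated among critical points, so one can pick $r>0$ with $r<|x-y|$ such that $\overline{B}(w,r)\cap\Sigma$ contains no critical point other than $w$ and $W_{p,q}>M_0$ on $\bigl(\overline{B}(w,r)\cap\Sigma\bigr)\setminus\{w\}$. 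Then $\mu:=\min\{W_{p,q}(z)\mid z\in\Sigma,\ |z-w|=r\}>M_0$, and since every $\gamma\in\mathcal{P}$ joins $w$ to $w'$, which lies at distance $|x-y|>r$ from $w$, it must cross the sphere $\{z\in\Sigma\mid|z-w|=r\}$; hence $\max_s W_{p,q}(\gamma(s))\ge\mu$, and taking the infimum over $\gamma$ yields $c\ge\mu>M_0$.

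With $c>M_0$ established, the classical Mountain Pass Theorem applies: because $\Sigma$ is compact and positively invariant under the negative gradient flow of $W_{p,q}$, a cut-off negative gradient flow, the cut-off vanishing on the sublevel set $\{W_{p,q}\le c-\delta\}$ (which contains neighbourhoods of $x$ and $y$, so that endpoints are never moved), realises the usual deformation: were $c$ a regular value, near-optimal paths could be pushed to have maximum below $c$, contradicting the definition of $c$. Thus there is a stationary configuration $z\in\Sigma$ with $W_{p,q}(z)=c>M_0\ge W_{p,q}(x),W_{p,q}(y)$; in particular $z\ne x$ and $z\ne y$, so $x\ll z\ll y$ by the observation in the first paragraph. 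Finally, $z$ is not a local minimum, so $i(z)\ge1$, while the Morse-theoretic sharpening of the mountain pass theorem --- the min-max critical point at level $c$ has nontrivial critical group in degree $1$, equivalently index exactly $1$ for a nondegenerate critical point --- gives $i(z)\le1$; hence $i(z)=1$. I expect the delicate point to be the strict inequality $c>M_0$ together with the correct bookkeeping at the corners of $\Sigma$ (where $x$ and $y$ sit), the rest being a routine transcription of Gol\'e's one-dimensional argument; alternatively one could obtain an index-$1$ critical point purely homologically, from the Morse inequalities for the gradient flow on the contractible, positively invariant set $\Sigma$: two index-$0$ equilibria force at least one equilibrium of index $1$.
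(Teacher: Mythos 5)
Your setup coincides with the paper's: the same min-max over paths confined to the order interval $[x,y]$, the same use of the flow-invariance of that interval (via Theorem \ref{monotonicity psi}) so that deformations can be carried out by the negative gradient flow without leaving the constraint set, and essentially the same argument that $c>\max\{W_{p,q}(x),W_{p,q}(y)\}$ and that some critical point at level $c$ exists strictly between $x$ and $y$. Up to that point the proposal is sound.

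The gap is in the identification of the index, which is precisely the delicate half of the paper's proof. The deformation argument only produces \emph{some} critical point at level $c$; it does not tell you which one, and nothing you have written excludes the possibility that every critical point at level $c$ has index $0$ or index $\geq 2$. In particular the assertion ``$z$ is not a local minimum, so $i(z)\ge 1$'' is unjustified: the level set $\{W_{p,q}=c\}$ may contain nondegenerate local minima, and the plain min-max argument cannot rule out that the point it hands you is one of them. The ``Morse-theoretic sharpening'' you then cite (a mountain-pass point with nontrivial first critical group, hence index $1$ in the nondegenerate case) is a statement about unconstrained min-max classes on the whole space; here the admissible paths are constrained to $[x,y]$ and all deformations must preserve that constraint, so the sharpening does not apply off the shelf --- and an unconstrained application would not locate the index-$1$ point between $x$ and $y$, since the unconstrained min-max level could be carried by critical points outside the order interval. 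Supplying the constrained version is exactly what the paper does with its three properties: (i) any path meeting a small ball around an index-$0$ critical point at level $c$ has maximum at least $c+\beta$, so near-optimal paths avoid such points; (ii) paths can be rerouted around index-$\geq 2$ points, inside the order interval, without raising the maximum; (iii) a quantitative flow estimate then pushes near-optimal paths strictly below $c$, contradicting the definition of $c$ unless an index-$1$ point at level $c$ exists. Your alternative homological suggestion (Morse inequalities for the flow on the contractible invariant box) is a genuinely different and appealing route, but it too needs extra care: $x$ and $y$ sit at corners of the boundary of the box, where the standard Morse/Conley counting does not apply verbatim, so one would have to justify the inequalities in that setting rather than quote them.
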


\begin{proof}
We use a simple variant of the mountain pass theorem, see for instance \cite{evans98}, Section 8.5.1. For this purpose, we let $\mathcal{C}$ be the collection of curves from $x$ to $y$ lying in the order interval $K:=[x,y]$, that is
$$\mathcal{C}:=\{ \gamma:[0,1]\to K \ | \ \gamma(0)=x, \  \gamma(1)=y\ \mbox{and} \ \gamma \ \mbox{is continuous}  \}\ .$$
We now claim that there is a critical point $z \in \mathring{K}$ for which $W_{p,q}(z) = c$, where
$$c:=\inf_{\gamma \in \mathcal{C}} \max_{0\leq t\leq1} W_{p,q}(\gamma(t))\ .$$ 
\noindent To prove our claim, let us define, for $\delta \in \R$, the sub-levelsets 
$$K^{\delta}:=\{x\in K\ | \ W_{p,q}(x)\leq \delta \}\ .$$
These $K^{\delta}$ are invariant under the forward flow of $\frac{dx}{dt} = -\nabla W_{p,q}(x)$. This is true because $K$ is invariant and because $W_{p,q}$ is a Lyapunov function for the gradient flow. \\
\indent Suppose now that there is no critical point $x\ll z \ll y$ with $W_{p,q}(z)=c$. We will show that this leads to a contradiction. \\
\indent We first of all remark that, by the Morse lemma and the fact that $x$ and $y$ have index $0$, it holds that  $c>\max\{W_{p,q}(x), W_{p,q}(y)\}$. Thus, because there are only finitely many critical points in $K$, and none of these except $x$ and $y$ lie in $\p K$, there exists an $\varepsilon>0$ so that the set $K^{c+\varepsilon}\backslash K^{c-\varepsilon}$
does not contain any critical points. \\
\indent This in turn implies, by compactness, that there is a $\sigma>0$ so that $||\nabla W_{p,q}||^2 > \sigma$ on $K^{c+\varepsilon}\backslash K^{c-\varepsilon/2}$. Hence, a solution curve $t\mapsto x(t)$ of the negative gradient flow satisfies $\frac{d}{dt}W_{p,q}(x(t)) = -||\nabla W_{p,q}(x(t))||^2 < -\sigma$ so long as $x(t)\in K^{c+\varepsilon}\backslash K^{c-\varepsilon/2}$. In particular, there is a $T>0$ for which $\Psi_{T}(K^{c+\varepsilon})\subset K^{c-\varepsilon/2}$. 
\\ \indent At the same time, by definition of $c$, there exists a $\gamma\in \mathcal{C}$ with $\gamma([0,1]) \subset K^{c+\varepsilon}$. The curve $\Psi_{T}\circ \gamma\in \mathcal{C}$ then lies entirely in $K^{c-\varepsilon/2}$. This contradicts the definition of $c$ and hence there must be critical points $x\ll z_1, \ldots, z_m \ll y$ with $W_{p,q}(z_i)=c$. \\
\indent It remains to show that at least one of the $z_i$ has index one. In fact,  the argument is a bit subtle. We start by observing the following:
\begin{itemize}
\item[1.] If $x\ll z_i \ll y$ is an index-$0$ critical point with $W_{p,q}(z_i)=c$, then there are $\alpha_i, \beta_i>0$ so that whenever $\gamma\in \mathcal{C}$ intersects $B_{\alpha_i}(z)$, then $\max_{t\in [0,1]} W_{p,q}(\gamma(t))\geq c+\beta_i$. 
\item[2.]  If $x\ll z_i \ll y$ is an index-$\geq 2$ critical point with $W_{p,q}(z_i)=c$, then there is an $\alpha_i>0$ so that whenever $\gamma\in \mathcal{C}$ intersects $B_{\alpha_i}(z_i)$, then $\gamma$ is homotopic to a curve $\tilde \gamma \in \mathcal{C}$ with the property that $\tilde \gamma$ does not intersect $B_{\alpha_i}(z_i)$, while $\max_{t\in [0,1]} W_{p,q}(\tilde \gamma(t))\leq \max_{t\in [0,1]} W_{p,q}(\gamma(t))$. 
\end{itemize}
These statements are easy to prove in local Morse coordinates near the critical point $z_i$. At the same time, by compactness, we have that there exist $\varepsilon, \sigma_1, \sigma_2 >0$ so that $\sigma_1 < ||\nabla W_{p,q}||^2<\sigma_2$ on $K^{c+\varepsilon}\backslash \left(K^{c-\varepsilon/2}\right.$ $\cup B_{\beta_1/2}(z_1)$ $\left. \cup \ldots \cup B_{\beta_m/2}(z_m)\right)$. Using that $||\frac{dx(t)}{dt}||=||\nabla W_{p,q}(x(t))||$ and $\frac{d}{dt} W_{p,q}(x(t)) = -||\nabla W_{p,q}(x(t))||^2$ for solutions of the gradient flow, one can prove quite easily that this implies:
\begin{itemize}
\item[3.]
If $t\mapsto x(t)$ solves $\frac{dx}{dt}=-\nabla W_{p,q}(x)$ and $W_{p,q}(x(0)) \leq c+\varepsilon$ and $x(0)\notin B_{\beta_1}(z_1)\cup$ $\ldots$ $\cup$ $B_{\beta_m}(z_m)$, then for $0\leq t\leq T:=  \min\{\beta_1,\ldots, \beta_m\}/2\sqrt{\sigma_2}$ one has that $x(t)\notin \cup_{i=1}^mB_{\beta_i/2}(z_i)$ and hence $W_{p,q}(x(T)) \leq \max\{c-\varepsilon/2, W_{p,q}(x(0)) - T\sigma_1\}$. 
\end{itemize}
We now use these facts as follows: Let us assume that none of the $z_i$ has index $1$ and let $\gamma_n\in \mathcal{C}$ be a sequence of curves with $c\leq \max_{t\in [0,1]} W_{p,q}(\gamma_n(t)) \leq c+\frac{1}{n}$. By property 1. we know that for large enough $n$, the curve $\gamma_n$ does not intersects $B_{\alpha_i}(z_i)$ for any of the index-$0$ points $z_i$. At the same time, by property 2. we may assume that none of the $\gamma_n$ intersects the $B_{\alpha_i}(z_i)$ for any of the index-$\geq 2$ points $z_i$. Property 3. then implies that for large enough $n$ we have that $(\Psi_{T}\circ \gamma_{n})([0,1]) \subset K^{c-\delta}$ for some $\delta>0$. This contradicts the definition of $c$.
\end{proof}

\noindent The next step is to show that the unstable manifold of the index-$1$ critical point $z$ of Lemma \ref{mountain pass} defines $C^1$ ordered heteroclinic connections to its neighboring index-$0$ critical points $x$ and $y$. This result, in more generality, can also be found in \cite{angenent88}, see Theorem 1 of Chapter 4.

\begin{lemma}\label{heteroclinic}
Let $W_{p,q}:\X_{p,q}\to \R$ be Morse, let $x  \ll y$ be two consecutive index-$0$ stationary configurations of $W_{p,q}$ and let $z$ be an index-$1$ stationary configuration with $x \ll z \ll y$. Then the unstable manifold of $z$ forms strictly ordered heteroclinic connections from $z$ to $x$ and $y$.
\end{lemma}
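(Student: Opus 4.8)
The idea is to study the one-dimensional unstable manifold $W^u(z)$ of the index-$1$ equilibrium $z$ of the negative gradient flow of $W_{p,q}$ on the finite-dimensional space $\X_{p,q}$. Since the $S_j$ are $C^2$, the vector field $-\nabla W_{p,q}$ is $C^1$ on $\X_{p,q}$, and since $W_{p,q}$ is Morse, $z$ is hyperbolic, so $W^u(z)$ is a $C^1$ curve consisting of $z$ and two flow-orbits (its branches). The structural input is a Perron--Frobenius observation: by the monotonicity condition D the Hessian $D^2W_{p,q}(z)$ has nonpositive off-diagonal entries, and it is irreducible because the strict twist inequalities $\p_{i,k}S_i<0$ for $\|i-k\|=1$ couple all sites of the fundamental domain $B_p$ through the periodic adjacency. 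Hence $-D^2W_{p,q}(z)$ is a symmetric, irreducible Metzler matrix, so its largest eigenvalue is simple with an eigenvector $v\gg0$; since $z$ has index $1$, this is the unique positive eigenvalue. Therefore $W^u(z)$ is tangent to $\R v$ at $z$, and its two branches leave $z$ in the directions $+v\gg0$ and $-v\ll0$.

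Let $\gamma^+$ be the branch leaving in the direction $+v$, parametrised by the flow so that $\gamma^+(t)\to z$ as $t\to-\infty$; the branch $\gamma^-$ is symmetric. I would first record three facts, each obtained by feeding the asymptotic behaviour of $\gamma^+$ near $t=-\infty$ into the strict parabolic comparison principle, Theorem \ref{monotonicity psi}: (i) $\gamma^+(t)\gg z$ for every $t$ (compare $\gamma^+$ with the constant orbit $z$); (ii) $t\mapsto\gamma^+(t)$ is strictly increasing, $\gamma^+(s)\ll\gamma^+(t)$ for $s<t$ (compare $\gamma^+(\cdot)$ with $\gamma^+(\cdot+h)$, using $\gamma^+(t+h)-\gamma^+(t)\approx c\,e^{\mu t}(e^{\mu h}-1)v\gg0$ near $-\infty$, where $\mu>0$ is the unstable eigenvalue); and (iii) $\gamma^+(t)\ll y$ for every $t$ (near $-\infty$ one has $\gamma^+(t)<y$ because $\gamma^+(t)\to z\ll y$ and $B_p$ is finite, and this is propagated forward by Theorem \ref{monotonicity psi}). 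By (ii) and (iii) the bounded, monotone curve $\gamma^+(t)$ converges pointwise as $t\to+\infty$ to a configuration $w$ with $z\ll w\le y$; since $W_{p,q}$ strictly decreases along the nonconstant orbit $\gamma^+$ one has $W_{p,q}(w)<W_{p,q}(z)$, hence $w\ne z$, and the standard estimate $\int_0^\infty\|\nabla W_{p,q}(\gamma^+(t))\|^2\,dt<\infty$ forces $\nabla W_{p,q}(w)=0$.

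It remains to prove $w=y$; this is the delicate part. First one shows $w$ has index $0$. If it did not, then $-D^2W_{p,q}(w)$ would again be a symmetric, irreducible Metzler matrix, now with a positive top eigenvalue $\mu_1$; its Perron eigenvector $w_1\gg0$ is then orthogonal to the stable subspace $E^s$, while $\langle w_1,u\rangle>0$ for every nonzero $u\ge0$, so $E^s$ contains no nonzero vector of constant sign. But $\gamma^+$ enters $w$ along $W^s(w)$, which is $C^1$ and tangent to $E^s$ at $w$, whereas $w-\gamma^+(t)\gg0$ for all $t$ by (ii); letting $t\to+\infty$ and passing to a limit of the normalised differences $(w-\gamma^+(t))/\|w-\gamma^+(t)\|$ produces a nonzero vector of $E^s$ that is $\ge0$ --- a contradiction. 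Thus $w$ is an index-$0$ equilibrium with $z\ll w\le y$. If $w\ne y$, then comparing the $i$-th components of the identities $\nabla W_{p,q}(w)=\nabla W_{p,q}(y)=0$ and using the twist condition gives, by a unique-continuation argument along the connected lattice, that $w\ll y$; but then $x\ll z\ll w\ll y$ exhibits an index-$0$ stationary configuration strictly between $x$ and $y$, contradicting that $x\ll y$ are consecutive. Hence $w=y$, i.e. $\gamma^+$ is a strictly increasing heteroclinic orbit from $z$ to $y$ contained in $z\ll\cdot\ll y$; symmetrically $\gamma^-$ is a strictly decreasing heteroclinic from $z$ to $x$ contained in $x\ll\cdot\ll z$. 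Therefore $W^u(z)=\{z\}\cup\gamma^+\cup\gamma^-$ is the claimed strictly ordered chain of heteroclinic connections from $x$ through $z$ to $y$. I expect the index-$0$ step above to be the main obstacle; the rest is the interplay between the Perron--Frobenius structure of the Hessians and repeated use of the strict comparison principle.
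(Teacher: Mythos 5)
Your proposal is correct and follows essentially the same route as the paper's proof: Perron--Frobenius applied to $-D^2W_{p,q}$ at $z$ gives a simple top eigenvalue with strictly positive eigenvector along which the one-dimensional unstable manifold leaves $z$, the strict comparison principle (Theorem \ref{monotonicity psi}) orders the two branches and confines them between $x$ and $y$, and the limiting equilibria are shown to have index $0$ by playing the positivity of the approach direction against the orthogonality of the non-Perron eigenvectors to the positive Perron vector, after which consecutiveness forces the limits to be $x$ and $y$. The only difference is cosmetic: you extract the index-$0$ conclusion from secant limits into the stable subspace $E^s$, whereas the paper phrases the same fact as forced tangency to the Perron eigenvector at the limit point.
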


\begin{proof} We consider the linearization of the negative gradient vector field at $z$, given by the matrix $- D^2W_{p,q}(z)$. The twist condition D and the bound on the second derivatives E together guarantee that there exists a constant $M>0$ so that the symmetric matrix $H:=- D^2W_{p,q}(z) + M\mbox{Id}$ is nonnegative and strictly positive on its diagonal and its two off-diagonals. \\
\indent By the theorem of Perron-Frobenius, $H$ then has to have a unique simple largest eigenvalue $\lambda_{\rm max} + M \in \R_{+}$ and the corresponding eigenvector $e_{\rm max}$ can be chosen strictly positive. Because $z$ is an index-$1$ point, $\lambda_{\rm max}$ is then the unique positive eigenvalue of $-D^2W_{p,q}(z)$ and $e_{\rm max}$ is its strictly positive eigenvector. \\
\indent The unstable manifold $\mathcal{W}^u(z)$ of $z$ is one-dimensional and at $z$ it is tangent to $e_{\rm max}$. In fact, it consists of $z$ and two orbits of the negative gradient flow 
$$\alpha^{\pm}(t)= z \pm e^{\lambda_{\rm max} \cdot t} \: e_{\rm max} + o(e^{\lambda_{\rm max}t})\ \mbox{for} \ t\to - \infty\ .$$ 
In particular we see that close to $z$, the unstable manifold is strictly ordered, because $e_{\rm max}$ is strictly positive. Theorem \ref{monotonicity psi} then implies that the entire $\mathcal{W}^u(z)$ is strictly ordered. Thus, we see that there must be two critical points $z^{-}:=\lim_{t \to \infty}\alpha_-(t)$ and $z^{+}:=\lim_{t \to \infty}\alpha_+(t)$. We claim that $z^-=x$ and $z^+=y$.\\
\indent To prove this, we will show that $z^-$ and $z^+$ are index-$0$ critical points. Our claim then follows because $x\leq z^- \ll z \ll z^+ \leq y$ by monotonicity and because $x$ and $y$ are consecutive index-$0$ points. \\ 
\indent So let us consider the linearization matrix $-D^2W_{p,q}(z^-)$. It also has a unique maximal eigenvalue $\lambda_{\rm max}^-$ and positive eigenvector $e_{\rm max}^-$. We know that $\lim_{t\to \infty} \alpha^-(t)= z^-$ and that $\alpha^-(\R)$ is strictly ordered.  At the same time, because $-D^2W_{p,q}(z^-)$ is symmetric, $e_{\rm max}^-$ is perpendicular to all other eigenvectors of $-D^2W_{p,q}(z^-)$, which implies that none of these other eigenvectors lies in the positive or the negative quadrant.
This means that $\alpha^{-}(t)$ has to approach $z^-$ tangent to $e^-_{\rm max}$, that is $$\alpha^{-}(t)= z^- + e^{\lambda_{\rm max}^-t} e_{\rm max}^- + o(e^{\lambda_{\rm max}^-t})\ \mbox{for} \ t\to \infty \ .$$ 
In particular, $\lambda_{\rm max}^-<0$. But $\lambda^-_{\rm max}$ is the maximal eigenvalue of $-D^2W_{p,q}(z^-)$. This means that all eigenvalues of $D^2W_{p,q}(z^-)$ are positive, i.e. that $z^-$ is an index-$0$ point.\\
\indent A similar argument for $z^+$ finishes the proof.
\end{proof}
\noindent We conclude with a definition and then give the proof of Theorem \ref{ghost_circles}.
\begin{definition}
A nonempty and strictly ordered collection of configurations
$$C_0=\{ \ldots, x_{-1} \ll x_0 \ll x_1 \ll \ldots \} \subset \X_{p,q}$$ is called a {\it maximal index-$0$ skeleton} for the Morse function $W_{p,q}$ if:
\begin{itemize} 
		\item it consists of index-$0$ critical points of $W_{p,q}$
		\item it is shift-invariant: for all $x\in C_0$ and $(k,l) \in \Z^d \times \Z$, it holds that $\tau_{k,l}x \in C_0$
		\item it is maximal: if $y \notin C_0$ is an index-$0$ point, then there is no $i\in \Z$ with $x_i \ll y \ll x_{i+1}$.
	\end{itemize}
\end{definition}
\begin{proof}[Proof of Theorem \ref{ghost_circles}]
We remark that a maximal index-$0$ skeleton in general is not unique, but it is not hard to see that a maximal index-$0$ skeleton exists if $W_{p,q}$ is Morse. \\
 \indent Indeed, one can construct one by starting with the strictly ordered, shift-invariant collection $C_0^0=\{ \ldots, \bar x_{-1}, \bar x_0, \bar x_1, \ldots \}$ of all the global minimizers of $W_{p,q}$. We note that $C_0^0$ is discrete because $W_{p,q}$ is a Morse function.\\
 \indent If there exists an index-$0$ point $x\notin C^0_0$ with the property that $\bar x_{i}\ll x \ll \bar x_{i}$ for some $i$, then one augments $C_0^0$ by the $\tau$-orbit of this $x$, thus obtaining the strictly ordered, shift-invariant and discrete collection 
 $$C_0^1:= C_0^0 \cup\{\tau_{k,l}x\ | \ (k,l)\in\Z^d \times \Z\}\ . $$ 
 One keeps on adding $\tau$-orbits of index-$0$ points this way. The Morse property of $W_{p,q}$ guarantees that the number of index-$0$ points between $\bar x_0$ and $\bar x_0 +1$ is finite, which implies that this process stops after finitely many steps.\\
 \indent The maximality of an index-$0$ maximal skeleton $C_0=\{ \ldots, x_{-1} \ll x_0 \ll x_1 \ll \ldots \}$ just means that the pairs $x_i, x_{i+1}$ are consecutive index-$0$ points. The Mountain Pass Lemma guarantees that between these consecutive elements, there is an index-$1$ critical point $z_i$, while Lemma \ref{heteroclinic} says that the unstable manifold of this $z_i$ defines strictly ordered heteroclinic connections from $z_i$ to $x_i$ and $x_{i+1}$. \\
 \indent If we choose the $z_i$ in such a way that $\overline{C_0}:=\{\ldots, x_{-1}\ll z_{-1}\ll x_0 \ll z_0\ll x_1 \ll z_1 \ll \ldots\}$ is shift-invariant, then the union of $\overline{C_0}$ and these heteroclinic connections is a ghost circle $\Gamma$. The construction above shows that $\Gamma$ may be assumed to contain all global minimizers of $W_{p,q}$.
\\ 
\indent  It only remains to show that this $\Gamma$ is $C^1$. This is clear except at the critical points. But in the proof of the Lemma \ref{heteroclinic}, we have seen that at the critical points, the heteroclinic connections are tangent to the dominant eigenvector. This eigenvector is simple and hence, $\Gamma$ is $C^1$ also at critical points.
\end{proof}

\section{Convergence of ghost circles}\label{convergence}
Section \ref{PMGC} was devoted to the construction of periodic ghost circles for action functions that satisfy the Morse property. In this section we will prove the existence of periodic ghost circles for arbitrary action functions. In turn, this will then imply the existence of ghost circles with irrational rotation vectors. These results follow from a compactness theorem for ghost circles that we will prove below. Before we can formulate it, let us specify what it means for a sequence of ghost circles to converge:

\begin{definition}[Convergence of ghost circles]
We say that a sequence of ghost circles $\Gamma_n$ converges to a ghost circle $\Gamma_{\infty}$, if for every $\xi\in\R$, the sequence of configurations $x^n(\xi) \in \Gamma_n$ defined by $\pi_0(x^n(\xi))=\xi$ converges pointwise to the configuration $x^{\infty}(\xi) \in \Gamma_{\infty}$ defined by $\pi_{0}(x^{\infty}(\xi))=\xi$.
\end{definition}
\noindent Thus, if $\Gamma_n\to\Gamma_{\infty}$ as $n\to\infty$ then $\Gamma_{\infty}$ consists of pointwise limits of elements of the $\Gamma_n$. \\
\indent Before stating the most important results of this section, let us make a few simple observations concerning convergence of ghost circles. First of all, one can observe that if $\lim_{n\to\infty}\omega_n= \omega_{\infty}$ and if a sequence of ghost circles $\Gamma_n\subset \mathcal{B}_{\omega_n}$ converges to $\Gamma_{\infty}$, then it must be true that $\Gamma_{\infty}\subset \mathcal{B}_{\omega_{\infty}}$. This follows from the continuity of the rotation vector as a function on $\mathcal{B}$ and the fact that the rotation vector of a ghost circle is defined as the rotation vector of any of its elements.\\
\indent The second remark is that if the $\Gamma_n\subset \overline{\mathcal{B}}_{\omega}$ are periodic ghost circles with the same rational rotation vector, and $\Gamma_{n}\to\Gamma_{\infty}$, then $\Gamma_{\infty} \subset  \overline{\mathcal{B}}_{\omega}$ is periodic as well. This follows because $\overline{\mathcal{B}}_{\omega}$ is a closed subset of $\R^{\Z^d}$.\\ 
\indent Our compactness result now is the following:
\begin{theorem}\label{compactnessghostcircle}
Let $\omega_n\in K\subset \R^d$ be a sequence of rotation vectors contained in a compact set $K$ and converging to $\omega_{\infty}\in K\subset \R^d$ and let $S_j^n:\R^{\Z^d}\to\R$ be a sequence of local potentials such that $\nabla S_j^n$ converge to $\nabla S_j^{\infty}$ uniformly in $C^{1}(\R^{\Z^d})$. Finally, let $\Gamma_n$ be a sequence of ghost circles for the $S_j^n$ of rotation vector $\omega_n$. Then there exists a ghost circle $\Gamma_{\infty}$ for the $S_j^{\infty}$ of rotation vector $\omega_{\infty}$ and a subsequence $\{n_j\}_{j=1}^{\infty}$ such that  $\lim_{j\to\infty}\Gamma_{n_j}=\Gamma_{\infty}$.\\
\indent If moreover $\lim_{n\to\infty} S_j^n = S_j^{\infty}$ uniformly in $C^0(\mathcal{B}_K)$ and if every $\Gamma_n$ contains a global minimizer, then also $\Gamma_{\infty}$ contains a global minimizer.
\end{theorem}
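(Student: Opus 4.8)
The plan is to extract a convergent subsequence of the ghost circles using the compactness of $\mathcal{B}_K/\Z$ (Proposition \ref{compactness}), to identify the limit $\Gamma_\infty$ as a candidate ghost circle, and then to verify the defining properties of Definition \ref{definition ghost circle} one by one, using the Lipschitz-continuous dependence of the gradient flow on the potentials (Theorem \ref{psi}, parts two and three, together with Corollary \ref{continuousflow}) to pass the flow-invariance to the limit, and the parabolic Harnack inequality (Theorem \ref{UHIP}) to guarantee strict ordering and, crucially, connectedness.

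Here is the order of steps I would carry out. First, parametrize each ghost circle by $\pi_0$: by Proposition \ref{projection}, for every $n$ and every $\xi\in\R$ there is a unique $x^n(\xi)\in\Gamma_n$ with $\pi_0(x^n(\xi))=\xi$, and this parametrization $\R\to\Gamma_n$ is a homeomorphism. Since $\Gamma_n\subset\mathcal{B}_{\omega_n}\subset\mathcal{B}_K$ and $\mathcal{B}_K/\Z$ is compact in the pointwise topology (which on $\mathcal{B}$ coincides with the $\X$-topology by Proposition \ref{pointwiseinX}), a diagonal argument over $\xi$ ranging in $\Q$ produces a subsequence $\{n_j\}$ along which $x^{n_j}(\xi)$ converges pointwise for every rational $\xi$; the uniform bound $|x_i-x_0-\langle\omega,i\rangle|\le 1$ from Lemma \ref{sequence} together with monotonicity of $\xi\mapsto x^n(\xi)$ then gives convergence for all real $\xi$, to a limit I call $x^\infty(\xi)$. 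Define $\Gamma_\infty:=\{x^\infty(\xi)\mid\xi\in\R\}$. One checks: $\Gamma_\infty\subset\mathcal{B}_{\omega_\infty}$ by continuity of the rotation vector (Lemma \ref{sequence}); $\xi\mapsto x^\infty(\xi)$ is nondecreasing, and by the elliptic/parabolic Harnack inequality applied to stationary limits or, more robustly, by applying $\Psi_t$ and Theorem \ref{UHIP}, it is in fact strictly increasing, so $\Gamma_\infty$ is strictly ordered; shift-invariance passes to the limit since $\tau_{k,l}$ is continuous (Lemma \ref{lipschitz translations}); and closedness follows since $\pi_0|_{\Gamma_\infty}$ has a continuous inverse (the limiting parametrization is monotone, hence continuous, and surjective onto $\R$ by the $+1$-shift invariance). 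Flow-invariance: for fixed $t$, $\Psi^{n_j}_t x^{n_j}(\xi)\in\Gamma_{n_j}$, and by part three of Theorem \ref{psi} (uniform convergence $\Psi^{n_j}_t\to\Psi^\infty_t$, guaranteed by Corollary \ref{continuousflow} from the $C^1$-convergence of the gradients) together with continuity of $\Psi^\infty_t$, the left side converges to $\Psi^\infty_t x^\infty(\xi)$, which therefore lies in $\Gamma_\infty$; applying this to $\Psi^{n_j}_{-t}$ as well gives $\Psi^\infty_t(\Gamma_\infty)=\Gamma_\infty$. Connectedness is the subtle point (see below). Once all properties hold, $\Gamma_\infty$ is a ghost circle for the $S_j^\infty$ of rotation vector $\omega_\infty$.

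For the final clause, suppose each $\Gamma_n$ contains a global minimizer $y^n$; by shift-invariance we may normalize $\pi_0(y^n)\in[0,1)$, i.e.\ $y^n=x^n(\xi_n)$ with $\xi_n\in[0,1)$. Passing to a further subsequence we may assume $\xi_n\to\xi_\infty$, and by the (uniform in $\xi$) convergence established above, $y^{n_j}\to y^\infty:=x^\infty(\xi_\infty)\in\Gamma_\infty$ pointwise. It remains to show $y^\infty$ is a global minimizer for the $S_j^\infty$. For any finite $B\subset\Z^d$ and any $z$ supported in $\mathring{B}^{(r)}$ we have $W^{n_j}_B(y^{n_j}+z)-W^{n_j}_B(y^{n_j})\ge 0$; since $B$ is finite, $W^{n_j}_B$ is a finite sum of the $S^{n_j}_j$, and the hypothesis $S^n_j\to S^\infty_j$ uniformly on $\mathcal{B}_K$ (note $y^{n_j}$, $y^{n_j}+z$ need not be Birkhoff, so one instead uses that $C^1$-convergence of the gradients plus, say, a fixed normalization forces $C^0$-convergence on the relevant compact order interval, or simply invokes the added $C^0(\mathcal{B}_K)$ hypothesis after checking $y^{n_j}, y^{n_j}+z$ lie in a common compact set) gives $W^{n_j}_B\to W^\infty_B$ pointwise; combined with $y^{n_j}\to y^\infty$ we obtain $W^\infty_B(y^\infty+z)-W^\infty_B(y^\infty)\ge 0$. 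Hence $y^\infty$ is a global minimizer, as desired.

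The main obstacle is proving $\Gamma_\infty$ is connected. A pointwise limit of connected sets need not be connected, so one cannot simply pass connectedness to the limit; instead one must show that the limiting parametrization $\xi\mapsto x^\infty(\xi)$ is \emph{continuous} (it is automatically monotone and surjective, which then yields connectedness). The natural route is to establish equicontinuity of the family $\{\xi\mapsto x^n(\xi)\}_n$: one needs that if $\xi-\xi'$ is small then $x^n(\xi)-x^n(\xi')$ is uniformly small in every coordinate, uniformly in $n$. This is exactly where the quantitative parabolic Harnack inequality of Theorem \ref{UHIP} does the work: running the flow a fixed time $t>0$ and comparing $x^n(\xi)<x^n(\xi')$, the bound $(\Psi_t x^n(\xi'))_i-(\Psi_t x^n(\xi))_i\ge L\,(x^n(\xi')_k-x^n(\xi)_k)$, with $L$ depending only on $K$, $\|i-k\|$ and $t$ — and \emph{not} on $n$ — lets one control the difference at any site $k$ by the difference at site $0$, which is $\xi'-\xi$; since $\Psi_t$ is Lipschitz on $\X$ with constant depending only on the uniform Lipschitz bound (Theorem \ref{psi}, part two, valid uniformly because the $S^n_j$ have uniformly bounded second derivatives on $\mathcal{B}_K$ by Corollary \ref{uniform_twist}), one converts this into the required uniform modulus of continuity. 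This is the step I expect to require the most care, and it is precisely the reason the Harnack inequality, rather than the purely one-dimensional intersection-counting argument of Gol\'e, is the right tool in arbitrary dimension $d$.
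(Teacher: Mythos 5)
Your overall architecture (diagonal extraction, verification of the ghost-circle properties one by one, uniform convergence of the flows via Corollary \ref{continuousflow}, and the limit of minimizers for the last clause) matches the paper, but the step you yourself flag as the crux --- equicontinuity of $\xi\mapsto x^n(\xi)$, hence connectedness of $\Gamma_\infty$ --- does not work as you describe, and this is a genuine gap. The parabolic Harnack inequality you quote, $(\Psi_t x^n(\xi'))_i-(\Psi_t x^n(\xi))_i\ge L\,\bigl(x^n(\xi')_k-x^n(\xi)_k\bigr)$, bounds the coordinate gaps of the circle at time $0$ by the gap of the \emph{flowed} configurations at site $i$; to extract smallness of $x^n(\xi')_k-x^n(\xi)_k$ you would need an upper bound on $(\Psi_t x^n(\xi'))_0-(\Psi_t x^n(\xi))_0$ in terms of $\xi'-\xi$, and the $0$-coordinates of $\Psi_t x^n(\xi)$, $\Psi_t x^n(\xi')$ are not $\xi,\xi'$. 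Your proposed repair via the Lipschitz continuity of $\Psi_t$ on $\X$ is circular: $\|x^n(\xi')-x^n(\xi)\|_{\X}$ involves all coordinates of the difference, which is exactly the quantity you are trying to control, and for Birkhoff configurations it is merely bounded (by roughly $\xi'-\xi+2$ per site), not small, when only the $0$-coordinates are close. The paper explicitly acknowledges that there is no reason for the maps $\pi_k\circ(\pi_0|_{\Gamma_n})^{-1}$ to be uniformly Lipschitz, which is precisely why it does not take your route. (The same latent problem infects your extension from rational to all real $\xi$ "by monotonicity": a monotone pointwise limit can have jumps, which is again the connectedness issue.)

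The paper's fix is to reparametrize each circle by $T^{\Gamma_n}:=\Psi_{-1}^n\circ(\pi_0|_{\Gamma_n})^{-1}$, i.e.\ to parametrize the \emph{backward-flowed} circle by the $0$-coordinate of its forward image. Then the Harnack inequality is applied in the correct direction: with $x=\Psi_{-1}^n x^n(\xi)$, $y=\Psi_{-1}^n x^n(\xi')$, $t=1$ and $i=0$ one gets $\xi'-\xi\ge L\,\bigl(T^{\Gamma_n}_k(\xi')-T^{\Gamma_n}_k(\xi)\bigr)$, so each $T^{\Gamma_n}_k$ is Lipschitz with constant depending only on $K$ and $\|k\|$, uniformly in $n$. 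Arzel\`a--Ascoli plus a diagonal argument then gives $T^{\Gamma_{n_j}}\to T^\infty$ pointwise uniformly, and one defines $\Gamma_\infty:=\Psi_1^\infty\bigl(T^\infty(\R)\bigr)$; connectedness is now automatic (continuous image of $\R$), closedness, strict ordering, shift- and flow-invariance, and the minimizer statement are then verified essentially as in your outline, but using the representation $x^n(\xi)=\Psi_1^n(T^{\Gamma_n}(\xi))$ throughout to get the uniform-in-$\xi$ convergence you need. Until your equicontinuity claim is either proved by some other means or replaced by this reparametrization, the proposal does not establish the theorem.
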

\noindent Before proving this compactness result, let us formulate its two main implications:
\begin{theorem}\label{existenceperiodic}
Let $\omega\in\mathbb{Q}^d$ be arbitrary and let the local potentials $S_j$ be given. Then there exists a periodic ghost circle $\Gamma_{\omega}\subset \overline{\mathcal{B}}_{\omega}$ for the $S_j$. This $\Gamma_{\omega}$ may be chosen so that it contains a global minimizer.
\end{theorem}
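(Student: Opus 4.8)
The plan is to obtain $\Gamma_\omega$ as a limit of periodic ghost circles associated with Morse approximations of the potentials $S_j$, combining Theorem \ref{morsification s}, Theorem \ref{ghost_circles} and the compactness result Theorem \ref{compactnessghostcircle}. First I would fix a set of principal periods $(p_1,q_1),\ldots,(p_d,q_d)$ for the rational rotation vector $\omega$, so that $\X_{p,q}=\overline{\X}_\omega\subset\X_\omega$ and, by Theorem \ref{maxrelper}, $\mathcal{B}_{p,q}=\overline{\mathcal{B}}_\omega$. Applying Theorem \ref{morsification s} to the $S_j$ and these periods produces local potentials $S_j^n$ satisfying conditions A--E, of uniformly bounded interaction range, with each periodic action $W_{p,q}^n=\sum_{j\in B_p}S_j^n$ a Morse function on $\X_{p,q}$, and such that $\nabla S_j^n\to\nabla S_j$ uniformly in $C^1(\R^{\Z^d})$ while $S_j^n\to S_j$ uniformly on $\mathcal{B}_{p,q}=\overline{\mathcal{B}}_\omega$.

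Next I would invoke Theorem \ref{ghost_circles}: since $W_{p,q}^n$ is Morse, there is a $C^1$ ghost circle $\Gamma_n\subset\X_{p,q}$ for the potentials $S_j^n$ that contains all global minimizers of $W_{p,q}^n$. By Theorem \ref{existence} the function $W_{p,q}^n$ attains its minimum on $\X_{p,q}$, and by Theorem \ref{global} any such minimizer is in fact a global minimizer for the $S_j^n$; hence each $\Gamma_n$ contains at least one global minimizer. Moreover $\Gamma_n\subset\X_{p,q}=\overline{\X}_\omega\subset\X_\omega$, so every $\Gamma_n$ is a ghost circle of rotation vector $\omega$.

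The final step is to apply the compactness Theorem \ref{compactnessghostcircle} to the constant sequence $\omega_n\equiv\omega$ (so that $K=\{\omega\}$), the limiting potentials $S_j^\infty:=S_j$, and the ghost circles $\Gamma_n$ just constructed: the hypothesis $\nabla S_j^n\to\nabla S_j$ uniformly in $C^1(\R^{\Z^d})$ is furnished by Theorem \ref{morsification s}, and the theorem then yields a subsequence $\Gamma_{n_j}$ converging to a ghost circle $\Gamma_\infty$ for the $S_j$ of rotation vector $\omega$. Since every $\Gamma_n$ contains a global minimizer and $S_j^n\to S_j$ uniformly on $\overline{\mathcal{B}}_\omega$, the limit $\Gamma_\infty$ inherits a global minimizer. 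Finally, each $\Gamma_{n_j}$ lies in $\overline{\mathcal{B}}_\omega$, which is closed under pointwise convergence, so $\Gamma_\infty\subset\overline{\mathcal{B}}_\omega$ is periodic; setting $\Gamma_\omega:=\Gamma_\infty$ gives the desired periodic ghost circle containing a global minimizer.

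I expect the argument to be a fairly mechanical assembly of the three main theorems, so the ``hard part'' is not a deep estimate but rather a bookkeeping point: matching the hypotheses of Theorem \ref{compactnessghostcircle}, since Theorem \ref{morsification s} only gives uniform convergence $S_j^n\to S_j$ on $\overline{\mathcal{B}}_\omega$ rather than on all of $\mathcal{B}_{\{\omega\}}=\mathcal{B}_\omega$. This is harmless: every ghost circle in play, each of its minimizers, and any pointwise limit of them is periodic, hence already contained in $\overline{\mathcal{B}}_\omega$, and the portion of the proof of Theorem \ref{compactnessghostcircle} that propagates a global minimizer to the limit only inspects configurations in that set.
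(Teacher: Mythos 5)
Your proposal is correct and follows essentially the same route as the paper: Morse approximations via Theorem \ref{morsification s}, periodic Morse ghost circles containing a minimizer via Theorem \ref{ghost_circles}, and passage to the limit via the compactness Theorem \ref{compactnessghostcircle}, with the limit staying in $\overline{\mathcal{B}}_{\omega}$ by closedness. Your extra bookkeeping remark (that uniform convergence of $S_j^n$ on $\overline{\mathcal{B}}_{\omega}$ rather than on all of $\mathcal{B}_K$ suffices, since all configurations involved are periodic) is a sound observation that the paper glosses over.
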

\begin{proof}
Given $\omega\in\mathbb{Q}^d$ and any local potentials $S_j$, choose principal periods $(p,q)$ for $\omega$. By Theorem \ref{morsification s} we can choose a sequence of local potentials $S^n_j$ such that $\lim_{n\to\infty}\nabla S_j^n=\nabla S_j$ uniformly in $C^{1}(\R^{\Z^d})$ and $\lim_{n\to\infty} S_j^n = S_j$ uniformly in $C^0(\mathcal{B}_K)$, while at the same time $W_{p,q}^n:\X_{p,q}\to \R$ is a Morse function. Then, by Theorem \ref{ghost_circles}, there is a ghost circle $\Gamma_n\subset \mathcal{B}_{p,q}=\overline{\mathcal{B}}_{\omega}$ for the local potentials $S_j^n$ that contains a minimizer of $W_{p,q}^n$. By Theorem \ref{compactnessghostcircle}, a subsequence of the $\Gamma_n$ converges to a ghost circle $\Gamma_{\omega}\subset \mathcal{B}_{p,q}= \overline{\mathcal{B}}_{\omega}$ for the local potentials $S_j$. By the second conclusion of Theorem \ref{compactnessghostcircle}, $\Gamma_{\omega}$ contains a global minimizer.
\indent 
\end{proof}
\begin{theorem}\label{existencequasiperiodic}
Let $\omega\in\mathbb{R}^d\backslash \mathbb{Q}^d$ and let the local potentials $S_j$ be given. Then there exists a ghost circle $\Gamma_{\omega}\subset \overline{\mathcal{B}}_{\omega}$ for the $S_j$. This $\Gamma_{\omega}$ may be chosen so that it contains the entire Aubry-Mather set of rotation vector $\omega$.
\end{theorem}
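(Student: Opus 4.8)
The plan is to obtain $\Gamma_\omega$ as a limit of the periodic ghost circles furnished by Theorem \ref{existenceperiodic}, using the compactness result Theorem \ref{compactnessghostcircle}, and then to absorb the Aubry--Mather set using closedness and shift-invariance of a ghost circle.

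First I would choose a sequence of rational rotation vectors $\omega_n\in\mathbb{Q}^d$ with $\lim_{n\to\infty}\omega_n=\omega$ and with the extra property that $\langle\omega_n,k\rangle+l=0$ whenever $\langle\omega,k\rangle+l=0$, that is $I_\omega\subseteq I_{\omega_n}$ for every $n$. Such a sequence exists, exactly as in the proof of Theorem \ref{quasi-periodic birkhoff minimizers}. For each $n$, Theorem \ref{existenceperiodic} provides a periodic ghost circle $\Gamma_n\subseteq\overline{\mathcal{B}}_{\omega_n}$ for the given potentials $S_j$, of rotation vector $\omega_n$, which may be chosen to contain a global minimizer. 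All the $\omega_n$ together with $\omega$ lie in some compact set $K\subset\mathbb{R}^d$.

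Next I would apply Theorem \ref{compactnessghostcircle} with the constant sequence $S_j^n:=S_j$; then $\nabla S_j^n\to\nabla S_j$ in $C^1(\mathbb{R}^{\mathbb{Z}^d})$ and $S_j^n\to S_j$ in $C^0(\mathcal{B}_K)$ hold trivially. The theorem yields a subsequence $\Gamma_{n_j}\to\Gamma_\infty$, where $\Gamma_\infty$ is a ghost circle for the $S_j$ of rotation vector $\omega$, and by its second part $\Gamma_\infty$ contains a global minimizer $x$. It remains to check $\Gamma_\infty\subseteq\overline{\mathcal{B}}_\omega$: if $(k,l)\in I_\omega$ then $(k,l)\in I_{\omega_{n_j}}$ by construction, so $\tau_{k,l}$ fixes every element of $\Gamma_{n_j}\subseteq\overline{\mathcal{B}}_{\omega_{n_j}}$, and hence fixes every pointwise limit of such elements; thus $\tau_{k,l}y=y$ for all $y\in\Gamma_\infty$, so $\Gamma_\omega:=\Gamma_\infty$ has the required maximal periodicity.

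Finally, for the Aubry--Mather statement, the global minimizer $x\in\Gamma_\omega$ is a Birkhoff configuration of rotation vector $\omega$ with $\tau_{k,l}x=x$ whenever $\langle\omega,k\rangle+l=0$, so by Lemma \ref{almostMatherset} and Theorem \ref{mathersettheorem} its recurrent set $\mathcal{M}(x)$ is the Aubry--Mather set of rotation vector $\omega$ (unique by the remark following Theorem \ref{mathersettheorem}). Since a ghost circle is shift-invariant and closed under pointwise convergence, $\Gamma_\omega$ contains the full $\tau$-orbit of $x$ and its pointwise closure $\widetilde{\mathcal{M}}(x)$, hence $\mathcal{M}(x)\subseteq\Gamma_\omega$. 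I expect no serious obstacle in this argument: essentially all of the analytic difficulty is already packaged in Theorem \ref{compactnessghostcircle}, and the only points that genuinely require care are the choice of the rational approximants $\omega_n$ so that $I_\omega\subseteq I_{\omega_n}$ (guaranteeing that the limit stays in $\overline{\mathcal{B}}_\omega$ rather than merely in $\mathcal{B}_\omega$), and the observation that closedness of a ghost circle under pointwise convergence is exactly what lets it swallow the whole Aubry--Mather set once it contains a single suitable minimizer.
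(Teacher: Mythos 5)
Your proposal is correct and follows essentially the same route as the paper: approximate $\omega$ by rationals, take the periodic ghost circles with minimizers from Theorem \ref{existenceperiodic}, pass to a limit via Theorem \ref{compactnessghostcircle} with constant potentials, and use closedness and shift-invariance to absorb $\mathcal{M}(x)$. You are in fact slightly more careful than the paper, which leaves implicit the choice of $\omega_n$ with $I_{\omega}\subseteq I_{\omega_n}$ needed to conclude that the limit lies in $\overline{\mathcal{B}}_{\omega}$ rather than merely in $\mathcal{B}_{\omega}$.
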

\begin{proof}
Given $\omega\in\mathbb{R}^d$ and local potentials $S_j$, choose a sequence $\omega_n\in \mathbb{Q}^d$ such that $\lim_{n\to\infty} \omega_n =\omega$. By Theorem \ref{existenceperiodic}, there is a periodic ghost circle $\Gamma_n\subset \overline{\mathcal{B}}_{\omega_n}$ for the local potentials $S_j$ that contains at least one global minimizer. By Theorem \ref{compactnessghostcircle}, a subsequence of the $\Gamma_n$ converges to a ghost circle $\Gamma_{\omega}\subset \overline{\mathcal{B}}_{\omega}$. \\ 
\indent The requirement for the second conclusion of Theorem \ref{compactnessghostcircle} is trivially valid, so that $\Gamma_{\omega}$ contains a global minimizer, say $x$. Being closed and shift-invariant, this implies that $\Gamma_{\omega}$ contains the entire Aubry-Mather set $\mathcal{M}(x)$. 
\end{proof}
\noindent Before proving Theorem \ref{compactnessghostcircle}, we remark that if $\Gamma_n$ is an arbitrary sequence of ghost circles for the local potentials $S_j^n$ and with rotation vectors $\omega_n$ in a compact set $K$, then for every $\xi\in \R$ the sequence of configurations $x^{n}(\xi)\in \Gamma_n$ has a subsequence that converges pointwise. This just follows from the compactness of $\mathcal{B}_{K} \cap \{ x \in \R^{\Z^d}\ | \ \pi_0(x)=\xi\}$. The problem is to show that this subsequence can be chosen independent of $\xi$ and that the collection of limit configurations $\{ \lim_{n\to\infty} x^n(\xi)\ | \ \xi  \in \R \}$ forms a ghost circle for the $S_j=\lim_{n\to\infty}S_j^n$.
\\ \mbox{}
\\
\noindent We will now make some preparations for the proof of Theorem \ref{compactnessghostcircle}. To start with, we define for a given ghost circle $\Gamma$, the map
$$T^{\Gamma}:\R \to \Gamma \ \ \mbox{by}\  \ T^{\Gamma}:=\Psi_{-1} \circ (\pi_0|_{\Gamma})^{-1} \! ,\ \mbox{that is:} \ \ T^{\Gamma}_k(\xi) \! := \left(T^{\Gamma}(\xi)\right)_k \!\! = \left( \pi_k\circ \Psi_{-1} \circ (\pi_0|_{\Gamma})^{-1}\right)(\xi)\ .$$ 
Here, $\Psi_{-1}: \X\to\X$ denotes the time-$-1$ flow of $\frac{dx}{dt}=-\nabla W(x)$. By Theorem \ref{psi}, $T^{\Gamma}$ is a homeomorphism, being the composition of two homeomorphisms. Moreover, it is ``pointwise Lipschitz continuous'':

\begin{lemma}\label{translations quasi}
Let $K\subset \mathbb{R}^d$ be a compact set and $\Gamma \subset \mathcal{B}_{K}=\cup_{\omega\in K} \mathcal{B}_{\omega}$ a ghost circle with rotation vector $\omega\in K$ for the local potentials $S_j$ satisfying conditions A-E. Then, for every $k\in \Z^d$, there is a constant $\Lambda_{||k||}>0$, depending only on $K$ and $||k||$ such that 
$$\left|T_k^{\Gamma}(\xi) - T^{\Gamma}_k(\nu) \right| \leq \Lambda_{||k||} |\xi-\nu|.$$
\end{lemma}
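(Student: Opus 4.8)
The plan is to decompose $T^\Gamma = \Psi_{-1}\circ (\pi_0|_\Gamma)^{-1}$ and estimate the two pieces. The map $\Psi_{-1}:\X\to\X$ is globally Lipschitz by Theorem \ref{psi}, with constant $L_{-1}=e^{L}$ depending only on the $S_j$. So it suffices to control $(\pi_0|_\Gamma)^{-1}$ pointwise: I claim that for $\xi<\nu$ one has, writing $x:=(\pi_0|_\Gamma)^{-1}(\xi)$ and $y:=(\pi_0|_\Gamma)^{-1}(\nu)$ (so $x_0=\xi$, $y_0=\nu$, and $x\ll y$ by strict ordering of $\Gamma$), an estimate of the form $y_k-x_k\le c_{||k||}(y_0-x_0)$ with $c_{||k||}$ depending only on $K$ and $||k||$. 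Once that is in hand, $|T^\Gamma_k(\xi)-T^\Gamma_k(\nu)|=|(\Psi_{-1}x)_k-(\Psi_{-1}y)_k|$ is bounded by the full $\X$-norm $\|\Psi_{-1}x-\Psi_{-1}y\|_\X$ times $2^{||k||}$ (since $|z_k|\le 2^{||k||}\|z\|_\X$), which in turn is $\le 2^{||k||}e^{L}\|x-y\|_\X$, and $\|x-y\|_\X=\sum_i 2^{-||i||}(y_i-x_i)\le\big(\sum_i 2^{-||i||}c_{||i||}\big)(y_0-x_0)$; the sum converges because $c_{||i||}$ grows only geometrically in $||i||$ while $2^{-||i||}$ decays faster once we note $c_{||i||}=\delta_1^{||i||}$ below and choose the weights accordingly — more carefully, one uses the weight $2^{-||i||}$ against $\delta_1^{||i||}|\{i:||i||=m\}|\sim \delta_1^m m^{d-1}$, which is summable provided $\delta_1<2$; if not, one simply replaces the weight by a finer exponential weight, or argues directly as in the next paragraph without passing through $\|\cdot\|_\X$.

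The key ingredient for the pointwise comparison $y_k-x_k\le c_{||k||}(y_0-x_0)$ is exactly the elliptic Harnack inequality, Theorem \ref{ellharn}: since $\Gamma\subset\mathcal B_K$, both $x$ and $y$ are Birkhoff with rotation vector in $K$, and since every ghost circle contains a globally stationary solution only — wait, not every element of $\Gamma$ is stationary. So Theorem \ref{ellharn} does not apply directly. Instead, I would run the argument through the flow: apply the parabolic Harnack inequality, Theorem \ref{UHIP}, backwards. Concretely, fix $x\ll y$ in $\Gamma$ and let $\tilde x:=\Psi_{-1}x$, $\tilde y:=\Psi_{-1}y$, which again lie in $\Gamma\subset\mathcal B_K$ (by $\Psi$-invariance of $\Gamma$ and Corollary \ref{Birkhoff invariance}), with $\tilde x<\tilde y$. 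Theorem \ref{UHIP} with $t=1$, applied to the pair $\tilde x<\tilde y$, gives $(\Psi_1\tilde y)_k-(\Psi_1\tilde x)_k\ge L(||i-k||,K)\,(\tilde y_i-\tilde x_i)$ for all $i,k$; but $\Psi_1\tilde x=x$ and $\Psi_1\tilde y=y$, so $y_k-x_k\ge L\,(\tilde y_i-\tilde x_i)$, i.e. $\tilde y_i-\tilde x_i\le L^{-1}(y_k-x_k)$ for all $i$ and $k$. Taking $i$ arbitrary and $k=0$ gives $\tilde y_i-\tilde x_i\le L^{-1}(y_0-x_0)$, i.e. the coordinates of $T^\Gamma(\nu)-T^\Gamma(\xi)=\tilde y-\tilde x$ are uniformly (in $i$) bounded by $L^{-1}|\xi-\nu|$ — which is even stronger than the claimed $||k||$-dependent bound, with $\Lambda_{||k||}$ a constant independent of $k$. (One should check: in Theorem \ref{UHIP} the constant $L$ depends on $||i-k||$, $K$ and $t$; here $t=1$ is fixed and we only need $||0-k||$, so $L^{-1}=L^{-1}(||k||,K,1)$, giving exactly $\Lambda_{||k||}$ as in the statement.)

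The main obstacle, and the only subtle point, is the direction in which the Harnack inequality is used: Theorem \ref{UHIP} propagates a lower bound \emph{forward} in time, whereas $T^\Gamma$ involves the \emph{backward} flow $\Psi_{-1}$. The resolution above — replace $(x,y)$ by their backward images and apply the forward inequality with $t=1$ to recover $(x,y)=(\Psi_1\Psi_{-1}x,\Psi_1\Psi_{-1}y)$ — works precisely because $\Gamma$ is invariant under the full flow, not just the forward flow, so $\Psi_{-1}x,\Psi_{-1}y$ are again Birkhoff configurations in $\mathcal B_K$ to which Theorem \ref{UHIP} is applicable. A secondary point to handle carefully is the case $\xi=\nu$, which is trivial since $(\pi_0|_\Gamma)^{-1}$ is injective, and the case $\xi>\nu$, which follows by symmetry. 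I would also remark that the resulting $\Lambda_{||k||}$ can in fact be taken independent of $||k||$, but stating it as in the lemma is all that is needed for the sequel.
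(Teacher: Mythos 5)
Your final argument---pass to the backward images $\Psi_{-1}x,\Psi_{-1}y\in\Gamma$ (using the two-sided flow-invariance and strict ordering of $\Gamma$) and then apply the parabolic Harnack inequality, Theorem \ref{UHIP}, forward with $t=1$ and the $0$-th coordinate on the favorable side---is exactly the paper's proof, and it is correct; the abandoned first paragraph is superseded by this direct argument. The only caveat is your closing remark that $\Lambda_{||k||}$ could be taken independent of $||k||$: the constant in Theorem \ref{UHIP} behaves like $e^{-Mt}(\lambda t/||k||)^{||k||}$ and so degrades as $||k||$ grows, hence that claim is unjustified (though also unnecessary for the lemma as stated).
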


\begin{proof}
Let $\xi, \nu\in \R$ and denote $X=(\pi_0|_{\Gamma})^{-1}(\xi)$ and $Y=(\pi_0|_{\Gamma})^{-1}(\nu)$. Assume that $\xi<\nu$, whence $X\ll Y$. Denote by $\Psi_t$ the time-$t$ flow of $-\nabla W$, with $W:=\sum_{j\in\Z^d} S_j$. Since $\Gamma$ is forward and backward invariant under $\Psi$, both $\Psi_{-1}(X)$ and $\Psi_{-1}(Y)$ lie in $\Gamma$ and satisfy $\Psi_{-1}(X)\ll\Psi_{-1}(Y)$. Now we apply the parabolic Harnack inequality of Theorem \ref{UHIP} to  $t=1$, $i=0$, $x=\Psi_{-1}(X)$ and $y=\Psi_{-1}(Y)$, to find that there is an $L>0$ depending only on $K$ and $||k||$ such that
$$T^{\Gamma}_k(\nu) - T^{\Gamma}_k(\xi) = (\Psi_{-1} Y)_k - (\Psi_{-1} X)_k \leq \frac{1}{L} ( Y_0-X_0 )= \frac{1}{L}\left(\nu- \xi \right) \ .$$
A similar argument in the case that $\xi>\nu$ finishes the proof.
\end{proof}

\noindent We remark here that we see no reason why the maps $\pi_k\circ (\pi_0|_{\Gamma})^{-1}$ should be uniformly Lipschitz continuous. This is why we study the maps $\pi_k\circ \Psi_{-1} \circ (\pi_0|_{\Gamma})^{-1}$ instead.

\begin{definition}
We say that a sequence of maps $T^n:\R\to\R^{\Z^d}$ converges {\it pointwise uniformly} to a map $T^{\infty}:\R\to\R^{\Z^d}$ as $n\to\infty$ if for every $k\in \Z^d$ the sequence of maps $T_k^n:=\pi_k\circ T^n:\R\to\R$ converges uniformly to $T_k^{\infty}:=\pi_k\circ T^{\infty}$ as $n\to\infty$.
\end{definition}

\begin{corollary}\label{uniform translations}
Let $K\subset \R^d$ be a compact set. Assume that for every $n\in\N$, we are given a rotation vector $\omega_n\in K$, local potentials $S_j^n$ satisfying conditions A-E and ghost circles $\Gamma_n \subset \mathcal{B}_{\omega_n}$ for the local potentials $S_j^n$. \\ \indent Then there is a subsequence $\{n_j\}_{j\in \N}\subset \N$ with the property that the maps $T^{\Gamma_{n_j}}:\R\to \R$ converge pointwise uniformly on $\R$, say $T^{\Gamma_{n_j}}\to T^{\infty}$ as $j\to \infty$.  Each limit map $T^{\infty}_k:=\pi_k \circ T^{\infty}: \R\to\R$, is non-decreasing, surjective and Lipschitz continuous.
\end{corollary}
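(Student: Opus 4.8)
The plan is to apply the Arzel\`a--Ascoli theorem to each coordinate of the maps $T^{\Gamma_n}$ and then diagonalise over $k\in\Z^d$. Fix $k\in\Z^d$. First I would record that the family $\{T^{\Gamma_n}_k\}_n$ is equicontinuous and uniformly bounded on compact intervals. Equicontinuity is exactly Lemma \ref{translations quasi}: each $T^{\Gamma_n}_k$ is Lipschitz with a constant $\Lambda_{\|k\|}$ depending only on $K$ and $\|k\|$, hence uniform in $n$ (the underlying Harnack constant of Theorem \ref{UHIP} stays uniform along the sequence because the $C^1$-convergence $\nabla S^n_j\to\nabla S^\infty_j$ keeps the twist of condition D uniformly non-degenerate on $\mathcal B_K$). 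For the uniform bound I would use that $C^1$-boundedness of the gradients gives $\sup_x|(\nabla W^n(x))_i|\le M$ with $M$ independent of $i$ and $n$; writing $x^n(\xi):=(\pi_0|_{\Gamma_n})^{-1}(\xi)\in\mathcal B_{\omega_n}$ and integrating $\dot x=-\nabla W^n(x)$ over $[-1,0]$ one gets $|T^{\Gamma_n}_k(\xi)-x^n(\xi)_k|\le M$, whereupon Lemma \ref{sequence} yields $|T^{\Gamma_n}_k(\xi)|\le|\xi|+R\|k\|+M+1$ with $R:=\max_{\omega\in K}\|\omega\|$.

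Next I would extract the subsequence. Since $\Z^d$ is countable, I enumerate it, apply Arzel\`a--Ascoli on the intervals $[-N,N]$ for $N\in\N$, and run the standard diagonal argument to obtain a single subsequence $\{n_j\}$ along which $T^{\Gamma_{n_j}}_k$ converges uniformly on every compact interval, for all $k$ simultaneously; call the limits $T^\infty_k$ and set $T^\infty:=(T^\infty_k)_k$. To upgrade compact-interval convergence to uniform convergence on all of $\R$, I would use shift-equivariance: each $\Gamma_n$ is $\tau_{0,1}$-invariant and $\Psi^n$ commutes with $\tau_{0,1}$ (Proposition \ref{invariance}), so $T^{\Gamma_n}(\xi+1)=\tau_{0,1}T^{\Gamma_n}(\xi)=T^{\Gamma_n}(\xi)+1$; hence $\xi\mapsto T^{\Gamma_n}_k(\xi)-T^\infty_k(\xi)$ is $1$-periodic, and uniform convergence on $[0,1]$ forces uniform convergence on $\R$.

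Finally I would read off the properties of $T^\infty_k$. It is Lipschitz with constant $\Lambda_{\|k\|}$, being a uniform limit of $\Lambda_{\|k\|}$-Lipschitz maps. Each $T^{\Gamma_n}_k=\pi_k\circ\Psi^n_{-1}\circ(\pi_0|_{\Gamma_n})^{-1}$ is strictly increasing: $\xi<\nu$ forces $(\pi_0|_{\Gamma_n})^{-1}(\xi)\ll(\pi_0|_{\Gamma_n})^{-1}(\nu)$ by the strict ordering of $\Gamma_n$, this order is preserved by $\Psi^n_{-1}$ since it is the inverse of the strictly monotone $\Psi^n_1$ (Theorem \ref{monotonicity psi}) and both images lie in $\Gamma_n$, and $\pi_k$ of a $\ll$-inequality is strict; so the pointwise limit $T^\infty_k$ is non-decreasing. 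Surjectivity follows by passing $T^{\Gamma_n}_k(\xi+1)=T^{\Gamma_n}_k(\xi)+1$ to the limit: $T^\infty_k$ is continuous with $T^\infty_k(m)=T^\infty_k(0)+m$ for all $m\in\Z$, so by the intermediate value theorem its image contains $[T^\infty_k(0)+m,\,T^\infty_k(0)+m+1]$ for every $m$, i.e.\ all of $\R$.

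The step I expect to be the main obstacle is the uniformity in $n$ of the Lipschitz constants $\Lambda_{\|k\|}$, equivalently the non-degeneracy of the twist used in Theorem \ref{UHIP} along the sequence; once that is in hand (which the $C^1$-convergence of the gradients provides), what remains is a routine Arzel\`a--Ascoli-plus-diagonal argument together with the periodicity bookkeeping that upgrades convergence on compacta to convergence on all of $\R$.
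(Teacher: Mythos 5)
Your overall strategy coincides with the paper's: uniform-in-$n$ Lipschitz constants from Lemma \ref{translations quasi} (i.e.\ the parabolic Harnack inequality of Theorem \ref{UHIP}), a bound on compacts, Arzel\`a--Ascoli plus a diagonal argument over $k\in\Z^d$, the relation $T^{\Gamma_n}_k(\xi+1)=T^{\Gamma_n}_k(\xi)+1$ to upgrade convergence on compacta to convergence on all of $\R$, and the monotonicity/surjectivity bookkeeping for the limit (which you in fact carry out in more detail than the paper does). The one step you handle differently is the uniform boundedness, and that is where your argument has a gap as written. First, $\sup_{x\in\R^{\Z^d}}|(\nabla W^n(x))_i|$ is in general infinite under conditions A--E: for Frenkel--Kontorova, $\p_iS_j$ contains the linear terms $\frac{1}{4d}(x_i-x_j)$, so only the second derivatives are globally bounded. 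The bound you want holds only on the Birkhoff class: since $\Gamma_n\subset\mathcal{B}_{\omega_n}\subset\mathcal{B}_K$ is invariant under the backward flow, the trajectory $\Psi^n_t x^n(\xi)$, $t\in[-1,0]$, stays in $\mathcal{B}_K$, where $|\p_iS_j^n|$ is bounded by compactness of $\mathcal{B}_K/\Z$ (Corollary \ref{uniform_twist}); your integration argument needs this restriction to be stated. Second, both of your uniformity-in-$n$ claims (for this gradient bound and for the Harnack constant behind $\Lambda_{\|k\|}$) are justified by invoking the $C^1$-convergence $\nabla S_j^n\to\nabla S_j^\infty$, which is a hypothesis of Theorem \ref{compactnessghostcircle} but not of the corollary itself, so as written you prove the statement under an extra assumption.

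The paper's boundedness argument avoids both issues and needs nothing about the potentials beyond A--E: by Proposition \ref{solution} each $\Gamma_n$ contains a stationary configuration, whose integer vertical translates are fixed points of $\Psi^n$ and, by the strict ordering of $\Gamma_n$, sandwich $(\pi_0|_{\Gamma_n})^{-1}(0)$; order preservation under the flow then gives $T^{\Gamma_n}_0(0)\in[-1,1]$, and Lemma \ref{sequence} (applied to the Birkhoff configuration $T^{\Gamma_n}(\xi)$ with rotation vector in $K$) yields $|T^{\Gamma_n}_k(0)|\le 2+\|K\|\cdot\|k\|$. Your worry about the uniformity of $\Lambda_{\|k\|}$ is legitimate --- the constant in Theorem \ref{UHIP} involves $\lambda$ and the bound $C$ of condition E, so Lemma \ref{translations quasi}'s assertion that it depends only on $K$ and $\|k\|$ tacitly presumes these constants uniform along the sequence; the paper simply takes that lemma at face value, whereas your repair via convergence of the gradients is only available in the setting where the corollary is eventually applied.
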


\begin{proof}
Fix a $k\in \Z^d$. By Lemma \ref{translations quasi}, the maps $T^{\Gamma_n}_k:=\pi_k\circ T^{\Gamma_n}$ are uniformly-in-$n$ Lipschitz continuous  with Lipschitz constant $\Lambda_{||k||}$. Moreover, by the definition of a ghost circle they are $1$-periodic and increasing.
To see that they are uniformly bounded on compacts, we then just have to note that Proposition \ref{solution} implies that $T_0^{\Gamma}(0)\in[-1,1]$, while Lemma \ref{sequence} and the fact that $(\Psi_{-1}\circ (\pi_0^{\Gamma})^{-1})(\xi)$ is a Birkhoff sequence then imply that $T_k^{\Gamma}(0)\in [-2-||K||\cdot ||k||, 2 + ||K||\cdot ||k||]$, where $||K||:=\max_{\omega\in K}||\omega||$. \\ 
\indent Thus, the theorem of Arzel\`{a}-Ascoli guarantees that there exists a uniformly convergent subsequence $T_k^{n_{j,k}} \to T^{\infty}_k$ for $j\to \infty$. Clearly, $T_k^{\infty}$ is nondecreasing and Lipschitz continuous with Lipschitz constant $\Lambda_{||k||}$. \\ 
\indent  Let $j\mapsto k_j,  \N\to\Z^d$ be a denumeration of $\Z^d$. Then the diagonal sequence $\{n_{j}\}_{j\in \N} \subset \N$ defined by $n_j:=n_{j,k_j}$ has the property that $T^{\Gamma_{n_j}} \to T^{\infty}$ pointwise uniformly as $j\to\infty$.
\end{proof}
 
\begin{theorem}[Convergence of ghost circles]\label{qpghostcircletheorem}
Let $\Gamma_n$ be a sequence of ghost circles for the local potentials $S_j^n$. Assume that there are local potential functions $S_j^\infty$ such that $\nabla S_j^n\to \nabla S_j^{\infty}$ uniformly in $C^1(\R^{\Z^d})$ and that the maps $T^{\Gamma_n}$ converge pointwise uniformly. Then there is a ghost circle $\Gamma_{\infty}$ for the local potentials $S_j^{\infty}$ such that $\Gamma_n\to \Gamma_{\infty}$ as $n\to\infty$.\\
\indent Moreover, when $\Gamma_n$ contains a global minimizer $x_n$ and $\lim_{n\to\infty} S_j^n = S_j^{\infty}$ uniformly in $C^0(\mathcal{B}_K)$, then $\Gamma_{\infty}$ contains a global minimizer as well. 
\end{theorem}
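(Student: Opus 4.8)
The plan is to build $\Gamma_{\infty}$ by hand from the limit map $T^{\infty}:=\lim_{n\to\infty}T^{\Gamma_n}$ supplied by the hypothesis, and then to verify the four clauses of Definition \ref{definition ghost circle} one at a time, transporting each property from the $\Gamma_n$. For $\xi\in\R$ write $x^n(\xi):=(\pi_0|_{\Gamma_n})^{-1}(\xi)=\Psi_1^n(T^{\Gamma_n}(\xi))\in\Gamma_n$, the unique configuration of $\Gamma_n$ with zeroth coordinate $\xi$, and set $x^{\infty}(\xi):=\Psi_1^{\infty}(T^{\infty}(\xi))$. First I would check that $x^n\to x^{\infty}$ pointwise uniformly: writing $x^n(\xi)-x^{\infty}(\xi)$ as $(\Psi_1^n-\Psi_1^{\infty})(T^{\Gamma_n}(\xi))+\Psi_1^{\infty}(T^{\Gamma_n}(\xi))-\Psi_1^{\infty}(T^{\infty}(\xi))$, the first term is controlled by the uniform convergence $\Psi_1^n\to\Psi_1^{\infty}$ on $\X$ from Corollary \ref{continuousflow} (which uses only $\nabla S^n_j\to\nabla S^{\infty}_j$ in $C^0$), and the second by the global Lipschitz continuity of $\Psi_1^{\infty}$ on $\X$ from Theorem \ref{psi} together with $\sup_{\xi}\|T^{\Gamma_n}(\xi)-T^{\infty}(\xi)\|_{\X}\to0$, which follows by dominated convergence over the lattice (the summands are $1$-periodic in $\xi$ by quasiperiodicity of $T^{\Gamma_n}$, converge uniformly in $\xi$ by hypothesis, and are dominated by a summable sequence since Birkhoff configurations with rotation vectors in $K$ grow at most linearly). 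Since $\pi_0\circ x^n\equiv\mathrm{id}_{\R}$, in the limit $\pi_0(x^{\infty}(\xi))=\xi$; also $T^{\infty}(\xi)=\lim_nT^{\Gamma_n}(\xi)$ is Birkhoff with rotation vector $\omega_{\infty}\in K$ (by closedness of $\mathcal{B}$ and continuity of the rotation vector, Lemma \ref{sequence}), and $x^{\infty}:\R\to\R^{\Z^d}$ is continuous, being a coordinatewise-uniform limit of the continuous maps $x^n$. Set $\Gamma_{\infty}:=x^{\infty}(\R)$; then $\Gamma_n\to\Gamma_{\infty}$ holds by construction.

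The decisive step is strict ordering of $\Gamma_{\infty}$, and this is where I expect the real difficulty. For $\xi<\nu$ the strict ordering of $\Gamma_n$ gives $x^n(\xi)\ll x^n(\nu)$, hence $T^{\Gamma_n}(\xi)\ll T^{\Gamma_n}(\nu)$ (the two preimages under the flow again lie in $\Gamma_n$ and are thus ordered, and the order cannot be reversed, by Theorem \ref{monotonicity psi}), so in the limit $T^{\infty}(\xi)\le T^{\infty}(\nu)$; equality is impossible since applying $\Psi_1^{\infty}$ would force $x^{\infty}(\xi)=x^{\infty}(\nu)$ against $\pi_0(x^{\infty}(\xi))=\xi\neq\nu=\pi_0(x^{\infty}(\nu))$, so $T^{\infty}(\xi)<T^{\infty}(\nu)$. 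The configurations of $\Gamma_{\infty}$ need not be stationary, so the elliptic Harnack inequality does not apply; instead, $x^{\infty}(\xi)$ and $x^{\infty}(\nu)$ are the time-one images under the gradient flow of $S^{\infty}_j$ of the two Birkhoff configurations $T^{\infty}(\xi)<T^{\infty}(\nu)$ of rotation vector $\omega_{\infty}\in K$, so the parabolic Harnack inequality, Theorem \ref{UHIP}, with $t=1$ yields, for every $i$ and every $k$ with $T^{\infty}(\nu)_k-T^{\infty}(\xi)_k>0$, that $x^{\infty}(\nu)_i-x^{\infty}(\xi)_i\ge L\,(T^{\infty}(\nu)_k-T^{\infty}(\xi)_k)>0$; hence $x^{\infty}(\xi)\ll x^{\infty}(\nu)$. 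This is precisely why $T^{\Gamma}$ was defined with the factor $\Psi_{-1}$ built into it. Strict ordering then makes $\pi_0|_{\Gamma_{\infty}}$ a continuous bijection onto $\R$ whose inverse $x^{\infty}$ is continuous, so it is a homeomorphism and $\Gamma_{\infty}$ is connected; and $\Gamma_{\infty}$ is closed, because $x^{\infty}(\xi_m)\to y$ pointwise forces $\xi_m=\pi_0(x^{\infty}(\xi_m))\to\pi_0(y)=:\xi$, whence $y=x^{\infty}(\xi)\in\Gamma_{\infty}$.

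Shift- and flow-invariance follow by the same transport device. From $\tau_{k,l}x^n(\xi)\in\Gamma_n$ one reads off $\tau_{k,l}x^n(\xi)=x^n(x^n(\xi)_k+l)$; letting $n\to\infty$ and using continuity of $\tau_{k,l}$ on $\X$ (Lemma \ref{lipschitz translations}), the pointwise-uniform convergence $x^n\to x^{\infty}$ and continuity of $x^{\infty}$, this gives $\tau_{k,l}x^{\infty}(\xi)=x^{\infty}(x^{\infty}(\xi)_k+l)\in\Gamma_{\infty}$, and applying this to $(-k,-l)$ too yields $\tau_{k,l}(\Gamma_{\infty})=\Gamma_{\infty}$. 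Identically, from $\Psi_t^n(x^n(\xi))\in\Gamma_n$ and the uniform convergence $\Psi_t^n\to\Psi_t^{\infty}$ on $\X$ one gets $\Psi_t^{\infty}(x^{\infty}(\xi))=x^{\infty}\bigl(\Psi_t^{\infty}(x^{\infty}(\xi))_0\bigr)\in\Gamma_{\infty}$, so $\Psi_t^{\infty}(\Gamma_{\infty})=\Gamma_{\infty}$ for all $t\in\R$. Hence $\Gamma_{\infty}$ is a ghost circle for $S^{\infty}_j$. For the last assertion, if $\Gamma_n$ contains a global minimizer for $S^n_j$ then, after a vertical shift, it equals $x^n(\xi_n)$ with $\xi_n\in[0,1]$; passing to a subsequence with $\xi_n\to\xi_{\infty}$ produces $y^{\infty}:=x^{\infty}(\xi_{\infty})\in\Gamma_{\infty}$ as a pointwise limit of global minimizers, and for every finite $B\subset\Z^d$ and every $z$ supported in $\mathring{B}^{(r)}$ the inequality $W^n_B(x^n(\xi_n)+z)-W^n_B(x^n(\xi_n))\ge0$ passes to the limit, since the finite-sum gradients $\nabla W^n_B=\sum_{j\in B}\nabla S^n_j$ converge uniformly on $\R^{\Z^d}$ and $S^n_j\to S^{\infty}_j$ uniformly on $\mathcal{B}_K$; thus $y^{\infty}$ is a global minimizer for $S^{\infty}_j$.
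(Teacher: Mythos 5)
Your proposal is correct and follows essentially the same route as the paper: you define $\Gamma_\infty$ as the image of $\Psi_1^\infty\circ T^\infty$, i.e.\ as the pointwise limits $x^\infty(\xi)=\lim_{n\to\infty}\Psi_1^n(T^{\Gamma_n}(\xi))$, and then verify closedness, connectedness, strict ordering, shift- and flow-invariance, and the minimizing property by transporting each from the $\Gamma_n$ using the uniform convergence of the flows and of the maps $T^{\Gamma_n}$. The only cosmetic deviations are that you invoke the parabolic Harnack inequality (Theorem \ref{UHIP}) for strict ordering where the paper uses the simpler strict monotonicity of the flow (Theorem \ref{monotonicity psi}), and that you upgrade the coordinatewise convergence of $T^{\Gamma_n}$ to uniform convergence in the $\X$-norm, which the paper's coordinatewise argument does not require.
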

\noindent Theorem \ref{compactnessghostcircle} now follows directly from Corollary \ref{uniform translations} and Theorem \ref{qpghostcircletheorem}. \\
\indent Before we prove Theorem \ref{qpghostcircletheorem}, let us recall that if $\Gamma_{\infty}=\lim_{n\to\infty} \Gamma_n$ exists, then it must be equal to
$$\Gamma_{\infty}:=\{ x^{\infty}(\xi) := \lim_{n\to\infty} x^n(\xi) \ \mbox{pointwise}\ | \ \xi \in \R \}\ .$$
At this point, it is of course not clear whether the limit $\lim_{n\to\infty} x^n(\xi)$ exists for every $\xi\in \R$. To see that it does under the conditions of Theorem \ref{qpghostcircletheorem}, we note that $x^n(\xi)=\Psi_1^n(T^n(\xi))$, so that
$$\lim_{n\to\infty} x^n(\xi) = \lim_{n\to \infty} \Psi_1^n(T^n(\xi)) \ \mbox{exists and is equal to}\ \Psi_1^{\infty}(T^{\infty}(\xi)) .$$
This is true because on the one hand, according to Corollary \ref{continuousflow}, $\Psi_1^n\to\Psi_1^{\infty}$ uniformly in the topology of pointwise convergence, while on the other hand it holds that for every $\xi\in\R$, the sequence of configurations $T^{\Gamma_n}(\xi)\in \mathcal{B}$ converges pointwise to the configuration $T^{\infty}(\xi)$ as $n\to\infty$, because $T^{\Gamma_n}\to T^{\infty}$ pointwise uniformly. Thus we find that under the conditions of Theorem \ref{qpghostcircletheorem}, $\Gamma_{\infty}:=\lim_{n\to\infty}\Gamma_n\subset \mathcal{B}_{\omega_{\infty}}$ is well defined and moreover that, if $\Gamma_{\infty}$ is a ghost circle, then $T^{\Gamma_{\infty}}=T^{\infty}$.
 \\ \indent We will now show that $\Gamma_{\infty}$ is in fact a ghost circle for the local potentials $S_j^{\infty}$:

\begin{proof}[Proof of Theorem \ref{qpghostcircletheorem}] We first check that $\Gamma_{\infty}$ has the properties required for a ghost circle:
\begin{itemize}
\item[1.] {\bf Closedness:} Let $x^{\infty}(\xi_m) \in \Gamma_{\infty}$ be a sequence of configurations that converges pointwise. This implies that the $\xi_m$ converge, say to $\xi$. We now want to show that $\lim_{m\to\infty} x^{\infty}(\xi_m)=x^{\infty}(\xi)\in \Gamma_{\infty}$ pointwise. This follows because $\lim_{m\to\infty} x^{\infty}(\xi_m)= \lim_{m\to\infty} \Psi_1^{\infty}(T^{\infty}(\xi_m)) = \Psi_1^{\infty}(\lim_{m\to\infty} T^{\infty}(\xi_m)) = \Psi_1^{\infty}(T^{\infty}(\xi)) = x^{\infty}(\xi)$. All these limits are pointwise. We have used that $\Psi_1^{\infty}$ is continuous for pointwise convergence and that $\lim_{m\to\infty} T^{\infty}(\xi_m)=T^{\infty}(\xi)$ pointwise. 
\item[2.] {\bf Connectedness:} We note that $\Gamma_{\infty}= \Psi_1^{\infty}(T^{\infty}(\R))$, so it is the image under a continuous map of a connected set, hence connected.  
\item[3.] {\bf Strict ordering:} Suppose $\xi<\nu$. Recall that $T^{\infty}$ is nondecreasing, so $T^{\infty}(\xi) \leq T^{\infty}(\nu)$. We remark that $T^{\infty}(\xi)$ cannot equal $T^{\infty}(\nu)$, because this would imply that $\xi = \pi_0(x^{\infty}(\xi)) = \pi_0( \Psi_1^{\infty}(T^{\infty}(\xi))) = \pi_0(\Psi_1^{\infty}(T^{\infty}(\nu)) = \pi_0(x^{\infty}(\nu) ) = \nu$. Thus $T^{\infty}(\xi)< T^{\infty}(\nu)$. \\ \indent
The strict monotonicity of the negative gradient flow, then implies that $x^{\infty}(\xi) = \Psi_1^{\infty}(T^{\infty}(\xi)) \ll \Psi_1^{\infty}(T^{\infty}(\nu)) = x^{\infty}(\nu) $. 
\item[4.] {\bf Shift-invariance:} Let $x^{\infty} \in \Gamma_{\infty}$, that is $x^{\infty}=\lim_{n\to\infty} x^n$ with $x^n \in \Gamma_n$ and $\pi_0(x^n)=\pi_0(x^{\infty})$. Let $k\in \Z^d$ and $l\in \Z$ be given. 
We want to show that $\tau_{k,l}x^{\infty}\in \Gamma_{\infty}$, that is we want to show that $\tau_{k,l}x^{\infty} = \lim_{n\to \infty} y^n$ with $y^n\in \Gamma_n$ such that $\pi_0(y^n)= \pi_0(\tau_{k,l}x^{\infty})$. We prove this by writing $$\lim_{n\to \infty} \left(\tau_{k,l}x^{\infty} - y^n \right) = \lim_{n\to \infty} \left(\tau_{k,l}x^{\infty} - \tau_{k,l}x^n\right) + \lim_{n\to\infty} \left( \tau_{k,l}x^n- y^n \right)$$ 
and showing that both limits on the right hand side vanish. \\
\indent The first limit is zero because, by Lemma \ref{lipschitz translations}, $\tau_{k,l}$ is continuous in the topology of pointwise convergence. Thus we have that $\lim_{n\to\infty} \left(\tau_{k,l}x^{\infty} - \tau_{k,l}x^n \right)= \lim_{n\to\infty} \tau_{k,0}(x^{\infty} - x^n) = 0$. \\
\indent For the second limit, we realize that $\tau_{k,l}x^n\in \Gamma_n$ because $\Gamma_n$ is shift-invariant and we observe that $\pi_0(\tau_{k,l}x^n) =x^n_k + l$. Because $T^{\Gamma_n}\to T^{\infty}$ pointwise uniformly, we moreover know that $\lim_{n\to\infty} T^{\Gamma_n}(x^n_k+l) = T^{\infty}(x^{\infty}_k + l)$ pointwise. Thus, by the uniform convergence of the $\Psi_1^n$ to $\Psi_1^{\infty}$,
$$\lim_{n\to\infty}\!\left( \tau_{k,l}x^n \right)\!=\! \lim_{n\to\infty} \Psi_1^n(T^n(x_k^n+l))\! =\! \Psi_1^{\infty}(T^{\infty}(x^{\infty}_k + l)) \!=\!  \lim_{n\to\infty}\! \Psi_1^{\infty}(T^{n}(x^{\infty}_k + l))\! =\! \lim_{n\to\infty}\! y^n . $$ 
\item[5.] {\bf Flow-invariance:} This is proved in a similar way as shift-invariance. So, let $x^{\infty} \in \Gamma_{\infty}$, that is $x^{\infty}=\lim_{n\to\infty} x^n$ with $x^n \in \Gamma_n$ and $\pi_0(x^n)=\pi_0(x^{\infty})$. Let $t\in \R$ be given. 
We want to show that $\Psi_t^{\infty} x^{\infty}\in \Gamma_{\infty}$, that is we want to show that $\Psi_t^{\infty} x^{\infty} = \lim_{n\to \infty} y^n$ with $y^n\in \Gamma_n$ such that $\pi_0(y^n)= \pi_0(\Psi_t^{\infty} x^{\infty})$. We prove this by writing $$\lim_{n\to \infty} \left(\Psi_t^{\infty} x^{\infty} - y^n \right) = \lim_{n\to \infty} \left(\Psi_t^{\infty} x^{\infty} - \Psi_t^n x^n\right) + \lim_{n\to\infty} \left( \Psi^n_t x^n- y^n \right)$$ 
and showing that both limits on the right hand side vanish. \\
\indent The first limit is zero because, by Theorem \ref{psi}, $\Psi_t^n$ converges to $\Psi_t^{\infty}$ uniformly. Thus we have that $\lim_{n\to\infty} \Psi_t^n x^{n} = \Psi_t^{\infty} x^{\infty}$. \\
\indent For the second limit, we realize that $\Psi_t^n x^n\in \Gamma^n$ because $\Gamma_n$ is flow-invariant and we observe that $\lim_{n\to\infty} \pi_0(\Psi_t^n x^n) = \pi_0 (\Psi_t^{\infty}(x^{\infty}))$ because $\pi_0$ is continuous for pointwise convergence. Because $T^{\Gamma_n}\to T^{\infty}$ pointwise uniformly, we therefore know that $\lim_{n\to\infty} T^{\Gamma_n}(\pi_0(\Psi^{\infty}_t(x^n))) = T^{\infty}(\pi_0(\Psi_t^{\infty} x^{\infty}))$ pointwise. Thus, by the uniform convergence of the $\Psi_1^n$ to $\Psi_1^{\infty}$,
$$\lim_{n\to\infty}\! \Psi_t^n x^n \!=\! \lim_{n\to\infty} \Psi^n_1(T^{\Gamma_n}(\pi_0(\Psi^n_tx^n)))\! =\! \lim_{n\to\infty} \Psi_1^{\infty}(T^{\Gamma_n}(\pi_0(\Psi_t^{\infty} x^{\infty}))) \!=\!  \lim_{n\to\infty}\! y^n . $$ 
\end{itemize}
We finish the proof of Theorem \ref{qpghostcircletheorem} by proving that when each $\Gamma_n$ contains a minimizer and $\lim_{n\to\infty} S_j^n=S_j^{\infty}$ uniformly in $C^0(\mathcal{B}_K)$, then also $\Gamma_{\infty}$ contains a minimizer: \\ 
\mbox{} \\
\noindent {\bf Minimizing property:} Suppose that every ghost circle $\Gamma_n$ contains a minimizer $x^n=x^n(\xi_n)$. This means that for every finite subset $B\subset \Z^d$ and every $y:\Z^d\to \R$ with finite support in $\mathring{B}^{(r)}$ it holds that 
\begin{align}\label{minprop}
W_B^n(x^n+y) - W_B^n(x^n) \geq 0 \ , \ \mbox{where}\ W_B^n(x):=\sum_{j\in B}S_j^n(x)\ .
\end{align}
By compactness of $\mathcal{B}_K/\Z$, a subsequence of the $x^n(\xi_n)$ converges pointwise, say to $x^{\infty}=\lim_{j\to\infty} x^{n_j}(\xi_{n_j})$. Moreover, $\lim_{n\to \infty}W_B^n = W_B^{\infty}$ uniformly in $C^0(\mathcal{B}_K)$. Taking the limit of equation (\ref{minprop}) as $n_j\to \infty$ then shows that $W_B^{\infty}(x^{\infty}+y)-W_B^{\infty}(x^{\infty})\geq 0$. In other words, $x^{\infty}$ is a global minimizer.\\
\indent  It remains to prove that $x^{\infty} \in \Gamma_{\infty}$. This holds because $x^{\infty}=\lim_{j\to\infty}x^{n_j}(\xi_{n_j}) = \lim_{j\to\infty}\Psi_1^n(T^n(\xi_{n_j})) = \Psi_1^{\infty}(T^{\infty}(\xi_{\infty})) = x^{\infty}(\xi_{\infty})$,  where $\xi_{\infty}:=\lim_{j\to\infty} \xi_{n_j}$.
\end{proof}

\section{Gap solutions}\label{gap_solutions}
In this final section we examine the situation that an Aubry-Mather set $\mathcal{M}(x)\subset \mathcal{B}_{\omega}$ has a gap, that is when there are elements $y^-, y^+\in \mathcal{M}(x)$ with $y^-\ll y^+$ such that $[y^-, y^+]$ does not contain any elements of $\mathcal{M}(x)$ other than $y^-$ and $y^+$. This situation occurs when $\omega\in \Q^d$ or when $\omega\in \R^d\backslash \Q^d$ and $\mathcal{M}(x)$ is a Cantor set. \\
\indent The main result of this section is Theorem \ref{gapsolution} below, which states that either $[y^-, y^+]$ admits a foliation by global minimizers, or there exists at least one stationary configuration $z\in [y^-, y^+]$ that is not a global minimizer. This result is more precise than the result of \cite{llave-valdinoci07}, that says that a gap must contain at least one stationary solution. Moreover, the proof below is more geometric, as it makes use of ghost circles.\\
\indent We start with the following theorem, which is a refinement of a result by Moser \cite{moser89}. It says that when a gap admits a foliation by stationary points, then all of them are minimizing. Recall that every Aubry-Mather set is contained in a ghost circle. 
\begin{theorem}\label{moserfoliation}
Let $[y^-, y^+]$ be a gap in the Aubry-Mather set $\mathcal{M}(x)$ and let $\Gamma$ be a ghost circle so that $\mathcal{M}(x)\subset \Gamma$. If $\Gamma^{[y^-,y^+]}:=\Gamma \cap [y^-, y^+]$ consists of stationary configurations only, then all of them are global minimizers.
\end{theorem}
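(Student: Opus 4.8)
The plan is to argue by contradiction. Suppose $\Gamma^{[y^-,y^+]}$ consists of stationary configurations but at least one of them, say $z_0$, is not a global minimizer. Since $\Gamma$ is a ghost circle, it is totally ordered and homeomorphic to $\R$ via $\pi_0$, and the subset $\Gamma^{[y^-,y^+]}$ is a closed subinterval. Because every configuration in $\Gamma^{[y^-,y^+]}$ is stationary, the negative gradient flow $\Psi_t$ fixes each of these configurations pointwise; by flow-invariance of $\Gamma$ this is automatic, but the key point is that the entire arc of $\Gamma$ between $y^-$ and $y^+$ is a curve of equilibria. In particular $y^-$ and $y^+$ are global minimizers (they lie in the Aubry-Mather set), so the action, in a suitable finite-dimensional periodic approximation, is smaller (or equal) at the endpoints than in the interior near $z_0$.

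The main technical step I would carry out is to localize the argument using the periodic structure. First I would reduce to the case $\omega\in\Q^d$: if $\omega$ is irrational, $z_0$ being a non-minimizer means there is a finite box $B$ and a variation $y$ supported in $\mathring B^{(r)}$ with $W_B(z_0+y)<W_B(z_0)$; one then passes to a large periodic box $\X_{p,q}$ containing $B$, on which the truncated configurations $y^-,y^+$ are close to genuine $p,q$-minimizers and $z_0$ is close to a critical point of $W_{p,q}$. Within $\X_{p,q}$ the arc $\Gamma^{[y^-,y^+]}$ projects to a connected curve of critical points of $W_{p,q}$ joining two minimizers. I would then invoke Theorem~\ref{global} (periodic minimizers are global minimizers) together with the following observation: a connected set of critical points of a smooth function $W_{p,q}$ on which $W_{p,q}$ is constant (critical points on a connected critical arc all have the same value, since $\nabla W_{p,q}$ vanishes along it) must consist entirely of minimizers if two of its points are minimizers — because along such an arc the Morse index cannot jump (the value is constant and the arc is connected), so every point of the arc is itself a local minimizer, hence, being an element of a strictly ordered shift-invariant family that contains genuine minimizers, a global one by Theorem~\ref{converse}.

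More concretely, the key steps in order are: (i) observe that on a connected arc of equilibria of the gradient flow, $W$ (suitably truncated/periodized) is constant, since $\frac{d}{ds}W_{p,q}(\gamma(s))=\langle\nabla W_{p,q}(\gamma(s)),\gamma'(s)\rangle=0$; (ii) use the strict monotonicity / parabolic comparison principle (Theorem~\ref{monotonicity psi}) and the fact that $\Gamma$ is totally ordered to conclude that the arc $\Gamma^{[y^-,y^+]}$ foliates the order interval $[y^-,y^+]$, i.e. for every $i$ and every $\xi\in[y^-_i,y^+_i]$ there is exactly one configuration in the arc with $i$-th coordinate $\xi$ — this is where the ghost-circle property $\pi_j|_\Gamma:\Gamma\to\R$ is a homeomorphism (Proposition~\ref{projection}) is used; (iii) apply Moser's foliation argument in the refined form: a foliation of a region by stationary configurations of a variational monotone recurrence relation consists of minimizers, because the leaves can be used as a calibration / null-Lagrangian for the action, so that for any compactly supported variation $y$ one writes $W_B(z+y)-W_B(z)$ as an integral of a non-negative quantity (using the twist/monotonicity condition D and the fact that the leaves are ordered and stationary), forcing it to be $\geq 0$; (iv) conclude that $z_0$ is a global minimizer, contradicting the assumption.

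The hard part will be step (iii), the calibration argument: making precise in the lattice (rather than PDE or twist-map) setting that a monotone foliation by stationary configurations yields a global calibration. The cleanest route is probably to avoid an explicit calibration and instead argue as in my step-(ii)/(iii) sketch above via Theorem~\ref{converse}: each leaf of the foliation is a local minimizer of $W_{p,q}$ in $\X_{p,q}$ (because along a connected critical arc with constant value, the second variation cannot have a negative direction without producing nearby critical values below the arc, contradicting constancy and the fact that the neighboring leaves stay ordered), hence a $p,q$-minimizer, hence a global minimizer. One must be slightly careful that "local minimizer along a critical arc" genuinely forces "local minimizer in $\X_{p,q}$"; this is exactly the kind of statement that the ordering (Aubry's lemma, Lemma~\ref{Aubry's lemma}, and the min-max property, Lemma~\ref{maxminprop}) is designed to deliver, since any competitor can be squeezed between two leaves. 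I expect the proof to be short once this reduction is set up correctly, with essentially all the real content borrowed from Theorems~\ref{global} and~\ref{converse} and the comparison principle.
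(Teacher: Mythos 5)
Your setup (argue by contradiction from a non-minimizing stationary $w$ in the gap) is right, but the two devices you rely on to close the argument do not work. First, the reduction to a periodic action $W_{p,q}$ is unjustified: for irrational $\omega$ the configurations $y^-$, $y^+$ and $z_0$ lie in no $\X_{p,q}$, and ``truncating'' them to a large periodic box destroys both stationarity and minimality, so Theorems \ref{global} and \ref{converse} are not applicable to them; nothing in your sketch controls the error made in this passage. Second, the claim that along a connected arc of critical points with constant critical value the Morse index cannot jump, so that every point of the arc is a local minimizer, is simply false: for $f(x,y)=h(y)\,x^2$ with $h$ changing sign, the $y$-axis is a connected critical arc with constant value whose endpoints can be transverse minima while interior points are transverse maxima. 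The genuinely hard step, your (iii) (a lattice calibration/Moser argument showing that an ordered family of stationary configurations filling the gap forces minimality), is exactly the content of the theorem, and you explicitly leave it unproven; so as it stands the proposal has a real gap rather than a complete alternative route.

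For comparison, the paper closes the argument with a touching (strong maximum principle) argument that avoids any periodic approximation: if $w\in\Gamma^{[y^-,y^+]}$ is not a global minimizer, pick a finite box $B$ and, using coercivity, a constrained minimizer $Z$ of $z\mapsto W_B(w+z)$ over variations supported in $\mathring{B}^{(r)}$ with $Z\neq 0$; the minimum–maximum property (Lemma \ref{maxminprop}) lets one replace $Z$ by $\min\{Z,y^+-w\}$, so that $w+Z\leq y^+$. Connectedness of $\Gamma$ (via Proposition \ref{projection}) then produces $y:=\inf\{\tilde y\in\Gamma\mid \tilde y\gg w+Z\}$, which touches $w+Z$ at some site $i\in\mathring{B}^{(r)}$ while $y_k>w_k=(w+Z)_k$ for $k\notin\mathring{B}^{(r)}$. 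Since, by hypothesis, $y$ is globally stationary and $w+Z$ is stationary for $W_B$ under variations in $\mathring{B}^{(r)}$, comparing $\p_iW_B(y)$ and $\p_iW_B(w+Z)$ and using $\p_{i,l}S_j\leq 0$ together with the strict twist $\p_{i,k}S_i<0$ propagates the equality $y_k=(w+Z)_k$ to neighbouring sites and, by induction, outside $\mathring{B}^{(r)}$ --- contradicting $w\ll y$. If you want to salvage your approach, you would have to actually prove the calibration step (iii) in the lattice setting; the ordering and squeezing facts you cite (Lemmas \ref{maxminprop} and \ref{Aubry's lemma}) are ingredients, but they do not by themselves substitute for this argument.
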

\begin{proof}
Assume that $\Gamma^{[y^-,y^+]}$ consists of stationary points only but that $w\in \Gamma^{[y^-, y^+]}$ is not a global minimizer. Then there is a finite subset $B \subset \Z^d$ and a $z \in \R^{\Z^d}$ with $\mbox{supp}(z) \subset \mathring{B}^{(r)}$ such that $W_B(w+z)<W_B(w)$. Because the function $z\mapsto W_B(w+z)$ is coercive, it attains its minimum, let's say at a $Z$ with support in $\mathring{B}^{(r)}$. By assumption $Z\neq 0$. Let's say there is an $i\in \mathring{B}^{(r)}$ for which $Z_i>0$. In the case that $Z_i<0$ the proof is similar. We now claim that $Z$ can be chosen so that $w+Z \leq y^+$. \\
\indent To prove this claim, we remark that when $m:=\min\{w+Z, y^{+}\}$ and $M:=\max\{w+Z, y^{+}\}$, then $W_B(w+Z)+W_B(y^+)\geq W_B(m)+W_B(M)$, as in the proof of Lemma \ref{maxminprop}. Because both $(w+Z) - m$ and $y^+ - M$ are supported in $\mathring{B}^{(r)}$ and both $w+Z$ and $y^+$ minimize $W_B$ with respect to variations supported in $\mathring{B}^{(r)}$, it must therefore hold that $W_B(w+Z)=W_B(m)=W_B(w+\min\{Z,y^{+}-w\})$.\\
\indent The next step is to define $y:=\inf\{\tilde y \in \Gamma \ | \ \tilde y \gg w+Z\}$. Because $w, y, y^+ \in \Gamma$, $w+Z\leq y^+$ and $Z_i>0$, it now holds that $w \ll y \leq y^+$. At the same time, because $\Gamma$ is connected, $y$ touches $w+Z$. That is: there is an $i\in \mathring{B}^{(r)}$ so that $y_i=w_i+Z_i$, while $z_k+W_k=z_k< y_k$ for all $k\notin \mathring{B}^{(r)}$. We claim that this is impossible. \\
\indent To prove this, choose such an $i\in \mathring{B}^{(r)}$ at which $y_i=w_i+Z_i$ and a $k\in \Z^d$ for which $||i-k||=1$. Then, because $y$ is a global stationary point and $w+Z$ is stationary for $W_B$ with respect to variations in $\mathring{B}^{(r)}$, it must be true that 
\begin{align}\nonumber
0  =  \p_iW_B(y)- & \p_iW_B(w+Z) = \!\!
 \sum_{j\in B, l\in \Z^d}\!\! \left( \int_0^1 \!\! \p_{i,l}S_j(t y+(1-t)(w+Z))\ \! dt\right) \cdot (y_l- w_l-Z_l) \\\nonumber 
 \geq &  \int_0^1\!\! \p_{i,k}S_i(ty +(1-t)(w+Z))\ \! dt \cdot (y_k-w_k-Z_k) \ .
 \end{align} 
Here, the inequality holds because $y_i-w_i-Z_i=0$ and $\p_{i,l}S_j\leq 0$ when $i\neq l$ and $y_l-w_l-Z_l\geq 0$ for all $l$. The twist condition that $\p_{i,k}S_i<0$ then guarantees that $y_k=w_k+Z_k$. By induction, one then finds that there is a $k\notin \mathring{B}^{(r)}$ for which $y_k=w_k+Z_k=w_k$. This is a contradiction.  
\end{proof}

\noindent We will now show that when $\Gamma^{[y^-,y^+]}$ does not consist of only stationary points, i.e. minimizers, then it contains at least one non-minimizing stationary point. We do this by finding a stationary point of a ``renormalized action'' function $W_{[y_-, y^+]}: [y^-, y^+] \to [0,\infty)$. In order to define $W_{[y^-,y^+]}$, we need the following well-known technical result that states, when applied to $\mathcal{M}=\mathcal{M}(x)$, that the gaps of an Aubry-Mather set are uniformly summable:
\begin{theorem}\label{l1}
Let $\mathcal{M}$ be any strictly ordered, shift-invariant collection of configurations of rotation vector $\omega \in \R^d$. Let $x, y\in \mathcal{M}$ be so that $x\ll y$ and assume that there exists no $z \in \mathcal{M}$ with $x\ll z \ll y$. Denote $H_{\omega}:= \{i\in\Z^d\ | \ \langle \omega, i\rangle \in \Z \}$. Then
$$\sum_{i\in\Z^d/ H_\omega} |y_i - x_i| \leq 1\ .$$
\end{theorem}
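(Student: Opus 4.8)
The plan is to reduce the statement to a uniform bound on finite partial sums and then to prove that bound by a telescoping argument around the circle $\R/\Z$. Since $x\ll y$, we have $|y_i-x_i|=y_i-x_i\ge 0$, so it suffices to show: for every finite set $F=\{i_1,\dots,i_m\}\subset\Z^d$ that meets each coset of $H_\omega$ at most once, $\sum_{j=1}^m(y_{i_j}-x_{i_j})\le 1$. For such an $F$ the residues $\theta_i:=\langle\omega,i\rangle\bmod 1\in[0,1)$ are pairwise distinct, and I would order $F$ so that $0\le\theta_{i_1}<\theta_{i_2}<\dots<\theta_{i_m}<1$. Throughout I would use the standing fact, noted after Definition \ref{defmatherset}, that every element of a strictly ordered shift-invariant collection is a Birkhoff configuration, so that Proposition \ref{numbertheory} applies to $x$ and $y$.

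The key step is the following consequence of the gap hypothesis: if $(k,l)\in\Z^d\times\Z$ satisfies $\langle\omega,k\rangle+l>0$, then $\tau_{k,l}x\ge y$. Indeed $\tau_{k,l}x\in\mathcal{M}$ by shift-invariance, Proposition \ref{numbertheory} gives $\tau_{k,l}x>x$, and strict ordering of $\mathcal{M}$ upgrades this to $\tau_{k,l}x\gg x$; since no element of $\mathcal{M}$ lies strictly between $x$ and $y$, we cannot have $\tau_{k,l}x\ll y$, hence $\tau_{k,l}x\ge y$. Now for $j=1,\dots,m-1$ put $k_j:=i_{j+1}-i_j$ and let $l_j\in\Z$ be the unique integer with $\langle\omega,k_j\rangle+l_j=\theta_{i_{j+1}}-\theta_{i_j}\in(0,1)$; put also $k:=i_1-i_m$ and let $l\in\Z$ be the unique integer with $\langle\omega,k\rangle+l=\theta_{i_1}-\theta_{i_m}+1\in(0,1]$. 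Each of these $m$ translates has positive drift, so applying the displayed fact and evaluating $\tau_{k_j,l_j}x\ge y$ at the site $i_j$ and $\tau_{k,l}x\ge y$ at the site $i_m$ yields
\[
y_{i_j}\ \le\ x_{i_{j+1}}+l_j\quad(j=1,\dots,m-1),\qquad y_{i_m}\ \le\ x_{i_1}+l.
\]

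Summing all $m$ inequalities, the $x$-contributions cancel: the increments $x_{i_{j+1}}-x_{i_j}$ add up to $x_{i_m}-x_{i_1}$, and together with the wrap-around term $x_{i_1}-x_{i_m}$ they sum to zero, leaving $\sum_{j=1}^m(y_{i_j}-x_{i_j})\le\sum_{j=1}^{m-1}l_j+l$. Finally, since $\sum_{j=1}^{m-1}k_j+k=(i_m-i_1)+(i_1-i_m)=0$ the terms $\langle\omega,\cdot\rangle$ cancel as well, so $\sum_{j=1}^{m-1}l_j+l=\sum_{j=1}^{m-1}(\theta_{i_{j+1}}-\theta_{i_j})+(\theta_{i_1}-\theta_{i_m}+1)=1$. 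Hence $\sum_{j=1}^m(y_{i_j}-x_{i_j})\le 1$, and taking the supremum over all admissible $F$ proves Theorem \ref{l1}.

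I do not expect a serious obstacle: the substantive idea is just to order a system of coset representatives by the fractional parts $\langle\omega,i\rangle\bmod 1$ and telescope once around $\R/\Z$. The points that need a little care are (i) the derivation of $\tau_{k,l}x\ge y$ from positive drift, which genuinely uses the strict ordering of $\mathcal{M}$ together with the gap hypothesis — and, pleasantly, \emph{not} closedness of $\mathcal{M}$, so the lemma holds in the stated generality; and (ii) the degenerate case $\omega\in\Z^d$, where $\Z^d/H_\omega$ is a single coset, the telescoped sum is empty, and only the wrap-around translate $\tau_{0,1}x=x+1\ge y$ is used, immediately giving $y_{i_1}-x_{i_1}\le 1$.
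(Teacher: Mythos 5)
Your proof is correct, and its engine is the same as the paper's: from Proposition \ref{numbertheory}, strict ordering, shift-invariance and the gap hypothesis you deduce that any translate with positive drift jumps over the gap, $\tau_{k,l}x\geq y$ whenever $\langle\omega,k\rangle+l>0$ (the paper derives exactly this, together with the mirror statement $\tau_{k,l}y\leq x$ for negative drift). Where you diverge is in how the bound $1$ is extracted. The paper turns the gap-jumping inequalities into the statement that the open intervals $(x_i+n,\,y_i+n)$ attached to representatives of distinct cosets are pairwise disjoint, shows via $\bar x=\inf_i(x_i-[x_i])$ that the normalized intervals $(x_i-[x_i],\,y_i-[x_i])$ all sit inside $(\bar x,\bar x+1)$, and concludes by additivity of Lebesgue measure. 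You instead fix a finite set of representatives, order them by the fractional parts $\langle\omega,i\rangle\bmod 1$, and run a cyclic telescoping chain in which the integer corrections sum to exactly $1$; the full claim follows by taking the supremum over finite subsets. Your version is more finitary and self-contained (no infimum $\bar x$, no measure statement, and the wrap-around translate handles the degenerate case $\omega\in\Z^d$ cleanly), at the cost of the explicit bookkeeping with the $l_j$; the paper's version is shorter once disjointness is in hand and makes the geometric picture — the gaps project to disjoint arcs of the circle $\R/\Z$ — more visible. Both proofs, as you correctly note, use neither closedness of $\mathcal{M}$ nor minimality, so they hold in the stated generality.
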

\begin{proof}
We start by remarking that our assumptions on $\mathcal{M}$ imply that $x$ and $y$ are Birkhoff. Now, let $i$ and $j$ be representatives of different equivalent classes of $\Z^d/H_{\omega}$ and let $m, n \in \Z$ be arbitrary. Then, $\langle \omega, j-i\rangle + m-n \neq 0$ and hence, by Proposition \ref{numbertheory}, either $\tau_{j-i, m-n}y \ll y$ or $\tau_{j-i, m-n}x \gg x$. In the first case, actually $\tau_{j-i,m-n}y \leq x$ because there is no element of $\mathcal{M}$ between $x$ and $y$. Evaluating the latter inequality at $i$, we then obtain $y_j +m \leq x_i +n$. In the second case, one finds that $\tau_{j-i,m-n}x\geq y$, whence $x_j+m \geq y_i +n$. In both cases we find that $(x_j+m, y_j+m) \cap (x_i+n, y_i+n) = \emptyset$. \\
\indent For $\xi\in\R$, denote by $[\xi]:=\max \{n\in \Z\ | \ n\leq \xi \}$ and define $\bar x = \inf_{i} (x_i-[x_i])\in [0,1)$. Then, clearly $x_i-[x_i]\geq \bar x$. We claim that $y_i-[x_i] \leq \bar x + 1$. To prove this, note that $x_i-[x_i]\leq x_j-[x_j]+1$ and hence, by the Birkhoff property, $x\ll \tau_{j-i}x+[x_i]-[x_j] +1$. The assumption that $x$ and $y$ are consecutive elements of $\mathcal{M}$ then implies that $y\leq \tau_{j-i}x+[x_i]-[x_j]+1$, that is, $y_i-[x_i]\leq x_j-[x_j]+1$. Hence, $y_i-[x_i]\leq \bar x +1$. \\
\indent
This yields, denoting by $|A|$ the Lebesgue measure of a set $A\subset \R$:
$$\sum_{i\in\Z^d/H_\omega} \!\!\! |y_i - x_i| =\!\!\! \sum_{i\in \Z^d/H_\omega}\!\!\! |(x_i-[x_i], y_i-[x_i])| = \left|\bigcup_{i\in\Z^d/H_{\omega}} \!\!\! (x_i-[x_i], y_i-[x_i]) \right| \leq |(\bar x, \bar x + 1)| = 1\ .$$
\end{proof}

\noindent For  \textit{rationally independent} rotation vectors, for which $H_{\omega}=\{0\}$, Theorem \ref{l1} was stated for the first time by Moser \cite{moser86}.\\
\indent For a gap $[y^-, y^+]$, with $y^-, y^+\in \mathcal{B}_{\omega}$, let us define
$$\overline{[y^-,y^+]}:=\{y\in[y^-, y^+] \ | \ \tau_{k,l}y=y \ \mbox{if} \ \langle \omega, k \rangle + l =0\} .$$
It is not hard to see that $\overline{[y^-, y^+]}\subset\mathcal{B}_{\omega}$. Namely, when $y^-< y < y^+$ and $\langle \omega, k\rangle +l> 0$, then $\tau_{k,l}y > \tau_{k,l}y^-\geq y^+>y$, where the second inequality holds because $[y^-, y^+]$ is a gap. Similarly, $\tau_{k,l}y<y$ when $\langle \omega, k\rangle + l< 0$. Hence, $y$ is Birkhoff once $\tau_{k,l}y=y$ for all $k,l$ with $\langle \omega, k\rangle + l =0$.
We are now ready to define the renormalized action function:
\begin{definition}
When $[y^-, y^+]$ is a gap, we define $W_{[y^-,y^+]}:\overline{[y^-, y^+]}\to [0,\infty)$ by
$$W_{[y^-,y^+]}(y):=\sum_{j\in \Z^d/H_{\omega}} \left(S_j(y)-S_j(y^-)\right) .$$
\end{definition}
\begin{proposition}\label{well_defined}
For every $y\in \overline{[y^-, y^+]}$, the sum $W_{[y^-,y^+]}(y)$ is absolutely convergent. Moreover, $W_{[y^-, y^+]}$ is continuous with respect to pointwise convergence.\end{proposition}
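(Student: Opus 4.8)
The plan is to reduce both assertions to an elementary interpolation estimate combined with the $\ell^1$-bound of Theorem \ref{l1}. First I would check that the summand is well defined on $\Z^d/H_\omega$: for $i\in H_\omega$ the pair $(i,-\langle\omega,i\rangle)$ lies in $I_\omega$, so $\tau_{i,-\langle\omega,i\rangle}y=y$ and $\tau_{i,-\langle\omega,i\rangle}y^-=y^-$ because $y,y^-\in\overline{[y^-,y^+]}$; by shift-invariance (condition B) this gives $S_{j+i}(y)=S_j(\tau_{i,-\langle\omega,i\rangle}y)=S_j(y)$ and likewise for $y^-$, so $S_j(y)-S_j(y^-)$ depends only on $[j]\in\Z^d/H_\omega$. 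Similarly $y_{k+i}=y_k+\langle\omega,i\rangle$ for $i\in H_\omega$, and the same for $y^-$ and $y^+$, so $y_k-y^-_k$ and $y^+_k-y^-_k$ depend only on $[k]$.

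For absolute convergence I would interpolate between $y^-$ and $y$:
\[
S_j(y)-S_j(y^-)=\int_0^1\sum_{\|k-j\|\le r}\p_kS_j\big(ty+(1-t)y^-\big)\,(y_k-y^-_k)\,dt .
\]
Each configuration $ty+(1-t)y^-$ with $0\le t\le1$ again lies in $\overline{[y^-,y^+]}\subset\mathcal{B}_\omega$: it is squeezed between $y^-$ and $y^+$, it has rotation vector $\omega$, and it inherits the periodicity of $y$ and $y^-$, so it is Birkhoff for the same reason that $\overline{[y^-,y^+]}\subset\mathcal{B}_\omega$. Hence Corollary \ref{uniform_twist}, applied to the first derivatives $\p_kS_j$ exactly as in the proof of Theorem \ref{existenceGole}, provides a constant $c=c(\omega)$ with $|\p_kS_j|\le c$ on $\mathcal{B}_\omega$ for all $j,k$, whence $|S_j(y)-S_j(y^-)|\le c\sum_{\|k-j\|\le r}|y_k-y^-_k|$. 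Summing over a set of representatives of $\Z^d/H_\omega$, substituting $k=j+m$ with $\|m\|\le r$, and using that for each fixed $m$ the map $[j]\mapsto[j+m]$ is a bijection of $\Z^d/H_\omega$, I obtain
\[
\sum_{[j]\in\Z^d/H_\omega}|S_j(y)-S_j(y^-)|\ \le\ c\sum_{\|m\|\le r}\ \sum_{[j]\in\Z^d/H_\omega}|y_{j+m}-y^-_{j+m}|\ =\ c\sum_{\|m\|\le r}\ \sum_{[i]\in\Z^d/H_\omega}|y_i-y^-_i| ,
\]
which is at most $c\,(2r+1)^d$ because $0\le y_i-y^-_i\le y^+_i-y^-_i$ (as $y^-\le y\le y^+$) and $\sum_{[i]\in\Z^d/H_\omega}|y^+_i-y^-_i|\le1$ by Theorem \ref{l1}.

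For continuity with respect to pointwise convergence I would show that $W_{[y^-,y^+]}$ is a uniform limit of pointwise-continuous functions on $\overline{[y^-,y^+]}$. For any finite $F\subset\Z^d/H_\omega$ the partial sum $y\mapsto\sum_{[j]\in F}(S_j(y)-S_j(y^-))$ is continuous for pointwise convergence, since each $S_j$ depends on finitely many coordinates. Given $\varepsilon>0$, Theorem \ref{l1} yields a finite $F_0\subset\Z^d/H_\omega$ with $\sum_{[i]\notin F_0}|y^+_i-y^-_i|<\varepsilon$; put $F:=\{[j]:[j+m]\in F_0\ \text{for some}\ \|m\|\le r\}$, which is finite. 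Then $[j]\notin F$ forces $[j+m]\notin F_0$ for all $\|m\|\le r$, so the same estimate as above gives, for every $y\in\overline{[y^-,y^+]}$,
\[
\sum_{[j]\notin F}|S_j(y)-S_j(y^-)|\ \le\ c\sum_{\|m\|\le r}\ \sum_{[j]\notin F}|y_{j+m}-y^-_{j+m}|\ \le\ c\,(2r+1)^d\sum_{[i]\notin F_0}|y^+_i-y^-_i|\ <\ c\,(2r+1)^d\,\varepsilon .
\]
Hence the partial sums over finite $F$ converge to $W_{[y^-,y^+]}$ uniformly on $\overline{[y^-,y^+]}$, and a uniform limit of functions continuous for pointwise convergence is again continuous for pointwise convergence.

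The main obstacle is bookkeeping rather than analysis: one must consistently interpret the summand $S_j(y)-S_j(y^-)$, the differences $y_k-y^-_k$ and $y^+_k-y^-_k$, and the summation range modulo $H_\omega$, and one must keep the interpolation path $ty+(1-t)y^-$ inside $\mathcal{B}_\omega$, since the uniform bound on the first derivatives of the $S_j$ holds only on Birkhoff configurations of rotation vector $\omega$ and not on all of $\R^{\Z^d}$. With these points in hand, the absolute convergence and the uniform-tail estimate underlying continuity are both routine.
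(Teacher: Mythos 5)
Your proof is correct and follows essentially the paper's own route: absolute convergence comes from the same interpolation identity, the uniform bound on the first derivatives $\p_k S_j$ over Birkhoff configurations (via Corollary \ref{uniform_twist}, after noting that convex combinations of elements of $\overline{[y^-,y^+]}$ stay in $\overline{[y^-,y^+]}\subset\mathcal{B}_\omega$), and the $\ell^1$-bound of Theorem \ref{l1}. The only difference is in packaging: for continuity the paper proves the Lipschitz estimate $|W_{[y^-,y^+]}(y^1)-W_{[y^-,y^+]}(y^2)|\leq (2r+1)^d D\sum_{k\in\Z^d/H_\omega}|y^1_k-y^2_k|$ and then invokes that pointwise convergence inside the order interval forces $\ell^1$-convergence (dominated by the summable envelope $y^+-y^-$), whereas you establish a uniform tail bound and take a uniform limit of the finite partial sums — both steps rest on exactly the same summable envelope from Theorem \ref{l1}, and your write-up additionally makes explicit the well-definedness of the summand modulo $H_\omega$, which the paper leaves implicit.
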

\begin{proof}
The compactness of $\mathcal{B}_{\omega}$ implies that there is a constant $D>0$ so that $|\p_kS_j|\leq D$ uniformly on $\overline{[y^-, y^+]}$. Thus, we compute that for $y^1,y^2\in \overline{[y^-,y^+]}$, 
\begin{align} 
\nonumber
& |W_{[y^-,y^+]}(y^1) - W_{[y^-,y^+]}(y^2)| \leq \sum_{j\in \Z^d/H_{\omega}} |S_j(y^1)-S_j(y^2)| \leq \\ \nonumber 
 \sum_{j\in \Z^d/H_{\omega}} & \sum_{||k-j||\leq r} \left| \int_0^1\p_k S_j(t y^1 + (1-t)y^2)\ \! dt\right|  |y^1_k-y^2_k| \leq (2r+1)^d D 
 \!\! \sum_{k\in \Z^d/H_{\omega}} \!\! |y^1_k-y^2_k| \ .
\end{align}
First of all, this implies that $W_{[y^-,y^+]}(y)$ is well-defined and converges absolutely for $y\in \overline{[y^-,y^+]}$, because $W_{[y^-,y^+]}(y^-)=0$, by definition, and $\sum_{k\in \Z^d/H_{\omega}}|y_k-y_k^-|\leq 1$ by Theorem \ref{l1}. \\ 
\indent Secondly, it is now clear that $W_{[y^-, y^+]}$ is continuous for pointwise convergence, because when $y^n\in \overline{[y^-,y^+]}$ is a sequence of configurationsconverging pointwise, say to $y^{\infty}$, then $\lim_{n\to\infty} \sum_{j\in \Z^d/H_{\omega}} |y^n_j-y^{\infty}_j|=0$, as is quite easy to prove, so that $\lim_{n\to\infty} W_{[y^-,y^+]}(y^n) = W_{[y^-,y^+]}(y^{\infty})$.
\end{proof}
\noindent The next result is harder to prove:
\begin{theorem}
We have $W_{[y^-,y^+]}\geq 0$. Moreover, if a configuration $y \in \overline{[y^-, y^+]}$ is a global minimizer, then $W_{[y^-,y^+]}(y)=0$. In particular, $W_{[y^-,y^+]}(y^-)=W_{[y^-,y^+]}(y^+)=0$. 
\end{theorem}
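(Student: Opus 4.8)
\noindent The plan is to reduce the three assertions to a single inequality and then prove that by a truncated–variation argument. First I would isolate the following claim $(\star)$: \emph{for every global minimizer $z\in\overline{[y^-,y^+]}$ and every $w\in\overline{[y^-,y^+]}$ one has}
\[
\sum_{j\in\Z^d/H_\omega}\bigl(S_j(w)-S_j(z)\bigr)\geq 0 .
\]
Granting $(\star)$, the theorem follows immediately. Recall $W_{[y^-,y^+]}(y^-)=0$ by definition, and that $y^-,y^+\in\mathcal M(x)$ are global minimizers lying in $\overline{[y^-,y^+]}$ (they are $I_\omega$-periodic as elements of an Aubry–Mather set). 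Taking $z=y^-$ in $(\star)$ gives $W_{[y^-,y^+]}(y)=\sum_{[j]}(S_j(y)-S_j(y^-))\geq 0$ for all $y\in\overline{[y^-,y^+]}$, which is the first assertion. If moreover $y\in\overline{[y^-,y^+]}$ is itself a global minimizer, then applying $(\star)$ with $z=y$ and $w=y^-$ yields $\sum_{[j]}(S_j(y^-)-S_j(y))\geq 0$, i.e. $W_{[y^-,y^+]}(y)\leq 0$, hence $W_{[y^-,y^+]}(y)=0$; specializing to $y=y^-$ and $y=y^+$ gives the last assertion.

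It remains to prove $(\star)$. In the rational case this is quick: if $\omega\in\Q^d$, pick principal periods $(p,q)$ for $\omega$, so that $I_\omega=J_{p,q}$ and its projection to $\Z^d$ is $p(\Z^d)$, whence $H_\omega=p(\Z^d)$ and $\overline{[y^-,y^+]}\subset\overline{\X}_\omega=\X_{p,q}$. Using $B_p$ as a set of representatives of $\Z^d/H_\omega$, the left-hand side of $(\star)$ is just $W_{p,q}(w)-W_{p,q}(z)$; since $z$ is a global minimizer lying in $\X_{p,q}$, Theorem~\ref{converse} says it is a $p,q$-minimizer, so $W_{p,q}(w)\geq W_{p,q}(z)$ and $(\star)$ holds.

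For general $\omega$ I would argue by truncation, exactly as in the proof of Theorem~\ref{converse}. Since $w,z\in[y^-,y^+]$ (compact) there is a uniform $D>0$ with $|\p_kS_j|\leq D$ on $[y^-,y^+]$, and by Theorem~\ref{l1} the sum $\sum_{[j]}|S_j(w)-S_j(z)|$ is finite (cf. Proposition~\ref{well_defined}); moreover shift-invariance of the $S_j$ together with $I_\omega$-periodicity of $w,z$ makes $S_j(w)-S_j(z)$ depend only on $[j]\in\Z^d/H_\omega$. For a box $B$ set $w^B_i:=w_i$ on $\mathring B^{(r)}$ and $w^B_i:=z_i$ otherwise; then $w^B\in[y^-,y^+]$ and $w^B-z$ is supported in $\mathring B^{(r)}$, so global minimality of $z$ gives $W_B(w^B)-W_B(z)\geq 0$. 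Splitting this into the bulk contribution of the sites $j$ with $B_j^r\subset\mathring B^{(r)}$ (where $S_j(w^B)=S_j(w)$) and a boundary remainder bounded by $(2r+1)^dD\sum_{j\in B,\ B_j^r\not\subset\mathring B^{(r)}}\sum_{||k-j||\leq r}|w_k-z_k|$, one chooses $B$ long in the directions of the lattice $H_\omega$ and of large but fixed extent transversally; then the cosets meeting the transverse boundary carry only a negligible part of the convergent sum $\sum_{[j]}|S_j(w)-S_j(z)|$, the bulk equals $A_B\sum_{[j]}(S_j(w)-S_j(z))$ up to an error of lower order in the common multiplicity $A_B\to\infty$, and the boundary remainder is of lower order as well, so letting $B$ grow forces $\sum_{[j]}(S_j(w)-S_j(z))\geq 0$. (When $\omega$ is totally irrational, $H_\omega=\{0\}$, there are no transverse directions and the argument just says $\sum_{j\in B}(S_j(w)-S_j(z))\to\sum_{j\in\Z^d}(S_j(w)-S_j(z))$ with vanishing boundary remainder.) I expect the main obstacle to be precisely this bookkeeping when $0<{\rm rank}_{\Z}H_\omega<d$: one must balance the linear bulk growth against the boundary coming both from truncating the $H_\omega$-fibers and from the finitely many incomplete cosets near the transverse boundary, absorbing the latter with the absolute convergence supplied by Theorem~\ref{l1}; this is the exact analogue of the volume-versus-area estimate in the proof of Theorem~\ref{converse}, and everything else is soft.
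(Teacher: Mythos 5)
Your proposal is correct and follows essentially the same route as the paper: the paper also reduces all three assertions to the single inequality that a global minimizer in $\overline{[y^-,y^+]}$ minimizes the renormalized action among configurations in $\overline{[y^-,y^+]}$, and proves it by the same Theorem~\ref{converse}-style truncation (replacing $w$ by $z$ outside $\mathring{B}_{np}^{(r)}$, a bulk term of order $\varepsilon n^{c}$ against a boundary term of order $n^{c-1}$, with the $l_1$-bound of Theorem~\ref{l1} and compactness of $[y^-,y^+]$ controlling the transverse directions). Your explicit truncation transverse to $H_\omega$, keeping the test variation finitely supported, is just a more careful write-up of the step the paper leaves as a sketch.
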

\begin{proof}
 The proof of this theorem is similar to that of Theorem \ref{converse}. We will sketch it here. \\
\indent  In fact, we will show that when $y^1, y^2\in \overline{[y^-,y^+]}$ and $y^1$ is a global minimizer, then $W_{[y^-,y^+]}(y^1)\leq W_{[y^-, y^+]}(y^2)$. Applied to $y^1=y^-$, this shows that $W_{[y^-,y^+]}\geq 0$, whereas when applied to $y^2=y^-$, it shows that $W_{[y^-,y^+]}(y)=0$ if $y$ is a global minimizer. \\ 
\indent So let $y^1, y^2\in \overline{[y^-, y^+]}$ and suppose that $\varepsilon:= W_{[y^-,y^+]}(y^1)-W_{[y^-, y^+]}(y^2)>0$. It suffices to show that this implies that $y^1$ is not a global minimizer. To prove this, let $(p_1, q_1), \ldots, (p_c, q_c)$ be principal periods for $\omega$ and let $B_{p}\subset \Z^d$ be the fundamental domain for $\Z^d/H_{\omega}$ defined by 
$$B_{p}:=\{i\in \Z^d\ | \ 0\leq \langle i, p_j\rangle <1 \ \mbox{for all} \ 1 \leq j \leq c\  \}\ .$$
Then $W_{[y^-,y^+]}=\left. W_{B_{p}}\right|_{\overline{[y^-,y^+]}}$ and $W_{nB_{p}}(y^1)-W_{nB_{p}}(y^2)=\varepsilon \cdot n^c$. \\
\indent Now define for every $n>r+2$ the configuration $y^n\in [y^-, y^+]$ by $$y^n_i:=\left\{ \begin{array}{ll} y^2_i & \mbox{if}\  i\in \mathring{B}_{np}^{(r)}, \\ y^1_i & \mbox{otherwise}.\end{array} \right.$$ 
Then $y^n-y^1$ is clearly supported in the $r$-interior of $B_{np}$ and it is not too hard to show, using the compactness of $[y^-, y^+]$, the uniform $l_1$-bound on $[y^-, y^+]$ and the argument of Theorem \ref{converse}, that there is a constant $E>0$ so that 
$$W_{nB_{p}}(y^1)-W_{nB_{p}}(y^n) \! =\! \left(W_{nB_{p}}(y^1) - W_{nB_{p}}(y^2) \right) + \left( W_{nB_{p}}(y^2)  -W_{nB_{p}}(y^n) \right) \! > \! \varepsilon \cdot n^c - E \cdot n^{c-1}\ .$$
This means that $y^1$ is not a global minimizer. 
\end{proof}
\noindent One can in fact also prove a variant of Theorem \ref{global} that says that if $W_{[y^-, y^+]}(y)=0$, then $y$ is a global minimizer. Since we do not need this result in this paper, we will not prove it here.\\
\indent Recall that both $[y^-, y^+]$ and $\overline{[y^-,y^+]}$ are invariant under the forward flow of the negative gradient vector field $-\nabla W$. But a ghost circle is also invariant under the backward flow. This implies that, if $\Gamma$ is a ghost circle and $y^-, y^+\in \Gamma$ are the endpoints of a gap in an Aubry-Mather set contained in $\Gamma$, then $\Gamma^{[y^-,y^+]}$ is both forward and backward invariant under the negative gradient flow. In order to prove that $y^-$ and $y^+$ are not the only fixed points in $[y^-,y^+]$, we will now show that $W_{[y^-,y^+]}$ acts as a Lyapunov function:

\begin{lemma}\label{lyapunov}
Let $y\in \Gamma^{[y^-,y^+]}$ and denote by $t\mapsto \Psi_t$ the flow of $-\nabla W$. Then $t\mapsto W_{[y^-, y^+]}(\Psi_ty)$ is continuously differentiable and
$$\left. \frac{d}{dt}\right|_{t=0} W_{[y^-, y^+]}(\Psi_ty) = - \!\! \sum_{i\in\Z^d/H_{\omega}}\!\! \left(\p_iW(y)\right)^2.$$ 
\end{lemma}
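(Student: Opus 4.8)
\emph{Plan.} The plan is to differentiate the absolutely convergent series defining $W_{[y^-,y^+]}$ term by term along the flow line $x(t):=\Psi_t y$, and then to reorganise the resulting absolutely convergent double sum into the claimed expression. Throughout I would fix a set $R\subset\Z^d$ of representatives for $\Z^d/H_{\omega}$ and write $W_{[y^-,y^+]}(z)=\sum_{j\in R}\bigl(S_j(z)-S_j(y^-)\bigr)$. Since $y^-,y^+$ lie in the Aubry--Mather set they are global minimizers, hence stationary, so $\nabla W(y^\pm)=0$ and $\Psi_t y^\pm=y^\pm$ for all $t$; combined with monotonicity and with the flow-invariance of $\overline{\X}_{\omega}$ (Proposition \ref{invariance}), this makes $\overline{[y^-,y^+]}$ invariant under $\Psi_t$ in both time directions, and since $\Gamma^{[y^-,y^+]}\subset\overline{[y^-,y^+]}$ we get $x(t)\in\overline{[y^-,y^+]}$ for every $t$, so each $g_j(t):=S_j(x(t))-S_j(y^-)$ is a $C^1$ function with $g_j'(t)=-\sum_{\|k-j\|\le r}\p_kS_j(x(t))\,\p_kW(x(t))$.

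\emph{Two structural facts.} Before summing over $j$ I would record, for any $z\in\overline{\X}_{\omega}$, two consequences of condition B (shift-invariance and $1$-periodicity of the $S_j$) together with the relations $\tau_{k,l}z=z$, $(k,l)\in I_{\omega}$: (i) $\p_mS_{j+h}(z)=\p_{m-h}S_j(z)$ for every $h\in H_{\omega}$, whence $\p_kW(z)=\sum_{j\in R}\sum_{h\in H_{\omega}}\p_{k+h}S_j(z)$ and, in particular, $k\mapsto\p_kW(z)$ is $H_{\omega}$-periodic; and (ii) if $R$ is a transversal of $\Z^d/H_{\omega}$ then so is $R+b$ for every $b\in\Z^d$, so that $\sum_{k\in R}|a_{k+b}|=\sum_{k\in R}|a_k|$ for any $H_{\omega}$-periodic sequence $a$. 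I would also note the elementary estimate, obtained by integrating $\p_kW$ along the segment from $y^-$ to $z$, using $\p_kW(y^-)=0$ and a uniform bound $|\p_{i,l}S_j|\le C$ on the compact set $\overline{[y^-,y^+]}$,
$$|\p_kW(z)|\ \le\ (2r+1)^dC\!\!\sum_{\|l-k\|\le 2r}\!\!(y^+_l-y^-_l)\qquad(z\in\overline{[y^-,y^+]}).$$

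\emph{Uniform term-by-term differentiation.} Combining this estimate with (ii) and the uniform gap bound $\sum_{k\in R}(y^+_k-y^-_k)\le 1$ of Theorem~\ref{l1}, I would derive a constant $K_0$, independent of $z\in\overline{[y^-,y^+]}$, with $\sum_{k\in R}|\p_kW(z)|\le K_0$; and since $0\le z_l-y^-_l\le y^+_l-y^-_l$ and $\sum_{l\in R}(y^+_l-y^-_l)<\infty$, the tails $\sum_{k\in R\setminus F}|\p_kW(z)|$ ($F\subset R$ finite) are dominated by a fixed tail of a convergent series, uniformly in $z$. Feeding $|g_j'(t)|\le D\sum_{\|k-j\|\le r}|\p_kW(x(t))|$, with $D:=\sup_{\overline{[y^-,y^+]}}|\p_kS_j|$, into (ii) once more then yields $\sum_{j\in R}|g_j'(t)|\le D(2r+1)^dK_0$ for small $|t|$, with tails tending to $0$ uniformly in $t$; as each $g_j'$ is continuous in $t$, it follows that $\sum_{j\in R}g_j'$ converges uniformly near $t=0$ and hence that $t\mapsto W_{[y^-,y^+]}(\Psi_t y)=\sum_{j\in R}g_j(t)$ is $C^1$ with derivative $\sum_{j\in R}g_j'(t)$.

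\emph{Identification of the derivative, and the obstacle.} Finally I would evaluate $\sum_{j\in R}g_j'(0)=-\sum_{j\in R}\sum_{\|k-j\|\le r}\p_kS_j(y)\,\p_kW(y)$ --- an absolutely convergent sum over pairs $(j,k)$ --- by regrouping it first by the value of $k$ and then by the class of $k$ in $\Z^d/H_{\omega}$ (each fibre being finite, so the regroupings are legitimate), pulling $\p_kW(y)$ out of the class sum by its $H_{\omega}$-periodicity, and recognising the remaining inner sum $\sum_{h\in H_{\omega}}\sum_{j\in R}\p_{k+h}S_j(y)$ as $\p_kW(y)$ by (i); this gives $\sum_{j\in R}g_j'(0)=-\sum_{k\in R}\bigl(\p_kW(y)\bigr)^2=-\sum_{i\in\Z^d/H_{\omega}}\bigl(\p_iW(y)\bigr)^2$, as claimed. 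I expect the only genuine difficulty to be the uniform term-by-term differentiation: the series of derivatives has no reason to converge uniformly in general, and what rescues it is that on the gap the quantities $\p_kW$ are controlled pointwise in $k$ by the gap-width sequence $y^+_k-y^-_k$, whose $\ell^1$-norm over $\Z^d/H_{\omega}$ is at most $1$ by Theorem~\ref{l1}, with (ii) serving as the bookkeeping device that moves sums between $\Z^d/H_{\omega}$ and the shifted index sets arising from $\p_kW$.
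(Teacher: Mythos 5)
Your analytic core is correct and rests on the same two ingredients as the paper's own proof: the uniform gap bound $\sum_{k\in\Z^d/H_{\omega}}(y^+_k-y^-_k)\leq 1$ of Theorem \ref{l1}, exploited through interpolation from $y^-$ (where $\nabla W$ vanishes because $y^-$ is a minimizer) with the uniform second-derivative bound, and the regrouping of the double sum over $(j,k)$ using the $H_{\omega}$-periodicity of $k\mapsto \p_kW$ coming from condition B. The packaging differs: you differentiate the series term by term under a Weierstrass-type domination of the series of derivatives, whereas the paper writes the increment in integral form, $W_{[y^-,y^+]}(\Psi_ty)-W_{[y^-,y^+]}(y)=-\int_0^t\sum_{i\in\Z^d/H_{\omega}}\bigl(\p_iW(\Psi_\tau y)\bigr)^2d\tau$, and obtains the $C^1$ statement from continuity of the integrand, which it proves by the same interpolation estimate as in Proposition \ref{well_defined}. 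Your explicit treatment of the regrouping by classes of $k$ (which the paper leaves implicit in the phrase ``summing over $j$'') and your pointwise-in-$k$ bound $|\p_kW(z)|\lesssim\sum_{\|l-k\|\leq 2r}(y^+_l-y^-_l)$ are correct and make that step transparent; both routes are of comparable difficulty.

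There is, however, one step whose justification as written would fail: the claim that $\overline{[y^-,y^+]}$ is invariant under $\Psi_t$ in both time directions. Forward invariance of the order interval follows from Theorem \ref{monotonicity psi} and the stationarity of $y^{\pm}$, but the backward flow of a monotone system is not monotone, and order intervals are in general not backward invariant: a point that enters $[y^-,y^+]$ through its boundary in forward time has a backward orbit leaving the interval. What rescues you --- and what the paper invokes in the remark immediately preceding the lemma --- is that you only need this along orbits starting on $\Gamma^{[y^-,y^+]}$: since $\Gamma$ is invariant under the backward flow and strictly ordered, and $y^{\pm}\in\Gamma$ are fixed points of $\Psi_t$, the configuration $\Psi_{-t}y$ is comparable with $y^{\pm}$; if it were $\gg y^+$, then applying $\Psi_t$ and the forward comparison principle would give $y\gg y^+$, a contradiction, and similarly it cannot lie below $y^-$. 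The periodicity part of membership in $\overline{[y^-,y^+]}$ is preserved in both time directions by Proposition \ref{invariance}, so with this replacement the orbit indeed stays in $\overline{[y^-,y^+]}$ for all $t$ and your argument goes through. Note that this point is not cosmetic: without it, $W_{[y^-,y^+]}(\Psi_ty)$ is not even defined for $t<0$, so the differentiability claim at $t=0$ genuinely requires it.
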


\begin{proof} Let us denote, for convenience, $||\nabla W(y)||^2:=\sum_{i\in \Z^d/H_{\omega}}(\p_iW(y))^2$. We will begin by showing that the function $y\mapsto ||\nabla W(y)||^2$ is absolutely convergent and continuous on $\overline{[y^-, y^+]}$. This is proved by interpolation, as in the proof of Proposition \ref{well_defined}. More precisely, if $|\p_iS_j|\leq D$ and $|\p_{i,k}S_j|\leq C$ uniformly on $\overline{[y^-,y^+]}$, then $|\p_iW|\leq (2r+1)^dD$, so that
\begin{align}
\left| ||\nabla W(y^1)||^2 - ||\nabla W(y^2)||^2 \right|  \leq & \sum_{i\in\Z^d/H_{\omega}} |\p_iW(y^1) + \p_iW(y^2)| \cdot |\p_iW(y^1) - \p_iW(y^2)| \nonumber \\ \nonumber \leq
\sum_{i\in \Z^d/H_{\omega}} \!\!  2(2r+1)^dD  \!\! \sum_{||j-i||\leq r}& \sum_{||k-j||\leq r} \! \left| \int_0^1 \! \p_{i,k}S_j(\tau y^1\!+\!(1-\tau)y^2)d\tau\right| \cdot |y^1_k-y^2_k| \\ \nonumber \leq \ 2CD&(2r+1)^{3d}  \!\! \sum_{k\in\Z^d/H_{\omega}}\! |y^1_k-y^2_k|\ .
\end{align}
Applied to $y^1=y^-$ and $y^2=y$ and combined with Theorem \ref{l1}, this implies that $||\nabla W(y)||^2$ is absolutely convergent, because $||\nabla W(y^-)||^2=0$. The continuity for pointwise convergence follows from the argument given in Proposition \ref{well_defined}. In particular, we now know that $t\mapsto -||\nabla W(\Psi_ty)||^2$ is continuous, being the composition of two continuous functions.\\ \indent 
The next step is to denote $y(t):=\Psi_ty$, for $y\in \Gamma^{[y^-, y^+]}$, and to observe that 
$$S_j(y(t))-S_j(y) =\int_0^t \frac{d}{d\tau}S_j(y(\tau))d\tau  = -\sum_{||k-j||\leq r} \int_0^t \p_kS_j(y(\tau)) \cdot\p_k W(y(\tau))d\tau.$$
Summing this over $j\in \Z^d/H_{\omega}$, we obtain because of the absolute convergence, that
$$W_{[y^-,y^+]}(y(t)) - W_{[y^-,y^+]}(y)= - \int_0^t ||\nabla W(y(\tau))||^2d\tau .$$
\noindent In particular, because the integrand is continuous, we find that $\left. \frac{d}{dt}\right|_{t=0} W_{[y^-, y^+]}(\Psi_ty) = \lim_{t\to 0} \frac{1}{t}\int_0^t||\nabla W(y(\tau))||^2d\tau = -||\nabla W(y)||^2$.
\end{proof}

\noindent Proposition \ref{well_defined} and Lemma \ref{lyapunov} combined now lead to the main result of this section:
 \begin{theorem}\label{gapsolution}
Let $[y^-,y^+]$ be a gap in the Aubry-Mather set $\mathcal{M}(x)$ and let $\Gamma$ be a ghost circle so that $\mathcal{M}(x)\subset \Gamma$. Then either $\Gamma^{[y^-,y^+]}= \Gamma\cap [y^-,y^+]$ consists of global minimizers only, or there is at least one stationary point $y \in \Gamma^{[y^-,y^+]}$ that is not a global minimizer.
\end{theorem}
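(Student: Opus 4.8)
The plan is to use the renormalised action $W_{[y^-,y^+]}$ as a Lyapunov function on the slice $\Gamma^{[y^-,y^+]}=\Gamma\cap[y^-,y^+]$ of the ghost circle, and to produce a rest point of the negative gradient flow inside the gap at a point where this function attains its \emph{maximum}. First I would record the structural facts that make this work. The set $\Gamma^{[y^-,y^+]}$ is compact: by Proposition \ref{projection}, $\pi_0|_\Gamma:\Gamma\to\R$ is a homeomorphism, and since $\Gamma$ is strictly ordered one has $\Gamma\cap[y^-,y^+]=(\pi_0|_\Gamma)^{-1}([\pi_0(y^-),\pi_0(y^+)])$, a homeomorphic copy of a compact interval. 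It is contained in $\overline{[y^-,y^+]}$, every element being maximally periodic, so by Proposition \ref{well_defined} together with the fact (proved above) that $W_{[y^-,y^+]}\geq 0$, the renormalised action restricts to a continuous non-negative function on $\Gamma^{[y^-,y^+]}$ and hence attains a maximum $M\geq 0$, say at $y^\ast\in\Gamma^{[y^-,y^+]}$. Finally, $\Gamma^{[y^-,y^+]}$ is invariant under the gradient flow in both time directions, and $y^-,y^+\in\mathcal M(x)$ are global minimizers, so $W_{[y^-,y^+]}(y^-)=W_{[y^-,y^+]}(y^+)=0$.

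I would then distinguish two cases according to the value of $M$. If $M=0$, then $W_{[y^-,y^+]}\equiv 0$ on $\Gamma^{[y^-,y^+]}$; for any $y$ in this set the orbit $t\mapsto\Psi_t y$ stays inside it, so $t\mapsto W_{[y^-,y^+]}(\Psi_t y)$ is identically zero, and Lemma \ref{lyapunov} forces $\sum_{i\in\Z^d/H_\omega}(\partial_i W(y))^2=0$. Since $y$ is maximally periodic, $\partial_i W(y)$ depends only on the coset $i+H_\omega$, so in fact $\nabla W(y)=0$ and $y$ is stationary; thus $\Gamma^{[y^-,y^+]}$ consists of stationary configurations only, and Theorem \ref{moserfoliation} gives that all of them are global minimizers --- the first alternative. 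If instead $M>0$, then $y^\ast\neq y^{\pm}$ (as $W_{[y^-,y^+]}$ vanishes there), hence $y^-\ll y^\ast\ll y^+$, and I consider $f(t):=W_{[y^-,y^+]}(\Psi_t y^\ast)$. By Lemma \ref{lyapunov} this is $C^1$, and $f(t)\leq M=f(0)$ for all $t$ because $\Psi_t y^\ast\in\Gamma^{[y^-,y^+]}$; therefore $f'(0)=0$, i.e.\ $\sum_{i\in\Z^d/H_\omega}(\partial_i W(y^\ast))^2=0$, which as before yields $\nabla W(y^\ast)=0$, so $y^\ast$ is stationary. But $W_{[y^-,y^+]}(y^\ast)=M\neq 0$, and every global minimizer in $\overline{[y^-,y^+]}$ has vanishing renormalised action (proved above); hence $y^\ast$ is a stationary point in $\Gamma^{[y^-,y^+]}$ that is not a global minimizer --- the second alternative.

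The routine ingredients --- compactness and continuity of $W_{[y^-,y^+]}$ on the slice, the inclusion $\Gamma^{[y^-,y^+]}\subset\overline{[y^-,y^+]}$, and the coset-constancy of $\nabla W$ on maximally periodic configurations --- should give no difficulty, since they are immediate from results already in place. The one point that needs care is to work with the \emph{maximum} of the Lyapunov function $W_{[y^-,y^+]}$ rather than its infimum (which is $0$, attained at the uninformative endpoints $y^\pm$), and to use that $\Gamma^{[y^-,y^+]}$ is flow-invariant in \emph{both} time directions: this is exactly what forces a maximizer to be an equilibrium, and since an equilibrium in the interior of the gap automatically carries strictly positive renormalised action, it is precisely the desired stationary non-minimizer.
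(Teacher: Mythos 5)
Your argument is correct and is essentially the paper's own proof: the same renormalized action $W_{[y^-,y^+]}$ serves as Lyapunov function on the compact, bi-invariant slice $\Gamma^{[y^-,y^+]}$, with the same dichotomy (identically zero, hence stationary and, via Theorem \ref{moserfoliation}, all minimizing; versus a positive interior maximum, which must be an equilibrium with $W_{[y^-,y^+]}>0$ and hence a non-minimizing stationary point). The only differences are cosmetic --- you deduce stationarity of the maximizer from $f'(0)=0$ at an interior maximum of $t\mapsto W_{[y^-,y^+]}(\Psi_t y^\ast)$ where the paper runs the integral identity backwards in time, and you spell out the compactness of the slice, the coset-constancy of $\nabla W$ on maximally periodic configurations, and the inclusion $\Gamma^{[y^-,y^+]}\subset\overline{[y^-,y^+]}$, the last of which the paper also uses without comment.
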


\begin{proof}
By Proposition \ref{well_defined}, we have that $W_{[y^-,y^+]}\geq 0$. When $\left.W_{[y^-,y^+]}\right|_{\Gamma^{[y^-,y^+]}} \equiv 0$, then the flow-invariance of $\Gamma^{[y^-,y^+]}$ implies that $W_{[y^-,y^+]}(\Psi_ty)=0$ for all $y\in \Gamma^{[y^-,y^+]}$ and all $t\in \R$. By Proposition \ref{lyapunov}, we then have that $||\nabla W(y)||^2=0$. That is, $\Gamma^{[y^-,y^+]}$ consists of stationary points only, and hence by Theorem \ref{moserfoliation}, it consists of global minimizers only.\\
\indent Because $\Gamma^{[y^-,y^+]}$ is compact and $W_{[y^-,y^+]}$ is continuous, the other possibility is that $\left.W_{[y^-,y^+]}\right|_{\Gamma^{[y^-,y^+]}}$ assumes a positive maximum at some point $y\in \Gamma^{[y^-,y^+]}$ with $y^- \ll y \ll y^+$. Proposition \ref{well_defined} implies that this $y$ is not a global minimizer. It is clearly stationary though: if not, then $||\nabla W(y)||^2> 0$, so that by continuity of $t\mapsto ||\nabla W(\Psi_ty)||^2$, we have for each $t<0$ that 
$$W_{[y^-,y^+]}(\Psi_ty)-W_{[y^-,y^+]}(y) = - \int_0^t ||\nabla W(\Psi_{\tau}y)||^2d\tau>0. $$ 
Because $\Psi_ty\in \Gamma^{[y^-,y^+]}$, this contradicts that $y$ is a maximizer of $\left.W_{[y^-,y^+]}\right|_{\Gamma^{[y^-,y^+]}}$.
 \end{proof}
\noindent At this moment it is unclear to us whether a gap in an Aubry-Mather set can be foliated by stationary points - which therefore all have to be nonrecurrent global minimizers.

\appendix	
\section{Twist maps}
Variational monotone recurrence relations do not only arise in statistical mechanics or as discretized PDEs: the case of dimension $d=1$ is relevant for the theory of twist maps of the cylinder. The latter arise for instance in the study of convex billiards and as Poincar\'e maps of Hamiltonian systems. In this short descriptive appendix, we will briefly review these topics. The informed reader can skip this appendix and we refer to \cite{MatherForni} or \cite{gole01} for more detailed proofs of our statements, as well as for a more comprehensive introduction to the topic. \\ 
\indent Let us denote by $\mathbb{A}:= \mathbb{R}/\mathbb{Z} \times \mathbb{R}$ the standard cylinder, with coordinates $(x\!\! \mod 1,y)$ and bundle projection $\pi: \mathbb{A}\to \mathbb{R}/\mathbb{Z}$ given by $(x \!\! \mod 1,y)\mapsto x \!\! \mod 1$. The lift $\tilde \pi: \mathbb{R}^2\to\mathbb{R}$ of $\pi$ to the universal covering spaces sends $(x,y)$ to $x$.
 \\
\indent Recall that a smooth cylinder map $T: \mathbb{A }\to \mathbb{A}$ allows for a lift $\tilde T:\mathbb{R}^2\to\mathbb{R}^2$, with the property that $\tilde T(x,y) \!\! \mod (1,0)= T(x \!\! \mod 1 ,y)$. This implies that $\tilde T(x+1,y)=\tilde T(x,y)+ (n, 0)$, where $n$ is the degree of $T$, and moreover that $\tilde T$ is unique modulo constants of the form $(m,0)$ with $m\in \mathbb{Z}$. 
\begin{definition}
We call a cylinder map $T:\mathbb{A}\to \mathbb{A}$ an {\it exact symplectic positive twist map} if it satisfies conditions {\bf 1}, {\bf 2} and {\bf 3} below.
\end{definition}
\begin{itemize}
\item[{\bf 1.}] {\bf Degree one:} $\tilde T(x+1, y)=\tilde T(x,y) + (1,0)$. 
\end{itemize}
Condition 1 is true if and only if $T$ is homotopic to the identity map of $\mathbb{A}$. 
\begin{itemize}
\item[{\bf 2.}] {\bf Exact symplectic:} The one-form $T^*(ydx)-ydx$ is exact. 
\end{itemize}
Condition 2 implies that there is a so-called {\it generating function}, denoted $s:\mathbb{A}\to\mathbb{R}$, such that $T^*(ydx)-ydx = ds$. We will denote its lift by $\tilde s:\R^2\to\R$. This lift satisfies the identity $\tilde s(x, y)=s(x\!\! \mod 1, y)$. In particular, $\tilde s(x+1,y)=\tilde s(x,y)$.\\
\indent Geometrically, condition 2 can be interpreted as follows: one can show that when condition 1 holds, then condition 2 is true if and only if  $T$ preserves the volume form $dy\wedge dx$ and moreover has the property that the volume enclosed by the cycles $\gamma(t):=(t \!\! \mod 1, 0)$ and its homotopic image $T \circ \gamma$ is equal to zero.\\
\indent
Moreover, conditions 1 and 2 hold if and only if $T$ is a so-called Hamiltonian map, i.e. $T$ is the time-$1$ flow of a time-$1$-periodic Hamiltonian vector field $X_{H(t)}$ on $\mathbb{A}$.\\ 
\indent To formulate the last condition, let us call $\tilde T(x,y) = (X(x,y), Y(x,y))$.
\begin{itemize}
\item[{\bf 3.}] {\bf Positive Twist:} The map $(x,y)\mapsto (x,X(x,y)): \mathbb{R}^2\to\mathbb{R}^2$ is a diffeomorphism. This implies that $\p_yX\neq 0$. We require that $\p_yX>0$.
\end{itemize}
Condition 3 says that $T$ twists each fiber $\{x \!\! \mod 1\}\times \mathbb{R} \subset \mathbb{A}$ around the cylinder $\mathbb{A}$ ``in the positive direction''. We will denote the inverse of the map $(x,y)\mapsto (x, X(x,y))$ by $(x,X) \mapsto (x, y(x,X))$. \\  
\indent In fact, condition 3 allows us to define the function $S:\mathbb{R}^2\to \mathbb{R}$ by $S(x, X) := \tilde s(x, y(x,X))$. The function $S$ is called the {\it generating function} of the twist map $T$. \\ \indent The following well-known theorem is crucial in the theory of twist maps. It states that, in order to find orbits of exact symplectic positive twist maps of the cylinder, one needs to solve a variational monotone recurrence relation in dimension $d=1$. For completeness, we have included a brief proof of this statement.
\begin{theorem} 
The sequence $i\mapsto (x_i, y_i) \in \mathbb{R}^2$ is an orbit of $\tilde T$ if and only if \\ \mbox{} \\
{\bf i)} For all $M,N\in \mathbb{Z}$ with $M<N$ the sequence $i\mapsto x_i$ is a stationary point of the finite action 
$$W_{M, N}(x):= \sum_{i=M+1}^{N} S_i(x), \ \mbox{with} \ S_i(x):= S(x_{i-1},x_i),$$
for variations of $x$ with fixed endpoints $x_M$ and $x_N$. \\ \mbox{} \\
\noindent {\bf ii)} It holds that $y_i=-\p_xS(x_{i},x_{i+1})$ for all $i$.
\\ \mbox{} \\ \noindent
Moreover, one has that $S(x+1, X+1) = S(x, X)$ and $\partial_{x,X}S < 0$.
\end{theorem}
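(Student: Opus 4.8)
The plan is to extract from the exact symplectic and twist conditions two ``generating function identities'' and then to read off the theorem by bookkeeping with index shifts. First I would invoke condition {\bf 2}: on the universal cover, $Y\,dX - y\,dx = d\tilde s$, where $\tilde T(x,y)=(X,Y)$ and $\tilde s$ is the lift of the primitive $s$, so in particular $\tilde s(x+1,y)=\tilde s(x,y)$. By the positive twist condition {\bf 3}, the map $(x,y)\mapsto(x,X(x,y))$ is a global diffeomorphism of $\R^2$, with inverse written $(x,X)\mapsto(x,y(x,X))$; rewriting $\tilde s$ in the coordinates $(x,X)$ is exactly the definition $S(x,X)=\tilde s(x,y(x,X))$. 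Since exterior differentiation is coordinate-free, $dS = Y\,dX - y\,dx$ in these coordinates, which gives
$$\p_x S(x,X) = -\,y(x,X), \qquad \p_X S(x,X) = Y\bigl(x,y(x,X)\bigr).$$
The delicate point here, and the one I expect to require the most care, is this change of variables: one must interpret $y$ and $Y$ on the right as the functions of $(x,X)$ obtained by inverting the twist map, not as a careless substitution. Everything afterward is routine.

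Granting the two identities, the equivalence follows. For the forward direction, if $i\mapsto(x_i,y_i)$ is an orbit of $\tilde T$, then the first identity with $(x,X)=(x_i,x_{i+1})$ is precisely {\bf ii)}, and the second identity gives $y_{i+1}=\p_X S(x_i,x_{i+1})$. Combining this with {\bf ii)} at the next index, $y_{i+1}=-\p_x S(x_{i+1},x_{i+2})$, yields $\p_X S(x_{i-1},x_i)+\p_x S(x_i,x_{i+1})=0$ for every $i$, which is exactly $\p_{x_i}W_{M,N}(x)=0$ for all $M<i<N$, i.e. {\bf i)}. For the converse, suppose $i\mapsto x_i$ satisfies {\bf i)} and define $y_i:=-\p_x S(x_i,x_{i+1})$. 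The first identity reads $y_i=y(x_i,x_{i+1})$, so by invertibility of $X\mapsto y(x_i,X)$ we get $x_{i+1}=X(x_i,y_i)$; then the second identity gives $\p_X S(x_i,x_{i+1})=Y(x_i,y_i)$, while the Euler--Lagrange equation from {\bf i)} together with the definition of $y_{i+1}$ gives $\p_X S(x_i,x_{i+1})=-\p_x S(x_{i+1},x_{i+2})=y_{i+1}$. Hence $(x_{i+1},y_{i+1})=(X(x_i,y_i),Y(x_i,y_i))=\tilde T(x_i,y_i)$, so the sequence is an orbit.

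For the two remaining claims I would argue as follows. The degree-one condition {\bf 1} gives $X(x+1,y)=X(x,y)+1$ and $Y(x+1,y)=Y(x,y)$, and inverting the first shows $y(x+1,X+1)=y(x,X)$; combined with the $\Z$-periodicity of $\tilde s$ this gives $S(x+1,X+1)=\tilde s\bigl(x+1,y(x,X)\bigr)=\tilde s\bigl(x,y(x,X)\bigr)=S(x,X)$. Finally, differentiating the first identity in $X$ gives $\p_{x,X}S(x,X)=-\p_X y(x,X)$, and differentiating the relation $X(x,y(x,X))=X$ in $X$ gives $\p_X y(x,X)=1/\p_y X(x,y(x,X))>0$ by the positive twist condition $\p_y X>0$; therefore $\p_{x,X}S<0$.
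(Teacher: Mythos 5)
Your proposal is correct and follows essentially the same route as the paper: derive $\p_X S = Y$ and $\p_x S = -y$ from $T^*(y\,dx)-y\,dx=ds$ in the twist coordinates $(x,X)$, translate the orbit condition into the recurrence $\p_X S(x_{i-1},x_i)+\p_x S(x_i,x_{i+1})=0$ together with $y_i=-\p_x S(x_i,x_{i+1})$, and obtain the periodicity and the sign of $\p_{x,X}S$ from degree one and implicit differentiation of $X(x,y(x,X))=X$. The only cosmetic difference is that you split the equivalence into two directions, whereas the paper handles both at once via the auxiliary quantities $X_i, Y_i$.
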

\begin{proof}
Recall the notation $\tilde T(x,y)=(X(x,y), Y(x,y))$, the diffeomorphism $(x,y)\mapsto (x, X(x,y))$ with its inverse $(x,X)\mapsto (x, y(x,X))$ and the definition $S(x,X)=\tilde s(x,y(x,X))$. Then the equality $T^*(ydx)=ydx+d s$ in the coordinate system $(x,y)$ becomes $YdX=ydx + dS$ in the coordinates $(x,X)$, viewing $y=y(x,X)$ and $Y=Y(x,y(x,X))$ as functions of $x$ and $X$. Writing $dS=\p_xS dx + \p_X SdX$, we thus obtain that 
\begin{align}
\label{generating11}
Y=\p_X S \ \mbox{and} \  y=-\p_x S\ . 
\end{align}
Now let $i\mapsto (x_i, y_i)$ be an arbitrary sequence and define $X_i := X(x_{i-1}, y_{i-1})$ and $Y_{i}:=Y(x_{i-1}, y_{i-1})$. Then $(x_i, y_i)$ is an orbit of $\tilde T$ if and only if $x_{i} = X_i$ and $y_{i} = Y_i$ for all $i$. According to formula (\ref{generating11}), we have that $Y_i= Y(x_{i-1}, y_{i-1}) = \p_XS(x_{i-1}, X(x_{i-1}, y_{i-1})) = \p_XS(x_{i-1}, X_i)$ and $y_{i} = -\p_xS(x_i, X(x_i,y_i)) = -\p_xS(x_i, X_{i+1})$. Thus, $x_i=X_i$ and $y_i=Y_i$ if and only if $y_i = -\p_xS(x_i, x_{i+1})$ and 
\begin{align}\label{1drecurrence}
\p_XS(x_{i-1}, x_i) + \p_xS(x_i, x_{i+1}) = 0 \ \mbox{for all} \ i\in \Z. 
\end{align}
Formula (\ref{1drecurrence}) holds if and only if $i\mapsto x_i$ is stationary for all the $W_{N,M}$ defined above. \\
\indent  Because $T$ has degree one, we have that $X(x+1, y)=X(x, y) + 1$. The function $y(x,X)$ is defined implicitly by the relation $X(x,y(x,X))=X$, and therefore we see that $y(x+1, X+1) = y(x, X)$. Thus, the generating function satisfies $S(x+1,X+1)  = \tilde s(x+1,y(x+1,X+1))= \tilde s(x,y(x,X)) = S(x,X)$. \\
\indent
Finally, formula (\ref{generating11}) implies that $\p_{x,X}S = -\p_X y = - (\p_yX)^{-1} < 0$.
\end{proof}
\subsubsection*{Examples of twist maps}
Perhaps the most famous example of an exact symplectic twist map is the {\it Chirikov standard map}. Given a $1$-periodic function $V=V(x)$, it is defined as $T_V: \mathbb{A} \to \mathbb{A}$ by $$T_V(x,y)=(x+y + 2V'(x) \!\!\! \!\mod 1, y+2V'(x))\  .$$
It turns out that its generating function is $S(x,X):= \frac{1}{2}(x-X)^2 + 2V(x)$. In other words, the variational monotone recurrence relation corresponding to $T_V$ is exactly the Frenkel-Kontorova equation in dimension $d=1$, given by $S_i(x)=\frac{1}{4}(x_{i-1}-x_i)^2+ V(x_i)$. By the way, the ``standard'' is to choose $V(x)=\frac{k}{8\pi^2}\cos(2\pi x)$,  for some parameter $k\geq 0$. This produces the map $(x\!\! \mod 1, y )\mapsto (x+y - \frac{k}{2\pi}\sin (2\pi x) \!\! \mod 1, y-\frac{k}{2\pi}\sin (2\pi x))$.\\ 
\indent
Another application of the theory of twist maps arises in the context of {\it convex billiards}, cf. \cite{Tabachnikov}. 
\begin{figure}[ht]  \centering 
 \includegraphics[width=5cm, height=6cm]{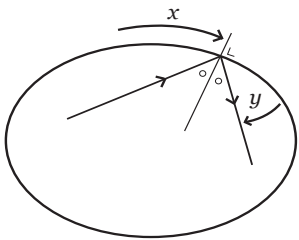}
\renewcommand{\figurename}{\rm \bf \footnotesize Figure} \vspace{-2.5cm}
\caption{\footnotesize An ellipse-shaped convex billiard and part of a billiard trajectory.} 
\label{billiard}
\end{figure}
The configuration space of such a billiard consists of the arclength parameters $x\in \mathbb{R}/\mathbb{Z}$ that describe the position of the billiard ball along the boundary of the billiard at the moment of reflection and angles $y\in (0,\pi)$ measuring the direction of the outgoing billiard trajectory with respect to the tangent line to the billiard at $x$. Then the motion of a billiard ball is described by an exact symplectic positive twist map $T:(x_i, y_i)\mapsto (x_{i+1}, y_{i+1})$. The variational structure of this problem follows as the rule ``angle of incidence $=$ angle of reflection'' is derived from the variational principle that a billiard ball travels along ``shortest paths''. The positive twist condition $\frac{\p x_{k+1}}{\p y_k}>0$ should be obvious from Figure \ref{billiard}.
\\
\indent
Finally, under generic conditions, the Poincar\'e return map of a $2$ degree of freedom Hamiltonian system near an elliptic equilibrium point is an exact symplectic twist map. In this case, the corresponding twist map is actually close to {\it integrable}, so that it allows for the application of various kinds of perturbation theory. Again, we refer to \cite{MatherForni} for more details.  
 
\begin{small}
\bibliographystyle{amsplain}
\bibliography{lattice}
\end{small}

 \end{document}